\documentclass[12pt]{article}
\overfullrule=0pt
\setlength{\textwidth}{16truecm}
\setlength{\textheight}{24.7truecm}
\setlength{\topmargin}{-1.2truecm}
\setlength{\oddsidemargin}{-0.05 true cm}
\setlength{\evensidemargin}{-0.05 true cm}
\setlength{\headsep}{20pt}
\setlength{\headheight}{10pt}
\setlength{\footskip}{24pt}
\usepackage[english]{babel}
\usepackage{amsmath}
\usepackage{amssymb}
\usepackage{amsthm}
\usepackage{pb-diagram}
\usepackage{mathrsfs}
\usepackage{latexsym}
\usepackage{amsfonts}
\usepackage{latexsym}
\usepackage{dsfont}
\usepackage{algorithm}
\numberwithin{equation}{section}

\theoremstyle{definition}
\newtheorem{teo}{Theorem}[subsection]%[chapter]
\newtheorem{df}[teo]{Definition}%[chapter]
\newtheorem{cor}[teo]{Corollary}%[chapter]
\newtheorem{oss}[teo]{Remark}%[chapter]
\newtheorem{prop}[teo]{Proposition}%[chapter]
\newtheorem{lem}[teo]{Lemma}%[chapter]
\newtheorem{es}[teo]{Example}%[chapter]
%[chapter]

\newcommand{\rcE}{{{\mathbb D}_E}}
\newcommand{\rcCone}{{\mathbb D}_{\mathbb C}}
\newcommand{\rcM}{{{\mathbb D}_M}}

\newcommand{\rcDE}{{{\mathbb D}_{E^*}}}
\newcommand{\rcDM}{{\mathbb D}_{M^*}}

\newcommand{\lind}[1]{\underset{#1}{\underrightarrow{\lim}}}
\newcommand{\hexpa}{{\mathscr{A}}^{\operatorname{exp}}_{\rcM}}
\newcommand{\hexpv}{{\mathscr{V}}^{\operatorname{exp}}_{\rcM}}
\newcommand{\hexpo}{{\mathscr{O}}^{\operatorname{exp}}_{\rcE}}
\newcommand{\hexppo}[1]{{\mathscr{O}}^{\operatorname{exp},({#1})}_{\rcE}}

\newcommand{\bexpo}{{\mathscr{B}}^{\operatorname{exp}}_{\rcM}}
\newcommand{\bexppo}{{\mathscr{B}}^{\operatorname{exp},(p)}_{\rcM}}

\newcommand{\oexp}{{\mathscr{O}}^{\operatorname{exp}}_{\rcE}}

\newcommand{\hinfo}{{\mathscr{O}}^{\operatorname{inf}}_{E^*_\infty}}
\newcommand{\hhinfo}[1]{{\mathscr{O}}^{\operatorname{inf}\!-\!{#1}}_{E^*_\infty}}

\newcommand{\bvth}{\overline{\vartheta}}
\newcommand{\QDC}[3]{C^{#2}({#3},\,{#3}')(\mathscr{Q}_{\rcE}^{(#1,\bullet)})}
\newcommand{\QQDC}[2]{C({#2},\,{#2}')(\mathscr{Q}_{\rcE}^{(#1,\bullet)})}
\newcommand{\QDRC}[2]{C^{#1}({#2},\,{#2}')(\mathscr{Q}^{(\bullet)}_{\rcE})}
\newcommand{\QQDRC}[1]{C({#1},\,{#1}')(\mathscr{Q}^{(\bullet)}_{\rcE})}
\newcommand{\HQDC}[3]{{{\textrm H}^{#2}(\QQDC{#1}{#3})}}
\newcommand{\HQDRC}[2]{{{\textrm H}^{#1}(\QQDRC{#2})}}

\newcommand{\ILHU}{\hat{\textrm H}^n(\hexpo(\mathcal{W}({U})))}
\newcommand{\ILHKU}{\hat{\textrm H}_K^n(\hexpo(\mathcal{W}({U})))}
\newcommand{\ilc}{\left(\dfrac{1}{2\pi\sqrt{-1}}\right)^n}
\newcommand{\IL}{\mathcal{I}\!\mathcal{L}}
\newcommand{\LL}{\mathcal{L}}
\newcommand{\LQ}{\mathscr{L}\!\mathscr{Q}}
\newcommand{\EQ}{\mathscr{E}\!\mathscr{Q}}
\newcommand{\LO}{\mathscr{L}\!\mathscr{O}^{\textrm{exp}}}
\newcommand{\EO}{\mathscr{E}\!\mathscr{O}^{\textrm{exp}}}
\newcommand{\pdiff}[1]{\dfrac{\partial}{\partial {#1}}}
\newcommand{\HHPC}[1]{\textrm{N}^*_{pc}({#1})}
\newcommand{\CatM}[1]{\mathrm{Mod}(\mathbb{Z}_{#1})}
\newcommand{\CatC}[1]{{\mathbf{\mathbf{C}}^+}(\mathrm{Mod}(\mathbb{Z}_{#1}))}
\newcommand{\CatK}[1]{{\mathbf{\mathbf{K}}^+}(\mathrm{Mod}(\mathbb{Z}_{#1}))}
\newcommand{\CatD}[1]{{\mathbf{\mathbf{D}}^+}(\mathrm{Mod}(\mathbb{Z}_{#1}))}

\newcommand{\CatMD}{{\mathbf{\mathbf{D}}^+}(\mathrm{Mod}(\mathbb{Z}))}

\title{Laplace hyperfunctions via \v{C}ech Dolbeault cohomology}
\author{Naofumi Honda\thanks{Department of Mathematics, Hokkaido University, Sapporo 060-0810, Japan}
	\, and \, Kohei Umeta\thanks{Department of General Education, College of
Science and Technology, Nihon University, Chiba 274-8501, Japan}}

\begin{document}
\maketitle
\begin{abstract}
	The paper studies several properties of Laplace hyperfunctions introduced by
	H.~Komatsu in the one dimensional case and by the authors in the 
	higher dimensional cases
	from the viewpoint of the relative \v{C}ech Dolbeault cohomology theory, which
	enables us, for example, to construct the Laplace transformation and its inverse
in a simple and systematic way. We also give some applications to a system of
PDEs with constant coefficients.
\end{abstract}

\tableofcontents

\section{Introduction}

A Laplace hyperfunction on the one dimensional space was first introduced
by H.~Komatsu (\cite{h1}, \cite{h4}, \cite{h5}) to justify the operational calculus for
arbitrary functions without any growth condition at infinity.
After his great success of the one dimensional Laplace hyperfunctions, 
the authors of this paper
established an Oka type vanishing theorem (Theorem 3.7 \cite{hu1}) and an edge of the wedge type
theorem (Theorem 3.12 \cite{hu2})
for the sheaf of holomorphic functions of several variables with exponential growth at infinity.
Thanks to these fundamental theorems, we were succeeded in defining 
the sheaf $\mathscr{B}^{\operatorname{exp}}$ of Laplace hyperfunction of several variables
as a local cohomology groups along the radial compactification 
${\mathbb D}_{\mathbb{R}^n} = \mathbb{R}^n \sqcup S^{n-1}$
of $\mathbb{R}^n$ with coefficients 
in the sheaf $\mathscr{O}^{\operatorname{exp}}$ 
of holomorphic functions with exponential growth,
and also showing that $\mathscr{B}^{\operatorname{exp}}$ is a soft sheaf (Corollary 5.9 \cite{hu2}).

Since a Laplace hyperfunction is defined as an element of the local cohomology group,
to understand its concrete expression we need some interpretation of the local cohomology group, which is done by usually considering its \v{C}ech representation through the relative
\v{C}ech cohomology group or more generally its ``intuitive representation'' 
introduced in \cite{KU} Section 4 (see Subsection \ref{subsec:intiuitve} also).

Recently T.~Suwa in \cite{Suwa01} and \cite{Suwa02} proposed another method to compute a local cohomology group
by using a soft resolution of a coefficient sheaf, 
which is called the relative \v{C}ech Dolbeault cohomology.
%when we distinguish it from the usual sheaf cohomology.
This implies, in particular, 
the sheaf of Sato's hyperfunction can be computed with the famous Dolbeault resolution
of holomorphic functions by using the relative \v{C}ech Dolbeault cohomology theory. 
In fact, N.~Honda, T.~Izawa and T.~Suwa \cite{HIS} studies Sato's hyperfunctions
from the viewpoint of \v{C}ech Dolbeault cohomology theory and finds that
several operations to a hyperfunction such as the integration of a hyperfunction 
along fibers, etc.~have very simple and easily understandable descriptions
in this framework because a hyperfunction is represented by a pair 
$(\mu_1,\, \mu_{01})$ of $C^\infty$-differential forms.

The purpose of this paper is to study Laplace hyperfunctions
from the viewpoint of \v{C}ech Dolbeault cohomology theory, which gives us
several advantages to their treatments like the case of Sato's hyperfunctions.
To make this point more clear,
we briefly explain, as such an example, an inverse Laplace transformation $\IL$
in the framework of \v{C}ech Dolbeault cohomology: It is given by
a quite simple form (see Definition \ref{def:IL} for details)
$$
\IL_\omega(f)
=\left[\ilc
\int_{\gamma^*} \rho(\omega)(\mathrm{Im}\,\zeta/|\mathrm{Im}\,\zeta|,\,z)\, e^{\zeta z} f(\zeta) d\zeta\right],
$$
where $\gamma^*$ is an appropriate real $n$-dimensional chain asymptotic to $\sqrt{-1}\mathbb{R}^n$ and
a pair $\rho(\omega)(\theta,z)$ of $C^\infty$-differential forms represents, roughly speaking, the constant function $1$ in the relative \v{C}ech Dolbeault cohomology group on $S^{n-1} \times {\mathbb D}_{\mathbb{C}^n}$ which also satisfies the support condition
$$
{\mathrm{supp}}(\rho(\omega))
\subset 
\widehat{\,\,\,\,}
\left\{(\theta,z) \in S^{n-1} \times \mathbb{C}^n;\, \langle \theta,\,\operatorname{Im} z \rangle > 0
\right\} \subset 
S^{n-1} \times {\mathbb D}_{\mathbb{C}^n}.
$$
Here ${\mathbb D}_{\mathbb{C}^n} = \mathbb{C}^n \sqcup S^{2n-1}$ 
is the radial compactification of $\mathbb{C}^n$,
and see Definition \ref{def:def_hat} for the symbol ${}^\wedge(\bullet)$.
Note that the above support condition for $\rho(\omega)$ guarantees the convergence 
of the integral.
The existence of such a kernel $\rho(\omega)e^{\zeta z}$ 
with the desired support condition 
is crucial in the definition of the inverse Laplace transformation, 
which comes from the fact that in \v{C}ech Dolbeault cohomology group
the support of a representative can be cut off in a desired way.

Furthermore, as was seen in the proof of Lemma \ref{lem:IL-path-independent},
we can estimate the support of a Laplace hyperfunction $\IL_\omega(f)$
by using the fact that
any derivative of $\rho(\omega)$ becomes zero as a cohomology class
because it is cohomologically constant.
In this way \v{C}ech Dolbeault cohomology theory gives us several new ideas and methods
in a study of Laplace hyperfunctions.

\

The paper is organized as follows: In Section 2, after a short review of \v{C}ech Dolbeault cohomology theory, we introduce several geometrical notations which are used
through the whole paper. Then we establish the fundamental de-Rham and Dolbeault theorems
in Section 3 and give the definition of the sheaf of Laplace hyperfunctions in Section 4.
We also give several expressions of Laplace hyperfunctions via \v{C}ech cohomology and
\v{C}ech Dolbeault cohomology in the same section.
The one of important facts in hyperfunction theory is the notion of boundary values
of holomorphic functions. We construct a boundary value morphism for Laplace hyperfunctions
in Section 5.  
The Laplace transformation and its inverse
in the framework of \v{C}ech Dolbeault cohomology are defined in Sections 6 and 7, and
the fact that they are inverse to each other is shown in Section 8.
The last section gives some applications to a system of PDEs with constant coefficients.

\section{Preparations}
Through the paper, we use the language of the derived categories:
Notations $\mathrm{Mod}(\mathbb{Z})$,
$\CatM{X}$, 
$\CatC{X}$,
$\CatK{X}$,
$\CatD{X}$, etc.~are the same as those in the book \cite{KS}, 
for example, 
$\mathrm{Mod}(\mathbb{Z})$ denotes the category of abelian groups,
$\CatM{X}$ the category of sheaves on $X$
of abelian groups,
$\CatC{X}$ the category of complexes bounded below of
sheaves on $X$ of abelian groups, and $\CatD{X}$ is 
the subcategory consisting of complexes bounded below
of the derived category of $\CatM{X}$. We sometimes write $\mathscr{F} \in \CatM{X}$
 instead of $\mathscr{F} \in \mathrm{Ob}(\CatM{X})$.

\subsection{A relative \v{C}ech derived complex}{\label{subsec:cheh-dolbeault-complex}}
{\label{subsec:relative_chec}}
In this subsection, we briefly recall the definition of a relative 
\v{C}ech derived complex.  For details, refer the readers to \cite{HIS}.
Let $X$ be a locally compact and $\sigma$-compact Hausdorff space and $K$ its closed subset, and
let $\mathcal{S}=\{U_i\}_{i \in \Lambda}$ be a finite open covering of $X$ and
$\Lambda'$ a subset of $\Lambda$ such that
$\mathcal{S}'=\{U_i\}_{i \in \Lambda'}$ ($\Lambda' \subset \Lambda$) becomes
an open covering of $X \setminus K$.
For $\alpha = (\alpha_0,\alpha_1,\cdots,\alpha_k) \in \Lambda^{k+1}$, we set
$$
U_\alpha = U_{\alpha_0} \cap U_{\alpha_1} \cap \cdots \cap U_{\alpha_k}.
$$

Let $\mathscr{F} \in \CatM{X}$. We denote by $C(\mathcal{S},\mathcal{S}';\,\mathscr{F})$
the relative \v{C}ech complex of $\mathscr{F}$ with respect to the pair $(\mathcal{S}, \mathcal{S}')$ of coverings, that is, 
$C(\mathcal{S}, \mathcal{S}';\,\mathscr{F})$ is the complex
$$
\cdots
\xrightarrow{\delta^{k-1}} C^k(\mathcal{S}, \mathcal{S}';\,\mathscr{F})
\xrightarrow{\delta^{k}} C^{k+1}(\mathcal{S}, \mathcal{S}';\,\mathscr{F})
\xrightarrow{\delta^{k+1}} C^{k+2}(\mathcal{S}, \mathcal{S}';\,\mathscr{F})
\xrightarrow{\delta^{k+2}} \cdots.
$$
Here $C^k(\mathcal{S}, \mathcal{S}';\,\mathscr{F})$ consists of
alternating sections $\{s_\alpha\}_{\alpha \in \Lambda^{k+1}}$ 
with $s_\alpha \in \mathscr{F}(U_\alpha)$ and $s_\alpha = 0$ if $\alpha \in (\Lambda')^{k+1}$, and the differential $\delta^k$ is defined by
$$
\delta^k(\{s_\alpha\}_{\alpha \in \Lambda^{k+1}})_\beta
= \sum_{i=1}^{k+2}(-1)^{i+1} s_{\beta^{\vee_i}}|_{U_\beta} \qquad (\beta \in \Lambda^{k+2}),
$$
where $\beta^{\vee_i}$ denotes the sequence such that the $i$-th element of $\beta$ is removed.

Let $\mathscr{F}^\bullet \in \CatC{X}$ be a complex with bounded below of sheaves of $\mathbb{Z}$-modules
$$
\cdots
\xrightarrow{d^{k-1}} \mathscr{F}^k
\xrightarrow{d^{k}} \mathscr{F}^{k+1}
\xrightarrow{d^{k+1}}  \mathscr{F}^{k+2}
\xrightarrow{d^{k+2}} \cdots.
$$
Then we denote by $C(\mathcal{S},\mathcal{S}')(\mathscr{F}^\bullet)$ 
the single complex associated with the double complex 
$$
\begin{array}{ccccccc}
&\uparrow\,\,\,\text{ }
&
&\uparrow\,\,\,\text{ }
&
&\uparrow\,\,\,\text{ }
& \\
%%%%%%%%%%%%%%%%%%%%%%%%5
\xrightarrow{d^{q-1}}
&C^{p+1}(\mathcal{S},\mathcal{S}';\,\mathscr{F}^q)
&\xrightarrow{d^{q}}
&C^{p+1}(\mathcal{S},\mathcal{S}';\,\mathscr{F}^{q+1})
&\xrightarrow{d^{q+1}}
&C^{p+1}(\mathcal{S},\mathcal{S}';\,\mathscr{F}^{q+2})
&\xrightarrow{d^{q+2}}
\\
%%%%%%%%%%%%%%5
&\uparrow\delta^p
&
&\uparrow\delta^p
&
&\uparrow\delta^p
& \\
%%%%%%%%%%%%%%5
\xrightarrow{d^{q-1}}
&C^{p}(\mathcal{S},\mathcal{S}';\,\mathscr{F}^q)
&\xrightarrow{d^{q}}
&C^{p}(\mathcal{S},\mathcal{S}';\,\mathscr{F}^{q+1})
&\xrightarrow{d^{q+1}}
&C^{p}(\mathcal{S},\mathcal{S}';\,\mathscr{F}^{q+2})
&\xrightarrow{d^{q+2}} \\
%%%%%%%%%%%%%%%%%
&\uparrow\,\,\,\text{ }
&
&\uparrow\,\,\,\text{ }
&
&\uparrow\,\,\,\text{ }
& \\
\end{array},
$$
that is, the complex is given by
$$
C^k(\mathcal{S},\mathcal{S}')(\mathscr{F}^\bullet) 
= \bigoplus_{p+q = k} C^{p}(\mathcal{S},\mathcal{S}';\,\mathscr{F}^q)
$$
and, for $\omega = \underset{p+q=k}{\oplus} \omega^{p,q} \in C^k(\mathcal{S},\mathcal{S}')(\mathscr{F}^\bullet)$,
$$
d^k_{C(\mathcal{S},\mathcal{S}')(\mathscr{F}^\bullet)}(\omega)
= \underset{p+q=k+1}{\oplus}(\delta^{p-1}(\omega^{p-1,q}) + (-1)^p d^{q-1}(\omega^{p,q-1})).
$$
Let $\mathscr{F} \in \CatM{X}$ and let $i: \mathscr{F} \to \mathscr{F}^\bullet$ be a resolution of $\mathscr{F}$ by soft sheaves, that is, 
$\mathscr{F}^\bullet \in \CatC{X}$ consists of soft sheaves and
the morphism $i$ of complexes is quasi-isomorphic.
Then we sometimes call the complex $C(\mathcal{S},\mathcal{S}')(\mathscr{F}^\bullet)$ the
 relative \v{C}ech derived complex of $\mathscr{F}$ 
(with respect to the pair $(\mathcal{S},\mathcal{S}')$ of coverings).
In particular, if $X$ is a complex manifold and $\mathscr{C}^{(0,\bullet)}_X$ is 
the Dolbeault complex which is a soft resolution of the sheaf $\mathscr{O}_X$ of holomorphic functions on $X$, then we say
$C(\mathcal{S},\mathcal{S}')(\mathscr{C}^{(0,\bullet)}_X)$ to be the relative
\v{C}ech Dolbeault complex.
\begin{teo}[\cite{HIS}]\label{teo:C-D-complex-iso}
Under the above situation, there exists the canonical isomorphism in $\CatMD$:
$$
\mathbf{R}\Gamma_K(X;\, \mathscr{F}) \simeq C(\mathcal{S},\mathcal{S}')(\mathscr{F}^\bullet).
$$
\end{teo}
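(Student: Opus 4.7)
The plan is to reduce the relative statement to two absolute \v{C}ech--Dolbeault comparisons via a mapping-cone argument. First observe that, by the alternating-with-$\Lambda'$-vanishing condition, $C^k(\mathcal{S},\mathcal{S}';\,\mathscr{F}^q)$ is exactly the kernel of the restriction-of-indices map $C^k(\mathcal{S};\,\mathscr{F}^q)\to C^k(\mathcal{S}';\,\mathscr{F}^q)$, and this restriction is surjective (extend an alternating cochain indexed by $(\Lambda')^{k+1}$ by zero on the remaining multi-indices). Assembling these bidegree-wise short exact sequences into total complexes yields the short exact sequence
$$0 \to C(\mathcal{S},\mathcal{S}')(\mathscr{F}^\bullet) \to C(\mathcal{S})(\mathscr{F}^\bullet) \to C(\mathcal{S}')(\mathscr{F}^\bullet) \to 0$$
and hence a distinguished triangle.

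The technical core is the absolute \v{C}ech--Dolbeault comparison: for any paracompact Hausdorff space $Y$ with open covering $\mathcal{U}$ and soft resolution $\mathscr{G}^\bullet\to\mathscr{G}$, the augmentation $\Gamma(Y;\,\mathscr{G}^\bullet)\hookrightarrow C(\mathcal{U})(\mathscr{G}^\bullet)$ is a quasi-isomorphism. I would establish this by the spectral sequence of the double complex $C^p(\mathcal{U};\,\mathscr{G}^q)$, filtered by the \v{C}ech degree. Taking $\delta$-cohomology first and exploiting a partition of unity subordinate to $\mathcal{U}$ (available by paracompactness), one produces a contracting homotopy showing $\check H^p_\delta(\mathcal{U};\,\mathscr{G}^q)=0$ for $p>0$, while $\check H^0_\delta(\mathcal{U};\,\mathscr{G}^q)=\Gamma(Y;\,\mathscr{G}^q)$ by the sheaf axiom. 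The $E_1$-page collapses onto the column $p=0$, and $d$-cohomology identifies the total cohomology with $H^q(\Gamma(Y;\,\mathscr{G}^\bullet))=H^q(Y;\,\mathscr{G})$, the last equality being the standard soft-resolution theorem on paracompact spaces. Applied to $(X,\mathcal{S})$ and $(X\setminus K,\mathcal{S}')$, which are paracompact because $X$ is locally compact $\sigma$-compact Hausdorff, and using that the restrictions of $\mathscr{F}^\bullet$ remain soft on such open subsets, this produces $C(\mathcal{S})(\mathscr{F}^\bullet)\simeq \mathbf{R}\Gamma(X;\,\mathscr{F})$ and $C(\mathcal{S}')(\mathscr{F}^\bullet)\simeq \mathbf{R}\Gamma(X\setminus K;\,\mathscr{F})$.

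To conclude, I would compare the above distinguished triangle with the standard local-cohomology triangle $\mathbf{R}\Gamma_K(X;\,\mathscr{F})\to \mathbf{R}\Gamma(X;\,\mathscr{F})\to \mathbf{R}\Gamma(X\setminus K;\,\mathscr{F})\xrightarrow{+1}$. Naturality of the \v{C}ech augmentation with respect to the inclusion $\mathcal{S}'\hookrightarrow\mathcal{S}$ makes the two rightmost vertical squares commute, and the triangulated five-lemma forces the induced arrow $\mathbf{R}\Gamma_K(X;\,\mathscr{F})\to C(\mathcal{S},\mathcal{S}')(\mathscr{F}^\bullet)$ on the leftmost terms to be a quasi-isomorphism. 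The main obstacle lies in the vanishing $\check H^p_\delta(\mathcal{U};\,\mathscr{G}^q)=0$ for $p>0$: for a fine sheaf the homotopy is immediate from a smooth partition of unity, but for a merely soft sheaf one must be more delicate, either appealing to a Godement-type extension argument or, as in the intended applications to the Dolbeault and $C^\infty$ de Rham resolutions, exploiting that $\mathscr{F}^\bullet$ is in fact fine, which is what ultimately makes the spectral-sequence collapse go through cleanly.
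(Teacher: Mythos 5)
Your argument is correct and follows essentially the same route as the proof in \cite{HIS} that the paper cites for this theorem: the levelwise short exact sequence $0 \to C(\mathcal{S},\mathcal{S}')(\mathscr{F}^\bullet) \to C(\mathcal{S})(\mathscr{F}^\bullet) \to C(\mathcal{S}')(\mathscr{F}^\bullet) \to 0$, the absolute \v{C}ech--Dolbeault comparison on $X$ and on $X\setminus K$, and the five lemma applied to the resulting morphism of distinguished triangles. The one point you leave open --- the vanishing of $\check{H}^p_\delta(\mathcal{U};\,\mathscr{G}^q)$ for $p>0$ when $\mathscr{G}^q$ is merely soft rather than fine --- does not require fineness: on a locally compact $\sigma$-compact Hausdorff space a soft sheaf restricts to a soft, hence $\Gamma$-acyclic, sheaf on every open subset, so the Cartan--Leray spectral sequence of the covering degenerates and gives $\check{H}^p(\mathcal{U};\,\mathscr{G}^q)\cong H^p(Y;\,\mathscr{G}^q)=0$ for $p>0$.
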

\begin{es}
If we take 
$$
\mathcal{V}=\{V_0 = X \setminus K,\, V_1=X\},\qquad \mathcal{V}'=\{V_0\}
$$
as coverings of $X$ and $X \setminus K$,
then the complex $C(\mathcal{V},\mathcal{V}')(\mathscr{F}^\bullet)$ becomes quite simple
as follows:
$$
C^k(\mathcal{V},\mathcal{V}')(\mathscr{F}^\bullet)
= \mathscr{F}^k(V_1) \oplus \mathscr{F}^{k-1}(V_{01}),
$$
where $V_{01}=V_0 \cap V_1$, and $d^k_{C(\mathcal{V},\mathcal{V}')(\mathscr{F}^\bullet)}$
is given by
$$
\mathscr{F}^k(V_1) \oplus \mathscr{F}^{k-1}(V_{01}) \ni
(\omega_1,\, \omega_{01}) \mapsto (d^k\omega_1,\, \omega_1|_{V_{01}} - d^{k-1}\omega_{01})
\in \mathscr{F}^{k+1}(V_1) \oplus \mathscr{F}^{k}(V_{01}).
$$
This complex is often denoted by $C(X,X \setminus K)(\mathscr{F}^\bullet)$,
and its $k$-th cohomology group is also written by $H^k(X,X \setminus K;\,\mathscr{F})$
if $\mathscr{F}^\bullet$ is a soft resolution of $\mathscr{F}$, which is
isomorphic to $H^k_K(X;\,\mathscr{F})$ by the above theorem.
\end{es}
\subsection{Radial compactification}
Let $M$ be an $n$-dimensional real vector space with the norm
$| \bullet |$ 
and $E = M \otimes_{\mathbb{R}} \mathbb{C}$. 
We denote by $\rcE$ (resp. $\rcM$) the radial compactification 
$E \sqcup S^{2n-1}$ (resp. $M \sqcup S^{n-1}$) of 
$E$ (resp. $M$) as usual (see Definition 2.1 \cite{hu2}). 
Note that $\rcM = \overline{M}$ holds, where $\overline{M}$ is the closure
of $M$ in $\rcE$.
We also set $M_\infty = \rcM \setminus M$ and $E_\infty = \rcE \setminus E$.
Through the paper, we use the following identification
$$
E_\infty = S^{2n-1} = (E \setminus \{0\})/\mathbb{R}_+,\qquad
M_\infty = S^{n-1} = (M \setminus \{0\})/\mathbb{R}_+.
$$
In particular, $\zeta \in E_\infty$ is sometimes identified with a unit vector in $E$.

\

We define an $\mathbb{R}_+$-action on $\rcE$ 
by, for $\lambda \in \mathbb{R}_+$ and $x \in \rcE$,
$$
\lambda x =
\begin{cases}
\lambda x & \text{if $x \in E$}, \\
x & \text{if $x \in E_\infty$}.
\end{cases}
$$
The $\mathbb{R}_+$-action on $\rcM$ is defined to be the restriction of the one in $\rcE$ to $\rcM$.
And we also define an addition for $a \in M$ (resp. $a \in E$) and
$x \in \rcM$ (resp. $x \in \rcE$) by
$$
a + x =
\begin{cases}
	a + x & \text{if $x \in M$ (resp. $x \in E$)}, \\
	x & \text{if $x \in M_\infty$ (resp. $x \in E_\infty$)}.
\end{cases}
$$
\begin{df}{\label{def:cone}}
A subset $K$ in $\rcM$ is said to be a cone with vertex $a \in M$ in $\rcM$ if there exists
an $\mathbb{R}_+$-conic set $L \subset \rcM$ such that
$$
K = a + L = \{a + x \in \rcM;\, x \in L\}.
$$
Here, if $L$ is an empty set, we set $a+L = \emptyset$ for convenience.
%Furthermore, it is said to be regular cone with vertex $a \in M$ if and only if 
%it satisfies the condition, for any $x \in M \setminus \{0\}$,
%$$
%\pi(x) \in K \cap M_\infty \iff
%a + x \in K.
%$$
%Here $\pi: M\setminus \{0\} \to M_\infty = M/\mathbb{R}_+$ is the canonical projection.
%The regular cone with vertex the origin is sometimes said to be
%a regular $\mathbb{R}_+$-conic set.
\end{df}
The notion of a cone in $\rcE$ is similarly defined.
%
%Consideration of Laplace transform
%of a Laplace hyperfuction with support in such a cone 
%makes our theory much more complex. Hence, hereafter, {\textbf{all the cones in either $\rcM$ or $\rcE$ are assumed to be regular without saying}} for simplicity.
%\begin{df}
%For a subset $Z \subset \rcE$, we define the open subset in $E_\infty$ by
%$$
%\Ninf{Z} = E_\infty \setminus \overline{(E \setminus Z)}.
%$$
%Here the closure is taken in $\rcE$.
%\end{df}
%
%We can easily see that $\zeta \in \Ninf{Z}$ if and only if
%there exist an $\mathbb{R}_+$-conic open subset $V$ in $E$ with $\zeta \in V$ and
%$a \in E$ such that 
%$$
%a + V \subset Z.
%$$
We often need to extend an open subset in $E$ to the one in $\rcE$. 
%Such an extension can be realized by using $\Ninf{\bullet}$.
\begin{df}{\label{def:def_hat}}
Let $V$ be an open subset in $E$, we define the open subset $\widehat{V}$ 
in $\rcE$ by
$$
\widehat{V} = \rcE \setminus \overline{(E \setminus V)}.
$$
\end{df}
Note that we sometimes write $\widehat{\,\,\,\,}V$ instead of $\widehat{V}$.
For an open subset $U$ in $M$, we can define 
an open subset $\widehat{U}$ in $\rcM$ in the same way as that in $\rcE$.  
\begin{lem}
Let $V$ be an open subset in $E$. Then
$\widehat{V}$ is the largest open subset $W$ in $\rcE$ with
$
	V = W \cap E.
$
\end{lem}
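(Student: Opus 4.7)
The plan is to verify three things: that $\widehat{V}$ is open in $\rcE$, that $\widehat{V}\cap E = V$, and that any open $W\subset \rcE$ with $W\cap E = V$ is contained in $\widehat{V}$.

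Openness of $\widehat{V}$ is immediate from the definition, since $\overline{(E\setminus V)}$ denotes closure in $\rcE$ and is therefore closed in $\rcE$. For the identity $\widehat{V}\cap E = V$, I would use the key fact that $E$ is open in $\rcE$ (this is built into the definition of the radial compactification $\rcE = E\sqcup S^{2n-1}_\infty$). Because $E$ is open, the closure of $E\setminus V$ taken in $\rcE$, when intersected with $E$, coincides with the closure of $E\setminus V$ taken in $E$; and since $V$ is open in $E$, the set $E\setminus V$ is already closed in $E$, so $\overline{(E\setminus V)}\cap E = E\setminus V$. Therefore
$$
\widehat{V}\cap E = E\setminus\bigl(\overline{(E\setminus V)}\cap E\bigr) = E\setminus(E\setminus V) = V.
$$

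For the maximality statement, suppose $W$ is an open subset of $\rcE$ satisfying $W\cap E = V$, and assume for contradiction that there is a point $x\in W$ with $x\notin\widehat{V}$, that is, $x\in\overline{(E\setminus V)}$. Since $W$ is an open neighbourhood of $x$ and $x$ lies in the closure of $E\setminus V$, we must have $W\cap(E\setminus V)\neq\emptyset$. But $W\cap(E\setminus V)\subset W\cap E = V$ while also $W\cap(E\setminus V)\subset E\setminus V$, forcing an element to lie in both $V$ and $E\setminus V$, which is impossible. Hence $W\subset\widehat{V}$.

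There is no real obstacle here; the whole argument is point-set topology, and the only subtle point is recognising that one must invoke the openness of $E$ inside $\rcE$ in order to pass between ``closure in $E$'' and ``closure in $\rcE$ intersected with $E$'' in the verification that $\widehat{V}\cap E = V$.
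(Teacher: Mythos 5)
Your argument is correct and complete; the paper itself states this lemma without proof, and what you have written is exactly the standard point-set verification one would supply (openness of the complement of a closure, the identity $\overline{(E\setminus V)}\cap E = E\setminus V$ using that $E$ is open in $\rcE$ and $V$ is open in $E$, and the maximality via the definition of closure). Nothing further is needed.
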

\begin{proof}
Let $W$ be the largest open subset with $W \cap E = V$.
Clearly we have $W \supset \widehat{V}$. Let us show the converse inclusion:
$W \cap E =V$ implies $E \setminus V \subset \rcE \setminus W$. Since $\rcE \setminus W$
is closed, we have $\overline{E \setminus V} \subset \rcE \setminus W$, which shows
$W \subset \widehat{V}$.
\end{proof}

In Definition 3.4 of \cite{hu1}, we introduced the notion that an open subset $U$ in
$\rcE$ is regular at $\infty$. In this paper, we call such an open subset
``$1$-regular at $\infty$'' to distinguish it from the similar notion for a closed subset
defined below.

\begin{df}{\label{def:regular-set}}
	A closed subset $F \subset \rcE$ is said to be regular (at $\infty$) if 
$
\overline{F \cap E} = F
$
holds.
\end{df}

\begin{lem}
Let $K \subset \rcE$ be a closed cone with vertex $a$.
Then $K$ becomes regular if and only if the equivalence
$$
\pi_{E_\infty}(x) \in K \cap E_\infty \iff a + x \in K
$$
holds for any $x \in E \setminus \{0\}$.
Here $\pi_{E_\infty}: E\setminus \{0\} \to E_\infty = (E \setminus \{0\})/\mathbb{R}_+$ is the canonical projection.
\end{lem}
\begin{proof}
Assume $\overline{K \cap E} = K$.
Let $x_\infty =\pi_{E_\infty}(x) \in K \cap E_\infty$ with $x \ne 0$. 
It follows from the assumption that
there exists a sequence $\{x_k\} \subset K \cap E$ such that 
$x_k \to  x_\infty$ $(k \to \infty)$, which implies $(x_k -a)/|x_k - a|$ converges to $x/|x|$. Since $K$ is a closed cone with the vertex $a$, we have $(x_k -a)/|x_k -a | + a \in K$, and hence, we get
$x/|x| + a \in K$ by taking $k \to \infty$. This implies $x + a \in K$. Conversely, if $x + a \in K$ ($x \ne 0$).
Then we have $\mathbb{R}_{\ge 0} x + a \subset K$, which implies 
$\pi_{E_\infty}(x) \in \overline{K \cap E}$. Hence we get $\pi_{E_\infty}(x) \in K$.

\

We show $\overline{K \cap E} = K$ under the equivalence condition of the lemma.
Since $K$ is closed, we have $\overline{K \cap E} \subset K$.
It suffices to show that $x_\infty \in \overline{K \cap E}$ holds
for $x_\infty \in K \cap E_\infty$. If we identify $x_\infty$ as a unit vector in $E$,
then we have $x_\infty + a \in K$ by the equivalence condition of the lemma. 
Therefore, we have $\mathbb{R}_{\ge 0} x_\infty + a \subset K \cap E$, which implies $x_\infty \in \overline{K \cap E}$.
\end{proof}
Note that, for example, the set consisting of the only one point in $E_\infty$ 
is a closed cone in our definition, however, which is not regular.

\begin{lem}[Lemma 3.5 \cite{hu1}]
Let $K \subset \rcE$ be a closed cone with vertex $a$. The conditions below are equivalent:
\begin{enumerate}
\item $K$ is regular.
\item $\widehat{\,\,\,\,}(E \setminus K) = \rcE \setminus K$ holds.
\item $\rcE \setminus K$ is a $1$-regular at $\infty$ (for the definition
	of $1$-regularity, see Definition 3.4 \cite{hu1}).
\end{enumerate}
\end{lem}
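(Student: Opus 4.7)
The plan is to reduce both equivalences to set-theoretic bookkeeping that unwinds the definition of $\widehat{\,\,\,\,}$; the cone structure of $K$ plays no role in the logic of the equivalences and only matches the intended application.

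First I would handle $(1)\Leftrightarrow(2)$. Applying Definition \ref{def:def_hat} with $V=E\setminus K$ gives
$$
\widehat{\,\,\,\,}(E\setminus K)\;=\;\rcE\setminus\overline{E\setminus(E\setminus K)}\;=\;\rcE\setminus\overline{K\cap E},
$$
with closure taken in $\rcE$. Taking complements in $\rcE$, condition (2) becomes $\overline{K\cap E}=K$, which is exactly Definition \ref{def:regular-set}. Since $K$ is closed and contains $K\cap E$, the inclusion $\overline{K\cap E}\subset K$ is automatic, so the genuine content of either condition is that every point of $K\cap E_\infty$ is a limit of points of $K\cap E$.

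Next I would handle $(2)\Leftrightarrow(3)$. The definition of ``$1$-regular at $\infty$'' from Definition 3.4 of \cite{hu1}, applied to an open set $W\subset\rcE$, amounts to the identity $W=\widehat{W\cap E}$. Setting $W=\rcE\setminus K$, which is open because $K$ is closed, and noting $W\cap E=E\setminus K$, condition (3) reads $\rcE\setminus K=\widehat{\,\,\,\,}(E\setminus K)$, which is condition (2) verbatim.

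The main obstacle, such as it is, lies not in the argument but in lining up the definition of $1$-regular at $\infty$ used in \cite{hu1} with the reformulation $W=\widehat{W\cap E}$; once this identification is secured, the whole lemma follows from complement-closure duality in $\rcE$ and the maximality property of $\widehat{\,\,\,\,}$ recorded in the preceding lemma.
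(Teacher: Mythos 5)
The paper does not prove this lemma at all---it is imported verbatim as Lemma 3.5 of \cite{hu1}---so there is no in-text argument to compare against. Your treatment of $(1)\Leftrightarrow(2)$ is complete and correct: with $V=E\setminus K$ one gets $\widehat{\,\,\,\,}(E\setminus K)=\rcE\setminus\overline{K\cap E}$, taking complements turns condition (2) into $\overline{K\cap E}=K$, which is exactly Definition \ref{def:regular-set}, and the inclusion $\overline{K\cap E}\subset K$ is indeed automatic from closedness.

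For $(2)\Leftrightarrow(3)$, however, you have not actually proved anything: you posit that ``$1$-regular at $\infty$'' for an open set $W$ amounts to $W=\widehat{\,\,\,\,}(W\cap E)$ and then observe that with $W=\rcE\setminus K$ this is condition (2) verbatim. That identification \emph{is} the content of this half of the lemma, and you flag it yourself as the ``main obstacle'' without resolving it. Moreover, your claim that the cone structure of $K$ plays no role is not supported by the text: the present paper asserts the characterization $\widehat{\,\,\,\,}(V\cap E)=V$ only with the qualifier ``since $V$ is an open cone'' (see the subsection on reconstruction of a representative), which suggests the equivalence with Definition 3.4 of \cite{hu1} is not purely formal for arbitrary open sets. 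What rescues the argument is that $\rcE\setminus K=a+(\rcE\setminus L)$ is itself an open cone whenever $K=a+L$ is a closed cone, since the $\mathbb{R}_+$-action fixes $E_\infty$ pointwise and the translation by $a$ is a bijection of $\rcE$. To close the gap you must either reproduce Definition 3.4 of \cite{hu1} and derive the reformulation $W=\widehat{\,\,\,\,}(W\cap E)$ from it, or explicitly note that $\rcE\setminus K$ is an open cone and invoke the paper's own remark. As written, the second equivalence is a restatement rather than a proof.
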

\begin{proof}
It follows from the definition of $\widehat{\,\,\,\,}(\bullet)$ that we have
$$
\widehat{\,\,\,\,}(E \setminus K) = \rcE \setminus \overline{(K \cap E)}.
$$
Hence the conditions 1.~and 2.~are equivalent. 

Let us show 3.~implies 1.
By the definition, ``$\rcE \setminus K$ being $1$-regular'' is equivalently saying that
$$
K \cap E_\infty = \mathrm{clos}^1_{\infty}(K)
$$
holds. 
Since we have $\mathrm{clos}^1_{\infty}(K) \subset \overline{(K \cap E)} \cap E_\infty$,
we get
$$
K \cap E_\infty = \mathrm{clos}^1_{\infty}(K) \subset 
\overline{(K \cap E)} \cap E_\infty \subset K \cap E_\infty,
$$
which shows $\overline{K \cap E} = K$.  
Since $\rcE \setminus K$ is an open cone with vertex $a$,
the implication 2.~to 3.~immediately follows from the Lemma 3.5 \cite{hu1}.
\end{proof}

The following definition are often used through the paper:
For open subsets  $U \subset \rcM$  and $\Gamma \subset M$,
define an open subset $U \widehat{\times} \sqrt{-1}\Gamma$ in $\rcE$ by
\begin{equation}{\label{eq:hat_times}}
U \widehat{\times} \sqrt{-1}\Gamma = \widehat{\,\,\,\,}
((U \cap E) \times \sqrt{-1}\Gamma) \subset \rcE.
\end{equation}
%for subsets $K \subset M_\infty$ and $\Gamma \subset \rcM$,
%\begin{equation}{\label{eq:inf_times}}
%K \overset{\infty}{\times} \sqrt{-1}\Gamma := 
%\{ x + \sqrt{-1}y \in E;\, x \in K,\, y \in \Gamma \cap M\}/\mathbb{R}_+ 
%\,\,\subset E_\infty,
%\end{equation}
%where we identify a point in $M_\infty$ as a unit vector in $M$.
%and
%the canonical projection $\varpi_{M^*_\infty}: E_\infty \setminus \sqrt{-1}M_\infty \to M_\infty$
%by
%\begin{equation}{\label{eq:inf_times}}
%E_\infty \setminus \sqrt{-1}M_\infty 
%\ni x + \sqrt{-1}y\,\,( (x,y)\in S^{2n-1},\, x \ne 0)
%\,\,\mapsto\,\, x/|x| \in M_\infty.
%\end{equation}

\

Let $M^*$ and $E^*$ be dual vector spaces of $M$ and $E$, respectively.
Then we can define the radial compactification $\rcDM$ and $M^*_\infty$ 
(resp. $\rcDE$ and $E^*_\infty$)
for a vector space $M^*$ (resp. $E^*$) in the same way as those of $\rcM$ and
$M_\infty$ (resp. $\rcE$ and $E_\infty$).

We also define the open subset $\widehat{V}$ in $\rcDE$ for
an open subset $V$ in $E^*$ in the same way as that in $\rcE$, that is,
\begin{equation}
\widehat{V} = \rcDE \setminus \overline{(E^* \setminus V)}.
\end{equation}
Now we introduce the subset $\HHPC{Z}$ in $E^*_\infty$ and 
the canonical projection $\varpi_{M^*_\infty}$ as follows: The canonical projection 
$\varpi_{M^*_\infty}: E^*_\infty \setminus \sqrt{-1}M^*_\infty \to M^*_\infty$  is 
defined by
\begin{equation}{\label{eq:inf_times_star}}
E^*_\infty \setminus \sqrt{-1}M^*_{\infty}
%= (E^* \setminus \sqrt{-1}M^*)/\mathbb{R}_+
= ((M^*\setminus\{0\}) \oplus \sqrt{-1}M^*)/\mathbb{R}_+
\,\xrightarrow{\,\varpi_{M^*_\infty}\,}\, (M^*\setminus\{0\})/\mathbb{R}_+ = M^*_\infty,
\end{equation}
which is induced from the canonical projection
$E^* = M^* \oplus \sqrt{-1}M^* \to M^*$, that is,
$\varpi_{M^*_\infty}$ is given by
$$
E^*_\infty \setminus \sqrt{-1}M^*_\infty 
\ni \xi + \sqrt{-1}\eta\,\, ((\xi,\eta) \in S^{2n-1},\, \xi \ne 0)
\,\,\mapsto\,\, \xi/|\xi| \in M^*_\infty.
$$
Let $Z$ be a subset in $\rcE$. 
\begin{df}
The subset $\HHPC{Z}$ in $E^*_\infty$ is defined by
$$
\{\zeta \in E^*_\infty;\, \operatorname{Re}\,\langle z,\, \zeta \rangle > 0\,\,(\forall z
\in \overline{Z} \cap E_\infty)\}.
$$
%Further, we also set $\HHPC{Z} = \HPC{Z} \setminus \sqrt{-1}M^*_\infty$.
\end{df}
Note that $\HHPC{Z}$ is an open subset in $E^*_\infty$. Further, 
for $Z \subset \rcM \subset \rcE$, we see that $\HHPC{Z} \cap \sqrt{-1}M^*_\infty \ne \emptyset$ holds if and only if $\overline{Z} \cap M_\infty = \emptyset$ (i.e., $Z$ is a compact set in $M$).
\begin{df}{\label{df:proper_contained}}
We say that $Z$ is properly contained in a half space  
of $\rcE$ with direction $\zeta \in E^*_\infty$
if  there exists $r \in \mathbb{R}$ such that
\begin{equation}
%	D \cap E \subset \{z \in E;\, 
%	\operatorname{Re}\langle z,\,\zeta_0 \rangle \ge r\}
%	\quad\text{and}\quad
	\overline{Z} \subset \widehat{\,\,\,\,}{\{z \in E;\, 
	\operatorname{Re}\langle z,\,\zeta \rangle > r\}},
\end{equation}
where $\zeta$ is regarded as a unit vector in $E^*$.
If a subset $Z$ is properly contained in a half space of $\rcE$ with some
direction, then $Z$ is often said to be a proper subset in $\rcE$.
%In the similar way, for $\xi_0 \in M^*_\infty$, 
%we say that a subset $D$ in $\rcM$ is properly contained
%in a half space (of $\rcM$) of direction $\xi_0$ if there exists $r \in \mathbb{R}$ 
%such that
%\begin{equation}{\label{eq:half_D}}
%	D \cap M \subset \{x \in M;\, 
%	\langle x,\,\xi_0 \rangle \ge r\}
%	\quad\text{and}\quad
%	\overline{D} \cap M_\infty \subset \widehat{\,\,\,\,}{\{x \in M;\, 
%	\langle x,\,\xi_0 \rangle > r\}}.
%\end{equation}
\end{df}
Then it is easy to see:
\begin{lem}
	Let $\zeta \in E^*_\infty$ and $Z \subset \rcE$.
The $Z$ is properly contained in a half space of $\rcE$ with direction $\zeta$ 
if and only if
$\zeta \in \HHPC{Z}$.
\end{lem}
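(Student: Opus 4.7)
The plan is to first establish the set-theoretic identity
$$
\widehat{W} \cap E_\infty = \{u \in E_\infty;\ \operatorname{Re}\langle u, \zeta\rangle > 0\}
\qquad \text{for } W = \{z \in E;\ \operatorname{Re}\langle z, \zeta\rangle > r\},
$$
from which the lemma will follow almost directly. For the inclusion $\supset$ I would use continuity: if $\operatorname{Re}\langle u, \zeta\rangle > 0$, a conic neighborhood of the direction $u$ keeps this real part bounded below by some $\varepsilon > 0$, so for $z$ in that cone with $|z|$ sufficiently large we get $\operatorname{Re}\langle z, \zeta\rangle \geq \varepsilon |z| > r$; this produces an open neighborhood of $u$ in $\rcE$ whose finite points lie in $W$, giving $u \in \widehat{W}$. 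For the inclusion $\subset$, if $\operatorname{Re}\langle u, \zeta\rangle \leq 0$ I would fix any $w \in E$ with $\operatorname{Re}\langle w, \zeta\rangle \leq r$ and consider $z_n = n u + w$: then $z_n \to u$ in $\rcE$ while $\operatorname{Re}\langle z_n, \zeta\rangle \leq r$, so $z_n \in E \setminus W$ and hence $u \in \overline{E \setminus W}$, i.e. $u \notin \widehat{W}$.

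The direction $(\Rightarrow)$ is then immediate: $\overline{Z} \subset \widehat{W}$ forces $\overline{Z} \cap E_\infty \subset \widehat{W} \cap E_\infty$, which by the identification above is precisely $\zeta \in \HPC{Z}$.

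For $(\Leftarrow)$, assuming $\zeta \in \HPC{Z}$, the identification already handles the points of $\overline{Z}$ at infinity for any choice of $r$; it therefore suffices to find $r \in \mathbb{R}$ with $\operatorname{Re}\langle z, \zeta\rangle > r$ for every $z \in \overline{Z} \cap E$. Suppose no such $r$ exists; then one can extract $z_n \in \overline{Z} \cap E$ with $\operatorname{Re}\langle z_n, \zeta\rangle \to -\infty$, and since $\operatorname{Re}\langle \cdot, \zeta\rangle$ is bounded on bounded sets necessarily $|z_n| \to \infty$. After passing to a subsequence, $z_n/|z_n|$ converges to some unit vector $u$, so $z_n \to u$ in $\rcE$ and $u \in \overline{Z} \cap E_\infty$; but $\operatorname{Re}\langle u, \zeta\rangle = \lim_n \operatorname{Re}\langle z_n, \zeta\rangle / |z_n| \leq 0$, contradicting $\zeta \in \HPC{Z}$.

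The main subtlety lies in the boundary case $\operatorname{Re}\langle u, \zeta\rangle = 0$ in the identification of $\widehat{W} \cap E_\infty$: one might naively expect such $u$ to belong to $\widehat{W}$ because it is ``not on the wrong side'' of the half space, but the sequence $n u + w$ rules it out. It is precisely this matching between the strict inequality in the definition of $\HPC{Z}$ and the behaviour of $\widehat{W}$ at infinity that makes the equivalence come out cleanly, so the writing will need to be careful around that step.
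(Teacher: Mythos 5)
Your proof is correct. The paper offers no argument for this lemma (it is introduced with ``it is easy to see''), and your reduction to the identity $\widehat{W}\cap E_\infty=\{u\in E_\infty;\ \operatorname{Re}\langle u,\zeta\rangle>0\}$, together with the sequence argument producing a limit direction $u\in\overline{Z}\cap E_\infty$ with $\operatorname{Re}\langle u,\zeta\rangle\le 0$ when no lower bound $r$ exists, is exactly the intended routine verification; your handling of the boundary case $\operatorname{Re}\langle u,\zeta\rangle=0$ via the points $nu+w$ is the one detail worth spelling out, and you do.
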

\begin{proof}
The implication $\Longrightarrow$ is clear. We will show the converse implication.
Take an open subset $\Omega$ in $E_\infty$ such that
$$
\overline{Z} \cap E_\infty \subset  \Omega \subset \overline{\Omega} 
\subset {\{z \in E_\infty;\, 
\operatorname{Re}\langle z,\,\zeta \rangle > 0\}}.
$$
Then, since $(\overline{Z} \cap E_\infty) \cap (E_\infty \setminus \Omega) =\emptyset$ 
holds, there exists $R > 0$ such that
$$
\overline{\{t\omega \in E;\, t \ge R, \omega \in E_\infty \setminus \Omega\}}
\cap \overline{Z} = \emptyset,
$$
from which we have
$$
\overline{Z} \subset \widehat{\,\,\,\,}{\{z \in E;\, 
\operatorname{Re}\langle z,\,\zeta \rangle > -R|\zeta|\}}.
$$
\end{proof}
\begin{es}
	Let $G$ be an $\mathbb{R}_+$-conic closed subset in $E$ and $a \in E$. 
Set $K = \overline{a + G} \subset \rcE$.  Then we have
$$
\HHPC{K} = \HHPC{G} = \widehat{\,\,\,\,}(\mathrm{int}\,G^\circ) \cap E^*_\infty,
$$
where $G^\circ$ is the dual cone of $G$ in $E^*$, that is,
$$
G^\circ = \{\zeta \in E^*;\, \textrm{Re}\,\langle z,\,\zeta \rangle \ge 0\,\,
(\forall z \in G)\}.
$$
\end{es}

\section{Several variants of de-Rham and Dolbeault complexes of exponential type on $\rcE$}{\label{sec:dolbeault-complex}}

Let $V$ be an open subset in $\rcE$ and $f$ a measurable function on $V \cap E$. 
We fix a coordinate system $z = x + \sqrt{-1}y$ of $E$ in what follows.

We say
that $f$ is of exponential type (at $\infty$) on $V$ if, for any compact subset $K$ in $V$,
there exists $H_K > 0$ such that $|\exp (-H_K|z|)\, f(z)|$ 
is essentially bounded on $K \cap E$, i.e., 
\begin{equation}
||\exp (-H_K|z|)\, f(z) ||_{L^\infty(K \cap E)} < +\infty.
\end{equation}
Set
$$
\mathscr{Q}_{\rcE}(V) := \left\{f \in C^\infty(V \cap E);\,
\begin{aligned}
&\text{Any higher derivative of $f$ with respect to variables $z$ and $\bar{z}$}\\
&\text{is of exponential type on $V$}
\end{aligned}
\right\}.
$$
Then it is easy to see that 
$\{\mathscr{Q}_{\rcE}(V)\}_V$ forms the sheaf $\mathscr{Q}_{\rcE}$ on $\rcE$. The following easy lemma is crucial in our theory:
\begin{lem}
The sheaf $\mathscr{Q}_{\rcE}$ is fine, in particular, it is a soft sheaf.
\end{lem}
Let $\mathscr{Q}_{\rcE}^{(p,q)}$ denote the sheaf on $\rcE$ of $(p,\,q)$-forms with
coefficients in $\mathscr{Q}_{\rcE}$, that is, $f \in \mathscr{Q}_{\rcE}^{(p,q)}(V)$ is written by
$$
\sum_{|I|=p, |J|=q} f_{I,J}(z) dz_I \wedge d\overline{z}_J
$$
with $f_{I,J}(z) \in\mathscr{Q}_{\rcE}(V)$, and set
$$
\mathscr{Q}_{\rcE}^{(k)} = \bigoplus_{p + q = k}\, \mathscr{Q}_{\rcE}^{(p,q)}.
$$
Now we define the de-Rham complex $\mathscr{Q}_{\rcE}^{(\bullet)}$ on $\rcE$ with coefficients in $\mathscr{Q}_{\rcE}$ by
$$
0
\longrightarrow
\mathscr{Q}_{\rcE}^{(0)} 
\overset{d}{\longrightarrow}
\mathscr{Q}_{\rcE}^{(1)} 
\overset{d}{\longrightarrow}
\dots
\overset{d}{\longrightarrow}
\mathscr{Q}_{\rcE}^{(2n)}
\longrightarrow
0,
$$
and the Dolbeault complex $\mathscr{Q}_{\rcE}^{(p, \bullet)}$ on $\rcE$ by
$$
0
\longrightarrow
\mathscr{Q}_{\rcE}^{(p,0)} 
\overset{\bar{\partial}}{\longrightarrow}
\mathscr{Q}_{\rcE}^{(p,1)} 
\overset{\bar{\partial}}{\longrightarrow}
\dots
\overset{\bar{\partial}}{\longrightarrow}
\mathscr{Q}_{\rcE}^{(p,n)}
\longrightarrow
0.
$$
Let $\hexpo$ (resp. $\hexppo{p}$) denote the sheaf of 
holomorphic functions (resp. $p$-forms) of exponential type (at $\infty$) on $\rcE$,
that is,
$$
\hexppo{p}(V) = \{f \in \mathscr{Q}_{\rcE}^{(p,0)}(V);\, \bar{\partial} f = 0\}.
$$
The following proposition can be shown by the similar arguments as those in the proof of
the usual de-Rham and Dolbeault theorems with bounds.
\begin{prop}
Both the canonical morphisms of complexes below are quasi-isomorphic:
$$
\mathbb{C}_{\rcE} \longrightarrow \mathscr{Q}_{\rcE}^{(\bullet)},\qquad
\hexppo{p} \longrightarrow \mathscr{Q}_{\rcE}^{(p,\,\bullet)}.
$$
\end{prop}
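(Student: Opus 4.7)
The plan is to verify both quasi-isomorphisms at the stalk level on $\rcE$. Since the sheaves $\mathscr{Q}_{\rcE}^{p,q}$ are fine (they are modules over the fine sheaf $\mathscr{Q}_{\rcE}$), the question reduces to two stalk-level Poincaré lemmas: for every $x_0 \in \rcE$, one must show that every germ at $x_0$ of a $d$-closed (resp. $\bar\partial$-closed) form of positive degree in $\mathscr{Q}_{\rcE}^{\bullet}$ (resp. $\mathscr{Q}_{\rcE}^{p,\bullet}$) admits a primitive in the same complex, and that the kernel at degree $0$ is the stalk of $\mathbb{C}_{\rcE}$ (resp. $\hexppo{p}$). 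The latter identification is immediate from the definitions.

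At a finite point $x_0 \in E$, a fundamental system of neighborhoods consists of relatively compact open subsets of $E$, on which the exponential-type condition defining $\mathscr{Q}_{\rcE}$ is vacuous; hence the stalks coincide with germs of ordinary smooth (co)forms and the classical Poincaré and Dolbeault lemmas apply verbatim.

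The substantive case is $x_0 \in E_\infty$, for which a cofinal system of neighborhoods is given by sets of the form $\widehat{\Gamma}$ where $\Gamma \subset E$ is an open convex cone in the direction of $x_0$; such a $\Gamma$ is, in particular, pseudoconvex. For the de Rham half, fix an interior point $p \in \Gamma$; convexity makes $\Gamma$ star-shaped at $p$, and the standard homotopy operator
$$
K\omega(z) = \int_0^1 t^{k-1}\iota_{z-p}\omega\bigl(p + t(z-p)\bigr)\,dt
$$
applied to a $k$-form $\omega$ produces a primitive on $\widehat{\Gamma}$. The elementary bound $|p + t(z-p)| \leq |p| + |z|$ valid along the integration segment shows that if $\omega$ satisfies an exponential estimate with constant $H_K$ on a compact $K \subset \widehat{\Gamma}$ then so does $K\omega$, and differentiating under the integral sign yields the same bounds for every derivative; hence $K\omega \in \mathscr{Q}_{\rcE}^{k-1}(\widehat{\Gamma})$.

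The principal obstacle is the $\bar\partial$-Poincaré lemma with exponential bounds, for which we invoke H\"ormander's weighted $L^2$ theory. Use the weight $\varphi_H(z) = H\sqrt{1+|z|^2}$; a direct computation of its complex Hessian shows it is plurisubharmonic, and it grows as $H|z|$ at infinity. On the pseudoconvex domain $\Gamma$, H\"ormander's theorem then furnishes, for every $\bar\partial$-closed $(p,q)$-form $\omega$ whose coefficients satisfy $\int_\Gamma |\omega|^2 e^{-\varphi_H}\,dV < \infty$ (which any representative from $\mathscr{Q}_{\rcE}^{p,q}(\widehat{\Gamma})$ will satisfy after enlarging $H$), a solution $\eta$ of $\bar\partial\eta = \omega$ with a weighted $L^2$ bound of the same type. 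Interior ellipticity of $\bar\partial + \bar\partial^*$ upgrades $\eta$ to a smooth form, and running the same estimate on the equations $\bar\partial(\partial^\alpha\eta) = \partial^\alpha\omega$ (combined with Sobolev embedding to pass from $L^2$-bounds to sup-bounds) extends the exponential-type control to every derivative of $\eta$ on a slightly shrunken conic neighborhood $\widehat{\Gamma'}$. Since the $\widehat{\Gamma'}$ remain cofinal at $x_0$, this yields a germ-level primitive in $\mathscr{Q}_{\rcE}^{p,q-1}$ and completes the verification.
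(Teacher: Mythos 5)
Your proposal is correct and follows exactly the route the paper intends: the paper offers no proof beyond the remark that the statement follows by ``the similar arguments as those in the proof of the usual de-Rham and Dolbeault theorems with bounds,'' and your argument --- reduction to stalks, the classical lemmas at finite points, and the star-shaped homotopy operator together with H\"{o}rmander's weighted $L^2$ estimates with weight $H\sqrt{1+|z|^2}$ on convex, Stein conic neighborhoods at points of $E_\infty$ --- is precisely that argument made explicit. Two small repairs are needed. First, a point $x_0 \in E_\infty$ does not have a cofinal system of neighborhoods of the form $\widehat{\Gamma}$ with $\Gamma$ a cone with vertex at the origin: such a set contains points $\epsilon x_0$ of arbitrarily small norm, while a basic neighborhood of $x_0$ in $\rcE$ need not; you should instead use translated cones $\widehat{\,\,\,\,}(a+\Gamma)$ with $a$ far out in the direction $x_0$ (this is why the paper defines cones with arbitrary vertex $a$), which are still convex and Stein, so nothing else in your argument changes. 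Second, you cannot literally ``run the same estimate on $\bar{\partial}(\partial^\alpha \eta) = \partial^\alpha \omega$,'' because $\partial^\alpha\eta$ is not the minimal H\"{o}rmander solution of that equation and the a priori $L^2$ bound does not apply to it; the correct mechanism is the one you also name, namely interior elliptic estimates (for the complex Laplacian, or for $\Delta$ when $q=1$) on unit balls contained in $\Gamma$ --- which exist around every point of a conically shrunken $\Gamma'$ once $|z|$ is large --- followed by Sobolev embedding, so this is only a matter of phrasing. With these adjustments the sketch is a complete and faithful expansion of the proof the paper leaves implicit.
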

We show, in \cite{hu1}, the Oka type vanishing 
theorem of holomorphic functions of exponential type
on a Stein domain. Hence the above proposition immediately concludes:
\begin{cor}[\cite{hu1}, Theorem 3.7]
Assume that $V\cap E$ is Stein and that $V$ is $1$-regular at $\infty$. Then
we have the quasi-isomorphism
$$
\hexppo{p}(V) \longrightarrow \mathscr{Q}_{\rcE}^{(p,\,\bullet)}(V).
$$
\end{cor}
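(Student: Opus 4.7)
The plan is to combine the Dolbeault-type resolution of the preceding proposition with the Oka vanishing theorem of \cite{hu1}. First, by the proposition we have a quasi-isomorphism of complexes of sheaves $\hexppo{p} \to \mathscr{Q}_{\rcE}^{p,\,\bullet}$ on $\rcE$. The preceding lemma asserts that $\mathscr{Q}_{\rcE}$ is fine; consequently each $\mathscr{Q}_{\rcE}^{p,q}$, being a module sheaf over $\mathscr{Q}_{\rcE}$, is also fine, hence soft and $\Gamma(V,-)$-acyclic. Therefore the complex of global sections $\mathscr{Q}_{\rcE}^{p,\,\bullet}(V)$ computes $\mathbf{R}\Gamma(V, \hexppo{p})$ in the derived category.

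Next, I would invoke the Oka type vanishing theorem \cite{hu1}, Theorem 3.7, which under the hypotheses that $V\cap E$ is Stein and $V$ is $1$-regular at $\infty$ yields $H^q(V, \hexppo{p}) = 0$ for all $q \geq 1$, together with $H^0(V, \hexppo{p}) = \hexppo{p}(V)$. Combining this with the previous step, the cohomology of the complex $\mathscr{Q}_{\rcE}^{p,\,\bullet}(V)$ is concentrated in degree $0$, where it coincides with $\hexppo{p}(V)$. This is exactly the statement that the canonical morphism $\hexppo{p}(V) \to \mathscr{Q}_{\rcE}^{p,\,\bullet}(V)$ is a quasi-isomorphism.

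The only genuine content lies in the two ingredients borrowed from above: the fineness of $\mathscr{Q}_{\rcE}$, which delivers softness (and hence $\Gamma(V,-)$-acyclicity) of the resolving sheaves, and the Oka vanishing, which provides the cohomological triviality of $\hexppo{p}$ on such $V$. Given both, the corollary follows by the standard formal argument, so no additional obstacle arises in this step; the substantive work is already concentrated in the cited theorem of \cite{hu1}.
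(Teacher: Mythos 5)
Your proposal is correct and follows exactly the route the paper intends: the paper derives this corollary by combining the preceding proposition (the quasi-isomorphism $\hexppo{p}\to\mathscr{Q}_{\rcE}^{p,\bullet}$ of sheaf complexes, with the $\mathscr{Q}_{\rcE}^{p,q}$ fine, hence acyclic) with the Oka type vanishing theorem of \cite{hu1}. You have merely spelled out the standard formal argument that the paper compresses into ``Hence the above proposition immediately concludes,'' so there is nothing to add.
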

Furthermore, the edge of the wedge type theorem of exponential type 
has been also established in our previous papers:
\begin{teo}[\cite{hu2}, Theorem 3.12, Proposition 4.1]{\label{thm:edge_of_wedge}}
	The complexes ${\mathbf R}\Gamma_{\rcM}(\hexppo{p})$ and
${\mathbf R}\Gamma_{\rcM}(\mathbb{Z}_{\rcE})$ are concentrated in degree $n$.
Furthermore, ${\mathscr H}^n_{\rcM}(\mathbb{Z}_{\rcE})$ is isomorphic to ${\mathbb Z}_{\rcM}$.
\end{teo}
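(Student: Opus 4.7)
Since $\rcM$ is closed in $\rcE$, the sheaves $\mathscr{H}^k_{\rcM}(-)$ vanish on $\rcE \setminus \rcM$, so it suffices to analyse stalks at an arbitrary $p \in \rcM$. At an interior point $p \in M$, a cofinal system of open neighbourhoods is given by small polydiscs $W$ relatively compact in $E$, on which the exponential-growth condition is vacuous and $\hexppo{p}|_W$ agrees with the usual sheaf of holomorphic $p$-forms. The stalk therefore reduces to the classical edge-of-the-wedge theorem, and the identification $\mathscr{H}^n_M(\mathbb{Z}_E) \simeq \mathbb{Z}_M$ is Sato's. The real content is the case $p \in M_\infty$, which I treat next.

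\textbf{Boundary case via wedge coverings.} Fix real coordinates $y = (y_1,\dots,y_n)$ on $M$ and choose a cofinal system of neighbourhoods $V = U \hat{\times} \sqrt{-1}B$ of $p$ in $\rcE$, where $U$ is an open $\mathbb{R}_+$-conic neighbourhood of $p$ in $\rcM$ with $U \cap M$ Stein and $1$-regular at infinity, and $B$ is a small Euclidean ball in $M$. For each $\epsilon \in \{+,-\}^n$ set $\Gamma_\epsilon = \{y \in B;\, \epsilon_j y_j > 0\}$ and $V_\epsilon = U \hat{\times} \sqrt{-1}\Gamma_\epsilon$; the collection $\{V_\epsilon\}$ and its finite intersections cover $V \setminus \rcM$. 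The crucial geometric lemma is that every such intersection is again of the same shape, is $1$-regular at infinity, and has Stein trace on $E$. Granted this, the Oka-type vanishing corollary preceding the theorem implies that the Dolbeault complex $\mathscr{Q}_{\rcE}^{p,\bullet}$ is acyclic on each member of the covering, so the associated \v{C}ech-Dolbeault complex computes $\mathbf{R}\Gamma(V\setminus \rcM;\hexppo{p})$. The nerve of $\{V_\epsilon\}$ is the boundary of the $n$-dimensional cross-polytope, an $(n-1)$-sphere, whose reduced cohomology lies only in degree $n-1$; combined with the distinguished triangle
$$
\mathbf{R}\Gamma_{\rcM}(V;\hexppo{p}) \to \mathbf{R}\Gamma(V;\hexppo{p}) \to \mathbf{R}\Gamma(V\setminus\rcM;\hexppo{p}) \xrightarrow{+1}
$$
and the Oka vanishing on the Stein, $1$-regular set $V$ itself, this forces $\mathscr{H}^k_{\rcM}(\hexppo{p})_p = 0$ for $k \ne n$.

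\textbf{Constant coefficients and identification of $\mathscr{H}^n_{\rcM}(\mathbb{Z}_{\rcE})$.} Running the identical argument with the quasi-isomorphism $\mathbb{C}_{\rcE} \to \mathscr{Q}_{\rcE}^\bullet$, together with an analogous soft de Rham resolution over $\mathbb{Z}$, yields the concentration in degree $n$ of $\mathbf{R}\Gamma_{\rcM}(\mathbb{Z}_{\rcE})$. For the identification with $\mathbb{Z}_{\rcM}$ it suffices to produce a canonical local generator at every $p \in \rcM$ and to check that the construction is compatible with restrictions. At an interior point this reproduces Sato's orientation class determined by the chosen coordinates on $M$; at $p \in M_\infty$ the same combinatorial recipe applied to the $2^n$ wedges $V_\epsilon$ produces the corresponding Thom class, which is manifestly $\mathbb{R}_+$-invariant and hence extends continuously across $M_\infty$, giving a local trivialisation that glues over all of $\rcM$.

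\textbf{Main obstacle.} The principal technical hurdle is the geometric lemma at infinity: verifying that $U \hat{\times} \sqrt{-1}\Gamma_\epsilon$ and all its finite intersections remain $1$-regular at infinity and have Stein traces on $E$, so that the Oka-type vanishing genuinely applies. Once this is secured, the combinatorial \v{C}ech computation parallels the classical edge-of-the-wedge argument and the identification with $\mathbb{Z}_{\rcM}$ follows by naturality of the orientation class; the $\mathbb{R}_+$-equivariance needed to glue the local generators across $M_\infty$ is built into the radial compactification itself.
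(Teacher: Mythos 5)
First, a remark on the ground truth: the paper does not prove this theorem at all — it is imported verbatim from \cite{hu2} (Theorem 3.12 and Proposition 4.1), so your proposal is a reconstruction of that earlier proof rather than something to compare with an argument in the present text. Judged on its own terms, it has two genuine defects.

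The first is the covering. With $\Gamma_\epsilon = \{y \in B;\ \epsilon_j y_j > 0\}$ for $\epsilon \in \{+,-\}^n$ you are taking the $2^n$ open orthants. These are pairwise disjoint, and their union omits every $y \neq 0$ having some vanishing coordinate, so for $n \geq 2$ the sets $V_\epsilon$ do not cover $V \setminus \rcM$, and their nerve is $2^n$ isolated vertices rather than the boundary complex of the cross-polytope. What you evidently intend is the covering by the $2n$ half-space wedges $U \hat{\times} \sqrt{-1}\{\pm y_j > 0\}$ (or the $n+1$ half-spaces $\langle y,\eta_k\rangle>0$ with $\eta_0+\cdots+\eta_n=0$ used elsewhere in the paper), whose nerve is indeed an $(n-1)$-sphere. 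This is repairable, but as written the covering step fails.

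The second defect is the essential one. Even with the corrected covering, Steinness and $1$-regularity of all the intersections give you — via the Oka-type vanishing and Leray's theorem — only that the \v{C}ech complex of $\hexppo{p}$ on this covering computes $\mathbf{R}\Gamma(V\setminus\rcM;\,\hexppo{p})$. They do not give you that the cohomology of this \v{C}ech complex is the cohomology of the nerve: that identification holds for a constant sheaf on a covering with connected (or contractible) intersections, but for $\hexppo{p}$ the \v{C}ech complex consists of infinite-dimensional spaces of holomorphic forms of exponential type, and proving that its cohomology vanishes in the intermediate degrees $0 < k < n-1$ (equivalently, $\mathscr{H}^k_{\rcM}(\hexppo{p}) = 0$ for $k < n$) is precisely the analytic content of the edge-of-the-wedge theorem you are trying to prove. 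Your sketch silently substitutes the topological computation for the analytic one; it therefore establishes only the $\mathbb{Z}_{\rcE}$ half of the statement (where the nerve argument, suitably executed near $M_\infty$, is legitimate and does yield $\mathscr{H}^n_{\rcM}(\mathbb{Z}_{\rcE}) \simeq \mathbb{Z}_{\rcM}$ from the triviality of the normal bundle of $\rcM$ in $\rcE$). For $\hexppo{p}$ one needs a genuine vanishing argument with exponential bounds — this is what occupies \cite{hu2} — and nothing in the proposal supplies it.
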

%Set
%$$
%or_{\rcE/\rcM} := {\mathscr H}^n_{\rcM}(\mathbb{Z}_{\rcE}),\qquad
%\bexppo := {\mathscr H}^n_{\rcM}(\hexppo{p}) \otimes_{\mathbb{Z}} or_{\rcE/\rcM}.
%$$
%It follows from the above theorem that, for any (not necessarily connected) open subset $\Omega$ in $\rcM$, we have
%$$
%\bexppo(\Omega) = 
%{\textrm{H}}^n_{\rcM}(V;\,\hexppo{p})\,\, \otimes_{{\mathbb{Z}_{\rcM}}(\Omega)} \,\,
%{\textrm H}^n_{\rcM}(V;\,\mathbb{Z}_{\rcE}),
%$$
%where $V$ is an open subset in $\rcE$ with $V \cap \rcM = \Omega$.
%\begin{oss}
%If $\Omega$ is connected, the above formula becomes
%$$
%\bexppo(\Omega) = 
%{\textrm{H}}^n_{\rcM}(V;\,\hexppo{p})\,\, \otimes_{\mathbb{Z}} \,\,
%{\textrm H}^n_{\rcM}(V;\,\mathbb{Z}_{\rcE}).
%$$
%\end{oss}
%
%\

\

In subsequent sections, we need to extend our de-Rham theorem to the one with a parameter.
Let $T$ be a real analytic manifold and set $Y := T \times \rcE$ and 
$Y_\infty = T \times (\rcE \setminus E)$.
We denote by $p_T: Y \to T$ (resp. $p_{\rcE}: Y \to \rcE$)
the canonical projection to $T$ (resp. $\rcE$). 

Let $W$ be an
open subset in $Y$ and $f(t,z)$ a measurable function on $W \setminus Y_\infty$.
We say that $f(t,z)$ is of exponential type on $W$ if, for any compact subset
$K$ in $W$, there exists $H_K > 0$ such that $|\exp(-H_K|z|)\, f(t,z)|$
is essentially bounded on $K \setminus Y_\infty$.

Now we introduce the set $\LQ_{Y}(W)$ consisting of 
a locally integrable function $f(t,z)$ on $W \setminus Y_\infty$ satisfying
the condition that
%\item For almost every $t_0$ in  $p_T(W)$, $f(t_0,z)$ is a $C^\infty$ function
%of the variables $x$ and $y$ on $(p_T^{-1}(t_0) \cap W) \setminus Y_\infty$.
any higher derivative (in the sense of distributions, for example) of $f(t,z)$ with respect to the variables $z$ and $\bar{z}$
is a locally integrable function of exponential type on $W$.
Then, in the same way as in $\mathscr{Q}_{\rcE}$, 
the family $\{\LQ_Y(W)\}_W$
forms the sheaf $\LQ_Y$ on $Y$ which is fine.
Let $\LQ_Y^{(k)}$ denotes the sheaf on $Y$ of $k$-forms with respect to 
the variables in $E$, and let us define the de-Rham complex $\LQ_Y^{(\bullet)}$
by
$$
0
\longrightarrow
\LQ_{Y}^{(0)} 
\overset{d_{\rcE}}{\longrightarrow}
\LQ_{Y}^{(1)} 
\overset{d_{\rcE}}{\longrightarrow}
\dots
\overset{d_{\rcE}}{\longrightarrow}
\LQ_{Y}^{(2n)}
\longrightarrow
0,
$$
where $d_{\rcE}$ is the differential on $\rcE$.

Let $\EQ_Y$ be the subsheaf of $\LQ_Y$ consisting of 
a $C^\infty$-function (with respect to all the variables $t$, $z$ and $\bar{z}$) whose any higher derivative also belongs to $\LQ_Y$.  
%of variables $t$, $x$ and $y$ 
Then we have also
the de-Rham complex $\EQ_Y^{(\bullet)}$:
$$
0
\longrightarrow
\EQ_{Y}^{(0)} 
\overset{d_{\rcE}}{\longrightarrow}
\EQ_{Y}^{(1)} 
\overset{d_{\rcE}}{\longrightarrow}
\dots
\overset{d_{\rcE}}{\longrightarrow}
\EQ_{Y}^{(2n)}
\longrightarrow
0.
$$

We denote by $\mathscr{L}^\infty_{loc,T}$ (resp. $\mathscr{E}_T$) the sheaf of $L_{loc}^\infty$-functions (resp. $C^\infty$-functions) on $T$.
Then the following proposition follows from the same arguments as those of a usual de-Rham complex.
\begin{prop}
We have the quasi-isomorphisms
$$
{p}_{T}^{-1} \mathscr{L}^\infty_{loc,T} \longrightarrow
\LQ_Y^{(\bullet)}\qquad\text{and}\qquad
{p}_{T}^{-1} \mathscr{E}_{T} \longrightarrow \EQ_Y^{(\bullet)}.
$$
\end{prop}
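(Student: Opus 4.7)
The plan is to verify the quasi-isomorphism at every stalk of $Y$, which reduces the assertion to a parameter-preserving Poincaré lemma with exponential bounds on $\rcE$. Since $d_{\rcE}$ differentiates only in the $E$-variables and both $\LQ_Y^\bullet$ and $\EQ_Y^\bullet$ are complexes of fine sheaves, it is enough to check at each point $(t_0,z_0) \in Y$: (i) local exactness in positive degrees, and (ii) that the kernel of $d_{\rcE}$ on $\LQ_Y^0$ (resp.\ $\EQ_Y^0$) consists exactly of functions locally independent of the $E$-variables, which by the definitions are pullbacks of elements of $\mathscr{L}^\infty_{loc,T}$ (resp.\ $\mathscr{E}_T$). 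Point (ii) is immediate; only (i) requires work, and we split it according to whether $z_0$ lies in $E$ or in $E_\infty$.

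When $(t_0,z_0) \in T \times E$, a cofinal family of neighborhoods takes the form $W = U \times B$ with $B$ an open ball in $E$ relatively compact and centered at $z_0$. On such a $W$ the exponential growth condition is automatic, so $\LQ_Y^\bullet(W)$ (resp.\ $\EQ_Y^\bullet(W)$) coincides with the usual parametrized de Rham complex of $L^\infty_{loc}$ (resp.\ $C^\infty$) forms on $U \times B$, and local exactness follows from the classical relative Poincaré lemma via the homotopy
\[
(H\omega)(t,z) = \int_0^1 s^{k-1}\,(z-z_0) \lrcorner\, \omega(t,\, z_0 + s(z-z_0))\, ds
\]
for a $k$-form $\omega$ with $k \ge 1$. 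Differentiation in $t$, $z$, $\bar z$ commutes with the integral and preserves the $L^\infty_{loc}$ (resp.\ $C^\infty$) regularity.

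When $(t_0,z_\infty) \in T \times E_\infty$, take a fundamental neighborhood of the form $W = U \times \widehat{G}$, where $G \subset E$ is an open convex cone with vertex at the origin whose image in $E_\infty$ is a small open neighborhood of $\pi(z_\infty)$, so that $z_\infty \in \widehat{G}$. Fix an interior point $z_1 \in G$; by convexity, the segment $[z_1,z]$ lies in $G$ for every $z \in G$, so the homotopy $H$ based at $z_1$ produces a primitive of any $d_{\rcE}$-closed $\omega$. Because $|z_1 + s(z-z_1)| \le |z_1|+|z|$ for $s \in [0,1]$, any estimate $|\exp(-H_{K'}|w|)\,\omega(t,w)| \le C_{K'}$ on a slightly enlarged compact $K'$ yields
\[
|\exp(-H_{K'}|z|)\,(H\omega)(t,z)| \le C_{K'}\, e^{H_{K'}|z_1|}\,|z-z_1|
\]
on a compact $K \subset W$. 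Absorbing the polynomial factor $|z-z_1|$ into a slightly enlarged exponential constant shows that $H\omega$ is of exponential type on $W$, and applying the same argument to $\partial^\alpha_z\partial^\beta_{\bar z}\omega$ — which is of exponential type by hypothesis — shows that $H\omega$ lies in $\LQ_Y^{k-1}(W)$ (resp.\ $\EQ_Y^{k-1}(W)$).

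The main technical obstacle is the bookkeeping of exponential bounds through the integral sign, so that $H\omega$ and every higher derivative $\partial^\alpha_z\partial^\beta_{\bar z}(H\omega)$ stay in the required class; this is handled by differentiating under the integral and using the analogous bound for $\partial^\alpha_z\partial^\beta_{\bar z}\omega$ evaluated at $z_1 + s(z-z_1)$, then enlarging the growth constant to absorb multiplicative factors. Once this is in place, the proof of the earlier de-Rham quasi-isomorphism for $\mathscr{Q}_{\rcE}^\bullet$ carries over verbatim, the real-analytic parameter $t \in T$ being carried along passively.
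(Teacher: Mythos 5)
Your overall strategy is exactly the one the paper leaves implicit (it offers no proof, only the remark that the statement "follows from the same arguments as those of a usual de-Rham complex"): reduce to stalks, observe that the degree-zero kernel is the pullback sheaf, and prove local exactness in positive degrees by a Poincar\'e homotopy operator whose compatibility with the exponential-growth and $L^\infty_{loc}$/$C^\infty$ conditions is checked by hand. The finite-point case and the estimate $|(H\omega)(t,z)|\le C|z-z_1|e^{H_{K'}|z_1|}e^{H_{K'}|z|}$ at infinity are both fine, as is the absorption of the polynomial factor into a slightly larger exponent.

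There is, however, one step that fails as written: at a point $(t_0,z_\infty)\in T\times E_\infty$ the sets $U\times\widehat{G}$, with $G$ an open convex cone with vertex at the origin, do \emph{not} form a fundamental system of neighborhoods. Indeed $\widehat{G}\cap E=G$ contains points of $E$ arbitrarily close to the origin, so no such set is contained in a neighborhood of $z_\infty$ of the form $\widehat{\,\,\,\,}\{z\in G_0;\,|z|>R\}$; consequently your argument does not show that a closed form defined only on such a small neighborhood becomes exact on a still smaller one, which is what the vanishing of the stalk of the cohomology sheaf requires. The repair is short: replace $G$ by a truncated cone $C=G\cap\{z\in E;\,\operatorname{Re}\langle z,\theta\rangle>R\}$ with $\theta$ pointing toward $z_\infty$; the sets $U\times\widehat{C}$ \emph{are} cofinal among neighborhoods of $(t_0,z_\infty)$, each $C$ is still convex (intersection of a convex cone with a half-space), hence star-shaped with respect to any interior point $z_1$, and every segment $[z_1,z]$ with $z\in C$ stays in $C$ with $|(1-s)z_1+sz|\le|z_1|+|z|$, so your homotopy operator and all of your estimates go through verbatim. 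You should also record the small point that for a compact $K\subset U\times\widehat{C}$ the union of the segments $[z_1,z]$, $z\in K\cap(T\times E)$, has closure in $T\times\rcE$ again compact in $U\times\widehat{C}$, so that the hypothesis "exponential type on a slightly enlarged compact" is actually available; this is the only place where the geometry at infinity genuinely enters.
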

We also have
\begin{prop}{\label{prop:const-with-param}}
	Let $\mathscr{F}$ be a sheaf of $\mathbb{Z}$-modules on $T$.
The complexes ${\mathbf R}\Gamma_{p^{-1}_{\rcE}(\rcM)}(p_T^{-1} \mathscr{F})$ 
is concentrated in degree $n$, and we have the canonical isomorphism
$$
\tilde{p}_T^{-1} \mathscr{F}\,\, \otimes_{\mathbb{Z}_{p^{-1}_{\rcE}(\rcM)}} \,\,
or_{p_{\rcE}^{-1}(\rcM)/ Y}
\longrightarrow 
{\textrm H}^n_{p_{\rcE}^{-1}(\rcM)}(p_T^{-1} \mathscr{F}),
$$
where $\tilde{p}_{T}: p^{-1}_{\rcE}(\rcM) = T \times \rcM \to T$ is the canonical projection.
\end{prop}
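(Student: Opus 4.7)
The plan is to reduce this parametric statement to the (non-parametric) edge-of-the-wedge Theorem \ref{thm:edge_of_wedge} by exploiting the product structure $Y = T \times \rcE$. The key observation is that $p_T^{-1}\mathscr{F}$ is constant along the fibers of $p_T$ (the $\rcE$-direction), while the closed support $T \times \rcM = p_{\rcE}^{-1}(\rcM)$ is pulled back through the projection $p_{\rcE} : Y \to \rcE$; hence the local cohomology along $T \times \rcM$ with coefficients in $p_T^{-1}\mathscr{F}$ should be governed by ${\mathbf R}\Gamma_{\rcM}(\mathbb{Z}_{\rcE})$ tensored with $p_T^{-1}\mathscr{F}$.

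Concretely, I would first construct the canonical morphism. By Theorem \ref{thm:edge_of_wedge}, ${\mathbf R}\Gamma_{\rcM}(\mathbb{Z}_{\rcE})$ is concentrated in degree $n$ with top cohomology $or_{\rcM/\rcE}$; applying $p_{\rcE}^{-1}$ and pairing with $p_T^{-1}\mathscr{F}$ produces a canonical map from $\tilde{p}_T^{-1}\mathscr{F}\otimes or_{T\times \rcM/Y}$ into ${\mathscr H}^n_{T\times \rcM}(p_T^{-1}\mathscr{F})$, with vanishing in other degrees to be verified. I would then check this is an isomorphism at each stalk $(t_0, z_0) \in Y$: when $z_0 \notin \rcM$ both sides vanish by locality. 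When $z_0 \in \rcM$, take a flabby resolution $\mathscr{F} \to \mathscr{I}^\bullet$ on $T$; the stalk of $p_T^{-1}\mathscr{I}^k$ at $(t_0, z_0)$ is $\mathscr{I}^k_{t_0}$, independent of $z_0$, so on a small product neighborhood $U \times V$ of $(t_0, z_0)$ one is reduced to computing ${\mathbf R}\Gamma_{V \cap \rcM}(V;\, A_{\rcE})$ for the constant sheaf with value $A = \mathscr{I}^k_{t_0}$, which by Theorem \ref{thm:edge_of_wedge} is $A \otimes or_{\rcM/\rcE,z_0}$ concentrated in degree $n$. The spectral sequence of the resolution $\mathscr{I}^\bullet$ then yields both the degree-$n$ concentration and the claimed canonical isomorphism.

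The main technical obstacle is justifying rigorously the interchange of the direct limit over neighborhoods $U$ of $t_0$ in $T$ with the local-cohomology computation in the $\rcE$-direction, since local cohomology does not commute with arbitrary colimits. A clean way around this is to fix first a sufficiently small $V \ni z_0$ and to compute ${\mathbf R}\Gamma_{U \times (V\cap \rcM)}(U \times V;\, p_T^{-1}\mathscr{I}^k)$ via the excision triangle for the complement of $T\times \rcM$; the $U$-dependence then sits only in the sections $\mathscr{I}^k(U)$ of the flabby sheaf, so the colimit over $U$ simply recovers the stalk $\mathscr{I}^k_{t_0}$. Equivalently, the whole argument can be phrased as the smooth base-change isomorphism for $p_{\rcE}$ applied to the closed inclusion $\rcM \hookrightarrow \rcE$, a standard fact in sheaf theory; once this formal input is in place, the degree-$n$ concentration and the canonical isomorphism follow directly from Theorem \ref{thm:edge_of_wedge}.
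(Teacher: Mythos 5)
Your proposal is correct in substance, but it takes a more roundabout route than the paper, and its cleanest version collapses onto the paper's argument. The paper's proof is purely functorial and global: it replaces $\rcE$ by a tubular neighborhood $\rcM \times \mathbb{R}^n$ of $\rcM$, observes that $p_T^{-1}\mathscr{F} = \pi^{-1}\tilde{p}_T^{-1}\mathscr{F}$ for the projection $\pi: T \times \rcM \times \mathbb{R}^n \to T\times\rcM$, and then computes $i^! p_T^{-1}\mathscr{F} = i^!\pi^!\tilde{p}_T^{-1}\mathscr{F} \otimes i^{-1}or_{Y/p_{\rcE}^{-1}(\rcM)}[-n] = \tilde{p}_T^{-1}\mathscr{F}\otimes i^{-1}or_{Y/p_{\rcE}^{-1}(\rcM)}[-n]$, using $\pi^{-1} = \pi^!\otimes or[-n]$ for a submersion and $(\pi\circ i)^! = \operatorname{id}$. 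This handles an arbitrary $\mathscr{F}$ in one stroke, with no stalkwise reduction, no resolution, and no colimit interchange. By contrast, your middle step is not quite right as written: $p_T^{-1}\mathscr{I}^k$ on a product neighborhood $U\times V$ is \emph{not} the constant sheaf with value $\mathscr{I}^k_{t_0}$ (its stalks at $(t_0,z_0)$ are, but the sheaf varies with $t$), so the reduction to ${\mathbf R}\Gamma_{V\cap\rcM}(V;A_{\rcE})$ is exactly the colimit-interchange problem you then flag, and the flabby resolution does not help with it (pullbacks of flabby sheaves are not flabby, and flabbiness plays no role in the fiberwise computation). The honest fix is the one in your last sentence: the sheaf is pulled back along the projection of the tubular neighborhood, whose fibers over $T\times\rcM$ (resp.\ over the deleted neighborhood) are contractible (resp.\ homotopy equivalent to $S^{n-1}$), so homotopy invariance of $\pi^{-1}$ (Corollary 2.7.7 of [KS], which the paper also invokes in the proof of Lemma \ref{lem:fundamental_support_LQ}) gives the degree-$n$ concentration and the identification of ${\mathscr H}^n$ directly for any coefficient sheaf on the base. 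Once you say that, the detour through flabby resolutions, constant sheaves, and Theorem \ref{thm:edge_of_wedge} becomes unnecessary, and what remains is precisely the paper's proof in stalkwise dress. So: no fatal gap, but the argument as staged needs its final remark promoted to the main argument, and the resolution/constant-sheaf reduction either repaired or discarded.
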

\begin{proof}
Since $\rcM$ has an open neighborhood $U$ in $\rcE$ which is topologically
isomorphic to $\rcM \times \mathbb{R}^n$, we may replace
$\rcE$ with $U = \rcM \times \mathbb{R}^n$, and we have the commutative diagram 
of topological spaces
$$
\begin{diagram}
\node{T} 
\node{Y = T \times \rcM \times \mathbb{R}^n}
\arrow{w,t}{p_T} 
\arrow{s,l}{\pi}
\node{p^{-1}_{\rcE}(\rcM) = T \times \rcM}
\arrow{sw,l}{{\textrm id}}
\arrow{w,t}{i}  \\
\node[2]{p^{-1}_{\rcE}(\rcM) = T \times \rcM}
\arrow{nw,l}{\tilde{p}_T}
\end{diagram},
$$
where $i(t,x) = (t,x,0)$ and $\pi(t,x,y) = (t,x)$.
Then, for a sheaf $\mathscr{F}$ on $T$, we have a chain of isomorphisms
$$
\begin{aligned}
{\mathbf R}\Gamma_{p^{-1}_{\rcE}(\rcM)}(p_T^{-1} \mathscr{F})
&\simeq i^! p^{-1}_T \mathscr{F}
\simeq i^! \pi^{-1} \tilde{p}^{-1}_T \mathscr{F} \\
&\simeq i^! \pi^! \tilde{p}_T^{-1} \mathscr{F} 
\otimes i^{-1} or_{Y/p^{-1}_{\rcE}(\rcM)}[-n] \\
&\simeq \tilde{p}_T^{-1} \mathscr{F} 
\otimes i^{-1} or_{Y/p^{-1}_{\rcE}(\rcM)}[-n].
\end{aligned}
$$
The last isomorphism comes from the fact $\pi \circ i = \textrm{id}$, which
also implies 
$$
or_{p^{-1}_{\rcE}(\rcM)/Y} \otimes i^{-1} or_{Y/p^{-1}_{\rcE}(\rcM)} \simeq {\mathbb Z}_{p^{-1}_{\rcE}(\rcM)}.
$$
This completes the proof.
\end{proof}

\begin{cor}{\label{cor:mes-zero}}
Let $W$ be an open subset in $Y$ and $s \in {\textrm H}^n_{p^{-1}_{\rcE}(\rcM)}(W;\,p_T^{-1} \mathscr{L}^\infty_{loc,T})$, and
let $\Delta$ be a subset in $\tilde{W} := W \cap p^{-1}_{\rcE}(\rcM)$. Assume 
the conditions below:
\begin{enumerate}
\item $\tilde{p}_T(\tilde{W}) \setminus \tilde{p}_T(\Delta)$ 
is a set of measure zero in $T$.
\item For any $q \in \Delta$, the stalk $s_q \in 
{\textrm H}^n_{p^{-1}_{\rcE}(\rcM)}(\,p_T^{-1} \mathscr{L}^\infty_{loc,T})_q$ 
of $s$ is zero.
\item The set $\tilde{p}_T^{-1}\tilde{p}_T(q) \cap \tilde{W}$ 
is connected for any $q \in \tilde{W}$.
\end{enumerate}
Then $s$ is zero.
\end{cor}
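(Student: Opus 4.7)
The plan is to translate the statement into an assertion about an honest $L^\infty_{\mathrm{loc}}$ function on an open set of $T$ and then conclude with an elementary measure-theoretic observation.

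First, Proposition~\ref{prop:const-with-param} applied with $\mathscr{F} = \mathscr{L}^\infty_{\mathrm{loc},T}$ identifies the cohomology sheaf ${\mathscr H}^n_{p^{-1}_{\rcE}(\rcM)}(p_T^{-1}\mathscr{L}^\infty_{\mathrm{loc},T})$ with $\tilde{p}_T^{-1}\mathscr{L}^\infty_{\mathrm{loc},T} \otimes or_{p^{-1}_{\rcE}(\rcM)/Y}$ on $p^{-1}_{\rcE}(\rcM)$. Since the orientation sheaf is locally trivial and the assertion $s=0$ is a local (stalk-wise) statement on $\tilde{W}$, I would trivialize $or_{p^{-1}_{\rcE}(\rcM)/Y}$ locally and instead show the vanishing of the corresponding section $\tilde{s} \in \Gamma(\tilde{W};\, \tilde{p}_T^{-1}\mathscr{L}^\infty_{\mathrm{loc},T})$.

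The second step is to descend $\tilde{s}$ to a section on the base $\tilde{p}_T(\tilde{W}) \subset T$, which is open since $\tilde{p}_T$ is an open map. Around any $q_0 = (t_0,x_0) \in \tilde{W}$, one can find a product neighborhood $V_{q_0} \times U_{q_0} \subset \tilde{W}$ and a representative $f_{q_0} \in \mathscr{L}^\infty_{\mathrm{loc},T}(V_{q_0})$ such that $\tilde{s}|_{V_{q_0}\times U_{q_0}} = \tilde{p}_T^{-1}(f_{q_0})$. In particular the stalk $\tilde{s}_{(t,x)} \in \mathscr{L}^\infty_{\mathrm{loc},T,t}$ depends only on $t$ as $x$ varies over a neighborhood in the fiber. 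Assumption~(3) (connectedness of each fiber $\tilde{p}_T^{-1}(t)\cap\tilde{W}$) then upgrades this to a fiberwise-global statement, producing a well-defined germ $s^*_t \in \mathscr{L}^\infty_{\mathrm{loc},T,t}$ for every $t \in \tilde{p}_T(\tilde{W})$. The local representatives $f_{q_0}$ agree with these germs on the neighborhoods $V_{q_0}$ (note $V_{q_0} \subset \tilde{p}_T(\tilde{W})$), so by the sheaf property they glue into a section $s^* \in \mathscr{L}^\infty_{\mathrm{loc},T}(\tilde{p}_T(\tilde{W}))$.

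By assumption~(2), the germ $s^*_{\tilde{p}_T(q)}$ vanishes for each $q \in \Delta$, so there exists an open set $U \subset \tilde{p}_T(\tilde{W})$ with $\tilde{p}_T(\Delta) \subset U$ and $s^*|_U = 0$. By assumption~(1), the complement $\tilde{p}_T(\tilde{W}) \setminus U$ is contained in the null set $\tilde{p}_T(\tilde{W}) \setminus \tilde{p}_T(\Delta)$ and thus has measure zero in $T$. As $s^*$ is represented by an $L^\infty_{\mathrm{loc}}$ function vanishing off a set of measure zero, $s^* = 0$ in $\mathscr{L}^\infty_{\mathrm{loc},T}(\tilde{p}_T(\tilde{W}))$. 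Retracing the identifications, $s = 0$.

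The principal obstacle is the descent of $\tilde{s}$ from $\tilde{W}$ to a section on the base: verifying that the fiberwise-constant germs assemble coherently on $\tilde{p}_T(\tilde{W})$. Connectedness of the fibers (assumption~(3)) is what makes the germ $s^*_t$ well defined independently of the chosen point in the fiber, while the local pullback form of $\tilde{s}$ together with the local product structure of $\tilde{W}$ furnishes the open sets $V_{q_0} \subset T$ on which the germs come from honest representatives. Once this descent is in hand, the rest of the proof is essentially the observation that an $L^\infty_{\mathrm{loc}}$ function vanishing off a null set is zero.
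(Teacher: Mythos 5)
Your proof is correct and follows essentially the same route as the paper's: identify the local cohomology with sections of $\tilde{p}_T^{-1}\mathscr{L}^\infty_{loc,T}$ on $\tilde{W}$ via Proposition \ref{prop:const-with-param}, descend to an $L^\infty_{loc}$ function on $\tilde{p}_T(\tilde{W})$ using the connectedness of fibers, and conclude by the a.e.\ vanishing argument. The only difference is that you spell out the descent isomorphism $\Gamma(\tilde{W};\,\tilde{p}_T^{-1}\mathscr{L}^\infty_{loc,T})\simeq\Gamma(\tilde{p}_T(\tilde{W});\,\mathscr{L}^\infty_{loc,T})$, which the paper simply asserts.
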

\begin{proof}
We have the commutative diagram, for any point $q \in \tilde{W}$,
$$
\begin{matrix}
{\textrm H}^n_{p^{-1}_{\rcE}(\rcM)}(W;\,p_T^{-1} \mathscr{L}^\infty_{loc,T}) 
&\simeq
&\Gamma(\tilde{W};\, \tilde{p}^{-1} \mathscr{L}^\infty_{loc,T}) 
&\simeq
&\Gamma(\tilde{p}_T(\tilde{W});\, \mathscr{L}^\infty_{loc,T}) \\
\downarrow & & \downarrow & & \downarrow \\
{\textrm H}^n_{p^{-1}_{\rcE}(\rcM)}(p_T^{-1} \mathscr{L}^\infty_{loc,T})_q 
&\simeq
&(\tilde{p}_T^{-1} \mathscr{L}^\infty_{loc,T})_q 
&\simeq
&(\mathscr{L}^\infty_{loc,T})_{\tilde{p}_T(q)}.
\end{matrix}
$$
Hence $s$ can be regarded as an $L_{loc}^\infty$-function on $\tilde{p}_T(\tilde{W})$.
Then, by the assumption, $s$ is zero on $\tilde{p}_T(\Delta)$. Hence
$s$ is almost everywhere zero, and thus, $s$ is zero
as an $L_{loc}^\infty$-function. This completes the proof.
\end{proof}

We can also define the Dolbeault complex with a parameter in the same way
as $\mathscr{Q}_{\rcE}^{(p,\,\bullet)}$.
Let $\LQ_Y^{(p,q)}$ and $\EQ_Y^{(p,q)}$ be the sheaves of
$(p,q)$-forms of $z$ and $\bar{z}$ with coefficients in $\LQ_Y$ and $\EQ_Y$, respectively.
Then we define the Dolbeault complex $\LQ^{(p, \bullet)}$ with a parameter on $Y$ by
$$
0
\longrightarrow
\LQ_{Y}^{(p,0)} 
\overset{\bar{\partial}}{\longrightarrow}
\LQ_{Y}^{(p,1)} 
\overset{\bar{\partial}}{\longrightarrow}
\dots
\overset{\bar{\partial}}{\longrightarrow}
\LQ_Y^{(p,n)}
\longrightarrow
0,
$$
and $\EQ^{(p, \bullet)}$ on $Y$ by
$$
0
\longrightarrow
\EQ_{Y}^{(p,0)} 
\overset{\bar{\partial}}{\longrightarrow}
\EQ_{Y}^{(p,1)} 
\overset{\bar{\partial}}{\longrightarrow}
\dots
\overset{\bar{\partial}}{\longrightarrow}
\EQ_Y^{(p,n)}
\longrightarrow
0.
$$
Then by standard arguments we have
\begin{prop}
Both the canonical morphisms of complexes below are quasi-isomorphic:
$$
\LO_Y \longrightarrow \LQ^{(0,\bullet)}_Y,\qquad
\EO_Y \longrightarrow \EQ^{(0,\,\bullet)}_Y.
$$
\end{prop}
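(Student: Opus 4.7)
The plan is to argue sheaf-locally by producing, near each point $q_0 = (t_0, z_0) \in Y$, a $\bar\partial$-homotopy operator that preserves the parameter regularity. Shrinking to a product neighbourhood $W = T_0 \times V$ with $V \subset \rcE$ being $1$-regular at $\infty$ and $V \cap E$ Stein, the bounded Dolbeault theorem already established in Theorem 3.7 of \cite{hu1} yields the resolution $\hexpo(V) \to \mathscr{Q}_{\rcE}^{0,\bullet}(V)$. From its proof one can extract on such Stein neighbourhoods a solution operator $T_V$ for $\bar\partial$ realised by a fixed integral kernel in the $z$-variable and respecting the exponential-type estimates on every $z$- and $\bar z$-derivative.

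Applied slicewise in $t$, the operator $T_V$ sends a $\bar\partial_{\rcE}$-closed $(0,q)$-form $f \in \LQ_Y^{0,q}(W)$ (resp.\ $\EQ_Y^{0,q}(W)$), with $q \ge 1$, to $u(t,z) := (T_V f(t,\cdot))(z)$ solving $\bar\partial_{\rcE} u = f$. Since $T_V$ does not depend on $t$, the parameter regularity of $u$ is inherited from that of $f$: $u$ is locally integrable in $t$ when $f$ is, by Fubini applied to an $L^1_{loc}$ kernel in $z$; and $u$ is $C^\infty$ in $t$ when $f$ is, by differentiation under the integral sign. The exponential-type control on every $z$- and $\bar z$-derivative of $u$ is exactly the estimate already proved in the unparametrised case. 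This gives exactness in positive degrees for both $\LQ_Y^{0,\bullet}$ and $\EQ_Y^{0,\bullet}$.

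At degree zero, a section $u \in \LQ_Y^{0,0}(W)$ with $\bar\partial_{\rcE} u = 0$ is, for almost every $t$, a distributional $\bar\partial$-closed function on $V \cap E$, hence holomorphic there by hypoellipticity; its $z$-derivatives remain locally of exponential type by assumption, so $u(t,\cdot) \in \hexpo(V)$ for almost every $t$. Combined with the $L^\infty_{loc}$-dependence on $t$, this is by definition a section of $\LO_Y$. The $\EO$ case is immediate from pointwise $C^\infty$-regularity together with fibrewise holomorphy.

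The main obstacle I anticipate is arranging for a $t$-independent solution operator $T_V$ on $1$-regular Stein neighbourhoods that preserves the exponential-type estimates up to the boundary at infinity; this is precisely what the proof of Theorem 3.7 in \cite{hu1} produces. Once it is available, the passage to the parameter version reduces to Fubini and differentiation under the integral, which is why the argument goes through verbatim for both the $L^\infty_{loc}$ and the $C^\infty$ parameter regularities.
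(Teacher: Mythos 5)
Your proposal is correct and follows essentially the route the paper intends: the paper offers no details beyond ``by standard arguments,'' and the standard argument is exactly what you describe — a $t$-independent $\bar\partial$-solution operator with exponential-type estimates (the ingredient behind the unparametrised quasi-isomorphism $\hexppo{p}\to\mathscr{Q}_{\rcE}^{p,\bullet}$, rather than the global Oka vanishing of Theorem 3.7 of \cite{hu1} which you cite), applied slicewise and combined with Fubini resp.\ differentiation under the integral sign to transfer the $L^\infty_{loc}$ resp.\ $C^\infty$ parameter regularity.
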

Here $\LO_Y$ and $\EO_Y$ are the subsheaves of $\LQ_Y$ and $\EQ_Y$
consisting of sections which are holomorphic with respect to the variables $z$,
respectively.

\section{Various expressions of Laplace hyperfunctions}
Let $M$ be an $n$-dimensional real vector space with the norm
$|\bullet|$
and $E = M \otimes_{\mathbb{R}} \mathbb{C}$. Recall that $\rcE$ (resp. $\rcM$) denotes the radial compactification $E \sqcup S^{2n-1}$ (resp. $M \sqcup S^{n-1}$) of $E$ (resp. $M$). 
Let $U$ be an open subset in $\rcM$, 
and $V$ an open subset in $\rcE$ with $V \cap \rcM = U$.

\begin{df}
The sheaf on $\rcM$ of $p$-forms of Laplace hyperfunctions is defined by
$$
\bexppo := {\mathscr H}^n_{\rcM}(\hexppo{p}) \otimes_{\mathbb{Z}_\rcM} or_{\rcM/\rcE},
$$
where $or_{\rcM/\rcE}$ is the relative orientation sheaf over $\rcM$, that is, it is given by ${\mathscr H}^n_{\rcM}(\mathbb{Z}_{\rcE})$.
\end{df}
It follows from Theorem {\ref{thm:edge_of_wedge}} that
we have
$$
\bexppo(U) = 
{\textrm{H}}^n_{U}(V;\,\hexppo{p})\,\, \otimes_{{\mathbb{Z}_{\rcM}}(U)} \,\,
or_{\rcM/\rcE}(U).
%{\textrm H}^n_{U}(V;\,\mathbb{Z}_{\rcE}).
$$
In particular, 
$\{{\textrm{H}}^n_{U}(V;\,\hexppo{p})\}_U$ forms a sheaf on $\rcM$.
%\begin{oss}
%If $\Omega$ is connected, the above formula becomes
%$$
%\bexppo(\Omega) = 
%%{\textrm{H}}^n_{\rcM}(V;\,\hexppo{p})\,\, \otimes_{\mathbb{Z}} \,\,
%{\textrm H}^n_{\rcM}(V;\,\mathbb{Z}_{\rcE}).
%$$
%\end{oss}

The above cohomology groups have several equivalent expressions. We briefly recall
those definitions which will be used in this paper.

\subsection{Representation by relative \v{C}ech Dolbeault cohomology groups}
We first give a representation of a Laplace hyperfunction by the relative \v{C}ech Dolbeault cohomology groups.
%Let $U$ be an open subset in $\rcM$ and $V$ its open neighborhood in $\rcE$, i.e.,
%$V \cap \rcM = U$. %Let $\Omega$ be an open subset in $\rcE$.
Set $V_0 = V \setminus {U}$, $V_1 = V$ and $V_{01} = V_0 \cap V_1$ as usual.
Then define the coverings
$$
\mathcal{V}_{U} = \{V_0,\, V_1\}, \qquad \mathcal{V}_{U}'=\{V_0\}.
$$
The complex $\QQDC{p}{\mathcal{V}_{U}}$ is called the relative
\v{C}ech Dolbeault complex of exponential type
%$C(\mathcal{V}_{\rcM},{\mathcal{V}_{\rcM}}')(\mathscr{Q}^{(p,\bullet)}_\rcE)$ 
(see Subsection \ref{subsec:cheh-dolbeault-complex} for the definition of the functor 
$C(\mathcal{V}_{U},{\mathcal{V}_{U}}')(\bullet)$):
$$
\begin{aligned}
0
\longrightarrow
\QDC{p}{0}{\mathcal{V}_{U}}
&\overset{\bvth}{\longrightarrow}
\QDC{p}{1}{\mathcal{V}_{U}}
\overset{\bvth}{\longrightarrow}
\dots \\
&\qquad\overset{\bvth}{\longrightarrow}
\QDC{p}{n}{\mathcal{V}_{U}}
\longrightarrow
0,
\end{aligned}
$$
where $\bvth$ is used to denote the differential of this complex.
In the same way, the complex $\QQDRC{\mathcal{V}_{U}}$
%we denote by $\QDRC{\bullet}{\mathcal{V}_{\rcM}}$ 
%the complex
%$C(\mathcal{V}_\rcM, {\mathcal{V}_\rcM}')(\mathscr{Q}^{(\bullet)}_{\rcE})$:
$$
\begin{aligned}
0
\longrightarrow
\QDRC{0}{\mathcal{V}_{U}}
&\overset{D}{\longrightarrow}
\QDRC{1}{\mathcal{V}_{U}}
\overset{D}{\longrightarrow}
\dots \\
&\qquad\overset{D}{\longrightarrow}
\QDRC{2n}{\mathcal{V}_{U}}
\longrightarrow
0,
\end{aligned}
$$
where $D$ is used to denote the differential of this complex,
is called the relative \v{C}ech de-Rham complex of exponential type.

As was seen in the previous section, $\mathscr{Q}_{\rcE}^{(\bullet)}$ and
$\mathscr{Q}_{\rcE}^{(p,\bullet)}$ are soft resolutions of 
$\mathbb{C}_{\rcE}$ and $\hexppo{p}$, respectively.
Hence the following theorem immediately follows from Theorem \ref{teo:C-D-complex-iso}:
\begin{teo}
	There exist the canonical isomorphisms in $\CatMD$:
$$
{\textbf R}\Gamma_{U}(V;\,{\mathbb C}_{\rcE}) 
\simeq \QQDRC{\mathcal{V}_{U}},\qquad
{\textbf R}\Gamma_{U}(V;\,\hexppo{p}) 
\simeq \QQDC{p}{\mathcal{V}_{U}}.
$$
\end{teo}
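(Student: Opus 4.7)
The plan is to reduce both isomorphisms to a direct application of the general \v{C}ech-Dolbeault theorem of \cite{HIS} quoted at the beginning of Section 2, once we feed it the soft resolutions with exponential bounds set up in Section \ref{sec:dolbeault-complex}.

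First I would verify that $\mathscr{Q}_\rcE^\bullet$ is a soft resolution of $\mathbb{C}_\rcE$ on $V$. The quasi-isomorphism $\mathbb{C}_\rcE \to \mathscr{Q}_\rcE^\bullet$ is the first proposition proved in Section \ref{sec:dolbeault-complex}, and softness of each component $\mathscr{Q}_\rcE^{k}$ follows from the lemma asserting that $\mathscr{Q}_\rcE$ is fine together with the fact that each $\mathscr{Q}_\rcE^{p,q}$ is a locally free $\mathscr{Q}_\rcE$-module of finite rank (the coefficient ring still admits partitions of unity). Exactly the same argument, applied to the Dolbeault quasi-isomorphism $\hexppo{p}\to\mathscr{Q}_\rcE^{p,\bullet}$ in the same proposition, shows that $\mathscr{Q}_\rcE^{p,\bullet}$ is a soft resolution of $\hexppo{p}$ on $V$.

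Next I would apply the HIS theorem to $X=V$, $K=U$, the pair of coverings $(\mathcal{V}_{\rcM},\mathcal{V}_{\rcM}')$, and each of the two soft resolutions above. One has only to observe that $\mathcal{V}_{\rcM}=\{V_0,V_1\}$ covers $V$ (immediate from $V_1=V$) and that $\mathcal{V}_{\rcM}'$ covers the complement $V\setminus U=V_0\subset V_1$. The two instances of the HIS conclusion then read
$$
\begin{aligned}
\mathbf{R}\Gamma_U(V;\,\mathbb{C}_\rcE) &\simeq C(\mathcal{V}_{\rcM},\mathcal{V}_{\rcM}')(\mathscr{Q}_\rcE^{\bullet}) = \QDRC{\bullet}{\mathcal{V}_{\rcM}}, \\
\mathbf{R}\Gamma_U(V;\,\hexppo{p}) &\simeq C(\mathcal{V}_{\rcM},\mathcal{V}_{\rcM}')(\mathscr{Q}_\rcE^{p,\bullet}) = \QDC{p}{\bullet}{\mathcal{V}_{\rcM}},
\end{aligned}
$$
which are the required quasi-isomorphisms. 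Canonicity is inherited from the naturality of the functor $C(\mathcal{V}_{\rcM},\mathcal{V}_{\rcM}')(-)$ with respect to morphisms of resolutions.

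There is no serious obstacle: the heavy analytic content (the existence of de-Rham and Dolbeault resolutions with exponential bounds) has already been supplied in Section \ref{sec:dolbeault-complex}, and the statement is a formal consequence. The only mildly delicate point is the verification of softness for the exponential-growth coefficient sheaves, but this reduces at once to the fineness of $\mathscr{Q}_\rcE$ recorded earlier.
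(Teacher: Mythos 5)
Your proposal is correct and is exactly the argument the paper intends (the theorem is stated without proof precisely because it is the HIS theorem applied to the soft resolutions $\mathbb{C}_{\rcE}\to\mathscr{Q}_{\rcE}^{\bullet}$ and $\hexppo{p}\to\mathscr{Q}_{\rcE}^{p,\bullet}$ furnished by Section 3, with softness coming from the fineness of $\mathscr{Q}_{\rcE}$). The only thing to watch is the bookkeeping of the primed covering: it must be the one covering $V\setminus U$, i.e.\ $\{V_0\}$, so that the surviving components of the complex are the pairs $(\omega_1,\omega_{01})$ used throughout the paper.
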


\

%In what follows, we constantly use the notations below:
%$$
%\HQDC{p}{k}{\mathcal{V}_{\rcM}} := {\textrm H}^k(\QDC{p}{\bullet}{\mathcal{V}_{\rcM}}),\qquad
%\HQDRC{k}{\mathcal{V}_{\rcM}} := {\textrm H}^k(\QDRC{\bullet}{\mathcal{V}_{\rcM}}).
%$$
It follows from the theorem that we have
\begin{equation}
\bexppo(U) \simeq
\HQDC{p}{n}{\mathcal{V}_{U}}\,
\otimes_{{\mathbb Z}_{\rcM(U)}}\, or_{\rcM/\rcE}(U).
\end{equation}
This implies that any Laplace hyperfunction $u \in \bexppo(U)$ is represented
by a pair $(\omega_1, \omega_{01})$ of $C^\infty$-forms which satisfies the following conditions 1.~and 2.
\begin{enumerate}
	\item $\omega_1 \in  \mathscr{Q}_{\rcE}^{(p,n)}(V)$ and 
	$\omega_{01} \in  \mathscr{Q}_{\rcE}^{(p,n-1)}(V \setminus U)$
	\item $\overline{\partial} \omega_{01} = \omega_1$ on $V \setminus {U}$.
\end{enumerate}

\begin{oss}
Let $\mathcal{S} = \{S_i\}_{i \in \Lambda}$ be a finite open covering of $V$,
and let $\Lambda' \subset \Lambda$.
Assume $\mathcal{S}'= \{S_i\}_{i \in \Lambda'}$ is an open covering of $V \setminus U$.
Then, as did in Subsection \ref{subsec:relative_chec},
$\QQDC{p}{\mathcal{S}}$ 
(resp. $\QQDRC{\mathcal{S}}$)
denotes
the relative \v{C}ech Dolbeault complex 
%$C(\mathcal{S},{\mathcal{S}}')(\mathscr{Q}^{(p,\bullet)}_\rcE)$
(resp. the relative \v{C}ech de-Rham complex) of exponential type
with respect to the pair $(\mathcal{S}, \mathcal{S}')$ of coverings.
For these complexes, we also have the isomorphisms
$$
{\textbf R}\Gamma_{U}(V;\,{\mathbb C}_{\rcE}) 
\simeq \QQDRC{\mathcal{S}},\qquad
{\textbf R}\Gamma_{U}(V;\,\hexppo{p}) 
\simeq \QQDC{p}{\mathcal{S}}.
$$
\end{oss}

\subsection{Representation by relative \v{C}ech cohomology groups}
Next we give a representation of a Laplace hyperfunction by the relative \v{C}ech cohomology groups. We first recall the Grauert type theorem for existence of a Stein open neighborhood of an open subset in $\rcM$.
\begin{teo}[Theorem 4.10 \cite{KU}]
Let $\Omega$ be an open cone in $M$, and
let $V \subset \rcE$ be an open neighborhood of $\widehat{\Omega}$. Then we can find an open set
$W \subset \rcE$ such that
\begin{enumerate}
\item $\widehat{\Omega} \subset W \subset V$.
\item $W \cap E$ is a Stein open subset.
\item $W$ is $1$-regular at $\infty$.
\end{enumerate}
\end{teo}

By taking the above theorem into account, we assume that, in this subsection, $V$ is $1$-regular at $\infty$ and $V \cap E$ is a Stein open subset.  We also set $U = V \cap \rcM$.
%We denote by $M^*$ (resp. $E^*$) the dual vector space of $M$ (resp. $E$).
Let $\eta_0,\dots, \eta_{n-1}$ be linearly independent vectors in $M^*$ so that
$\{\eta_0, \dots, \eta_{n-1}\}$ forms a positive frame of $M^*$.
Set $\eta_{n} := - (\eta_0 + \dots + \eta_{n-1}) \in M^*$ and
$$
S_k := \widehat{\,\,\,\,}
\{z = x+\sqrt{-1}y \in E;\,z \in V,\, \langle y,\, \eta_k\rangle > 0\} \qquad (k=0,1,\cdots,n).
$$
For convenience, we set $S_{n+1}=V$.
Let $\Lambda = \{0,1,2,\dots,n+1\}$ and set,
for any $\alpha = (\alpha_0,\dots,\alpha_k) \in \Lambda^{k+1}$,
$$
S_\alpha := S_{\alpha_0} \cap S_{\alpha_1} \cap \dots \cap S_{\alpha_k}.
$$
We define coverings of $V$ and $V \setminus U$ by
$$
\mathcal{S} := \{S_0,S_1,\dots, S_{n+1}\},\qquad
\mathcal{S}' := \{S_0,\dots, S_{n}\}.
$$
Since $S_\alpha \cap E$ is a Stein open subset and $S_\alpha$ is $1$-regular at $\infty$ for any $\alpha \in \Lambda^{k+1}$, by the theory of the relative \v{C}ech cohomology,
we have the isomorphism
$$
{\textrm H}^n_{U}(V;\,\hexppo{p})
\simeq
{\textrm H}^n(C(\mathcal{S},\,\mathcal{S}')(\hexppo{p})).
$$
Let $\Lambda^{k+1}_*$ be the subset in $\Lambda^{k+1}$ consisting of $\alpha
= (\alpha_0,\dots, \alpha_k)$ with
$$
\alpha_0 < \alpha_1 < \dots < \alpha_k = n+1.
$$
Then we obtain
$$
{\textrm H}^n(C(\mathcal{S},\,\mathcal{S}')(\hexppo{p}))
\simeq \dfrac{\bigoplus_{\alpha \in \Lambda^{n+1}_*}
\hexppo{p}(S_\alpha)}{\bigoplus_{\beta \in \Lambda^{n}_*} \hexppo{p}(S_\beta)}.
$$
Hence, any hyperfunction $u$ has a representative $\underset{\alpha \in \Lambda^{n+1}_*}{\oplus} f_\alpha$ which is a formal sum of
$(n+1)$-holomorphic functions of exponential type defined on each $S_\alpha$ ($\alpha \in \Lambda^{n+1}_*$).

Note that the \v{C}ech representation and the \v{C}ech Dolbeault representation of Laplace hyperfunctions are linked by the following diagram
whose morphisms are all quasi-isomorphisms.
\begin{equation}
C(\mathcal{S},\,\mathcal{S}')(\hexppo{p})
\xrightarrow{\,\,\alpha_1\,\,}
\QQDC{p}{\mathcal{S}}
\xleftarrow{\,\,\alpha_2\,\,}
\QQDC{p}{\mathcal{V}_U},
\end{equation}
where the middle complex is the \v{C}ech Dolbeault one associated with the 
covering $(\mathcal{S}, \mathcal{S}')$, $\alpha_1$ is induced from
the canonical morphism $\hexppo{p} \to \mathscr{Q}_{\rcE}^{(p,\bullet)}$ of Dolbeault complexes and $\alpha_2$ follows from the fact that
$\mathcal{S}$ is a finer covering of $\mathcal{V}_{U}$.

Furthermore, let $i:\mathscr{Q}_{\rcE}^{(p,\bullet)} \to \mathscr{L}^\bullet$
be a flabby resolution of the complex $\mathscr{Q}_{\rcE}^{(p,\bullet)}$, i.e.,
$i$ is a morphism of complexes which is quasi-isomorphic and $\mathscr{L}^\bullet$
is the complex of flabby sheaves on $\rcE$. Then we have the commutative diagram
$$
\begin{diagram}
\node[3]{\Gamma_U(V;\,\mathscr{L}^\bullet)} 
\arrow{s,l}{}
\arrow{sw,l}{}
\\
%%%%
\node[2]{C(\mathcal{S},\mathcal{S}')(\mathscr{L}^\bullet)}
\node{C(\mathcal{V}_U,\mathcal{V}'_U)(\mathscr{L}^\bullet)}
\arrow{w,t}{\alpha_2}\\
%%%%%%%%%%
\node{C(\mathcal{S},\,\mathcal{S}')(\hexppo{p})}
\arrow{e,t}{\alpha_1}
\arrow{ne,l}{}
\node{\QQDC{p}{\mathcal{S}}}
\arrow{n,l}{i}
\node{\QQDC{p}{\mathcal{V}_U}}
\arrow{n,l}{i}
\arrow{w,t}{\alpha_2}
\end{diagram},
$$
where all the morphism are quasi-isomorphisms. Hence we have obtained the (canonical) isomorphisms between cohomology groups:
$$
\begin{diagram}
	\node[2]{\bexppo(U)=\mathrm{H}^n_U(V;\,\hexppo{p})} \\
	\node{\mathrm{H}^n(C(\mathcal{S},\,\mathcal{S}')(\hexppo{p}))}
\arrow{e,t}{\alpha_1}
\arrow{ne,l}{}
\node{\mathrm{H}^n(\QQDC{p}{\mathcal{S}})}
\arrow{n,l}{}
\node{\mathrm{H}^n(\QQDC{p}{\mathcal{V}_U})}
\arrow{nw,l}{}
\arrow{w,t}{\alpha_2}
\end{diagram}.
$$
In what follows, all the cohomology groups are identified through these canonical isomorphisms.
\subsection{Generalization of \v{C}ech representations}{\label{subsec:intiuitve}}

Representation by \v{C}ech cohomology groups can be generalized to the much more convenient one,
that is ``intuitive representation'' of Laplace hyperfunctions
introduced in \cite{KU}. Let us briefly recall this representation. 
Let $U$ be an open subset in $\rcM$, and
let $\Gamma$ be an $\mathbb{R}_+$-conic connected open subset in $M$. 

\begin{df}[\cite{KU} Definition 4.8]{\label{def:wedge}}
An open subset $W \subset \rcE$ is said to be an infinitesimal wedge
of type $U \widehat{\times} \sqrt{-1}\Gamma$ if and only if for any $\mathbb{R}_+$-conic open subset $\Gamma'$ properly contained in $\Gamma$ there exists an open neighborhood $O \subset \rcE$ of $U$ such that
$$
(U \widehat{\times} \sqrt{-1}\Gamma') \,\cap\, O \,\subset\, W.
$$
holds (see \eqref{eq:hat_times} for the symbol $\widehat{\times}$).
\end{df}
\begin{oss}
The definition of an infinitesimal  wedge itself does not assume the inclusion 
$W \subset U \widehat{\times} \sqrt{-1}\Gamma$.
\end{oss}
We denote by $\mathcal{W}(U \widehat{\times} \sqrt{-1}\Gamma)$ 
the set of all the infinitesimal wedges of type $U \widehat{\times} \sqrt{-1}\Gamma$
which are contained in $U\widehat{\times} \sqrt{-1}\Gamma$.
Furthermore, we set
$$
\mathcal{W}(U) := \bigcup_\Gamma \mathcal{W}(U \widehat{\times} \sqrt{-1}\Gamma),
$$
where $\Gamma$ runs through all the $\mathbb{R}_+$-conic connected open subsets in $M$
(in particular, $\Gamma$ is non-empty).

Define the quotient vector space
\begin{equation}
\hat{\mathrm{H}}^n(\oexp(\mathcal{W}(U)))
:= \left(\bigoplus_{W \in \mathcal{W}(U)}\,
\oexp(W)\right)/\mathcal{R},
\end{equation}
where $\mathcal{R}$ is a $\mathbb{C}$-vector space generated by elements
$$
f \oplus (-f|_{W_2})\quad  \in \oexp(W_1) \oplus \oexp(W_2)
$$
for any $W_2 \subset W_1$ in $\mathcal{W}(U)$ and any $f \in \oexp(W_1)$.
\begin{teo}[\cite{KU} Theorem 4.9]{\label{thm:intuitive}}
	Let $U$ be an open cone in $\rcM$ such that $\widehat{\,\,\,\,}(U \cap M) = U$.
Then there exists a family $b_{\mathcal{W}} = \{b_W\}_{W \in \mathcal{W}(U)}$
of morphisms $b_W: \oexp(W) \to \bexpo(U)$ $(W \in \mathcal{W}(U))$
which satisfies
$$
b_{W_1}(f) = b_{W_2}(f|_{W_2}) \qquad\text{in}\,\,\bexpo(U)
$$
for any $W_2 \subset W_1$ in $\mathcal{W}(U)$ and any $f \in \oexp(W_1)$.
Furthermore the induced morphism
$$
b_\mathcal{W}: \hat{\mathrm{H}}^n(\oexp(\mathcal{W}(U))) \to \bexpo(U)
$$
becomes an isomorphism.
\end{teo}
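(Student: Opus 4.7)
The plan is to realise each $b_W$ concretely via the \v{C}ech description of $\bexpo(\hat U)$ recalled in Subsection 4.2, to check that the construction descends to the quotient by $\mathcal R$, and to deduce that $b_{\mathcal W}$ is an isomorphism with the aid of the edge-of-the-wedge theorem (Theorem~\ref{thm:edge_of_wedge}).

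To construct $b_W$ for a given $W\in\mathcal W(\hat U)$ of type $U\hat{\times}\sqrt{-1}\Gamma$, I would first pick a positive frame $\{\eta_0,\ldots,\eta_{n-1}\}\subset M^*$ whose open dual cone
$$
\Gamma'':=\{y\in M;\ \langle y,\eta_k\rangle>0,\ k=0,\ldots,n-1\}
$$
has $\overline{\Gamma''}\setminus\{0\}\subset\Gamma$, set $\eta_n=-(\eta_0+\cdots+\eta_{n-1})$, and pick a $1$-regular Stein open neighborhood $V\subset\rcE$ of $\hat U$ with $V\cap\rcM=\hat U$ small enough that $V\cap(U\hat{\times}\sqrt{-1}\Gamma'')\subset W$; such a $V$ exists by the defining property of an infinitesimal wedge. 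With the sectors $S_k$ and $S_{n+1}=V$ as in Subsection 4.2, the intersection $S_{(0,\ldots,n-1,n+1)}$ lies inside $W$, so placing $f|_{S_{(0,\ldots,n-1,n+1)}}$ in the slot indexed by $(0,\ldots,n-1,n+1)$ and zero in the remaining slots of $C^n(\mathcal S,\mathcal S';\oexp)$ gives a cocycle (automatic in top \v{C}ech degree) whose class in $H^n(\mathcal S,\mathcal S';\oexp)\simeq H^n_{\hat U}(V;\oexp)$, tensored with $or_{\rcM/\rcE}(\hat U)$, I take to be $b_W(f)$. A refinement argument (passing to a finer frame and smaller $V$) shows independence of the class from these choices and yields the compatibility $b_{W_1}(f)=b_{W_2}(f|_{W_2})$ whenever $W_2\subset W_1$, so $b_{\mathcal W}$ descends to $\hat H^n(\oexp(\mathcal W(\hat U)))$.

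Surjectivity is then immediate: fixing any positive frame, an arbitrary $u\in\bexpo(\hat U)$ is represented in the \v{C}ech picture by $\bigoplus_{\alpha\in\Lambda^{n+1}_*}f_\alpha$ with $f_\alpha\in\oexp(S_\alpha)$, each $S_\alpha$ is itself an infinitesimal wedge of type $U\hat{\times}\sqrt{-1}\Gamma_\alpha$ for the simplicial cone $\Gamma_\alpha$ dual to $\{\eta_{\alpha_0},\ldots,\eta_{\alpha_{n-1}}\}$, and hence $u=\sum_\alpha b_{S_\alpha}(f_\alpha)$.

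The main obstacle will be injectivity. Starting from $\sum_i b_{W_i}(f_i)=0$ for a finite family, I would first use the infinitesimal-wedge property to shrink each $W_i$ into a sector $S_{\alpha_i}$ attached to one common positive frame, replacing each $f_i$ by its restriction (which does not alter the class in $\hat H^n$ by compatibility). The vanishing then translates into an honest \v{C}ech-coboundary relation in $C^n(\mathcal S,\mathcal S';\oexp)$, producing $g_\beta\in\oexp(S_\beta)$ for $\beta\in\Lambda^n_*$ whose \v{C}ech differential is the cocycle of the $f_i$'s; each such $g_\beta$ is the common holomorphic extension, across a shared face $S_\beta$, of two adjacent boundary representatives, and its existence is precisely the content of Theorem~\ref{thm:edge_of_wedge}. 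The delicate combinatorial step is an inductive bookkeeping that absorbs the $g_\beta$'s one at a time into $\mathcal R$ by enlarging the relevant $W_i$'s to wedges on which the corresponding $g_\beta$ is holomorphic, finally yielding $\bigoplus_i f_i\in\mathcal R$ and completing the proof.
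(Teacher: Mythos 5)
The paper does not prove this statement: it is imported verbatim as Theorem 4.9 of \cite{KU}, so there is no internal proof to compare yours with; I can only assess your argument on its own terms. Your construction of $b_W$ via the covering of Subsection 4.2 and the surjectivity of $b_{\mathcal{W}}$ are plausible in outline (modulo the orientation/sign bookkeeping that the paper itself flags in Example \ref{es:l-c-example01}, and modulo the fact that comparing two different frames requires a common refinement rather than a nested pair, which your ``finer frame and smaller $V$'' remark elides).

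The genuine gap is in injectivity, at the very first reduction. You propose to ``shrink each $W_i$ into a sector $S_{\alpha_i}$ attached to one common positive frame, replacing each $f_i$ by its restriction.'' For that restriction to be covered by the compatibility relation $b_{W_1}(f)=b_{W_2}(f|_{W_2})$ you need $S_{\alpha_i}\subset W_i$, hence essentially $\Gamma_{\alpha_i}\subset\Gamma_i$ near $\hat U$. But a single positive frame $\{\eta_0,\dots,\eta_n\}$ yields only the $n+1$ fixed simplicial sectors $\Gamma_\alpha$ ($\alpha\in\Lambda^{n+1}_*$), and an arbitrary connected open cone $\Gamma_i$ --- for instance a thin cone properly contained in one $\Gamma_\alpha$, or one straddling a wall of the subdivision --- contains none of them; the inclusion goes the wrong way. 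Consequently the relation $\sum_i b_{W_i}(f_i)=0$ cannot in general be transported into the $(n+2)$-set \v{C}ech complex of Subsection 4.2. One is forced to pass to arbitrarily fine conical subdivisions, for which that complex no longer computes ${\textrm H}^n_{\hat U}(V;\oexp)$ in the simple quotient form you invoke, and identifying the resulting limit over refinements with $\hat{\mathrm{H}}^n(\oexp(\mathcal{W}(\hat U)))$ is exactly the nontrivial content of \cite{KU}. Even granting the reduction, the closing step --- absorbing each $g_\beta$ into $\mathcal{R}$ by lifting it to the larger wedge $S_\beta$ and concluding $\bigoplus_i f_i\in\mathcal{R}$ --- depends on each $g_\beta$ appearing in exactly two faces with signs that cancel after the moves through $\mathcal{R}$; this is where the edge-of-the-wedge mechanism actually operates, and in your write-up it is asserted rather than carried out.
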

\begin{oss}{\label{oss:remark_bb_map_and_ic}}
If $W \in \mathcal{W}(U)$ is cohomologically trivial, that is, it satisfies the condition A2.~given in Subsection {\ref{sec:functrial-contstruction}}, 
then $b_W$ coincide with the boundary value map functorially constructed
in Subsection {\ref{sec:functrial-contstruction}}
(see also Subsection 3.1 in \cite{KU}, where the boundary value map was
constructed in a functorial way).
\end{oss}

Now let us consider the problem under the situation 
given in the previous subsection, that is,
open subsets $U$ and $V$, coverings $\mathcal{S}$ and $\mathcal{S}'$,
and vectors $\{\eta_k\}$ are those ones already given in the previous subsection. Additionally
we also assume $\widehat{\,\,\,\,}(U \cap M) = U$.

%Let $\mathcal{S}$ and $\mathcal{S}'$ be coverings of $(V, V\setminus U)$
%given in the previous substion.
Then there exists the canonical isomorphism 
\begin{equation}
\iota_{IC}:{\textrm H}^n(C(\mathcal{S},\,\mathcal{S}')(\hexpo))
\longrightarrow \hat{\mathrm{H}}^n(\oexp(\mathcal{W}({U})))
\end{equation}
which is defined by
\begin{equation}
\dfrac{\bigoplus_{\alpha \in \Lambda^{n+1}_*}
\hexpo(S_\alpha)}{\bigoplus_{\beta \in \Lambda^{n}_*} \hexpo(S_\beta)}
\ni [(f_\alpha)_\alpha] \mapsto (-1)^n\bigoplus_\alpha\, \mathrm{sgn}(\alpha) f_\alpha \in \hat{\mathrm{H}}^n(\oexp(\mathcal{W}({U}))),
\end{equation}
where, for $\alpha = (\alpha_0,\cdots,\alpha_n)$, the $\mathrm{sgn}(\alpha)$ is $1$ if the vectors $\eta_{\alpha_0}$, $\eta_{\alpha_1}$,
$\cdots$, $\eta_{\alpha_{n-1}}$ form a positive frame in $M$ and it is $-1$ otherwise.

The following lemma follows from the construction of $b_{\mathcal{W}}$
(see Definition 3.14 and Theorem 3.15 \cite{KU}) .
\begin{lem}{\label{lem:b_w_and_I_C_commute}}
The morphism $\iota_{IC}$ makes the following diagram commutative:
$$
\begin{diagram}
\node{{\textrm H}^n(C(\mathcal{S},\,\mathcal{S}')(\hexpo))}
\arrow{e,t}{\iota_{IC}}\arrow{se,l}{\sim}
\node{\hat{\mathrm{H}}^n(\oexp(\mathcal{W}({U})))}\arrow{s,l}{b_\mathcal{W}} \\
\node[2]{\bexpo(U)}
\end{diagram},
$$
where all the morphism are isomorphic.
\end{lem}

%Alternatively saying that $\hat{\mathrm{H}}^n(\oexp(\mathcal{W}(U)))$ is
%the vector sapce
%$$
%\left(\bigoplus_{W \in \mathcal{W}(U)}\,
%\oexp(W)\right)/\sim,
%$$
%where the equivalence releation is given by, for $f_1 \in \oexp(W_1)$ and $f_2 \in \oexp(W_2)$ ($W_1, W_2 \in \mathcal{W}(U)$),
%$f_1 \sim f_2$ if and only if there exists $h \in \oexp(W_1 \cap W_2)$
%such that
%$$
%f_1|_{W_1 \cap W_2} = h \qquad \text{and}\qquad f_2|_{W_1\cap W_2} = h.
%$$

\section{Boundary value map in $\rcE$}{\label{sec:boundary-value}}

One of the important features in hyperfunction theory is a boundary value map,
by which we can regard a holomorphic function of exponential type on an wedge along $\rcM$ as
a Laplace hyperfunction. We construct, in this section, the boundary value map
in the framework of the relative \v{C}ech Dolbeault cohomology.

Let $U$ be an open subset in $\rcM$ and $V$ an open subset in $\rcE$ such that
$V \cap \rcM = U$. Let $\Omega$ be an open subset in $\rcE$.

\subsection{Functorial construction}{\label{sec:functrial-contstruction}}

We first construct the boundary value map in a functorial way.
For an open subset $W$ in $\rcE$ and a complex $F$ of sheaves on $W$, we define its dual on $W$ by
$$
\textrm{D}_{W}(F) := {\mathbf R}{\mathcal{H}}om_{\mathbb{C}_W}(F,\, \mathbb{C}_W).
$$
Note that, for a complex $F$ of sheaves on $\rcE$, we have 
$\textrm{D}_{\rcE}(F)|_W = \textrm{D}_W(F|_W)$.
We assume:
\begin{enumerate}
	\item[A1.]  $\,\,$  $U \subset \overline{\Omega}$.
	\item[A2.]  $\,\,$  $\Omega$ is cohomologically trivial in $V$, that is, 
$$
\textrm{D}_{\rcE}(\mathbb{C}_\Omega)|_V \simeq \mathbb{C}_{\overline{\Omega}}|_V,\qquad
\textrm{D}_{\rcE}(\mathbb{C}_{\overline{\Omega}})|_V \simeq \mathbb{C}_{\Omega}|_V.
$$
\end{enumerate}
Through this subsection, we always assume conditions A1.~and A2.
Following Schapira's construction (see Section 11.5 in \cite{KS}) of a boundary value morphism, we can construct the corresponding one for a Laplace hyperfunction as follows:
Let $j_V: V \to \rcE$ be the canonical inclusion. 
By the assumption, we have the canonical morphism on $V$
$$
j_V^{-1}\mathbb{C}_{\overline{\Omega}} \to j_V^{-1}\mathbb{C}_{\rcM}.
$$
It follows from the assumption that we have
$$
\textrm{D}_{V}(j_V^{-1}\mathbb{C}_{\overline{\Omega}}) \simeq 
j_V^{-1} \mathbb{C}_{\Omega},\qquad
\textrm{D}_{V}(j_V^{-1}\mathbb{C}_{\rcM}) \simeq 
j_V^{-1}(\mathbb{C}_{\rcM} \otimes or_{\rcM/\rcE})[-n].
$$
Hence, applying the functor $\textrm{D}_{V}(\bullet)$ to the above morphism, we obtain the
canonical morphism
$$
j_V^{-1}(\mathbb{C}_{\rcM} \otimes or_{\rcM/\rcE})[-n] \to j_V^{-1}\mathbb{C}_{\Omega}.
$$
Now applying the functor 
${\mathbf R}{\textrm Hom}_{\mathbb{C}_V}(\bullet,\, j_V^{-1}\hexpo)$
to the above morphism and taking the $0$-th cohomology groups, we have obtained
the boundary value map
$$
b_{\Omega}: \hexpo(\Omega \cap V) \to \bexpo(U).
$$

\subsection{\v{C}ech Dolbeault construction of a boundary value map}{\label{subsec:c-d-boundary-value-map}}

The construction of a boundary value map for Laplace hyperfunctions
in the framework of the relative \v{C}ech Dolbeault cohomology
is the almost same as that for hyperfunctions done in the paper \cite{HIS}.
First recall the coverings
$$
\mathcal{V}_U = \{V_0,\, V_1\}, \qquad {\mathcal{V}_U}'=\{V_0\}
$$
of $V$ and $V \setminus U$,
where $V_0 = V \setminus U$, $V_1 = V$ and $V_{01} = V_0 \cap V_1$.
We now construct the boundary value morphism 
$$
b_\Omega: \hexpo(\Omega) \longrightarrow 
\HQDC{0}{n}{\mathcal{V}_U}\, \otimes_{{\mathbb Z}_{\rcM(U)}}\, or_{\rcM/\rcE}(U)
$$
in the framework of the relative \v{C}ech Dolbeault cohomology.

Let us first recall the morphism of complexes 
$$
\rho: \QQDRC{\mathcal{V}_U} \to \QQDC{0}{\mathcal{V}_U}, 
$$
which is 
defined by the projection to the space of anti-holomorphic forms, that is,
$$
\QDRC{k}{\mathcal{V}_U} \ni
\sum_{|I|+ |J|=k} f_{I,J}dz^I \wedge d\bar{z}^J
\qquad \mapsto \qquad
\sum_{|J|=k} f_{\emptyset, J} d\bar{z}^J
\in \QDC{0}{k}{\mathcal{V}_U}.
$$
Then we have
\begin{lem}
The following diagram commutes:
$$
\begin{diagram}
\node{{\mathbf R}\Gamma_{U}(V;\,{\mathbb C}_{\rcE})}
\arrow{e}\arrow{s}
\node{{\mathbf R}\Gamma_{U}(V;\,\hexpo)} \arrow{s} \\
\node{\QQDRC{\mathcal{V}_U}} 
\arrow{e,t}{\rho}
\node{\QQDC{0}{\mathcal{V}_U}}
\end{diagram},
$$
where the top horizontal arrow is the morphism associated with the canonical sheaf
morphism $\mathbb{C}_{\rcE} \to \hexpo$.
\end{lem}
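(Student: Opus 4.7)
The plan is to lift the square in the statement to a strictly commutative square of complexes of sheaves on $\rcE$, and then apply the \v{C}ech-Dolbeault functor $C(\mathcal{V}_\rcM, {\mathcal{V}_\rcM}')(-)$ introduced in Subsection \ref{subsec:cheh-dolbeault-complex} to descend commutativity to the lemma.

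First I would verify that $\rho$ is already a morphism of complexes at the sheaf level. For $\omega = \sum_{p+q=k}\omega^{p,q} \in \mathscr{Q}_{\rcE}^k$ one has $d\omega = \sum_{p+q=k}(\partial\omega^{p,q} + \bar\partial\omega^{p,q})$, whose $(0,k+1)$-component is exactly $\bar\partial \omega^{0,k}$ since $\partial$ raises the holomorphic degree. Hence $\rho(d\omega) = \bar\partial\,\rho(\omega)$, which is the sheaf-level version of the map $\rho: \QDRC{\bullet}{\mathcal{V}_\rcM} \to \QDC{0}{\bullet}{\mathcal{V}_\rcM}$ by functoriality of $C(\mathcal{V}_\rcM, {\mathcal{V}_\rcM}')(-)$.

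Next I would consider the square of complexes of sheaves on $\rcE$
$$
\begin{array}{ccc}
\mathbb{C}_{\rcE} & \longrightarrow & \hexpo \\
\downarrow & & \downarrow \\
\mathscr{Q}_{\rcE}^{\bullet} & \xrightarrow{\rho} & \mathscr{Q}_{\rcE}^{0,\bullet}
\end{array}
$$
whose top row is the inclusion of constants as holomorphic functions, and whose vertical arrows are the de-Rham and Dolbeault resolutions. This square commutes strictly, because in degree $0$ we have $\mathscr{Q}_{\rcE}^0 = \mathscr{Q}_{\rcE}^{0,0}$, the component $\rho_0$ is the identity, and both compositions send a constant $c$ to $c$ viewed as a $(0,0)$-form of class $\mathscr{Q}_{\rcE}$.

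Finally, I would apply $C(\mathcal{V}_\rcM, {\mathcal{V}_\rcM}')(-)$ to this square. By additivity and naturality of the functor, the result is a commutative square of single complexes; its two vertical arrows are, by definition, the quasi-isomorphisms $\mathbf{R}\Gamma_U(V;\mathbb{C}_{\rcE}) \xrightarrow{\sim} \QDRC{\bullet}{\mathcal{V}_\rcM}$ and $\mathbf{R}\Gamma_U(V;\hexpo) \xrightarrow{\sim} \QDC{0}{\bullet}{\mathcal{V}_\rcM}$ used in the statement, and its top arrow is the one induced by $\mathbb{C}_{\rcE} \to \hexpo$ through the chosen resolutions. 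The only potentially delicate point is confirming that these identifications are functorial in the coefficient sheaf with respect to morphisms of the chosen soft resolutions, but this is immediate from the double-complex definition of $C(\mathcal{V}_\rcM, {\mathcal{V}_\rcM}')(-)$, so no substantial obstacle arises and the commutativity of the lemma's diagram follows.
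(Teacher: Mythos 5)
Your proof is correct, and since the paper states this lemma without proof, your argument supplies exactly the routine verification the authors evidently had in mind: $\rho$ is a chain map because the $(0,k+1)$-component of $d\omega$ is $\bar\partial\omega^{0,k}$, the square of sheaf resolutions commutes in degree $0$, and the \v{C}ech--Dolbeault functor $C(\mathcal{V}_\rcM,{\mathcal{V}_\rcM}')(-)$ carries this to the stated diagram by functoriality. No gaps.
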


Let us take a section $\mathds{1} \in {\textrm H}^n_{U}(V;\,\mathbb{Z}_{\rcE})$
such that, for each $x \in U$, the stalk $\mathds{1}_x$ of $\mathds{1}$ at $x$ generates ${\textrm H}^n_{\rcM}(\mathbb{Z}_{\rcE})_x$ as a $\mathbb{Z}$-module. 
Note that we have, in each connected component of $U$, 
two choices of such a $\mathds{1}$, i.e., 
either $\mathds{1}$ or $-\mathds{1}$. Then the canonical sheaf morphism
$\mathbb{Z}_{\rcE} \to \mathbb{C}_{\rcE}$ induces the injective morphism
$$
{\textrm H}^n_{U}(V;\,\mathbb{Z}_{\rcE}) \to 
{\textrm H}^n_{U}(V;\,\mathbb{C}_{\rcE}).
$$
Note that we still denote by $\mathds{1}$ the image in ${\textrm H}^n_{U}(V;\,\mathbb{C}_{\rcE})$ of $\mathds{1}$ by this morphism.

\

Now we assume the following conditions to $\Omega$.
\begin{enumerate}
	\item[B1.] $\,\,$The canonical inclusion
$(V \setminus \Omega) \setminus \rcM \hookrightarrow (V \setminus \Omega)$
gives a homotopical equivalence.
\end{enumerate}
The following lemma can be proved in the same way as that in Lemma 7.10 in \cite{HIS}.
\begin{lem}{\label{lem:support_one}}
Assume the conditions A1 and B1. Then
there exists $\tau = (\tau_1,\,\tau_{01}) \in \QDRC{n}{\mathcal{V}_U}$ which satisfies
the following conditions:
\begin{enumerate}
\item $D \tau = 0$ and $[\tau] = \mathds{1}$ in $\HQDRC{n}{\mathcal{V}_U}$.
\item $\operatorname{supp}_{V_{01}}(\tau_{01}) \subset \Omega$ and
$\operatorname{supp}_{V_{1}}(\tau_1) \subset \Omega$.
\end{enumerate}
\end{lem}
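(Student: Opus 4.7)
The plan is to start from an arbitrary Čech-de Rham representative of $\mathds{1}$ and modify it by a coboundary so as to force its support into $\Omega$; the argument follows the strategy of Lemma~7.10 in \cite{HIS}. First one picks any cocycle $(\tilde\tau_1,\tilde\tau_{01}) \in \QDRC{n}{\mathcal{V}_\rcM}$ representing $\mathds{1}$, which exists by the quasi-isomorphism $\mathbf{R}\Gamma_U(V;\mathbb{C}_\rcE) \simeq \QDRC{\bullet}{\mathcal{V}_\rcM}$ together with the injection $\textrm{H}^n_U(V;\mathbb{Z}_\rcE) \hookrightarrow \textrm{H}^n_U(V;\mathbb{C}_\rcE)$ recalled just before the lemma.

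The central step is to show that the local cohomology $\textrm{H}^n_{U\setminus\Omega}(V\setminus\Omega;\mathbb{C})$ vanishes. For this, one considers the long exact sequence of the pair $(V\setminus\Omega,\,(V\setminus\Omega)\setminus U)$:
$$
\cdots \to \textrm{H}^{n-1}(V\setminus\Omega;\mathbb{C}) \to \textrm{H}^{n-1}((V\setminus\Omega)\setminus U;\mathbb{C}) \to \textrm{H}^n_{U\setminus\Omega}(V\setminus\Omega;\mathbb{C}) \to \textrm{H}^n(V\setminus\Omega;\mathbb{C}) \to \textrm{H}^n((V\setminus\Omega)\setminus U;\mathbb{C}) \to \cdots
$$
Since $V \cap \rcM = U$, one has $(V\setminus\Omega)\setminus U = (V\setminus\Omega)\setminus\rcM$, and assumption B1 says that the inclusion $(V\setminus\Omega)\setminus\rcM \hookrightarrow V\setminus\Omega$ is a homotopy equivalence. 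Consequently the two restriction arrows flanking $\textrm{H}^n_{U\setminus\Omega}(V\setminus\Omega;\mathbb{C})$ are isomorphisms, forcing the middle term to vanish.

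Using this vanishing, together with the Čech-Dolbeault quasi-isomorphism of Subsection~\ref{subsec:cheh-dolbeault-complex} and the identification of cohomology of the closed subset $V\setminus\Omega$ with the direct limit of the Čech-Dolbeault complexes over open neighborhoods of $V\setminus\Omega$ in $V$, one obtains a sufficiently small open neighborhood $W \supset V\setminus\Omega$ in $V$ and forms $\sigma'_1 \in \mathscr{Q}^{n-1}(W)$, $\sigma'_{01} \in \mathscr{Q}^{n-2}((V\setminus U)\cap W)$ satisfying $D(\sigma'_1,\sigma'_{01}) = (\tilde\tau_1|_W,\tilde\tau_{01}|_{(V\setminus U)\cap W})$. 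Next, fix a smooth cutoff $\chi \in C^\infty(V)$ with $\chi \equiv 1$ on an open neighborhood $W'$ of $V\setminus\Omega$ having $\overline{W'}\subset W$ and $\operatorname{supp}(\chi)\subset W$, and define $\sigma_1 := \chi\sigma'_1 \in \mathscr{Q}^{n-1}(V)$ and $\sigma_{01} := \chi\sigma'_{01} \in \mathscr{Q}^{n-2}(V\setminus U)$ by zero extension outside $W$. The pair $\tau := (\tilde\tau_1,\tilde\tau_{01}) - D(\sigma_1,\sigma_{01})$ is then a cocycle whose class in $\HQDRC{n}{\mathcal{V}_\rcM}$ equals $\mathds{1}$, and since $\chi \equiv 1$ on $W'$ one has $\tau|_{W'} = 0$. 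Thus $\operatorname{supp}_{V_1}(\tau_1) \cup \operatorname{supp}_{V_{01}}(\tau_{01}) \subset V\setminus W' \subset \Omega$, completing the construction.

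The main obstacle is the middle link in the chain: passing from the abstract vanishing of $\textrm{H}^n_{U\setminus\Omega}(V\setminus\Omega;\mathbb{C})$ to an explicit primitive defined on an honest open neighborhood $W$ of the closed set $V\setminus\Omega$, and then globalizing this primitive to $V$ through a cutoff in such a way that the correction $D(\sigma_1,\sigma_{01})$ simultaneously leaves the cohomology class untouched and annihilates the cochain on a whole neighborhood of $V\setminus\Omega$. The fineness of the sheaf $\mathscr{Q}$ underpins both the existence of the cutoff $\chi$ and the smoothness of the extended forms $\sigma_1,\sigma_{01}$.
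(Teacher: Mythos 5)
Your proof is correct and follows exactly the route the paper intends: the paper gives no proof of this lemma, deferring to Lemma 7.10 of \cite{HIS}, whose argument is precisely your combination of the long exact sequence for the pair $(V\setminus\Omega,\,(V\setminus\Omega)\setminus\rcM)$, the homotopy equivalence from B1 to kill $\textrm{H}^n_{U\setminus\Omega}(V\setminus\Omega;\mathbb{C})$, and a cut-off of the resulting primitive using the fineness of $\mathscr{Q}_{\rcE}$. No gaps.
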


Now we assume the conditions A1 and B1, and let $\tau = (\tau_1,\tau_{01})$ be the one
given in the above Lemma.  Then we can define the morphism
\begin{equation}
b_\Omega: \hexpo(\Omega) \longrightarrow 
\HQDC{0}{n}{\mathcal{V}_U}\, 
\otimes_{{\mathbb Z}_{\rcM(U)}}\, or_{\rcM/\rcE}(U).
\end{equation}
by 
\begin{equation}{\label{eq:def_boundary_map}}
b_\Omega(f) = [f \rho(\tau)] \otimes \mathds{1} \qquad (f \in \hexpo(\Omega)).
\end{equation}
\begin{lem} 
The above $b_\Omega$ is well-defined.
\end{lem}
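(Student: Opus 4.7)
The plan for showing that $b_\Omega$ is well-defined breaks into three tasks: verifying that $f\rho(\tau)$ is a legitimate element of $\QDC{0}{n}{\mathcal{V}_\rcM}$, that it is $\bvth$-closed, and that its class in $\HQDC{0}{n}{\mathcal{V}_\rcM}$ does not depend on the choice of $\tau$ from Lemma \ref{lem:support_one}.

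The first two tasks are essentially formal. For the first, the support condition $\operatorname{supp}_{V_1}(\tau_1),\,\operatorname{supp}_{V_{01}}(\tau_{01}) \subset \Omega$ means that $\rho(\tau_1)$ and $\rho(\tau_{01})$ vanish on a neighbourhood of every point outside $\Omega$. Multiplying by the holomorphic exponential-type $f$ defined on $\Omega$ and extending by zero therefore produces $C^\infty$ exponential-type forms on all of $V_1$ and $V_{01}$, since the exponential-type and smoothness conditions are local and are preserved by the trivial extension. For the second, the projection $\rho$ is a morphism of complexes, so $\bvth\,\rho(\tau) = \rho(D\tau) = 0$; the Leibniz rule together with $\bar\partial f = 0$ then gives $\bvth(f\rho(\tau)) = (\bar\partial f)\wedge \rho(\tau) + f\,\bvth\,\rho(\tau) = 0$ on $\Omega$, while off $\Omega$ both sides vanish identically by the support condition.

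The substantive task is the third. Given two choices $\tau, \tau'$, the difference $\tau - \tau'$ is $D$-closed with $[\tau - \tau'] = 0$ in $\HQDRC{n}{\mathcal{V}_\rcM}$, so there exists $\sigma \in \QDRC{n-1}{\mathcal{V}_\rcM}$ with $D\sigma = \tau - \tau'$. The crucial additional claim is that $\sigma$ can be chosen with $\operatorname{supp}(\sigma) \subset \Omega$. Granted this, $f\rho(\sigma)$ is a well-defined cochain by the same extension-by-zero argument, and
$$
\bvth(f\rho(\sigma)) \;=\; f\,\bvth\,\rho(\sigma) \;=\; f\,\rho(D\sigma) \;=\; f\,\rho(\tau - \tau'),
$$
so $[f\rho(\tau)] = [f\rho(\tau')]$ in $\HQDC{0}{n}{\mathcal{V}_\rcM}$.

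The main obstacle is precisely this support-controlled primitive $\sigma$: everything else is formal Leibniz manipulation, whereas obtaining $\sigma$ with $\operatorname{supp}(\sigma) \subset \Omega$ uses the geometric hypotheses on $\Omega$ essentially. I would handle it by following the strategy of the proof of Lemma 7.11 in \cite{HIS}: conditions A1 and B1 guarantee that the sub-complex of $\QDRC{\bullet}{\mathcal{V}_\rcM}$ consisting of cochains with support in $\Omega$ is quasi-isomorphic, in the degrees that matter, to the full \v{C}ech--de-Rham complex computing $\mathbf{R}\Gamma_U(V;\mathbb{C}_{\rcE})$. Consequently a $D$-coboundary relation between two representatives of $\mathds{1}$ that are themselves supported in $\Omega$ can be realized inside this sub-complex, producing the desired $\sigma$.
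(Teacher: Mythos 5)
Your argument is correct and follows exactly the route the paper intends: the lemma is stated without proof (the construction is explicitly modelled on \cite{HIS}), and your three steps --- extension by zero of $f\rho(\tau)$ via the closed-support condition $\operatorname{supp}\subset\Omega$, $\bvth$-closedness from $\bvth\rho(\tau)=\rho(D\tau)=0$ together with $\bar{\partial}f=0$, and the existence of an $\Omega$-supported primitive $\sigma$ of $\tau-\tau'$ coming from the fact that under A1 and B1 the subcomplex of $\Omega$-supported cochains of $\QDRC{\bullet}{\mathcal{V}_\rcM}$ maps quasi-isomorphically to the full complex (the non-parametric analogue of Lemma \ref{lem:fundamental_support_LQ}) --- reconstruct that argument faithfully. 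The only point you leave implicit is independence of the choice of sign of $\mathds{1}$, which is immediate because replacing $\mathds{1}$ by $-\mathds{1}$ replaces $\tau$ by a representative of $-\mathds{1}$, and the two sign changes cancel in the tensor product $[f\rho(\tau)]\otimes\mathds{1}$.
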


To avoid a higher jet as an $\Omega$, we also introduce the following condition
\begin{enumerate}
	\item[B2.] $\,\,$For any point $x \in \rcM$, there exist an open neighborhood 
$W \subset \rcE$ of $x$ and a non-empty open cone $\Gamma \subset M$
such that
$$
((W \cap M) \widehat{\times} \sqrt{-1}\Gamma) \,\cap\, W \,\subset\, \Omega.
$$
\end{enumerate}
Note that the condition B2 implies A1.
We also introduced the localized version of the condition B1.
\begin{enumerate}
	\item[B1'.] $\,\,$For any point $x \in \rcM$, there exist a family 
$\{V_\lambda\}_{\lambda \in \Lambda}$ of fundamental open neighborhoods
of $x$ in $V$, for which the canonical inclusion
$(V_\lambda \setminus \Omega) \setminus \rcM \hookrightarrow (V_\lambda \setminus \Omega)$
gives a homotopical equivalence.
\end{enumerate}

The following theorem can be shown in the same way as that in Appendix A.~in \cite{HIS}.
\begin{teo}[Theorem A.2 \cite{HIS}]{\label{teo:b_omega_commute}}
	Assume the conditions A2, B1, B1' and B2. Then the boundary value morphism
constructed in the functorial way and the one in this subsection coincide.
To be more precise, the following diagram commutes:
$$
\begin{diagram}
	\node{\hexpo(\Omega)}\arrow{e,t}{b_\Omega}\arrow{se,r}{b_\Omega}\node{\bexpo(U)} \\
	\node[2]
	{\mathrm{H}^n(C(\mathcal{V}_U,\mathcal{V}'_U)(\mathscr{Q}^\bullet_{\rcE}))
	\otimes_{{\mathbb Z}_{\rcM(U)}}\, or_{\rcM/\rcE}(U)}
\arrow{n,l}{}
\end{diagram}.
$$
\end{teo}

Furthermore, for any $W \in \mathcal{W}(U)$, we can find 
$\widetilde{W} \in \mathcal{W}(U)$ such that $\widetilde{W} \subset W$ and it satisfies
the conditions given in the above theorem. Hence, 
by Remark \ref{oss:remark_bb_map_and_ic}, we have the following corollary.
\begin{cor}{\label{cor:commute_b_omega_xx}}
For any $\Omega \in \mathcal{W}(U)$ satisfying the condition A2, 
we have the commutative diagram below:
$$
\begin{diagram}
\node{\hexpo(\Omega)}\arrow{e,t}{b_\Omega}\arrow{se,r}{}\node{\bexpo(U)} \\
\node[2]{\hat{\mathrm{H}}^n(\oexp(\mathcal{W}(U)))}
\arrow{n,l}{b_\mathcal{W}}
\end{diagram},
$$
where $b_{\mathcal{W}}$ is given by Theorem {\ref{thm:intuitive}} and
the down right arrow is just the embedding
$$
\hexpo(\Omega) \ni f \mapsto f \in \hat{\mathrm{H}}^n(\oexp(\mathcal{W}(U))).
$$
\end{cor}

Now we give a concrete construction of $\tau$ in a specific case.
\begin{es}\label{es:fundamental_example}
Let $U = \rcM$ and $V \subset \rcE$ be an open neighborhood of $U$. 
%and $\Omega = U \widehat{\times} \sqrt{-1}\Gamma$
%with $\Gamma \subset M$ being an $\mathbb{R}_+$-conic 
%non-empty open subset.
Let $\eta_0$, $\dots$, $\eta_\ell$ be a family of unit vectors in $M^*$ such that
for any $k+1$ choices $\eta_{\alpha_0}$, $\cdots$, $\eta_{\alpha_k}$
$(\alpha = (\alpha_0,\cdots, \alpha_k) \subset \{0,1,\cdots, \ell\})$
of vectors which are linearly dependent, the cone
$$
\mathbb{R}_{+} \eta_{\alpha_0} + \mathbb{R}_{+} \eta_{\alpha_1} + \cdots +
\mathbb{R}_{+} \eta_{\alpha_k}
$$
should contain a line (i.e., it becomes a non-proper cone). Furthermore, we also assume
that
$$
\sum_{k=0}^\ell \mathbb{R}_{+} \eta_k = M^*.
$$
Note that a typical family of such vectors is, for example,
\begin{itemize}
\item $n$ linearly independent unit vectors $\xi_0,\cdots,\xi_{n-1}$ with
	$\xi_{n} = - (\xi_0 + \cdots + \xi_{n-1})/|\xi_0 + \cdots + \xi_{n-1}|$.
\item a family of $2n$ unit vectors
$$
(\pm 1,0,\cdots,0), \cdots, (0,\cdots,0, \pm 1).
$$
\end{itemize}
%Set 
%$$
%\Lambda_*^{k} = \{(\alpha_1, \cdots, \alpha_k) \subset \{1,2,\cdots,\ell\};\,
%\alpha_1 < \cdots < \alpha_k\}.
%$$
Let $\Gamma_k$ ($k= 0, \cdots, \ell$) be open subsets in $M$ defined by
$$
\Gamma_k = \{y \in M;\, \langle y, \eta_k\rangle > 0\}.
$$
Then, by the conditions for $\{\eta_k\}$, for any 
$\alpha = (\alpha_0,\cdots, \alpha_k)$ with
$\eta_{\alpha_0}$, $\cdots$, $\eta_{\alpha_k}$ being linearly dependent
(which is always satisfied if $k \ge n$), we have
$$
\Gamma_{\alpha_0} \cap \Gamma_{\alpha_2} \cap \cdots \cap \Gamma_{\alpha_k} = \emptyset.
$$
Further we also have
$$
\bigcup_{k=0,\cdots,\ell}\,\Gamma_{k} = M \setminus \{0\}.
$$
Let $H_k$ ($k=0,\cdots,\ell$) be an $\mathbb{R}_+$-conic open convex subset in $M$ 
satisfying the conditions below:
\begin{enumerate}
	\item $H_k \subset \Gamma_k$ $(k=0,1,\cdots,\ell)$.
\item $H_0 \cup H_1 \cup \dots \cup H_\ell = M \setminus \{0\}$.
\end{enumerate}
Then we choose $\ell+1$ sections $\varphi_0$, $\dots$, $\varphi_{\ell}$ in
$\mathscr{Q}_{\rcE}(V \setminus U)$ which satisfies
\begin{enumerate}
\item Set $S_k := (U \widehat{\times} \sqrt{-1} H_k) \cap V$. Then
$\operatorname{supp}(\varphi_k) \subset S_k$ holds for $k=0,\dots,\ell$. 
\item $\varphi_0 + \varphi_1 + \dots + \varphi_{\ell} = 1$ on $V \setminus U$.

\end{enumerate}
Set
$$
\Lambda^{k+1}_* = \{\alpha = (\alpha_0,\cdots, \alpha_k) \subset \{0,1,\cdots,\ell+1\};\,
\alpha_0 < \alpha_1 < \cdots < \alpha_k = \ell+1\}
$$
and set $S_{\ell+1} = V$.
For $\alpha = (\alpha_0,\cdots, \alpha_k) \in \Lambda^{k+1}_*$, we define
$$
S_{\alpha} = S_{\alpha_0} \cap S_{\alpha_1} \cap \cdots \cap S_{\alpha_k}
$$
as usual and for $\alpha = (\alpha_0,\cdots, \alpha_n) \in \Lambda^{n+1}_*$, we also define
$$
\mathrm{sgn}(\alpha) = 
\begin{cases}
	+1 \qquad &(\text{$\xi_{\alpha_0}$, $\cdots$, $\xi_{\alpha_{n-1}}$ is a positive frame in $M^*$ }), \\
	-1\qquad &(\text{otherwise}).
\end{cases}
$$
Now for $\alpha = (\alpha_0,\cdots, \alpha_n) \in \Lambda^{n+1}_*$, we define
\begin{equation}
\tau^\alpha_{01} = (-1)^{n}(n-1)!\,\mathrm{sgn}(\alpha) \chi_{S_{\alpha}}\,
d \varphi_{\alpha_0} \wedge \cdots \wedge d\varphi_{\alpha_{n-2}},
\end{equation}
where $\chi_Z$ is the characteristic function of the set $Z$. 
We can see the following facts by the same reasoning as that of Example 7.14 in 
\cite{HIS}.
\begin{enumerate}
	\item $\tau^\alpha := (0,\,\tau^\alpha_{01})$ belongs to $\QDRC{n}{\mathcal{V}_{U}}$.
%, where
%$\mathcal{G} = \{\rcE\setminus\rcM,\,\rcE\}$ and
%$\mathcal{G}' = \{\rcE\setminus\rcM\}$.
\item $D \tau^\alpha = 0$ and $[\tau^\alpha] = \mathds{1}$ in 
$\HQDRC{n}{\mathcal{V}_U}$. Here we choose $\mathds{1}$ so that it gives the standard positive orientation of $M$.
\item $\operatorname{supp}_{V\setminus U}(\tau^\alpha_{01}) \subset S_\alpha$.
This follows from the fact that, on $S_\alpha$, we have
$\varphi_{\alpha_0} + \cdots + \varphi_{\alpha_{n-1}} = 1$.
\end{enumerate}
Hence this $\tau^\alpha$ satisfies all the desired properties described in
Lemma \ref{lem:support_one} when $\Omega = S_\alpha$. Note that we have
\begin{equation}
\rho(\tau^\alpha) =
\left(0,\,\, (-1)^{n}(n-1)!\, \mathrm{sgn}(\alpha)\chi_{S_\alpha}\,
\bar{\partial} \varphi_{\alpha_0} \wedge \cdots \wedge \bar{\partial} \varphi_{\alpha_{n-2}}\right).
\end{equation}
In particular, for $f \in \hexpo(\Omega)$, we have
$$
b_\Omega(f) = [f \rho(\tau^\alpha)]
= [\left(0,\,\, (-1)^{n}(n-1)!\, \mathrm{sgn}(\alpha) f(z) \chi_{S_\alpha}\,
\bar{\partial} \varphi_{\alpha_0} \wedge \cdots \wedge \bar{\partial} \varphi_{\alpha_{n-2}}\right)].
$$

\

In what follows, we additionally assume that $V$ is $1$-regular and $V \cap E$ is Stein. 
We define the coverings $\mathcal{S}$ and
$\mathcal{S}'$ of $(V, V \setminus U)$ by
$$
\mathcal{S} = \{S_0,\cdots, S_{\ell+1}\}, \qquad
\mathcal{S}' = \{S_0,\cdots, S_{\ell}\},
$$
where 
$$
S_k = (U \widehat{\times} \sqrt{-1}H_{k+1}) \cap V \qquad
(k=0,1,\cdots,\ell)
$$
and $S_{\ell+1} = V$.  We also define the coverings 
$$
\mathcal{V}_U = \{V \setminus U,\,V\}, \qquad 
\mathcal{V}'_U = \{V \setminus U\}.
$$
Let us consider the diagram whose morphisms
are all isomorphic:
\begin{equation}
\mathrm{H}^n(C(\mathcal{S},\,\mathcal{S}')(\hexpo))
\xrightarrow{\,\,\alpha^n_1\,\,}
\HQDC{0}{n}{\mathcal{S}}
\xleftarrow{\,\,\alpha^n_2\,\,}
\HQDC{0}{n}{\mathcal{V}_U}.
\end{equation}
Let $f \in \hexpo(S_{\alpha})$.
Then it follows from Lemma A.5 \cite{HIS} that,
by repeated applications of the well-known Wiel procedure, we can find
$$
((\alpha_2^n)^{-1}\circ \alpha_1^n)([f]) =
[\left(0,\,\, (n-1)!\, f(z) \chi_{S_\alpha}\,
\bar{\partial} \varphi_{\alpha_0} \wedge \cdots \wedge \bar{\partial} 
\varphi_{\alpha_{n-2}}\right)]
=(-1)^n\mathrm{sgn}(\alpha)b_{S_{\alpha}}(f).
$$
Hence we have obtained the following commutative diagram:
\begin{equation}{\label{eq:b_w_and_diag_equiv}}
\begin{diagram}
\node{ {\textrm H}^n(C(\mathcal{S},\,\mathcal{S}')(\hexpo))} 
\arrow{s,l}{\iota_{IC}}\arrow{se,l}{(\alpha^n_2)^{-1} \circ \alpha_1^n} \\
\node{\hat{\textrm H}^n(\hexpo(\mathcal{W}(U)))}
\arrow{e,t}{b}
\node{\HQDC{0}{n}{\mathcal{V}_{U}}}
\end{diagram},
\end{equation}
where all morphisms are isomorphic and 
$
b: {\hat{\textrm H}^n(\hexpo(\mathcal{W}(U)))}
\to \HQDC{0}{n}{\mathcal{V}_{U}}
$
is defined by the boundary value morphisms
$$
\hexpo(W) \ni f \, \mapsto\, b_W(f) \in \HQDC{0}{n}{\mathcal{V}_U} \qquad
(W \in \mathcal{W}(U)).
$$
Furthermore, $\iota_{IC}$ is given by
$$
\dfrac{\bigoplus_{\alpha \in \Lambda^{n+1}_*}
\hexpo(S_\alpha)}{\bigoplus_{\beta \in \Lambda^{n}_*} \hexpo(S_\beta)}
\ni [(f_\alpha)_\alpha] \mapsto (-1)^n\bigoplus_\alpha\, \mathrm{sgn}(\alpha) f_\alpha \in \hat{\mathrm{H}}^n(\oexp(\mathcal{W}({U})))
$$
as in Lemma \ref{lem:b_w_and_I_C_commute}.
\end{es}

\section{Laplace transformation $\LL$ for hyperfunctions}

\subsection{Preparation}

Let $(z_1 = x_1 + \sqrt{-1}y_1,\,\cdots,\,z_n = x_n + \sqrt{-1}y_n)$
be a coordinate system of $E$.
Hereafter, we fix the orientation of $M$ and $E$ so that
$\left\{ \pdiff{x_1},\,\pdiff{x_2},\,\dots,\,\pdiff{x_n}\right\}$  gives the positive orientation on $M$, and
$\left\{\pdiff{y_1},\,\dots,\,\pdiff{y_n},\,\pdiff{x_1},\,\dots,\,\pdiff{x_n}\right\}$ give the one on $E$.
%We omit the orientation sheaf in the definition of Laplace hyperfunctions for simplicity.
\begin{oss}
The above orientation of $E$ is different from the usual standard orientation
of $\mathbb{C}^n$, where $\left\{\pdiff{x_1},\,\pdiff{y_1},\,\pdiff{x_2},\,
\pdiff{y_2},\,\dots,\,\pdiff{x_n},\,\pdiff{y_n}\right\}$
is taken to be a positive frame. 
\end{oss}
We say that the boundary $\partial D$ of a subset $D$ in $\rcE$ is
(partially) smooth if $\partial D \cap E$ is (partially) smooth. Note that, when the boundary $\partial D$ 
is smooth, the orientation of $\partial D$ is determined
so that the outward-pointing normal vector of $\partial D$ followed by a positive frame
of $\partial D$ determines the positive orientation of $E$.

\

Let $h:E^*_\infty \to \{-\infty\} \cup \mathbb{R}$ be an upper semi-continuous function, and let $W$ be an open subset in $\rcDE$ and $f$ a holomorphic function on $W \cap E^*$. 

\begin{df}
We say that $f$ is of infra-$h$-exponential type (at $\infty$)
on $W$ if, for any compact set $K \subset W$ and any $\epsilon > 0$,
there exists $C > 0$ such that
$$
e^{|\zeta|\,h(\pi_{E^*_\infty}(\zeta))}|f(\zeta)| \le C e^{\epsilon |\zeta|} 
\qquad (\zeta \in K \cap (E^* \setminus \{0\})),
$$
where $\pi_{E^*_\infty}: E^*\setminus \{0\} \to (E^*\setminus \{0\})/\mathbb{R}_+ = E^*_\infty$ is the canonical projection, i.e., 
$\pi_{E^*_\infty}(\zeta) = \zeta/|\zeta|$, and we set $e^{-\infty} = 0$ for convenience.
In particular, we say that $f$ is simply called of infra-exponential type if $h \equiv 0$.
\end{df}

Define a sheaf on $E^*_\infty$ by, for an open subset $\Omega$
in $E^*_\infty$,
$$
\hinfo(\Omega) := 
\lind{W}\, \{f \in \mathscr{O}(W \cap E^*);\,
\text{$f$ is of infra-exponential type on $W$}\},
$$
where $W$ runs through open neighborhoods of $\Omega$ in $\rcDE$.
Then the family $\{\hinfo(\Omega)\}_{\Omega}$ forms the sheaf $\hinfo$ on $E^*_\infty$.
Similarly we define the sheaf $\hhinfo{h}$ on $E^*_\infty$ by, for an open subset $\Omega \subset E^*_\infty$, 
$$
\hhinfo{h}(\Omega) := 
\lind{W}\, \{f \in \mathscr{O}(W \cap E^*);\,
\text{$f$ is of infra-$h$-exponential type on $W$}\},
$$
where $W$ runs through open neighborhoods of $\Omega$ in $\rcDE$.

We also introduces the sheaf $\hexpa := \left.\hexpo\right|_{\rcM}$ of real analytic functions
of exponential type and the one $\hexpv$ of real analytic volumes
of exponential type.
The latter sheaf is defined by
$$
\hexpv = \left.\hexppo{n}\right|_{\rcM} \otimes_{\mathbb{Z}_{\rcM}} or_{\rcM},
$$
where $or_{\rcM} := (j_M)_*\,or_M$ with
$j_M: M \hookrightarrow \rcM$ being the canonical inclusion.
Note that
we can also define the orientation sheaf $or_{\rcE}$ on $\rcE$ by $(j_E)_*\,or_E$ with
the canonical inclusion $j_E: E \hookrightarrow \rcE$,
for which we have the canonical isomorphism
\begin{equation}{\label{eq:orientation_iso}}
or_{\rcM/\rcE} \otimes or_{\rcM} \simeq or_\rcE|_{\rcM}.
\end{equation}

Let $K$ be a subset in $\rcE$. Then we define the support function 
$h_K(\zeta): E^*_\infty \to \{\pm\infty\} \cup \mathbb{R}$ by
\begin{equation}
h_K(\zeta) =
\begin{cases}
+\infty \qquad &\text{if $K \cap E$ is empty}, \\
\\
\underset{z \in K \cap E}{\inf}\,\,\,\textrm{Re}\,\langle z,\,\zeta \rangle
\qquad & \text{otherwise},
\end{cases}
\end{equation}
where we identify $\zeta \in E^*_\infty$ with a unit vector in $E^*$.
Note that if $K$ is properly contained 
in a half space of $\rcE$ with direction $\zeta_0 \in E^*_\infty$ (which is equivalently saying $\zeta_0 \in \HHPC{K}$)
and if $K \cap E$ is non-empty, then the subset
$$
\overline{K} \cap \overline{\{z \in E;\, \textrm{Re}\,\langle z,\,\zeta_0 \rangle = h_K(\zeta_0)\}}
$$
is a compact set in $E$.
The following lemma easily follows from the definition.
\begin{lem}
	Let $K \subset \rcE$ with $\HHPC{K}\ne \emptyset$.
	Then $\HHPC{K}$ is a connected open subset in $E^*_\infty$. 
	Furthermore, the function
	$h_K(\zeta)$ is upper semi-continuous on $E^*_\infty$, in particular,
	it is continuous on $\HHPC{K}$ and $h_K(\zeta) > -\infty$ there. 
\end{lem}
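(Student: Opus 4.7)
Identify $E^*_\infty$ (resp. $E_\infty$) with the unit sphere in $E^*$ (resp. $E$) via the canonical projection $\pi$. Put $A := \overline{K}\cap E_\infty$, a compact subset of the unit sphere in $E$ (as a closed subset of the compact space $\rcE$). Under these identifications,
$$
\HPC{K} = \{\zeta \in E^*_\infty \mid \operatorname{Re}\langle z,\zeta\rangle > 0 \text{ for all } z \in A\}
$$
with the convention $\HPC{K} = E^*_\infty$ if $A=\emptyset$. All four assertions of the lemma will be read off from this description.

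\textbf{Openness and connectedness of $\HPC{K}$.} The function $g(\zeta) := \inf_{z\in A}\operatorname{Re}\langle z,\zeta\rangle$ is continuous on $E^*_\infty$ as an infimum, over a compact set, of a jointly continuous function, so $\HPC{K} = \{g>0\}$ is open. For connectedness, I observe that the preimage of $\HPC{K}$ in $E^*\setminus\{0\}$ under $\pi$ is the intersection of the open real half-spaces $\{\zeta\ne 0 \mid \operatorname{Re}\langle z,\zeta\rangle>0\}$ as $z$ ranges over $A$, hence a nonempty open convex cone—in particular path-connected. Its continuous image $\HPC{K}$ is then path-connected, hence connected.

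\textbf{Upper semi-continuity; the crux for $\HPC{K}$.} Upper semi-continuity of $h_K$ on all of $E^*_\infty$ is immediate: it is the infimum of the continuous affine functions $\zeta \mapsto \operatorname{Re}\langle z,\zeta\rangle$ as $z$ ranges over $K\cap E$. Continuity and finiteness on $\HPC{K}$ rest on the following compactness reduction. Fix $\zeta_0\in\HPC{K}$. By compactness of $A$, I choose $\delta>0$, an open neighborhood $A'$ of $A$ in the unit sphere of $E$, and a neighborhood $\mathcal{U}$ of $\zeta_0$ in $E^*_\infty$, such that $\operatorname{Re}\langle z,\zeta\rangle \ge \delta$ for all $z\in A'$ and all $\zeta\in\mathcal{U}$. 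Since the only accumulation directions of $K\cap E$ at infinity lie in $A$, there exists $R>0$ with $z/|z|\in A'$ whenever $z\in K\cap E$ and $|z|\ge R$. Hence $\operatorname{Re}\langle z,\zeta\rangle\ge\delta|z|\ge\delta R$ on $\{|z|\ge R\}\cap K\cap E$ for every $\zeta\in\mathcal{U}$. Taking $\zeta=\zeta_0$ already yields $h_K(\zeta_0)>-\infty$. Further enlarging $R$ so that $\delta R>h_K(\zeta_0)+1$ and shrinking $\mathcal{U}$ via upper semi-continuity so that $h_K(\zeta)\le h_K(\zeta_0)+\tfrac12<\delta R$ on $\mathcal{U}$, I conclude that the defining infimum of $h_K(\zeta)$ is attained on the compact set $\overline{K\cap E}\cap\{|z|\le R\}$ for every $\zeta\in\mathcal{U}$; continuity at $\zeta_0$ then follows from the uniform continuity of $\operatorname{Re}\langle z,\zeta\rangle$ in $\zeta$ on this compact $z$-set.

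\textbf{Main obstacle.} The substantive step is the uniform linear lower bound $\operatorname{Re}\langle z,\zeta\rangle\gtrsim|z|$ for large $|z|$, holding uniformly in $\zeta$ near $\zeta_0$; this is precisely what confines the infimum to a compact subset of $E$ and produces both finiteness and lower semi-continuity of $h_K$ at $\zeta_0$. Openness, connectedness and upper semi-continuity on the whole of $E^*_\infty$ are formal consequences of the definitions.
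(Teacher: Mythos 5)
Your proof is correct; the paper itself offers no argument for this lemma (it is declared to ``easily follow from the definition''), and what you have written is precisely the routine verification being omitted. The one genuinely non-formal step --- using the compactness of $\overline{K}\cap E_\infty$ to get a uniform bound $\operatorname{Re}\langle z,\zeta\rangle\ge\delta|z|$ for $|z|\ge R$ and $\zeta$ near $\zeta_0$, which confines the infimum defining $h_K$ to a fixed compact subset of $E$ --- is identified and handled correctly, and it yields both the finiteness and the (locally Lipschitz) continuity of $h_K$ on $\HPC{K}$.
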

\begin{oss}
In the above lemma, if $K \subset \rcM$, then we have
$$
\HHPC{K} = \begin{cases}
\varpi_{M^*_\infty}^{-1}(\HHPC{K}\cap M^*_\infty) 
\qquad &(\overline{K} \cap M_\infty \ne \emptyset), \\
\\
\varpi_{M^*_\infty}^{-1}(\HHPC{K}\cap M^*_\infty) \cup \sqrt{-1}M^*_\infty = E^*_\infty
\qquad &(\overline{K} \cap M_\infty = \emptyset)
\end{cases}
$$
and $h_K(\zeta)$ is continuous on $\HHPC{K} \cup \sqrt{-1}M^*_\infty$
(for the definition of $\varpi_{M^*\infty}$, see {\eqref{eq:inf_times_star}}).
\end{oss}

\subsection{Laplace transformation}

Let $K$ be a closed subset in $\rcM$ such that $\HHPC{K}$ is non-empty.
Take $\xi_0 \in \HHPC{K} \cap M^*_\infty$ and an open neighborhood $V$ of $K$ in $\rcE$.
Set $U := \rcM \cap V$ and coverings
\begin{equation}{\label{eq:def-V-K}}
\mathcal{V}_K := \{V_0 := V \setminus K,\, V_1 := V\},\qquad
\mathcal{V}'_K := \{V_0\}.
\end{equation}
In what follow, we assume that $U$ and $V$ are connected for simplicity.  
Note that we have
$$
\Gamma_K(U;\,\bexpo \otimes_{\hexpa} \hexpv) 
\simeq \HQDC{n}{n}{\mathcal{V}_K}\, \underset{\mathbb{Z}_{\rcM}(U)}{\otimes}\,
or_{\rcM/\rcE}(U)\, 
\underset{\mathbb{Z}_{\rcM}(U)}{\otimes}\, or_{\rcM}(U).
$$
Let 
$$
u = \tilde{u} \otimes a_{\rcM/\rcE} \otimes a_\rcM \in \Gamma_K(U;\,\bexpo \otimes_{\hexpa} \hexpv),
$$
where 
$a_{\rcM/\rcE} \otimes a_{\rcM} \in  or_{\rcM/\rcE}(U)\, 
\underset{\mathbb{Z}_{\rcM}(U)}{\otimes}\, or_{\rcM}(U)$ and let
$\nu = (\nu_1,\,\nu_{01}) \in \QDC{n}{n}{\mathcal{V}_K}$ be a 
representative of $\tilde{u}$, i.e., $\tilde{u}= [\nu]$. 

Here we may assume that $a_{\rcM/\rcE}$ and $a_\rcM$
are generators in each orientation sheaf. Hence,
through the canonical isomorphism {\eqref{eq:orientation_iso}}, the section
$a_{\rcM/\rcE} \otimes a_\rcM$ determines the orientation of $E$. We perform the subsequent integrations under this orientation.
\begin{oss}
If $or_{\rcM/\rcE}$ gives the orientation so that $\{dy_1,\dots,dy_n\}$ is a positive frame
and if $or_{M}$ gives the orientation so that $\{dx_1,\dots,dx_n\}$ is a positive one.
Then 
$\{dy_1,\dots,dy_n,dx_1,\dots,dx_n\}$ becomes a positive frame
under the orientation determined by $a_{\rcM/\rcE} \otimes a_\rcM$.
\end{oss}

\begin{df}{\label{def:laplace-integral}}
The Laplace transform of $u$ with a \v{C}ech Dolbeault representative
$\nu = (\nu_1,\,\nu_{01}) \in \QDC{n}{n}{\mathcal{V}_K}$ is defined by
\begin{equation}\label{eq:def-laplace}
\LL_D(u)(\zeta) := \int_{D \cap E} e^{-z \zeta} \nu_1 - 
\int_{\partial D \cap E} e^{-z \zeta}\nu_{01},
\end{equation}
where $D$ is a contractible open subset in $\rcE$ with a \textit{good} boundary 
(see the remark below)
such that $K \subset D \subset \overline{D} \subset V$ and it is properly contained
in a half space of $\rcE$ with direction $\xi_0$.
\end{df}
Note that the orientation of $D$ and $\partial D$ is taken in the usual way, that is,
the orientation of $D$ is that of $E$, and the one of $\partial D$ is determined so
that the outward pointing normal vector of $D$ and a positive frame of $\partial D$
form that of $E$.

\begin{oss}{\label{oss:good_boundary}}
Recall the definition of a piecewise $C^\infty$ submanifold with boundary in $E$
introduced in Definition 5.10 \cite{Suwa00}. We use, through the paper, a slightly weaker definition of a piecewise $C^\infty$ submanifold with boundary so that a cone 
whose boundary is smooth except for the vertex in $E$ also belongs to such a class of submanifolds.

Let $Z' \subset Z$ be subsets in $E$.
$Z$ is called a partially $C^\infty$ submanifold with
boundary $Z'$ if there exist a triangulation $(K,h)$ of $E$ and
subcomplexes $L' \subset L$ of $K$ which satisfy the conditions below:
\begin{enumerate}
\item $h(|L|) = Z$ and $h(|L'|) = Z'$
and $|L|$ is a $\mathrm{PL}$ 
submanifold of $|K|$ with boundary $|L'|$. 
\item Any simplex $\sigma$ of $L$ satisfies the following conditions, where
we set $\ell_\sigma = \dim \sigma$ and
$H_\sigma$ denotes the $\ell_\sigma$ dimensional affine space containing $\sigma$:
\begin{enumerate}
	\item $h|_{\sigma}: \sigma \to E$ is a Lipschitz mapping.
\item  $h|_{\sigma^\circ}: \sigma \setminus  |\partial \sigma| \to E$ is a $C^\infty$
mapping of rank $\ell_\sigma$, and
it extends to a $C^\infty$ mapping of rank $\ell_\sigma$ on 
an open neighborhood of $\sigma^*$ in $H_\sigma$. Here we set
$$
\sigma^* =
\sigma \setminus \bigcup_{\tau \prec \sigma,\,\dim \tau \le \ell_\sigma-2} \tau.
$$
\end{enumerate}
\end{enumerate}
Note that 
the Stokes formula holds for a partially $C^\infty$ submanifolds with boundary
in the same way as those in the case of ``a piecewise $C^\infty$ manifold with boundary'' (\cite{Suwa00}) and ``a standard chain'' introduced by Whitney (see Chapter III \cite{Wh}).

Now let us define that a open subset $D \subset \rcE$ has a good boundary:
$D$ has a good boundary if there exist $C > 0$ and $R>0$ such that the following conditions are satisfied.
\begin{itemize}
\item For any $r > R$, the sets $\overline{D \cap B_r}$ and $\partial D \cap B_r$
are compact partially $C^\infty$ submanifolds with boundaries 
$\partial(D \cap B_r)$ and $\partial D \cap \partial B_r$
in $E$, respectively. Furthermore,  
$$
|\partial (D \cap B_r)|_{2n-1} < Cr^{2n-1}, \qquad
|\partial D \cap \partial B_r|_{2n-2} < Cr^{2n-2}
$$
hold. Here $|\bullet|_{d}$ denotes the $d$ dimensional Hausdorff measure of a Borel set in $E$ and
$$
B_r = \{x + \sqrt{-1}y \in E;\, |x| \le r\}.
$$
\end{itemize}
%W \cap D = \{(\tau_1,\cdots, \tau_{2n});\, \tau_1 > 0,\,\cdots,\, \tau_\ell > 0\}.
%$$
In what follows, we always assume a chain $D$ of Laplace integral has a good boundary.

\

Before ending this remark, we explain how to make a chain $D$ with a good boundary of the integration
of Laplace transform in a convenient way. Let $K \subset \rcM$ be a non-empty regular closed cone. Now let us take a $1$-regular open cone $W \subset \rcM$ with the vertex $a \in M$ such that 
$K \subset W$ and $W \cap M$ has a smooth boundary except for the vertex
and choose a smooth function
$\rho=(\rho_1,\rho_2\,\cdots,\rho_n): W' \cap M \to \mathbb{R}^n$ 
($W' \subset \rcM$ is an open neighborhood of $\overline{W}$) satisfying
\begin{enumerate}
\item $\rho(x) \in \mathbb{R}_+^n$ ($x \in W \cap M$) and there exists $C> 0$ such that 
	$\displaystyle\sum_{1 \le i \le n}\left|\dfrac{\partial \rho}{\partial x_i}(x)\right| \le C$ on $W \cap M$.
\item there exist $\epsilon > 0$ and an open neighborhood $\widetilde{W}$ of 
$K \cap M_\infty$ in $W$ such that $|\rho(x)| \ge \epsilon|x|$ $(x \in \widetilde{W} \cap M)$ holds.
\end{enumerate}
Then we define
$$
D(W,\rho) = \widehat{\,\,\,\,}\{x + \sqrt{-1}y \in E;\, 
x \in W,\, |y_k| < \rho_k(x)\,\,(k=1,\cdots,n)\}.
$$
Clearly $D(W,\rho)$ is an open neighborhood of $K$ in $\rcE$.
Conversely,
for any open neighborhood $V$ of $K$ in $\rcE$, we have $K \subset \overline{D(W,\rho)} \subset V$ by suitable choices of $W$ and $\rho$. 
Note that $D(W,\rho)$ has a good boundary in the above sense.
%Note that, if $\rho(x) \in \mathbb{R}^n_+$ holds for every $x \in \overline{W} \cap M$, 
%then $D(W,\rho)$ becomes partially smooth. However, if some of $\rho_k(x)$ becomes $0$
%at a point in $\partial W$, then $D(W,\rho)$ may not be partially smooth along $\partial
%W$. Even in this case, such a point is negligible and 
%Stokes formula still holds in the sense of Whitney (i.e., a standard domain in his sense). Hence 
%such a $D(W,\rho)$ is also allowed as a chain of a Laplace integral in what follows.
As a special case of $D(W,\rho)$, if we take 
$$
W = \widehat{\,\,\,\,}\{(y_1,\cdots,y_n) \in M;\, y_k > 0\,(k=1,2,\cdots,n)\}
$$
and
$$
\rho(x) = \sigma x
$$
for some $\sigma > 0$, then $D(W, \rho) \cap E$ becomes the product of one dimensional cone
in $\mathbb{C}$.
$$
\widehat{\,\,\,\,}
(\{|y_1| < \sigma x_1\} \times \cdots \times \{|y_n| < \sigma x_n\}).
$$
\end{oss}

\begin{oss}
In what follows, we write
$\displaystyle\int_{D} e^{-z\zeta}\nu_1$ instead of
$\displaystyle\int_{D \cap E} e^{-z\zeta}\nu_1$, etc., for simplicity.
\end{oss}

\

Set $z = x + \sqrt{-1}y$ and $\zeta = \xi + \sqrt{-1}\eta$. We may assume $\xi_0 = (1,0,\cdots,0)$, and we write $x = (x_1,x')$ and $\xi = (\xi_1, \xi')$. 
Then there exist $b \in \mathbb{R}$ and  $\kappa > 0$ such that
$$
D \subset \{z = x + \sqrt{-1}y;\, |x'| + |y| \le \kappa(x_1 - b)\}.
$$
Furthermore, it follows from the
definition of $\nu$ that there exist $H > 0$ and $C \ge 0$
such that $|\nu_{01}| \le Ce^{Hx_1}$
on a neighborhood of $\partial D$ and
$|\nu_{1}| \le Ce^{Hx_1}$
on a neighborhood of $\overline{D}$.
Hence, if $z \in D$, we have
$$
|e^{-z\zeta}\nu_1| \le
Ce^{-x\xi + y\eta + Hx_1}
\le Ce^{-x_1\xi_1 + \kappa(|\xi'|+|\eta|)(x_1-b)+ Hx_1},
$$
from which the integral $\displaystyle\int_{D} e^{-z\zeta}\nu_1$ converges
if $\xi_1$ is sufficiently large. We also have the same conclusion
for $\displaystyle\int_{\partial D} e^{-z\zeta}\nu_{01}$.
\begin{lem}{\label{lem:L_well_defined}} $\LL_D(u)$ is holomorphic at points
$\zeta = R \xi_0$ if $R > 0$ is sufficiently large.
Furthermore, $\LL_D(u)$ is independent of the choices of
a representative $\nu$ of $u$ and $D$ of the integral.
Here we identify $\xi_0$ with the corresponding unit vector in $M^*$.
\end{lem}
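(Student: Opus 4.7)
All three assertions will reduce to Stokes' theorem combined with the exponential decay of $e^{-z\zeta}$ near $\zeta = R\xi_0$.

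For the holomorphicity, I would work on a small complex neighborhood $\Omega_0$ of $R_0\xi_0$, with $R_0 > 0$ so large that the pointwise estimate displayed just before the lemma provides an integrable majorant for both $e^{-z\zeta}\nu_1$ and $e^{-z\zeta}\nu_{01}$ uniformly in $\zeta \in \Omega_0$. Since the kernel is entire in $\zeta$, the standard theorem on holomorphy of parameter integrals (or Morera's theorem applied to a small triangle in $\zeta$) then ensures that both integrals are holomorphic on $\Omega_0$.

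For independence of the representative, suppose $\nu'$ is another representative of $\tilde u$, so that $\nu - \nu' = \bvth\tau$ for some $\tau = (\tau_1, \tau_{01}) \in \QDC{n}{n-1}{\mathcal{V}_K}$, with $\tau_1 \in \mathscr{Q}^{n,n-1}_\rcE(V)$ and $\tau_{01} \in \mathscr{Q}^{n,n-2}_\rcE(V \setminus K)$. Unwinding the definition of $\bvth$ yields $\nu_1 - \nu_1' = \bar\partial \tau_1$ and $\nu_{01} - \nu_{01}' = \tau_1|_{V_{01}} - \bar\partial \tau_{01}$. Since $e^{-z\zeta}$ is holomorphic in $z$ and the holomorphic degree of each $\tau_\bullet$ is already maximal, one has $d(e^{-z\zeta}\tau_1) = e^{-z\zeta}\bar\partial \tau_1$ and $d(e^{-z\zeta}\tau_{01}) = e^{-z\zeta}\bar\partial \tau_{01}$. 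Applying Stokes' theorem on $D$ and on $\partial D$ (noting $\partial(\partial D) = \emptyset$) then gives
\begin{equation*}
\int_D e^{-z\zeta}\bar\partial \tau_1 = \int_{\partial D} e^{-z\zeta}\tau_1, \qquad \int_{\partial D} e^{-z\zeta}\bar\partial \tau_{01} = 0,
\end{equation*}
so that the contribution of $\bvth\tau$ to $\LL(u)_D$ cancels, leaving $\LL(u)_D$ independent of the chosen representative.

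For independence of $D$, I would reduce via $D_1 \cap D_2$ (or an interpolating family) to the case $D_1 \subset D_2$. Setting $A := \overline{D_2 \setminus D_1} \subset V \setminus K$, on which the cocycle condition $\bvth \nu = 0$ reads $\nu_1 = \bar\partial \nu_{01}$, an application of Stokes on $A$ with $\partial A = \partial D_2 - \partial D_1$ (outward-oriented) gives
\begin{equation*}
\int_{D_2} e^{-z\zeta}\nu_1 - \int_{D_1} e^{-z\zeta}\nu_1 = \int_A e^{-z\zeta}\bar\partial \nu_{01} = \int_{\partial D_2} e^{-z\zeta}\nu_{01} - \int_{\partial D_1} e^{-z\zeta}\nu_{01},
\end{equation*}
which is exactly $\LL(u)_{D_2} = \LL(u)_{D_1}$.

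The chief technical obstacle, common to all three steps, is that the integration domains, although compact in $\rcE$, are unbounded in $E$, so Stokes' theorem must be applied with care to rule out spurious boundary contributions at infinity. This is handled using the geometric containment $|x'| + |y| \le \kappa(x_1 - b)$ on $\overline{D}$, the exponential-type bounds on the $\nu_\bullet$ and $\tau_\bullet$, and the decay $e^{-R x_1}$ of the kernel when $\xi_1 \gg 0$ — essentially the same computation that justifies convergence of the Laplace integral in the display immediately preceding the lemma. Once these tail estimates are in place, each of the Stokes computations above is rigorous.
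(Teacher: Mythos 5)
Your proposal is correct and follows essentially the same route as the paper: holomorphy and convergence come from the exponential estimate established just before the lemma, and both independence claims come from Stokes' theorem combined with the cocycle relation $\nu_1 = \bar\partial\nu_{01}$ on $V\setminus K$ (the paper reduces to $\tilde D\subset D$ exactly as you do, and disposes of representative-independence with the remark that $\LL_D(\bvth\tau)=0$ ``by the same reasoning''). Your explicit treatment of the boundary contributions at infinity is a detail the paper leaves implicit.
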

\begin{proof}
The convergence of the integration is already shown above.
We first show $\LL_D(\bvth \tau) = 0$ for $\tau = (\tau_1, \tau_{01}) \in \QDC{n}{n-1}{\mathcal{V}_K}$.
$$
\begin{aligned}
\LL_D(\bvth \tau)
&=
\int_{D} e^{-z\zeta}\overline{\partial} \tau_1
- \int_{\partial D} e^{-z\zeta}(\tau_1 - \overline{\partial} \tau_{01})
=
\int_{D} d(e^{-z\zeta} \tau_1)
- \int_{\partial D} e^{-z\zeta}(\tau_1 - d \tau_{01}) \\
&=
\big(\int_{D} d(e^{-z\zeta} \tau_1) - \int_{\partial D} e^{-z\zeta}\tau_1\big)
+ \int_{\partial D} d(e^{-z\zeta}  \tau_{01})
= 0,
\end{aligned}
$$
where the last equality comes from the Stokes formula. Hence
the Laplace integral does not depend on the choices of representative of $u$.

Next we will show the Laplace integral is independent of the choices of $D$.
Let $\varphi \in \mathscr{Q}_{\rcE}(\rcE)$ which satisfies
\begin{enumerate}
\item $\operatorname{supp}(\varphi) \subset D$,
\item $\varphi = 1$ on $W \cap E$ for an open neighborhood $W$ of $K$ in $\rcE$,
\end{enumerate}
and define 
$$
\tilde{\nu} = (\tilde{\nu}_{1},\,\tilde{\nu}_{01}) = \left(\varphi\nu_{1} 
+ \bar{\partial}\varphi \wedge \nu_{01},\, \varphi\nu_{01}\right).
$$
Since we have 
$$
\nu - \tilde{\nu} = 
\bvth\,\left((1 - \varphi)\nu_{01},\,0\right),
$$
representatives $\nu$ and $\tilde{\nu}$ give the same cohomology class.
Hence, as the support of $\tilde{\nu}$ is contained in $D$, we have obtained
\begin{equation}{\label{eq:concise_laplace_exp}}
	\LL_D(\nu) = \LL_D(\tilde{\nu}) = 
	\int_{D} e^{-z\zeta} \tilde{\nu}_{1} =
	\int_{E} e^{-z\zeta} \tilde{\nu}_{1} =
	\int_{E} e^{-z\zeta} \left(\varphi\nu_{1}
	+ \bar{\partial}\varphi \wedge \nu_{01}\right).
\end{equation}
The last expression does not depend on $D$. This show the claim.
\end{proof}

Due to the above lemma, in what follows, we write $\LL(\bullet)$ instead of $\LL_D(\bullet)$. 
Then, thanks to the expression \eqref{eq:concise_laplace_exp} and
the integration by parts, we get:
\begin{cor}
For $u \in \Gamma_K(U;\,\bexpo \otimes_{\hexpa} \hexpv)$ and
$v \in \Gamma_K(U;\,\bexpo)$,
we have the formulas
$$
\dfrac{\partial}{\partial \zeta_k} \LL(u)
=\LL(-x_k u),\qquad
\zeta_k \LL(v\, dx\otimes a_{\rcM}) = \LL\left(\dfrac{\partial v}{\partial x_k} dx\otimes a_{\rcM}\right) \qquad (k=1,2,\cdots,n).
$$
\end{cor}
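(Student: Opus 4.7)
My plan is to treat the two formulas separately, each by an integration-by-parts argument in line with the hint from the text.

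For the first identity $\partial_{\zeta_k}\LL(u) = \LL(-x_k u)$, I will differentiate under the integral in the defining expression of $\LL(u)$. Since $\partial_{\zeta_k}e^{-z\zeta} = -z_k e^{-z\zeta}$, this immediately produces $\partial_{\zeta_k}\LL(u) = \LL(-z_k u)$, the interchange of $\partial_{\zeta_k}$ with the integral being routine from the exponential-type bounds on $(\nu_1,\nu_{01})$ combined with the exponential decay of $e^{-z\zeta}$ already used to establish the convergence of $\LL(u)$ above. To conclude, I note that $x_k$ is the restriction to $M$ of the holomorphic function $z_k\in\hexpa(\rcE)$, so the $\hexpa$-module action on $\bexpo\otimes_{\hexpa}\hexpv$, which is by holomorphic multiplication on any \v{C}ech-Dolbeault representative, yields $x_k u = z_k u$, whence $\LL(-z_k u) = \LL(-x_k u)$.

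For the second identity I will use the single-integral expression $\LL(u) = \int_E e^{-z\zeta}\,\tilde{\nu}$ obtained just above the corollary, with compact representative $\tilde{\nu} = \varphi\,\nu_1 + \bar{\partial}\varphi\wedge\nu_{01}$, where $\varphi\in\mathscr{Q}_{\rcE}$ is a cut-off satisfying $\operatorname{supp}\varphi\subset D$ and $\varphi\equiv 1$ on a neighborhood of $K$. Writing a top-degree form as $F\,dz\wedge d\bar{z}$, the identity $\partial_{z_k}F\,dz\wedge d\bar{z} = (-1)^{k-1}\,d(F\,\widehat{dz}_k\wedge d\bar{z})$ (with $\widehat{dz}_k$ denoting $dz$ with $dz_k$ deleted), together with $\zeta_k e^{-z\zeta} = -\partial_{z_k}e^{-z\zeta}$ and Stokes on compactly supported forms over $E$, yields $\int_E \zeta_k e^{-z\zeta}\,\tilde{\nu} = \int_E e^{-z\zeta}\,\partial_{z_k}\tilde{\nu}$. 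Applying the Leibniz rule then decomposes $\partial_{z_k}\tilde{\nu}$ as $[\varphi\,\partial_{z_k}\nu_1 + \bar{\partial}\varphi\wedge\partial_{z_k}\nu_{01}] + [\psi\,\nu_1 + \bar{\partial}\psi\wedge\nu_{01}]$ with $\psi := \partial_{z_k}\varphi$, and the first bracket is itself the compactly supported representative (with the same $\varphi$) of $\partial_{x_k}v\,dx\otimes a_{\rcM}$, contributing $\LL(\partial_{x_k}v\,dx\otimes a_{\rcM})$.

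The hardest step will be showing that the remainder $\int_E e^{-z\zeta}\,[\psi\,\nu_1 + \bar{\partial}\psi\wedge\nu_{01}]$ vanishes. The key observation is that $\psi = \partial_{z_k}\varphi$ is zero in a neighborhood of $K$ (since $\varphi\equiv 1$ there), so $\psi\,\nu_{01}$, although $\nu_{01}$ is a priori defined only on $V_{01}$, extends by zero to a compactly supported $(n,n-1)$-form on $E$. On $V_{01}$ the cocycle relation $\bar{\partial}\nu_{01} = \nu_1$ rewrites the bracket as $\bar{\partial}(\psi\,\nu_{01})$, and since $e^{-z\zeta}$ is $\bar{\partial}$-closed while any $\partial$-derivative of an $(n,n-1)$-form vanishes for bidegree reasons, we obtain $e^{-z\zeta}\,\bar{\partial}(\psi\,\nu_{01}) = d(e^{-z\zeta}\,\psi\,\nu_{01})$; Stokes on this compactly supported form then gives zero, yielding the desired $\zeta_k\LL(v\,dx\otimes a_{\rcM}) = \LL(\partial_{x_k}v\,dx\otimes a_{\rcM})$.
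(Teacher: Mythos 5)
Your proof is correct and follows exactly the route the paper intends: the text derives the cut-off representative $\tilde{\nu}_{01}=\varphi\nu_{1}+\bar{\partial}\varphi\wedge\nu_{01}$ precisely so that the corollary follows ``by the integration by parts,'' and your argument supplies the details of that step, including the key cancellation of the cut-off error term by rewriting it as $\bar{\partial}(\psi\nu_{01})=d(e^{z\zeta}\text{-free part})$ and applying Stokes. The only cosmetic point is that these forms are supported in the proper (not compact) set $D$, so Stokes and the differentiation under the integral sign are justified by the exponential decay of $e^{-z\zeta}$ on $D$ for $\zeta$ near the direction $\xi_0$, which you already invoke for convergence.
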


\

Note that, by the definition of $\varpi_{M^*_\infty}$ given in {\eqref{eq:inf_times_star}},
we have, for $\xi_0 \in M^*_\infty$, 
$$
\varpi_{M^*_\infty}^{-1}(\xi_0) = 
\{\xi_0 + \sqrt{-1}\eta \in E^*;\, \eta \in M^*\}/ \mathbb{R}_+
\,\,\subset E^*_\infty \setminus \sqrt{-1}M^*_\infty.
$$
%where $\pi: E^{*}\setminus \{0\} \to E^*_\infty = 
%(E^{*}\setminus \{0\})/\mathbb{R}_+$ is the canonical projection.
\begin{prop}
Assume $K \cap M$ is non-empty.
For any $a \in K \cap \{x \in M;\, \langle x,\,\xi_0 \rangle = h_K(\xi_0)\}$,
any $\epsilon > 0$ and any compact subset $L$ in $\varpi_{M^*_\infty}^{-1}(\xi_0)$,  
there exist $C > 0$ and an open neighborhood $W \subset \rcDE$ of $L$ such that
$$
|e^{a\zeta}\LL(u)(\zeta)| \le Ce^{\epsilon |\zeta|}\qquad (\zeta \in W \cap E^*).
$$
\end{prop}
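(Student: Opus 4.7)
The plan is to exploit the freedom in both the \v{C}ech-Dolbeault representative of $u$ and the integration chain $D$ to re-express $e^{a\zeta}\LL(u)(\zeta)$ as an absolutely convergent integral whose integrand is manifestly of infra-exponential growth in $\zeta$ as $\zeta$ tends to infinity inside a neighborhood of $L$. Concretely, using the cut-off representative introduced just above the proposition (take $\varphi$ with $\mathrm{supp}(\varphi) \subset D$ and $\varphi \equiv 1$ on a neighborhood of $K$), I work with
$$
e^{a\zeta}\LL(u)(\zeta) = \int_E e^{(a-z)\zeta}\bigl(\varphi\nu_1 + \bar\partial\varphi \wedge \nu_{01}\bigr).
$$
Fix coordinates so that $\xi_0 = (1,0,\dots,0)$, write $z = (x_1,x') + \sqrt{-1}\,y$ and $\zeta = \xi + \sqrt{-1}\,\eta$, and set $\hat\xi = \xi/|\xi|$, $\hat\eta = \eta/|\xi|$. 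Compactness of $L$ inside $\varpi_\infty^{-1}(\xi_0)$ yields $M > 0$ with $|\hat\eta| \le M$ at points of $L$ while $\hat\xi = \xi_0$ there.

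Given $\epsilon > 0$, I choose $D$ as a tight tubular neighborhood of $K$ of the form
$$
D \subset \bigl\{ z : x_1 > a_1 - \delta_1,\; |x'| \le C(x_1 - b),\; |y| \le \delta_2(x_1 - b) \bigr\}
$$
for small $\delta_1, \delta_2 > 0$, some $b \in \mathbb{R}$, and some constant $C > 0$. Here $\delta_1$ can be taken arbitrarily small because $K \subset \{x_1 \ge a_1 = h_K(\xi_0)\}$; $\delta_2$ can be taken arbitrarily small because $K \subset \rcM$ forces $y = 0$ on $K$; and $C$ is just a fixed constant dictated by the proper-cone geometry of $K$ at infinity. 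The neighborhood $W$ of $L$ in $\rcDE$ is then chosen to force $|\hat\xi - \xi_0| < \delta'$ and $|\hat\eta| < 2M$, with $|\zeta|$ uniformly larger than some $R_0$ on $W \cap E^*$, for a small $\delta'$ to be fixed later.

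The estimation uses the identity $|e^{(a-z)\zeta}| = \exp(\langle a-x, \xi\rangle + \langle y, \eta\rangle)$ together with the bound $|\varphi\nu_1 + \bar\partial\varphi \wedge \nu_{01}| \le C' e^{Hx_1}$ on $D$ (from the exponential-type hypothesis). Writing $x_1 = a_1 - \delta_1 + s$ with $s \ge 0$ and splitting $\langle a-x, \xi\rangle = |\xi|(a_1 - x_1) + |\xi|\langle a - x, \hat\xi - \xi_0\rangle$, the total exponent of the integrand is bounded above by
$$
|\xi|\,A(\delta_1, \delta_2, \delta') + s\,\Bigl[-|\xi| + |\xi|\bigl((1+C)\delta' + 2M\delta_2\bigr) + H\Bigr] + O(1),
$$
where $A(\delta_1, \delta_2, \delta') \to 0$ as $(\delta_1, \delta_2, \delta') \to 0$. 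I first fix $\delta_2$ and $\delta'$ so that $(1+C)\delta' + 2M\delta_2 < 1/2$, which makes the $s$-coefficient at most $-|\xi|/2 + H$, negative for $|\xi| > R_0$. Integration in $s$ against the polynomial volume factor of size $(x_1 - b)^{2n-1}$ coming from the $x'$- and $y$-directions then yields a uniformly bounded contribution. Shrinking $\delta_1, \delta_2, \delta'$ further so that $A(\delta_1, \delta_2, \delta') < \epsilon$, and using $|\xi| \le |\zeta|$, gives the desired bound $|e^{a\zeta}\LL(u)(\zeta)| \le C e^{\epsilon|\zeta|}$ on the neighborhood $W := \{\zeta \in E^* : \zeta/|\zeta| \in N,\; |\zeta| > R_0\} \cup N$ of $L$, for an appropriate open $N \subset S^{2n-1}_\infty$.

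The main obstacle is juggling the small parameters $\delta_1, \delta_2, \delta'$ against the fixed non-small constants $C$ (the transverse aperture of $D$, forced by the geometry of $K$ at infinity) and $M$ (the vertical scale of $L$), while keeping the $s$-integral convergent; one cannot simply shrink $D$ to a tiny neighborhood of the single point $a$ because $D$ must contain all of $K$. The decisive geometric input making the argument go through is the hypothesis $K \subset \rcM$, which permits $\delta_2$ to be chosen freely and thereby neutralizes the potentially large factor $M$ multiplying the otherwise divergent $x_1$-integration.
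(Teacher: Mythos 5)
Your argument is correct and is essentially the paper's own proof: both rest on choosing the chain $D$ to be properly conic in the direction $\xi_0$ and arbitrarily thin in the imaginary directions (possible precisely because $K\subset\rcM$), so that the real part of the exponent decays linearly in the distance from the supporting hyperplane $\{\langle x,\xi_0\rangle=h_K(\xi_0)\}$ fast enough to dominate the exponential-type constant $H$ once $|\zeta|$ is large, leaving only an $\epsilon|\zeta|$ contribution from the region near that hyperplane. The sole difference is organizational --- the paper covers $K$ by a relatively compact neighborhood $O$ of the contact set together with a cone $a+\operatorname{int}(G)$ and estimates the two pieces $T_{<\epsilon}$ and $T_{\ge\epsilon}$ separately, whereas you work in a single tube and integrate slab-by-slab in the variable $\langle x,\xi_0\rangle$ --- and both reduce to the same balance of the small parameters against the fixed aperture and the bound $|\eta|/|\xi|\le M$ coming from the compactness of $L$ in $\varpi_\infty^{-1}(\xi_0)$.
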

\begin{proof}
Take a point 
$\zeta_0 = (\xi_0 + \sqrt{-1}\eta_0)/|\xi_0 + \sqrt{-1}\eta_0| \in E^*_\infty$. 
In what follows, we sometimes identify a point in $E^*_\infty$ with
a unit vector in $E^*$.
Denote by $B_\delta(\zeta_0)$ an open ball 
with radius $\delta > 0$ and center at $\zeta_0$.

Since $K$ is properly contained in a half space of $\rcM$ with direction $\xi_0$,
there exist $\delta_1 > 0$, $\sigma_1 > 0$, a relatively compact open neighborhood $O \subset M$ of
$K \cap \overline{\{x \in M;\, \langle x - a,\,\xi_0 \rangle = 0\}}$ and
an $\mathbb{R}_+$-conic proper closed set $G \subset \rcM$ such that
$$
K \subset O \cup (a + \operatorname{int}(G)),
$$
$$
O \subset \{x \in M;\, |\langle x -a,\, \xi\rangle | < \epsilon/2\}
\qquad (\xi \in B_{\delta_1}(\xi_0) \cap M^*_\infty),
$$
and
$$
\langle x, \xi \rangle \ge \sigma_1 |x|\qquad
(x \in G \cap M,\,\, \xi \in B_{\delta_1}(\xi_0) \cap M^*_\infty).
$$
For $\delta_2 > 0$, define open subsets $D_O$ in $E$ and $D_G$ in $\rcE$ by
$$
D_O = \left\{z = x+\sqrt{-1}y \in E;\,
x \in O,\,\, 
|y| < \dfrac{\epsilon}{2\max\{1,\,2|\eta_0|\}}\right\},
$$
$$
D_G = \widehat{\,\,\,\,}\left\{z = x+\sqrt{-1}y \in E;\,
x \in a+\operatorname{int}(G), |y| < \delta_2\textrm{dist}(x, M \setminus (a+G))\right\}.
$$
If we take $\delta_2>0$ sufficiently small, there exists $\sigma_2 > 0$ such that
$$
\operatorname{Re}\,\langle z-a, t\zeta \rangle \ge \sigma_2 t |z-a|\qquad(t \in \mathbb{R}_+,\,\,z \in D_G \cap E,\,\, \zeta \in B_{\delta_2}(\zeta_0) \cap E^*_\infty)
$$
holds. Note that we also have
$$
|\operatorname{Re}\,\langle z -a,\, t\zeta \rangle| < \epsilon t
\qquad (t \in \mathbb{R}_+,\,\,z \in D_O,\,\,\zeta \in B_{\delta_2}(\zeta_0) \cap E^*_\infty).
$$
As in the proof of Lemma \ref{lem:L_well_defined}, we take a
$\varphi \in \mathscr{Q}_{\rcE}(\rcE)$ and set
$$
\tilde{\nu} = (\tilde{\nu}_{1},\,\tilde{\nu}_{01}) = \left(\varphi\nu_{1} 
+ \bar{\partial}\varphi \wedge \nu_{01},\, \varphi\nu_{01}\right).
$$
Then we have
$$
\LL(u)(\zeta)
= \int_{E} e^{-z \zeta} \tilde{\nu}_1.
$$
Furthermore, by taking $\varphi$ suitably, we may assume
$$
\mathrm{supp}\,{\tilde{\nu}_1} \subset D_O \cup D_G.
$$
Therefore, for $t \in \mathbb{R}_+$ and $\zeta \in B_{\delta_2}(\zeta_0) \cap E^*_\infty$, we get
$$
|e^{ta\zeta}\LL(u)(t\zeta)|
= \left|\int_{E} e^{-t(z -a) \zeta} \tilde{\nu}_1\right|
\le
\left|\int_{D_O} e^{-t(z -a) \zeta} \tilde{\nu}_1\right|
 +
 \left|\int_{D_G} e^{-t(z -a) \zeta} \tilde{\nu}_1\right|.
$$
Then, it is easy to see that there exists a positive constant $C_1 > 0$ such that, 
for any $\zeta \in B_{\delta_2}(\zeta_0) \cap E^*_\infty$ and $t \in \mathbb{R}_+$, we have
$$
\left|\int_{D_O} e^{-t(z-a) \zeta} \tilde{\nu}_{1}\right| 
\le C_1 e^{\epsilon t}.
$$
Furthermore, since there exist a constant $C_2,H > 0$ 
such that
$$
|\tilde{\nu}_{1}| \le C_{2}e^{H |z-a|} \qquad (z \in D_G \cap E),
$$
we get, for $\zeta \in B_{\delta_2}(\zeta_0) \cap E^*_\infty$ and $t \in \mathbb{R}_+$, 
$$
\left|\int_{D_G} e^{-t(z-a) \zeta} \tilde{\nu}_{1}\right| 
\le 
C_{2} \int_{D_G} e^{(-\sigma_2 t + H)|z-a|}\, d\mu,
$$
where $d\mu$ denotes the Lebesgue measure on $E$.
Hence the last integral converges if $t$ is sufficiently large,
which completes the proof. 
\end{proof}
we have the following corollary as a consequence of the proposition:
\begin{cor}{\label{cor:laplace_h_k}}
Assume $K \cap M$ is non-empty.
Then we have
$
\LL(u) \in \hhinfo{h_K}(\HHPC{K}).
$
\end{cor}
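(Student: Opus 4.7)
The plan is to use the preceding proposition to obtain local estimates near each point of $\HPC{K}$ and then upgrade them to the intended $h_K$-bound via a trivial inequality that expresses $h_K(\zeta)$ as a lower bound coming from the supporting points of $K\cap M$. The key remark, which is essentially the content of the proof, is that for any $a\in K\cap M$ and any $\zeta\in E^*\setminus\{0\}$,
$$
|\zeta|\,h_K(\pi(\zeta)) \;=\; \inf_{z\in K\cap M}\langle z,\operatorname{Re}\zeta\rangle \;\le\; \operatorname{Re}\langle a,\zeta\rangle,
$$
which follows straight from the definition of $h_K$ as an infimum. Exponentiating yields $e^{|\zeta|h_K(\pi(\zeta))}\le |e^{a\zeta}|$ pointwise, so any bound of the form $|e^{a\zeta}\LL(u)(\zeta)|\le Ce^{\epsilon|\zeta|}$ immediately upgrades to the infra-$h_K$-exponential bound.

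To execute this, I would fix $\zeta_0\in\HPC{K}$ and put $\xi_0:=\varpi_\infty(\zeta_0)$, which lies in $\HPC{K}\cap M^*_\infty$ thanks to the identification $\HPC{K}=\varpi_\infty^{-1}(\HPC{K}\cap M^*_\infty)$ valid for $K\subset\rcM$. I would then pick $a\in K\cap M$ with $\langle a,\xi_0\rangle=h_K(\xi_0)$ (its existence was noted in the discussion of the support function), take a compact neighborhood $L$ of $\zeta_0$ inside $\varpi_\infty^{-1}(\xi_0)$, and apply the previous proposition to produce a constant $C>0$ and an open neighborhood $W_{\zeta_0}\subset\rcDE$ of $L$ on which $|e^{a\zeta}\LL(u)(\zeta)|\le Ce^{\epsilon|\zeta|}$. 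Combining with the inequality above gives the local estimate
$$
e^{|\zeta|h_K(\pi(\zeta))}|\LL(u)(\zeta)|\le Ce^{\epsilon|\zeta|}\qquad(\zeta\in W_{\zeta_0}).
$$

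To globalise, I would set $W:=\bigcup_{\zeta_0\in\HPC{K}}W_{\zeta_0}$, an open neighborhood of $\HPC{K}$ in $\rcDE$. Any compact $K_0\subset W$ is covered by finitely many $W_{\zeta_0}$'s, so taking the maximum of the corresponding constants yields the uniform bound required by the definition of infra-$h_K$-exponential type, and the inductive-limit description of $\hhinfo{h_K}$ then delivers $\LL(u)\in\hhinfo{h_K}(\HPC{K})$. I expect the main technical nuisance to be checking that the locally defined $\LL(u)$'s on the various $W_{\zeta_0}$ assemble into a single holomorphic function on $W$, since different local arguments use different chains $D$ and different supporting points $a$; this is handled by the earlier independence of $\LL_D(u)$ from both $\nu$ and $D$, together with uniqueness of analytic continuation across connected overlaps.
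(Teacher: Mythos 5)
Your proposal is correct and follows the same route as the paper, which deduces the corollary directly from the preceding proposition; the key pointwise inequality $|\zeta|\,h_K(\pi(\zeta))=\inf_{z\in K\cap M}\operatorname{Re}\langle z,\zeta\rangle\le\operatorname{Re}\langle a,\zeta\rangle$ for $a\in K\cap M$, followed by the finite-subcover globalisation over $\HPC{K}=\varpi_\infty^{-1}(\HPC{K}\cap M^*_\infty)$, is exactly the intended (and in the paper unwritten) argument. No gaps.
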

\begin{proof}
First assume that $K \cap M_\infty \ne \emptyset$. In this case,
we see that $\HHPC{K} = \varpi_{M^*_\infty}^{-1}(\HHPC{K}\cap M^*_\infty)$ and that
$h_K(\zeta)$ is upper semi-continuous. Hence the result comes from the above proposition.

Next assume that $K \cap M_\infty = \emptyset$, which implies $K$ is a compact set in $M$.
Note that $\HHPC{K} = E^*_\infty$ holds.
Then we can take a relatively compact open subset in $E$ as $D$
which is sufficiently close to $K$, and the result immediately follows.
\end{proof}

\

Let $G \ne \emptyset$ be an $\mathbb{R}_+$-conic proper closed subset in $M$ and $a \in M$.
We denote by $G^\circ \subset E^*$ the dual cone of $G$ in $E^*$, that is,
$$
G^\circ := \{\zeta \in E^*;\, \operatorname{Re}\, \langle \zeta, x\rangle \ge 0 \,\,
\text{ for any $x \in G$}\}.
$$
Assume $K = \overline{\{a\} + G} \subset \rcM$. Since
$\HHPC{K} = \widehat{\,\,\,\,}(\mathrm{int}\,G^\circ) \cap E^*_\infty$ and
$
h_K(\zeta) = \operatorname{Re}\,a\zeta
$
on $\HHPC{K}$ hold (here we write $a\zeta = \langle a,\zeta\rangle$),
the corollary immediately implies the following theorem.
\begin{teo} Under the above situation, $e^{a\zeta} \LL(u)(\zeta)$ belongs to 
	$\hinfo(\widehat{\,\,\,\,}(\mathrm{int}\,G^\circ) \cap E^*_\infty)$.
\end{teo}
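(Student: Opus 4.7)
The plan is to deduce the theorem directly from Corollary \ref{cor:laplace_h_k} together with the two identifications stated just before the theorem, namely $\HPC{K} = \widehat{\,\,\,\,}(G^\circ) \cap E^*_\infty$ and $h_K(\zeta) = \operatorname{Re}\,a\zeta$ on $\HPC{K}$. The only content of the theorem beyond the corollary is that the exponential factor $e^{a\zeta}$ precisely cancels the $h_K$-weight, converting an estimate of infra-$h_K$-exponential type into one of infra-exponential type.

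First, I would check the case when $K \cap M$ is empty: then $K \subset M_\infty$, but since $K = \overline{\{a\} + G}$ with $a \in M$, the set $K$ necessarily contains $a$, so this case does not actually occur here. Thus Corollary \ref{cor:laplace_h_k} applies and gives $\LL(u) \in \hhinfo{h_K}(\HPC{K})$, and $e^{a\zeta}\LL(u)(\zeta)$ is certainly holomorphic on an open neighborhood of $\HPC{K} = \widehat{\,\,\,\,}(G^\circ) \cap E^*_\infty$ in $\rcDE$.

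Next I would carry out the pointwise estimate. For $\zeta \in E^* \setminus \{0\}$, the projection used in the definition of infra-$h$-exponential type is $\pi(\zeta) = \zeta/|\zeta|$, and since $a \in M$, a direct calculation gives
$$
|\zeta|\, h_K(\pi(\zeta))
= |\zeta| \cdot \operatorname{Re}\,\langle a,\, \zeta/|\zeta|\rangle
= \operatorname{Re}\,\langle a,\, \zeta \rangle,
$$
whereas $|e^{a\zeta}| = e^{\operatorname{Re}\,a\zeta}$. Consequently the infra-$h_K$-exponential estimate
$$
e^{|\zeta|\,h_K(\pi(\zeta))}\,|\LL(u)(\zeta)| \le C e^{\epsilon|\zeta|}
$$
on a compact subset of $\HPC{K}$ becomes exactly $|e^{a\zeta}\LL(u)(\zeta)| \le C e^{\epsilon|\zeta|}$, which is the defining estimate for $\hinfo$. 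This shows $e^{a\zeta}\LL(u) \in \hinfo(\widehat{\,\,\,\,}(G^\circ) \cap E^*_\infty)$.

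There is no real obstacle here; everything comes from Corollary \ref{cor:laplace_h_k} combined with a direct computation of the support function $h_K$. The only minor point that deserves care is verifying the two preliminary identifications claimed just before the theorem (that $\HPC{K}$ equals $\widehat{\,\,\,\,}(G^\circ) \cap E^*_\infty$ and that $h_K(\zeta) = \operatorname{Re}\,a\zeta$ on this set), both of which follow from the example immediately preceding Definition \ref{df:proper_contained} and from the translation invariance $h_{a+G}(\zeta) = \operatorname{Re}\,a\zeta + h_G(\zeta)$ together with $h_G \equiv 0$ on the dual cone $G^\circ$.
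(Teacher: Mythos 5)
Your proposal is correct and takes essentially the same route as the paper: the paper likewise deduces the theorem immediately from Corollary \ref{cor:laplace_h_k} together with the identities $\HPC{K} = \widehat{\,\,\,\,}(G^\circ)\cap E^*_\infty$ and $h_K(\zeta)=\operatorname{Re}\,a\zeta$ on $\HPC{K}$. The cancellation $|\zeta|\,h_K(\zeta/|\zeta|)=\operatorname{Re}\langle a,\zeta\rangle=\log|e^{a\zeta}|$ that you spell out is exactly the content the paper leaves implicit in the word ``immediately.''
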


\subsection{Several equivalent definitions of Laplace transform}{\label{subsec:comparison}}

We give, in this subsection, several equivalent definitions of Laplace transform
previously defined for various expressions of a Laplace hyperfunction.
%Note that, through the subsection, the vertex of cone is not necessarily assumed 
%to be the origin.
%A cone whose vertex is the origin is called $\mathbb{R}_+$-conic subset.
%Let $\Gamma$ be an $\mathbb{R}_+$-conic open subset in $M$ and $\Omega$ be an open subset in $\rcM$.
%Recall that we say an open subset $S \subset \rcE$ is a wedge of type
%$\Omega \times \sqrt{-1}\Gamma$ if, for any $\mathbb{R}_+$-conic open subset $\Gamma'$ 
%properly contained in $\Gamma$,
%there exists an open neighborhood $O \subset \rcE$ of $\Omega$ such that
%$$
%\widehat{\,\,\,\,}(M \times \sqrt{-1}\Gamma') \cap O \subset S.
%$$
%\begin{oss}
%We do not assume here the inclusion 
%$S \subset \widehat{\,\,\,\,}(M \times \sqrt{-1}\Gamma)$.
%\end{oss}
The following proposition is quite important to obtain a good \v{C}ech representation of
a Laplace hyperfunction with compact support.
Recall the definition of a regular closed subset given in Definition {\ref{def:regular-set}}
and the one of an infinitesimal wedge in Definition {\ref{def:wedge}}.
Recall also that we use the word ``1-regular at $\infty$'' to indicate
the notion ``regular at $\infty$'' introduced in Definition 3.4 \cite{hu1}.

\begin{prop}{\label{prop:stein_regular_neighborhood}}
Let $K \ne \emptyset$ be a regular closed cone in $\rcM$ and let $\eta \in M^*_\infty$.
Then we can find an open
subset $S \subset \rcE \setminus K$ 
%and an open neighborhood $\Omega \subset \rcM$ of $K$ 
such that
\begin{enumerate}
\item $S$ is an infinitesimal wedge of type $M \widehat{\times} \sqrt{-1} \Gamma$,
where $\Gamma = \{y \in M;\, \langle y, \eta \rangle > 0\}$. 
\item $S \cap E$ is a Stein open subset and $S$ is $1$-regular at $\infty$.
\item $S$ is an open neighborhood of $\rcM \setminus K$ in $\rcE$.
\end{enumerate}
\end{prop}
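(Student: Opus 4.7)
The plan is to construct $S$ explicitly by removing from $\rcE$ a suitable ``thickening'' of $K$ in the imaginary direction opposite to $\Gamma$. After reducing to the case in which the cone $K$ has vertex at the origin, set
\[
K'_E := (K \cap M) + \sqrt{-1}\{y \in M : \langle y, \eta\rangle \le 0\} \subset E,
\]
and define $S := \widehat{\,\,\,\,}(E \setminus K'_E)$. By the definition of the $\widehat{\,\,\,\,}$-operator one has $S = \rcE \setminus \overline{K'_E}$ (closure taken in $\rcE$), which is automatically $1$-regular at $\infty$.

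Two of the three required conditions will follow almost immediately from the construction. For condition~(1), any $z = x + \sqrt{-1}y$ in $M \hat{\times}\sqrt{-1}\Gamma'$ with $\Gamma' \Subset \Gamma$ satisfies $\langle y, \eta\rangle > 0$ and hence $z \notin K'_E$; the same conclusion persists at the points at infinity of $M \hat{\times}\sqrt{-1}\Gamma'$ by homogeneity, so $M \hat{\times}\sqrt{-1}\Gamma' \subset S$ and the infinitesimal wedge property holds (in fact with $O = \rcE$). For condition~(3), since $K'_E \cap M = K \cap M$ and $K'_E$ is $\mathbb{R}_+$-conic, the regularity of $K$ gives $\overline{K'_E} \cap \rcM = K$; hence $S \cap \rcM = \rcM \setminus K$, which simultaneously shows $S \subset \rcE \setminus K$ and that $S$ is an open neighborhood of $\rcM \setminus K$ in $\rcE$.

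The main obstacle is verifying that $S \cap E = E \setminus K'_E$ is Stein. I plan to produce a strictly plurisubharmonic exhaustion function adapted to the decomposition
\[
E \setminus K'_E = \{z \in E : \langle y, \eta\rangle > 0\} \,\cup\, \{z \in E : x \notin K \cap M\}.
\]
On the first piece $-\log \langle y, \eta\rangle$ is plurisubharmonic since $\langle y, \eta\rangle$ is a positive pluriharmonic function there. On the second piece, using the regularity and $\mathbb{R}_+$-conicity of $K$, I would construct a locally finite family $\{p_\alpha\}$ of continuous, nonnegative, $\mathbb{R}_+$-homogeneous, convex functions on $M$, each vanishing on $K \cap M$, whose open sets $\{p_\alpha > 0\}$ cover $M \setminus (K \cap M)$. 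Each $-\log p_\alpha(\operatorname{Re} z)$ is then plurisubharmonic on its domain, and the maximum of $-\log \langle y, \eta\rangle$ with all the $-\log p_\alpha(\operatorname{Re} z)$ (with the convention $+\infty$ outside the respective domains) yields a plurisubharmonic function on $E \setminus K'_E$ that tends to $+\infty$ at the boundary; adding $\varepsilon |z|^2$ produces the desired strictly plurisubharmonic proper exhaustion. The delicate part---and the crux of the proof---is the construction of the family $\{p_\alpha\}$ when $K \cap M$ is not convex: here one exploits the conic structure to reduce, via a partition of the sphere $S^{n-1}$ adapted to $K \cap M_\infty$, to local situations on which supporting-hyperplane-type constructions produce the required convex barriers.
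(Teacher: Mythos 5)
There is a genuine gap, and it is fatal to the construction rather than to a detail of the verification: the domain $S\cap E=E\setminus K'_E$ you build is in general \emph{not} Stein, so condition (2) fails (your arguments for (1) and (3) are fine). Your domain is the union of the two pieces $U_1=\{\langle \operatorname{Im}z,\eta\rangle>0\}$ and $U_2=\{\operatorname{Re}z\notin K\cap M\}$; each piece is pseudoconvex, but a union of pseudoconvex domains need not be, and here the edge where the two boundaries meet is genuinely non-pseudoconvex. Concretely take $n=2$, $K\cap M=\{x_1\ge 0,\ x_2\ge 0\}$, $\eta=(1,0)$, so that $K'_E=\{x_1\ge0,\ x_2\ge0,\ y_1\le0\}$. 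Near the boundary point $p=(1,0)$ the constraint $x_1\ge0$ is inactive, so locally $E\setminus K'_E=\{\operatorname{Im}z_1>0\}\cup\{\operatorname{Re}z_2<0\}$, the complement of a product of two closed half-planes. This violates the Kontinuit\"atssatz: the harmonic functions $A(\lambda)=\operatorname{Im}\lambda+\varepsilon\operatorname{Re}(1-\lambda^2)-\delta$ and $B(\lambda)=-\operatorname{Im}\lambda+\varepsilon\operatorname{Re}(1-\lambda^2)-\delta$ (with $0<\delta<\varepsilon\le 1/2$) satisfy $A(0)=B(0)>0$ while $\min(A,B)\le-\delta$ on $|\lambda|=1$, so $\{A\ge0\}\cap\{B\ge0\}$ is a nonempty compact subset of the open unit disc; realizing $A=-\operatorname{Im}\phi_1$, $B=\operatorname{Re}\phi_2$ with $\phi_1,\phi_2$ holomorphic and scaling the disc $(\phi_1,\phi_2)$ to sit near $p$, the family $\Phi_t=(\phi_1+\sqrt{-1}\,t,\ \phi_2)$ has all boundaries in a fixed compact subset of $E\setminus K'_E$, lies entirely in $E\setminus K'_E$ for large $t$, yet meets $K'_E$ at $t=0$. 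Hence $E\setminus K'_E$ is not pseudoconvex, and no exhaustion function can exist. (Independently, the proposed exhaustion is not well formed: extending each $-\log p_\alpha$ by $+\infty$ and taking the \emph{maximum} yields $+\infty$ off $\bigcap_\alpha\{p_\alpha>0\}$ rather than off the union --- what you need is a minimum, which is not plurisubharmonic; moreover $-\log p(\operatorname{Re}z)$ is plurisubharmonic only when $-\log p$ is convex, i.e.\ when $p$ is log-concave, not merely convex.)

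The paper's proof avoids this trap by never forming a union. Following Theorem 4.10 of \cite{KU}, it takes $S$ to be (the hat of the interior of) $\bigcap_{\xi\in K}\{z\in E:\ \operatorname{Re}\varphi_\xi(z)>0\}$ with
\begin{equation*}
\varphi_\xi(z)=(z_1-(\xi_1+\sqrt{-1}\sigma|\xi|))^2+(z_2-\xi_2)^2+\cdots+(z_n-\xi_n)^2+\sigma^2|\xi|^2,
\end{equation*}
modified near the vertex. Each factor $\{\operatorname{Re}\varphi_\xi>0\}$ is pseudoconvex because $-\operatorname{Re}\varphi_\xi$ is pluriharmonic, and the interior of an intersection of pseudoconvex sets is pseudoconvex, hence Stein; these quadrics round off exactly the corner between ``$x\notin K\cap M$'' and ``$\langle y,\eta\rangle>0$'' on which your flat construction breaks down, while still yielding a conic set disjoint from $K$ that contains a wedge of type $M\hat{\times}\sqrt{-1}\Gamma$ and a neighborhood of $\rcM\setminus K$. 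Any repair of your approach has to replace $K'_E$ by a set whose complement is obtained as such an intersection (or some other pseudoconvex smoothing), not as a union of two half-space-type conditions.
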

\begin{proof}
The proof is the almost same as that of Theorem 4.10 \cite{KU}. 
For reader's convenience, we briefly explain how to construct
the desired $S$. We may assume that the vertex of $S$ is the origin
and $\eta=(1,0,\cdots,0)$. Let $\sigma$ be a sufficiently small
positive number and set, for $\xi \in M$,
$$
\varphi_\xi(z) = (z_1 - (\xi_1 + \sqrt{-1}\sigma|\xi|))^2
+(z_2 - \xi_2)^2 + \cdots + (z_n - \xi_n)^2 + \sigma^2|\xi|^2.
$$
Note that
$$
\textrm{Re}\,\varphi_\xi(z) > 0 \iff
(y_1 - \sigma|\xi|)^2 + y_2^2 + \cdots + y_n^2
< \sigma^2|\xi|^2 + |x - \xi|^2.
$$
Then, by the same reasoning as in the proof of Theorem 4.10 \cite{KU},  the set
$$
O = \textrm{Int}\left( \bigcap_{\xi \in K} \{z \in E;\,
\textrm{Re}\,\varphi_\xi(z) > 0\}\right) 
$$
is an $\mathbb{R}_+$-conic Stein open subset, and hence, $\widehat{O}$ is
$1$-regular at $\infty$. Define $S$ by modifying $O$ near the origin:
$$
S = \widehat{\,\,\,\,}\textrm{Int}\left( 
\left(
\bigcap_{\xi \in K,|\xi| \ge 1} \{z \in E;\,
\textrm{Re}\,\varphi_\xi(z) > 0\}
\right)
\, \bigcap\,
\left(\bigcap_{\xi \in K,|\xi| < 1} \{z \in E;\,
\textrm{Re}\,\psi_\xi(z) > 0\}
\right)\right),
$$
where
$$
\psi_\xi(z) = (z_1 - (\xi_1 + \sqrt{-1}\sigma))^2
+(z_2 - \xi_2)^2 + \cdots + (z_n - \xi_n)^2 + \sigma^2.
$$
Since $\widehat{O}$ and $S$ coincide in an open neighborhood of $E_\infty$, the $S$ is still $1$-regular at $\infty$
and $S \cap E$ is a Stein open subset. We can easily confirm that $S$ satisfies the rest of required properties in the proposition. 
\end{proof}

\subsubsection{Laplace transform for \v{C}ech representation}

We give here several examples to compute the Laplace transform 
of a \v{C}ech representative of a Laplace hyperfunction.
The following lemma is needed for subsequent examples.
\begin{lem}{\label{lem:c_infty_distance}}
Let $K \subset \rcM$ be a non-empty regular closed cone and $\Omega \subset \rcM$
an open neighborhood of $K$. Then there exist an open subset 
$U' \subset \rcM$ with smooth boundary satisfying $K \subset U' \subset \overline{U'} \subset \Omega$ and a smooth function $\varrho_{U'}: M \to \mathbb{R}$ such that
\begin{enumerate}
\item $\displaystyle\sum_{k=1}^n \left|\dfrac{\partial \varrho_{U'}}{\partial x_k}\right|$ is bounded on $M$.
\item there exists an open subset $T \subset \rcM$ with $\partial {U'} \subset T$
satisfying
$$
\varrho_{U'}(x) = \mathrm{dist}(x, M \setminus U')\qquad 
(x \in (\overline{U'} \cap T) \cap M).
$$
Furthermore, there exists $C > 1$ such that
$$
C^{-1}\mathrm{dist}(x, M \setminus U') \le \varrho_{U'}(x) \le 
C\mathrm{dist}(X, M \setminus U')\qquad (x \in \overline{U'} \cap M).
$$
\end{enumerate}
\end{lem}
\begin{proof}
By making an open cone with smooth boundary except for the vertex $a \in M$
(see Proposition 2.10 and Corollary 2.11 \cite{Ko})
and then by modifying the cone near $a$, we can find the open subset $U' \subset \rcM$
with smooth boundary such that $K \subset U' \subset \overline{U'} \subset \Omega$
which also satisfies the additional condition: there exists $R > 0$ such that
$$
\text{for any $c > 0$ and $x \in U'$ with $|x - a| > R$ and $|c(x-a)| > R$}
\Longrightarrow c(x-a) + a \in U'.
$$
Since $\partial U'$ is smooth and $U'$ satisfies the above additional condition, the function
$$
\tau(x) = \begin{cases}
\mathrm{dist}(x, M \setminus U') \qquad &(x \in \overline{U'} \cap M),\\
-\mathrm{dist}(x, U') \qquad &(x \in M \setminus U')
\end{cases}
$$
is smooth on $T \cap M$ for an open neighborhood $T \subset \rcM$ of $\partial U'$.
Taking open sets in $\rcM$
$$
\overline{U' \setminus T} \subset U_1 \subset \overline{U_1} \subset
U_2 \subset \overline{U_2} \subset U_3 \subset \overline{U_3} \subset U'.
$$
Let $\varphi_1$ is a $C^\infty$ function on $M$ with $0 \le \varphi_1 \le 1$ 
and bounded derivatives on $M$ such that
$\varphi_1(x) = 0$ in an open neighborhood of $(\overline{U' \setminus T}) \cap M$ and
$\varphi_1(x) = 1$ in an open neighborhood of $(\overline{U' \setminus U_1}) \cap M$.
In the same way, let $\varphi_2$ is a $C^\infty$ function on $M$ 
with $0 \le \varphi_2 \le 1$ and bounded derivatives on $M$ such that
$\varphi_2(x) = 0$ in an open neighborhood of $(\overline{U' \setminus U_3}) \cap M$
and $\varphi_1(x) = 1$ in an open neighborhood of $\overline{U_2} \cap M$.
Then, for $\epsilon > 0$, we define
$$
\varrho_{U'}(x) = \varphi_1(x) \tau(x) + \varphi_2(x) (\epsilon^{-n}\psi(x/\epsilon) * 
\mathrm{dist}(x, X \setminus U')),
$$
where $\psi(x)$ is a $C^\infty$ function on $M$ with $0 \le \psi \le 1$,
$\mathrm{supp}(\psi) \subset \{|x| < 1\}$ and $\displaystyle\int_{M} \varphi(x) dx = 1$.
If we take $\epsilon > 0$ sufficiently small, then $\varrho_{U'}(x)$ satisfies 
required conditions.
\end{proof}

\begin{es}{\label{es:l-c-example01}}
Let $K \ne \emptyset$ be a closed cone in $\rcM$ which is regular and proper, and
let $\eta_0,\dots, \eta_{n-1}$ be linearly independent vectors in $M^*$ so that
$\{\eta_0, \dots, \eta_{n-1}\}$ forms a positive frame of $M^*$.
Set $\eta_{n} := - (\eta_0 + \dots + \eta_{n-1}) \in M^*$.

Then, by applying Proposition \ref{prop:stein_regular_neighborhood} to the vector
$\eta_k$, we obtain $S_k$ satisfying the conditions in the proposition 
with $\eta = \eta_k$ ($k=0,\dots,n$).
Since $S_0 \cup \cdots \cup S_{n} \cup \rcM$ is an open neighborhood of $\rcM$,
it follows from Theorem 4.10 \cite{KU} that we can take an open neighborhood $S \subset \rcE$ of $\rcM$ such that
\begin{enumerate}
\item $S \cap E$ is a Stein open subset and it is $1$-regular at $\infty$.
\item $\{S_0 \cap S,\, S_1 \cap S,\, \dots,\, S_{n} \cap S\}$ 
is a covering of the set $S \setminus K$.
\end{enumerate}

For simplicity, we set $S_{n+1} := S$. Let $\Lambda = \{0,1,2,\dots,n+1\}$ and set,
for any $\alpha = (\alpha_0,\dots,\alpha_k) \in \Lambda^{k+1}$,
$$
S_\alpha := S_{\alpha_0} \cap S_{\alpha_1} \cap \dots \cap S_{\alpha_k}.
$$
%\begin{oss}
%Usually 
%the last index of a covering
%is assigned to the one for an open neighborhood $S$ of $\Omega$, i.e., $S_{n+2} = S$. 
%Here the first index $0$ is assigned to it, and thus, we need to pay special attention to the sign, which may be different from the usual one.
%\end{oss}
We already defined the covering $(\mathcal{V}_K, \mathcal{V}'_K)$ of
$(S,\,S \setminus K)$ in \eqref{eq:def-V-K} with $V=S$.
We also define another covering of $(S,\,S \setminus K)$ by
$$
\mathcal{S} := \{S_0,S_1,\dots, S_{n+1}\},\qquad
\mathcal{S}' := \{S_0,\dots, S_{n}\}.
$$
Then, by the theories of the relative \v{C}ech and the relative \v{C}ech Dolbeault cohomologies, we have 
$$
{\textrm H}^n_{K}(S;\,\hexppo{n})
\simeq
{\textrm H}^n(C(\mathcal{S},\,\mathcal{S}')(\hexppo{n}))
\simeq
\HQDC{n}{n}{\mathcal{S}}
\simeq
\HQDC{n}{n}{\mathcal{V}_K}.
$$
Let $\Lambda^{k+1}_*$ be the subset in $\Lambda^{k+1}$ consisting of $\alpha
= (\alpha_0,\dots, \alpha_k)$ with
$$
\alpha_0 < \alpha_1 < \dots < \alpha_k = n+1.
$$
We take proper open subset $U' \subset \rcM$ with $K \subset U'$
and $\varrho_{U'}(x)$ given in Lemma \ref{lem:c_infty_distance}.
Assume $\epsilon > 0$ is sufficiently small. Then we define closed subsets in $E$ by
$$
\sigma_{n+1} :=
\bigcap_{0 \le k \le n}\overline{\{z = x+ \sqrt{-1}y \in E;\, x \in U' \cap E,\, 
\langle y,\, \eta_k \rangle < \epsilon\varrho_{U'}(x)\}} \,\,\bigcap\,\, E 
$$
and, for $0 \le k \le n$,
$$
\sigma_k :=
\overline{\{z = x+ \sqrt{-1}y \in E;\, x \in U' \cap E,\, 
\langle y,\, \eta_k \rangle > \epsilon\varrho_{U'}(x)\}} \,\, \bigcap\,\, E.
$$
We may assume that, by taking $\epsilon> 0$ sufficiently small,
\begin{equation}
\overline{\sigma_{n+1}} \cap \overline{\sigma_k} \subset S_k \qquad
(k=0,1,\cdots,n+1)
\end{equation}
holds in $\rcE$.
For any $\alpha = (\alpha_0,\dots, \alpha_k) \in \Lambda^{k+1}_*$, we also define
$$
\sigma_\alpha := \sigma_{\alpha_0} \cap \sigma_{\alpha_1} \cap \dots \cap 
\sigma_{\alpha_k}.
$$
Here we determine the orientation of $\sigma_\alpha$ in the following way:
\begin{enumerate}
	\item $\sigma_{n+1}$ has the same orientation as the one of $E$.
	\item For $k > 0$ and $\alpha \in \Lambda_*^{k+1}$, the vectors $(-\eta_{\alpha_0})$, $(-\eta_{\alpha_1})$, $\cdots$, $(-\eta_{\alpha_{k-1}})$ followed by the the positive frame of 
$\sigma_\alpha$ form a positive frame of $E$.
Note that $\alpha_k=n+1$ as $\alpha \in \Lambda_*^{k+1}$.
\end{enumerate}
\begin{oss}
The above 2. is equivalently saying that, for a point $x$  in the smooth part of $\sigma_\alpha$ and 
taking points  $x_j \in \textrm{int}(\sigma_{\alpha_j})$ ($j=0,1,\cdots,k$) sufficiently close to $x$, 
the positive frame of $\sigma_\alpha$ at $x$ is determined so that the vectors 
$\overrightarrow{x_0x_k}$, $\overrightarrow{x_1x_k}$, $\cdots$,
$\overrightarrow{x_{k-1}x_k}$ and the positive frame of $\sigma_\alpha$ at $x$ form
that of $E$ at $x$.
\end{oss}
Then, for any $\alpha \in \Lambda^{k+1}$ which contains the index $n+1$, we can define
$\sigma_\alpha$ with orientation by extending the above definition in the alternative way, that is,
$\sigma_{\alpha}=0$ if the same index appears twice in $\alpha$, and otherwise
$$
\sigma_{\alpha} = \operatorname{sgn}(\alpha, \tilde{\alpha})\, \sigma_{\tilde{\alpha}},
$$
where $\tilde{\alpha} \in \Lambda_*^{k+1}$ is obtained by a permutation of $\alpha$ and
$\operatorname{sgn}(\alpha, \tilde{\alpha})$ denotes the signature of this permutation.

Now let us consider the \v{C}ech Dolbeault complex 
$\QDC{n}{\bullet}{\mathcal{S}}$ for the covering $(\mathcal{S},\,\mathcal{S}')$.
Then, for any
$$
\omega = \{\omega_{\alpha}\}_{0\le k \le n,\, \alpha \in \Lambda_*^{k+1}} 
\in 
\bigoplus_{0 \le k \le n} C^k(\mathcal{S},\mathcal{S}';\,
\mathscr{Q}^{(n,n-k)}_{\rcE})
= \QDC{n}{n}{\mathcal{S}},
$$
we define the Laplace transform of $\omega$ by
$$
I(\omega) := \sum_{0 \le k \le n} \sum_{\alpha \in \Lambda_*^{k+1}}
\int_{\sigma_\alpha} e^{-z \zeta}\, \omega_{\alpha}.
$$
By our convention of orientation of $\sigma_{\alpha}$ and
the fact 
$$
\dim_{\mathbb{R}} \sigma_{n+1} \cap \{z = x+\sqrt{-1}y \in E;\,
x \in \partial U'\} < n, 
$$
we have, for any $\alpha \in \Lambda_*^{k+1}$,
$$
\partial \sigma_\alpha = \sum_{0 \le j \le n+1} \sigma_{[\alpha\, j]},
$$
where $[\alpha\,j]$ denotes a sequence in $\Lambda^{k+2}$ whose last element is $j$.

Hence it follows from Stokes's formula that we obtain
$$
I(\bvth \omega) = 0	\qquad (\omega \in \QDC{n}{n-1}{\mathcal{S}}).
$$
As a matter of fact, for 
$\omega_{\alpha} \in \mathscr{Q}^{n,n-k-1}_{\rcE}(S_\alpha)$ 
with $\alpha \in \Lambda_*^{k+1}$, we have
$$
e^{-z\zeta}\bvth \omega_{\alpha} = (-1)^k\overline{\partial} (e^{-z\zeta}\omega_{\alpha})
+ \delta (e^{-z\zeta}\omega_{\alpha})
= (-1)^kd (e^{-z\zeta}\omega_{\alpha}) + \delta  (e^{-z\zeta}\omega_{\alpha}),
$$
and thus, by noticing $\sigma_{[j\,\alpha]} = (-1)^{k+1}\sigma_{[\alpha\,j]}$,
$$
\begin{aligned}
I(\bvth \omega_{\alpha})
&=
(-1)^k\int_{\sigma_\alpha} d (e^{-z\zeta}\omega_{\alpha}) + 
\sum_{j=0}^{n+1} \int_{\sigma_{[j\,\alpha]}} e^{-z\zeta}\omega_{\alpha} \\
&=
(-1)^{k} \sum_{j=0}^{n+1} \int_{\sigma_{[\alpha\,j]}} e^{-z\zeta}\omega_{\alpha}
+
\sum_{j=0}^{n+1} \int_{\sigma_{[j\,\alpha]}} e^{-z\zeta}\omega_{\alpha}
=0.
\end{aligned}
$$
Summing up, if $\omega$ and $\omega'$ in $\QDC{n}{n}{\mathcal{S}})$
give the same cohomology class, we have
$$
I(\omega) = I(\omega').
$$
Now let us consider the canonical quasi-isomorphisms of complexes
$$
C(\mathcal{S},\,\mathcal{S}')(\hexppo{n})
\overset{\beta_1}{\longrightarrow}
\QQDC{n}{\mathcal{S}}
\overset{\beta_2}{\longleftarrow}
\QQDC{n}{\mathcal{V}_K},
$$
where $\beta_1$ is induced from the resolution $\hexppo{n} \to \QQDC{n}{\mathcal{S}}$
and $\beta_2$ is due to the fact that $(\mathcal{S},\mathcal{S}')$ 
is a covering finer than $(\mathcal{V}_K, \mathcal{V}'_K)$.
It is easy to see, for $\nu_2 \in \QDC{n}{n}{\mathcal{V}_K}$ with $\bvth \nu_2 = 0$,
$$
\LL([\nu_2]) = I(\beta_2(\nu_2)).
$$
Let $\nu_1 = \{ \nu_{1,\alpha}\}_{\alpha \in \Lambda^{n+1}_*}
\in C^n(\mathcal{S},\,\mathcal{S}')(\hexppo{n})$ with $\delta \nu_1 = 0$.
If $\beta_1(\nu_1)$ and $\beta_2(\nu_2)$ give the same cohomology class in
$\HQDC{n}{n}{\mathcal{S}}$, by the above reasoning, we get
$$
I(\beta_1(\nu_1)) = I(\beta_2(\nu_2)) = \LL([\nu_2]).
$$
It follows from the definition of $I(\bullet)$ that we have
$$
I(\beta_1(\nu_1)) = \sum_{\alpha \in \Lambda^{n+1}_*} 
\int_{\sigma_{\alpha}} e^{-z\zeta}\nu_{1,\alpha}.
$$
Furthermore, each integration can be rewritten to
\begin{equation}
\int_{\sigma_{\alpha}} e^{-z\zeta}\nu_{1,\alpha}
=
(-1)^n\operatorname{sgn}(\operatorname{det}(\eta_{\alpha_0},\dots,\eta_{\alpha_{n-1}}))
\int_{L_{\alpha}} e^{-z\zeta}\nu_{1,\alpha},
\end{equation}
where $L_\alpha$ is a real $n$-chain in $E$
\begin{equation}
L_\alpha = \{z = x + \sqrt{-1}y \in E;\, x \in \overline{U'} \cap M,\,\, 
y= \rho_\alpha(x)\}
\end{equation}
with a smooth function $\rho_\alpha: \overline{U'} \cap M \to M$ satisfying the conditions
\begin{enumerate}
\item $\rho_\alpha(x) = 0$ for $x \in \partial U' \cap M$,
\item $\overline{L_\alpha} \subset S_\alpha$ in $\rcE$,
\item $\displaystyle\sum_{k=1}^n \left| \dfrac{\partial \rho_\alpha}{\partial x_k}(x)\right|$ is bounded on $\overline{U'} \cap M$,
\end{enumerate}
and its orientation is the same as the one of $U'$. 

Summing up, for a \v{C}ech representation
$\{\nu_{1,\alpha}\}_{\alpha \in \Lambda_*^{n+1}}$ of a Laplace hyperfunction $u=[\tau]$, its Laplace transform is given by
\begin{equation}
\LL(u) =
(-1)^n\sum_{\alpha \in \Lambda_*^{n+1}}\operatorname{sgn}(\operatorname{det}(\eta_{\alpha_0},\dots,\eta_{\alpha_{n-1}})) \int_{L_{\alpha}} e^{-z\zeta}\nu_{1,\alpha}.
\end{equation}
\begin{oss}
In our settings, the last index of a covering
is assigned to the one for an open neighborhood $S$ of $\rcM$, i.e., $S_{n+1} = S$. 
In usual hyperfunction theory, however, the first index $0$ is assigned to it,
i.e., $S_0=S$.  This is the reason why the factor $(-1)^n$ appeared in the above expression.
\end{oss}
\end{es}

\

\begin{es}{\label{es:l-c-example02}}
Now we consider another useful example.
Set 
$$
\Gamma_{+^n} = \{y =(y_1,\cdots,y_n) \in M\,;\, y_k > 0\,\,(k=1,2,\cdots,n)\}
$$
and $K = \overline{\Gamma_{+^n}}$ in $\rcM$.
Let $S \subset \rcE$ be an open neighborhood of $\rcM$ such that
$S \cap E$ is a Stein open subset and $S$ is $1$-regular at $\infty$.
Define, for $k=0,1,\cdots,n-1$,
$$
S_k := \widehat{\,\,\,\,}\{z = (z_1,z_2,\cdots,z_n)\in S;\, z_{k+1} \in \mathbb{C} \setminus 
\mathbb{R}_{\ge 0}\}\,\, \subset \rcE.
$$
Set $S_n = S$. Then 
$$
\mathcal{S} = \{S_0,S_1, \dots, S_n\},\qquad
\mathcal{S}' = \{S_0, \dots, S_{n-1}\}
$$
are coverings of $(S,\, S \setminus K)$.

Define the $n\times n$ matrix $B: = (1 + \epsilon )I - \epsilon C$ for
sufficiently small $\epsilon > 0$,
where $I$ is the identity matrix and $C$ is the $n \times n$ matrix with entries being
all $1$. We define the $\mathbb{R}$-linear transformation $T$ on $E = M \times \sqrt{-1}M$ by
$$
x + \sqrt{-1}y \in E \longrightarrow B\,x + \sqrt{-1}\, y \in E.
$$
Let $\gamma \subset \mathbb{C}$ be the open subset defined by
$$
\gamma := \{z =x + \sqrt{-1}y \in \mathbb{C};\, |y| < \epsilon(x + \epsilon) \}.
$$
Then we introduce real $2n$-dimensional chains in $E$ by
$$
\sigma_n := \overline{T(\gamma \times \dots \times \gamma)}\,\, \bigcap \,\, E,
$$
and, for $k=0,\dots,n-1$,
$$
\sigma_k := \overline{T(\mathbb{C} \times \dots \times 
		\underset{\text{$(k+1)$-th}}{(\mathbb{C} \setminus \gamma)} \times 
		\dots \times \mathbb{C})}\,\,\bigcap \,\, E.
$$
Note that $\overline{\sigma_n}$ is a neighborhood of $K$ in $\rcE$. One should aware that,
however, $\overline{\gamma \times \gamma \times \cdots \times \gamma}$ is not.

Set $\Lambda = \{0,1,\dots,n\}$, and $\Lambda^{k+1}_*$ is the subset of $\Lambda^{k+1}$
consisting of an element $(\alpha_0, \alpha_1,\cdots,\alpha_k)$ with
$$
\alpha_0 < \alpha_1 < \cdots < \alpha_k=n.
$$
Then, for any $\alpha = (\alpha_0,\dots, \alpha_k) \in \Lambda^{k+1}_*$, 
the orientation of
$
\sigma_\alpha := \sigma_{\alpha_0} \cap \sigma_{\alpha_1} \cap \dots \cap 
\sigma_{\alpha_{k}}
$
is determined in the following way:
\begin{enumerate}
\item $\sigma_n$ has the same orientation as the one of $E$.
\item the outward-pointing normal vector of $\sigma_{\alpha_0}$, 
that of $\sigma_{\alpha_1}$, $\cdots$, that of $\sigma_{\alpha_{k-1}}$ followed by the the positive frame of $\sigma_\alpha$ form a positive frame of $E$.
\end{enumerate}

Note that, for any $\alpha \in \Lambda^{k+1}$ which contains the index $n$, we can define $\sigma_\alpha$ with orientation 
by extending the above definition in the alternative way as did
in the previous example.

For any $\alpha \in \Lambda_*^{k+1}$, we have
$$
\partial \sigma_\alpha = \sum_{0 \le j \le n} \sigma_{[\alpha\, j]},
$$
where $[\alpha\,j]$ is the sequence in $\Lambda^{k+2}$ whose last element is $j$.
Therefore the rest of argument goes in the same way as in 
Example {\ref{es:l-c-example01}}, and
we finally obtain, for $u = [\tau] \in \Gamma_K(\rcM;\,\bexpo \otimes_{\hexpa} \hexpv)$
and its \v{C}ech representative $\nu_{(012\dots n)} \in C^n(\mathcal{S},\,\mathcal{S}';\, 
\hexppo{n}) = \hexppo{n}(S_0 \cap S_1 \cap \dots \cap S_n)$, 
\begin{equation}
\LL(u) = \int_{L_{(012\dots n)}} e^{-z\zeta}\,\nu_{(012\dots n)}
\end{equation}
with the real $n$-chain 
\begin{equation}
L_{(012\dots n)} := T(\partial\gamma \times \dots \times \partial\gamma)
\subset E
\end{equation}
whose orientation is given so that each arc $\partial\gamma \subset \mathbb{C}$  has
anti-clockwise direction.
\end{es}

\

\begin{es}{\label{es:l-c-example03}}
Let us consider another kind of \v{C}ech covering: 
Let $K \ne \emptyset$ be a closed cone in $\rcM$
which is regular and proper in $\rcM$, and
let $\eta_k$'s ($k=0,\dots,n-1)$ be a family of linearly independent vectors in $M^*$, for which the sequence $\eta_0$, $\eta_1$, $\cdots$, $\eta_{n-1}$ of vectors forms a positive frame of $M^*$. Set
$$
\eta_{k,\pm} = \pm \eta_k\qquad(k=0,\dots,n-1).
$$
Then, we take open subsets $S$ and $S_{k,\pm}$ $(k=0,1,\cdots,n-1)$ 
in the same way as those in Example \ref{es:l-c-example01} by using 
Proposition \ref{prop:stein_regular_neighborhood} with $\eta = \eta_{k,\pm}$.
Set $S_n = S$ and coverings
$$
\mathcal{S} := \{S_{0,\pm},\dots, S_{n-1,\pm}, S_n\},\qquad
\mathcal{S}' := \{S_{0,\pm},\dots, S_{n-1,\pm}\}.
$$
Let $\Lambda$ be the set consisting of ``$n$'' and pairs ``$(i, \epsilon)$'' with
$i \in \{0,1,\dots,n-1\}$ and $\epsilon \in \{+,-\}$.
We define the linear order $<$ on $\Lambda$ by:
\begin{itemize}
\item[a.]\,\, $\alpha < n$ for any $\alpha \in \Lambda \setminus \{n\}$.
\item[b.]\,\, $(i,e_i) < (j,e_j)$ if $i < j$ or if $i = j$ and $e_i = +$ and $e_j=-$.
\end{itemize}
Let $\Lambda^{k+1}_{*}$ be the  subset in $\Lambda^{k+1}$ consisting of
$\alpha = (\alpha_0,\dots,\alpha_{k})$ with 
$$
\alpha_0 < \alpha_1 < \dots < \alpha_k = n.
$$
Furthermore, let $\Lambda^{k+1}_{**}$ be the subset in $\Lambda^{k+1}_*$ consisting of
$$
\alpha = ((i_0,\epsilon_0),\, \cdots,\,(i_{k-1},\epsilon_{k-1}),\, n) \in \Lambda^{k+1}_*
$$  
with $i_0 < i_1 < \dots < i_{k-1}$. For $\alpha \in \Lambda_*^{k+1}$, the subset $S_\alpha$
is defined as usual, that is,
$$
S_\alpha = S_{\alpha_0} \cap \cdots \cap S_{\alpha_k}.
$$
Note that, in this example, 
the open subset $S_\alpha$ is not necessarily empty for $\alpha \in \Lambda^{k+1}_* \setminus \Lambda^{k+1}_{**}$ with $k > n$. 

We take a proper open subset $U' \subset \rcM$ with $K \subset {U'}$
and $\varrho_{U'}(x)$ given in Lemma {\ref{lem:c_infty_distance}}.
Assume $\epsilon > 0$ is sufficiently small. Then we define closed subsets in $E$  by
$$
\sigma_n =
\bigcap_{0 \le k \le n-1} \overline{\{z = x+ \sqrt{-1}y \in E;\, x \in U' \cap E,\, 
|\langle y,\, \eta_k \rangle| < \epsilon\varrho_{U'}(x)\}}\,\,\bigcap\,\, E
$$
and, for $0 \le k \le n-1$,
$$
\sigma_{(k,\pm)} =
\overline{\{z = x+ \sqrt{-1}y \in E;\, x \in U' \cap E,\, 
\pm\langle y,\, \eta_k \rangle > \epsilon\varrho_{U'}(x)\}}\,\, \bigcap\,\, E.
$$
Note that $\overline{\sigma_n} \cap \overline{\sigma_\alpha} \subset S_\alpha$ holds for $\alpha \in \Lambda$
if $\epsilon$ is sufficiently small.
Then, in the same way as in the previous example,
we can define $\sigma_\alpha$ for $\alpha \in \Lambda_{**}^{k+1}$ and 
determine its orientation. For any
$$
\omega = \{\omega_{\alpha}\}_{0\le k \le n,\, \alpha \in \Lambda_*^{k+1}} 
\in 
\bigoplus_{0 \le k \le n} C^k(\mathcal{S},\mathcal{S}';\,
\mathscr{Q}^{(n,n-k)}_{\rcE})
= \QDC{n}{n}{\mathcal{S}},
$$
we define the Laplace transform of $\omega$ by
$$
I(\omega) := \sum_{0 \le k \le n} \sum_{\alpha \in \Lambda_{**}^{k+1}}
\int_{\sigma_\alpha} e^{-z \zeta}\, \omega_{\alpha},
$$
for which one should aware that the sum ranges through indices only in $\Lambda_{**}^{k+1} \subset \Lambda_*^{k+1}$.

We have, for any $\alpha = ((i_0,e_0),\,\cdots,\,
(i_{k-1},\,e_{k-1}),\, n) \in \Lambda_{**}^{k+1}$,
$$
\partial \sigma_\alpha = \sum_{j \notin \{i_0,\cdots,i_{k-1},n\},\, \epsilon = \pm}
	\sigma_{[\alpha\, (j,\,\epsilon)]},
$$
where $[\alpha\, (j,\,\epsilon)]$ is a sequence in $\Lambda^{k+2}$ whose last element
is $(j,\,\epsilon)$.
The important fact here is that $\partial \sigma_\alpha$ 
($\alpha \in \Lambda_{**}^{k+1}$)
does not contain any cell $\sigma_\beta$ with $\beta \in \Lambda^{k+2}_{*}\setminus\Lambda_{**}^{k+2}$.
Hence, by Stokes's formula, we still obtain
$$
I(\bvth \omega) = 0	\qquad (\omega \in \QDC{n}{n-1}{\mathcal{S}}).
$$
As a matter of fact,
%for $\omega_{k,\alpha}$ with 
%$\alpha = ((i_0,e_0),\,\cdots,\, (i_{k-1},\,e_{k-1}),\, n) \in \Lambda_{**}^{k+1}$,
if $\alpha \in \Lambda_{*}^{k+1} \setminus \Lambda_{**}^{k+1}$, then
$
I(\bvth \omega_{\alpha}) = 0
$
for $\omega_\alpha \in \mathscr{Q}^{(n,n-k-1)}_{\rcE}(S_\alpha)$
because $\overline{\partial} \omega_{\alpha}$  (resp. $\delta \omega_{\alpha}$)
does not contain a non-zero term with an index in $\Lambda_{**}^{k+1}$
(resp. $\Lambda_{**}^{k+2}$).
If 
$\alpha = ((i_0,e_0),\,\cdots,\, (i_{k-1},\,e_{k-1}),\, n) \in \Lambda_{**}^{k+1}$, 
then we have for $\omega_\alpha \in \mathscr{Q}^{(n,n-k-1)}_{\rcE}(S_\alpha)$
$$
\begin{aligned}
I(\bvth \omega_{\alpha})
&=
(-1)^k\int_{\sigma_\alpha} d(e^{-z\zeta}\omega_{\alpha}) + 
\sum_{j \notin \{i_0,\cdots,i_{k-1},n\},\, \epsilon = \pm}
\int_{\sigma_{[(j,\epsilon)\,\alpha]}} e^{-z\zeta}\omega_{\alpha} 
\\
&=
(-1)^{k} 
\sum_{j \notin \{i_0,\cdots,i_{k-1},n\},\, \epsilon = \pm}
\int_{\sigma_{[\alpha\,(j,\epsilon)]}} e^{-z\zeta}\omega_{\alpha}
+
\sum_{j \notin \{i_0,\cdots,i_{k-1},n\},\, \epsilon = \pm}
\int_{\sigma_{[(j,\epsilon)\,\alpha]}} e^{-z\zeta}\omega_{\alpha} 
=0.
\end{aligned}
$$

The rest of argument is the same as the one in the previous example:
For $u = [\tau] \in \Gamma_K(\rcM;\,\bexpo \otimes_{\hexpa} \hexpv)$ and its relative \v{C}ech representative 
$$
\nu = \underset{\alpha \in \Lambda^{n+1}_{*}}{\bigoplus} \nu_{\alpha} \in 
C^n(\mathcal{S},\,\mathcal{S}')(\hexppo{n})
\quad \text{with}\,\, \delta \nu = 0,
$$
we obtain
\begin{equation}
\LL(u) = (-1)^n\sum_{\alpha \in \Lambda^{n+1}_{**}}
\textrm{sgn}(\alpha)\int_{L_{\alpha}} e^{-z\zeta}\nu_{\alpha}.
\end{equation}
Here, for $\alpha=((0,\epsilon_0),\dots,(n-1,\epsilon_{n-1}),\, n) \in \Lambda^{n+1}_{**}$, we set $\textrm{sgn}(\alpha)=\epsilon_0\epsilon_1 \cdots \epsilon_{n-1}$ and
$L_\alpha$ is the real $n$-chain in $E$
\begin{equation}
L_\alpha = \{z = x + \sqrt{-1}y \in E;\, x \in \overline{U'} \cap M,\,\, 
y= \rho_\alpha(x)\}
\end{equation}
with a smooth function $\rho_\alpha: \overline{U'} \cap M \to M$ satisfying the conditions
\begin{enumerate}
\item $\rho_\alpha(x) = 0$ for $x \in \partial U' \cap M$,
\item $\overline{L_\alpha} \subset S_\alpha$ in $\rcE$,
\item $\displaystyle\sum_{k=1}^n \left| \dfrac{\partial \rho_\alpha}{\partial x_k}(x)\right|$ is bounded on $\overline{U'} \cap M$,
\end{enumerate}
and its orientation is the same as the one of $U'$.
\end{es}

\subsubsection{Laplace transform whose chain is of product type}{\label{subsec:laplace_prodcut}}

Let us consider the Laplace transformation of a Laplace hyperfunction $u$ whose
support is contained in $\overline{\Gamma_{+^n}} \subset \rcM$. 
Here $\Gamma_{+^n} = \{(x_1,\cdots,x_n) \in M;\, x_k > 0 \,\,(k=1,2,\cdots,n)\}$.
In this case, one can expect
the the path of the integration to be the product 
$\gamma_1 \times \cdots \times \gamma_n$ of the one dimensional paths $\gamma_k$.
However, we cannot take such a path unless the support of $u$ is
contained in a cone strictly smaller than $\overline{\Gamma_{+^n}}$.
In this subsection, we show that a chain of product type can be taken as an integral
path of the Laplace transformation if the condition ${\operatorname{supp}(u)}\setminus \{0\} \subset \widehat{\Gamma_{+^n}}$ is satisfied.

\

%We assume, in this subsection, $E=\mathbb{C}^n$ and $M=\mathbb{R}^n$.
Let $K \ne \emptyset$ be a regular closed cone in $\rcM$ satisfying
\begin{equation}{\label{eq:proper-condition-K}}
	K \setminus \{0\} \subset \widehat{\Gamma_{+^n}}.
\end{equation}
Let $\epsilon > 0$ and Let $D_k \subset \mathbb{C}$ be an open subset 
satisfying that
$\widehat{D_k}$ has a good boundary (see Remark \ref{oss:good_boundary} for the definition of a good boundary), $\overline{\mathbb{R}_+} \subset \widehat{D_k}$ and
$$
D_k \subset \{z = x + \sqrt{-1}y \in \mathbb{C}\,;\, |y| < \epsilon(x + \epsilon)\}.
$$
%Further, there exist $C>0$ and $R > 0$ such that
%$$
%|\partial B_r \cap \partial D_k|_1 = 0,\qquad |\partial (B_r \cap D_k)|_1 < Cr\qquad
%(r > R),
%$$
%where 
%$B_r \subset \mathbb{C}$ denotes an open ball with the radius $r>0$ and the center at the origin and $|\bullet|_1$ designates $1$ dimensional Hausdorff measure.
Set
$$
D = D_1 \times D_2 \times \cdots \times D_n \subset E.
$$
One should aware that
$\widehat{D}$ is not an open neighborhood of $\overline{\Gamma_{+^n}}$ in $\rcE$.
However, since $\widehat{D}$ becomes an open neighborhood of $K$ in $\rcE$ because of 
\eqref{eq:proper-condition-K}, 
we can compute its Laplace transform by
$$
\LL(u)(\zeta) := \int_D e^{-z \zeta} \nu_1 - \int_{\partial D} e^{-z \zeta}\nu_{01}
$$
for a Laplace hyperfunction $u = [(\nu_1, \nu_{01})]$ 
($(\nu_1,\,\nu_{01}) \in \QDC{n}{n}{\mathcal{V}_K}$)
with support in $K$.

Let $\eta_{k,\pm}=(0,\cdots,\pm 1,\cdots,0)$ ($k=0,\dots,n-1)$ be a unit vector
whose $(k+1)$-th element is $\pm 1$.
Recall the definitions of $\Lambda_*^{k+1}$ and $\Lambda_{**}^{k+1}$
given in Example \ref{es:l-c-example03}, and
let us introduce open subsets $S$, $S_{k,\pm}$ and the pair 
$(\mathcal{S}, \mathcal{S}')$ of coverings of $(S, S \setminus K)$
in the same way as those in Example 
\ref{es:l-c-example03}.
Set $\sigma_n = \overline{D} \cap E$ and, for $k=0,\cdots,n-1$,
$$
\sigma_{k,\pm} =
\overline{\{z = (z_1,\cdots, z_n) \in E\,;\, 
	z_{k+1} \in \mathbb{C} \setminus D_{k+1},\, \pm\textrm{Im}\, z_{k+1} \ge 0 \}
} \,\,\bigcap\,\, E.
$$
Then, as we did in the example, we define the Laplace transform by
$$
I(\omega) := \sum_{0 \le k \le n} \sum_{\alpha \in \Lambda_{**}^{k+1}}
\int_{\sigma_\alpha} e^{-z \zeta}\, \omega_{\alpha}
$$
for  
$
\omega = \{\omega_{\alpha}\}_{0\le k \le n,\, \alpha \in \Lambda_*^{k+1}} 
\in 
\bigoplus_{0 \le k \le n} C^k(\mathcal{S},\mathcal{S}';\,
\mathscr{Q}^{(n,n-k)}_{\rcE})
= \QDC{n}{n}{\mathcal{S}}.
$

Note that we have, for $\alpha = ((i_0,\epsilon_0),\cdots,(i_{k-1},\epsilon_{k-1}),n) \in \Lambda_*^{k+1}$,
$$
\partial \sigma_\alpha = \sum_{j \notin \{i_0,\dots,i_{k-1},n\},\, \epsilon = \pm}
	\sigma_{[\alpha\, (j,\,\epsilon)]}
	+ \sum_{j \in \{i_0,\dots,i_{k-1},n\},\, \epsilon = \pm}
	\sigma_{[\alpha\, (j,\,\epsilon)]}.
$$
Define $\pi_j: \mathbb{C}^n \to \mathbb{C}$ to be $\pi_j(z_1,\cdots,z_n) = z_{j+1}$.
Since 
$
\pi_j(\sigma_{j,+} \cap \sigma_{j,-} \cap \sigma_n)
$
$(j=0,1,\cdots,n-1)$
consists of the one point,
for  $j \in \{i_0,\dots,i_{k-1}\}$ and $\epsilon = \pm$, 
the restriction of the holomorphic $n$-form $dz$ to $\sigma_{[\alpha\, (j,\,\epsilon)]}$ becomes $0$
and we get
$$
\int_{\sigma_{[\alpha\, (j,\,\epsilon)]}} e^{-z\zeta}\tau = 0
$$
for an $(n,n-k-1)$-form $\tau$.
Therefore we still have the same Stokes formula as the one
in Example \ref{es:l-c-example03}
$$
\int_{\sigma_{\alpha}} 
e^{-z\zeta} 
\overline{\partial}
\tau 
= \sum_{j \notin \{i_0,\dots,i_{k-1},n\},\, \epsilon = \pm}
\int_{\sigma_{[\alpha\,(j,\epsilon)]}}e^{-z\zeta} \tau,
$$
and hence, we obtain
$$
I(\bvth \omega) = 0	\qquad (\omega \in \QDC{n}{n-1}{\mathcal{S}}).
$$
Summing up, let $u = [\tau] \in \Gamma_K(\rcM;\,\bexpo \otimes_{\hexpa} \hexpv)$
and let 
$$
\nu = \underset{\alpha \in \Lambda^{n+1}_{*}}{\bigoplus} \nu_{\alpha} \in 
C^n(\mathcal{S},\,\mathcal{S}')(\hexppo{n})\quad\text{with}\,\,\, \delta \nu = 0
$$
be its \v{C}ech representative, that is, through the diagram of isomorphisms
$$
\mathrm{H}^n(C(\mathcal{S},\,\mathcal{S}')(\hexppo{n}))
\overset{\beta_1}{\longrightarrow}
\HQDC{n}{n}{\mathcal{S}}
\overset{\beta_2}{\longleftarrow}
\HQDC{n}{n}{\mathcal{V}_K},
$$
$\tau$ and $\nu$ satisfy $[\tau] = ((\beta_2^{n})^{-1} \circ \beta_1^n)([\nu])$.
We have obtained
\begin{equation}
\LL(u) = (-1)^n\sum_{\alpha \in \Lambda^{n+1}_{**}} 
\textrm{sgn}(\alpha)\int_{\gamma_{\alpha}} 
e^{-z\zeta}\nu_{\alpha},
\end{equation}
where, for $\alpha=((0,\epsilon_0),\dots,(n-1,\epsilon_{n-1}),\, n) 
\in \Lambda^{n+1}_{**}$, we set
$\textrm{sgn}(\alpha)= \epsilon_0\epsilon_1 \dots \epsilon_{n-1}$,
\begin{equation}
\begin{aligned}
\gamma_{\alpha}
&= \left(\partial D_1 \times \partial D_2 \times \cdots \times \partial D_{n}\right)\,
\bigcap\,\overline{\Gamma_{\alpha}}, \\
\Gamma_{\alpha} &=
\{z=(z_1,\cdots,z_n) \in E\,;\, \epsilon_{k}\textrm{Im}\,z_{k+1} > 0 \quad (k=0,1,\cdots,n-1)\}
\end{aligned}
\end{equation}
and the orientation of $\gamma_{\alpha}$
is chosen to be the same as the one in $M$.

\subsection{Reconstruction of a representative}

By the same arguments as in the previous examples, we have a formula
to reconstruct the corresponding \v{C}ech representative 
from a \v{C}ech Dolbeault representative of a Laplace hyperfunction.

Recall the definition of $\Lambda_*^{n+1}$ and $\Lambda_{**}^{n+1}$ given in
Example \ref{es:l-c-example03}.
Set
$$
\Gamma_\alpha := \{x \in M;\, \epsilon_k x_{k+1} > 0\,\,\,(k=0,\dots,n-1)\}
$$
for any $\alpha = ((0,\epsilon_0),(1,\epsilon_1), \cdots, (n-1,\epsilon_{n-1}),n)
\in \Lambda_{**}^{n+1}$.
In particular, we denote by $+^n$ the sequence $((0,+),(1,+), \cdots, (n-1,+),n)$.
Thus $\Gamma_{+^n}$ denotes the first orthant in $M$.

Let $K \subset \rcM$ be a regular closed cone such that $K \cap M$ is a non-empty convex set, and $V \subset \rcE$ an open cone such that
$V$ is $1$-regular at $\infty$ and $V \cap E$ is a Stein open subset.
Note that, since $V$ is an open cone, the fact that $V$ is $1$-regular at $\infty$ is equivalent to saying that $\widehat{\,\,\,\,}(V \cap E) = V$.
We also assume 
\begin{equation}
K \setminus \{0\} \,\subset\, \widehat{\Gamma_{+^n}} \,\subset\,
\overline{\Gamma_{+^n}}\,\subset\, V.
\end{equation}
Set $U = V \cap \rcM$ and we also assume $\widehat{\,\,\,\,}(U \cap M)=U$. Let $\ILHU$ denote the intuitive representation of Laplace hyperfunctions on $U \subset \rcM$. 
\begin{oss}
In this subsection, 
we assume that $\mathcal{W}({U})$ consists of
an infinitesimal wedge which satisfies the condition B1.~in Section
{\ref{sec:boundary-value}}. For such a family $\mathcal{W}({U})$
of restricted open subsets, still Theorem {\ref{thm:intuitive}} holds.
\end{oss}

Then, we define
$
b: \ILHU \to \HQDC{0}{n}{\mathcal{V}_{U}}
$
by
$$
\hexpo(W) \ni f \, \mapsto\, b_W(f) \in \HQDC{0}{n}{\mathcal{V}_U} \qquad
(W \in \mathcal{W}(\hat{U})),
$$
where $\mathcal{V}_U = \{V \setminus {U},\,V\}$, 
$\mathcal{V}'_U = \{V \setminus {U}\}$ 
and $b_W$ is the boundary value map {\eqref{eq:def_boundary_map}}.

Recall that the isomorphism $b_{\mathcal{W}}: \ILHU \to \Gamma({U};\, \bexpo)$
was given in Theorem {\ref{thm:intuitive}}, for which
we have the commutative diagram (see Theorem \ref{teo:b_omega_commute} and
Corollary {\ref{cor:commute_b_omega_xx}}):
$$
\begin{diagram}
	\node[2]{\Gamma({U};\, \bexpo)}
	\node{\Gamma_K({U};\, \bexpo)}\arrow{w,t}{\iota}   \\
\node{\ILHU} \arrow{e,t}{b} \arrow{ne,l}{b_{\mathcal{W}}}
\node{\HQDC{0}{n}{\mathcal{V}_U}}  \arrow{n,l}{\simeq}
\node{\HQDC{0}{n}{\mathcal{V}_K}} \arrow{w,t}{\iota} \arrow{n,l}{\simeq}
\end{diagram},
$$
where $\mathcal{V}_K = \{V \setminus K,\,V\}$ and $\mathcal{V}_K' = \{V \setminus K\}$,
the morphisms $\iota$ are injective and all the other morphisms are isomorphic.
Set
$$
\ILHKU := \{u \in \ILHU;\,
\operatorname{Supp}(b_{\mathcal{W}}(u)) \subset K \}.
$$
Then the morphism $b$ induces the isomorphism
$$
b_K: \ILHKU \overset{\sim}{\longrightarrow} \HQDC{0}{n}{\mathcal{V}_K}.
$$

Now we give the inverse of $b_K$ concretely.
Let $u \in \Gamma_K({U};\, \bexpo)$ and $\tau = (\tau_1,\tau_{01}) \in \QDC{0}{n}{\mathcal{V}_K}$ be its representation.
Define
$$
h_\tau(z) = \dfrac{1}{(2\pi\sqrt{-1})^n}
\left(\int_D \dfrac{\tau_1(w) e^{(z-w)a}}{w - z}dw -
\int_{\partial D} \dfrac{\tau_{01}(w) e^{(z-w)a}}{w - z} dw
\right),
$$
where 
$\dfrac{1}{w - z}$ denotes 
$\dfrac{1}{(w_1-z_1)\cdots(w_n-z_n)}$, 
the vector $a$ and the domain $D$ are as follows:
\begin{enumerate}
\item $D$ is a contractible open subset in $\rcE$ with a good boundary 
$\partial D$ 
(see Remark {\ref{oss:good_boundary}} for the good boundary)
which satisfies 
$$
K \subset D \subset \overline{D} \subset V
$$
and 
$$
(D \cap E) \subset \bigcup_{k=1}^n\,\{w \in E;\, |w_k - z_k| > \delta\}
$$
for some $\delta > 0$. Furthermore, $D$ is properly contained in an half space of $\rcE$
with direction $\dfrac{1}{\sqrt{n}}(1,1,\cdots,1)$.
\item $a = R(1,1,\dots,1)$, where $R > 0$ is sufficiently large so that the integrals converge.
\end{enumerate}
Note that the orientation of $D$ is the same as the one of $E$, and that of $\partial D$
is determined so that the outward-pointing normal vector of $\partial D$ followed by
a positive frame of $\partial D$ form a positive frame of $E$.

Then it is easy to check that $h_\tau(z)$ remains unchanged when we take another $D$ and
representative $\tau$ of $u$ if the integral converges for the same $a$.
Hence, by deforming $D$ suitably 
(here keep $D$ unchanged near $K \cap E_\infty$, and hence, we do not
need to change $a$ in this deformation), we find that $h_\tau(z)$ belongs to
$\hexpo(\Omega)$, where 
$$
\Omega := \widehat{\,\,\,\,}\{z = x+\sqrt{-1}y \in E;\, y_1 y_2 \cdots y_n \ne 0\}.
$$
For $\alpha \in \Lambda_{**}^{n+1}$, set $\Omega_\alpha := 
M \widehat{\times} \sqrt{-1}\Gamma_\alpha \subset \rcE$.
Note that we have
$$
\Omega := \underset{\alpha \in \Lambda_{**}^{n+1}}{\bigsqcup} \Omega_\alpha.
$$
Now we define the inverse $b^\dagger_K$ of $b_K$ by
\begin{equation}
u = [\tau] \longrightarrow \bigoplus_{\alpha \in \Lambda_{**}^{n+1}}\, 
\operatorname{sgn}(\alpha) h_\tau(z)\big|_{\Omega_\alpha}
\in \ILHU,
\end{equation}
where $\operatorname{sgn}(\alpha) = \epsilon_0 \epsilon_1 \cdots \epsilon_{n-1}$
for $\alpha = ((0,\epsilon_0),(1,\epsilon_1), \cdots, (n-1,\epsilon_{n-1}),n) \in \Lambda_{**}^{n+1}$.
\begin{lem}
	$b_K^{\dagger}$ is independent of the choices of $a = R(1,\dots,1)$ if $R > 0$ is
	sufficiently large.
\end{lem}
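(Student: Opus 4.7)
The plan is to differentiate $h_u$ in the scalar $R$ (where $a = R(1,\ldots,1)$) and show that the derivative vanishes in $\ILHU$ after the signed restriction $\sum_\alpha \operatorname{sgn}(\alpha)(\cdot)|_{\Omega_\alpha}$ defining $b_K^\dagger$; integration in $R$ will then yield the asserted independence on the convergence interval $(R_0,\infty)$. Using $\partial_R e^{(z-w)a} = [\sum_l(z_l-w_l)]\,e^{(z-w)a}$ together with the algebraic identity $[\sum_l(w_l-z_l)]/\prod_l(w_l-z_l) = \sum_{k=1}^n 1/\prod_{j\ne k}(w_j-z_j)$, one obtains $\partial_R h_u^{(R)}(z) = -\sum_{k=1}^n H_k(z)$ with
\begin{equation*}
H_k(z) := \frac{1}{(2\pi\sqrt{-1})^n}\left(\int_D \frac{\tau_1\, e^{(z-w)a}}{\prod_{j\ne k}(w_j-z_j)}\,dw - \int_{\partial D}\frac{\tau_{01}\, e^{(z-w)a}}{\prod_{j\ne k}(w_j-z_j)}\,dw\right).
\end{equation*}
The crucial feature is the absence of the pole $\{w_k = z_k\}$ in the kernel of $H_k$.

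Next, for each $k \in \{1,\ldots,n\}$ and each sign vector $(s_j)_{j\ne k} \in \{\pm 1\}^{n-1}$, the cone $\tilde\Gamma := \{y \in M \,:\, s_j y_j > 0 \text{ for } j \ne k\}$ is $\mathbb{R}_+$-conic, open, and connected. I would show that $H_k$ extends holomorphically from $\Omega$ to $U \hat{\times} \sqrt{-1}\tilde\Gamma \in \mathcal{W}(\hat{U})$. For $z$ in this wedge, $\operatorname{Im} z_j$ has a fixed nonzero sign for every $j \ne k$, so $z_j \notin M \supset \pi_j(K)$; by the hypotheses $K \setminus \{0\} \subset \widehat{\Gamma_{+^n}} \subset \overline{\Gamma_{+^n}} \subset V$, one can deform $D$ (via the same Stokes-type independence as for $h_u$, applied in the region where the reduced kernel is smooth) to lie close enough to $K$ in the $j$-th imaginary direction for every $j \ne k$, securing $z_j \notin \pi_j(\overline{D})$ and hence convergence and holomorphicity in $z$. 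No constraint is imposed on $y_k$, so the extension crosses the wall $\{y_k = 0\}$.

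Two indices $\alpha, \alpha' \in \Lambda^{n+1}_{**}$ that agree except in the sign $\epsilon_{k-1}$ then satisfy $\Omega_\alpha \cup \Omega_{\alpha'} \subset U \hat{\times} \sqrt{-1}\tilde\Gamma$ (for the $\tilde\Gamma$ matching their common signs) and $\operatorname{sgn}(\alpha) = -\operatorname{sgn}(\alpha')$. By the defining relation of $\ILHU$, the classes $[H_k|_{\Omega_\alpha}]$ and $[H_k|_{\Omega_{\alpha'}}]$ both equal the class $[\tilde H_k]$ of the common extension, so their signed contributions cancel. Summing over the $2^{n-1}$ sign choices of $(s_j)_{j \ne k}$ and then over $k = 1, \ldots, n$ produces $\sum_\alpha \operatorname{sgn}(\alpha)\, \partial_R h_u^{(R)}|_{\Omega_\alpha} = 0$ in $\ILHU$, i.e.\ $\tfrac{d}{dR} b_K^\dagger(u) = 0$, whence the lemma.

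The main obstacle is the coherent construction of the chain $D$ (depending on $z$) in the extension step: $D$ must contain $K$, stay inside $V$, remain contractible with (partially) smooth boundary, avoid $\pi_j^{-1}(z_j)$ for every $j \ne k$ and every $z$ in the target wedge, and support uniform convergence of the exponential integrals. The product-type chains constructed in Subsection \ref{subsec:laplace_prodcut} provide a natural starting point and can be adapted fiberwise in each $j \ne k$ direction; carefully verifying all these conditions simultaneously is the technical core of the argument.
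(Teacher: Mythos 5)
Your proof is correct and rests on the same mechanism as the paper's: a perturbation of $a$ produces a kernel in which one factor of the denominator $\prod_j(w_j-z_j)$ has been cancelled, hence a function holomorphic across the corresponding wall $\{y_k=0\}$, and the signed restrictions of such a function to the $\Omega_\alpha$ cancel in pairs in $\ILHU$. The organizational difference is that the paper perturbs one coordinate of $a$ at a time, writing $e^{(z-w)a'}-e^{(z-w)a}=(R'-R)(z_1-w_1)\int_0^1 e^{(z-w)(ta'+(1-t)a)}\,dt$ so that the factor $(w_1-z_1)$ cancels directly and only the single wall $\{y_1=0\}$ is crossed (the general case then follows by iterating over coordinates), whereas you move $a$ along the diagonal and use the partial-fraction identity to split $\partial_R h_u^{(R)}$ into $n$ terms $H_k$, each missing one pole; your version treats the diagonal family $a=R(1,\dots,1)$ in one stroke at the cost of the partial-fraction bookkeeping, the paper's avoids that identity at the cost of an iteration. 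One point of rigor: rather than asserting $\tfrac{d}{dR}b_K^\dagger(u)=0$ in the quotient $\ILHU$, where differentiating a path is not a priori meaningful, integrate first at the level of functions, writing $h_u^{(R')}-h_u^{(R)}=-\sum_k\int_R^{R'}H_k^{(\rho)}\,d\rho$ on $\Omega$ and noting that each summand still extends across $\{y_k=0\}$ because the $\rho$-integral converges locally uniformly on the larger wedge; this is exactly the paper's $\int_0^1\cdots\,dt$ formula in your setting. Your identification of the deformation of $D$ as the technical core is accurate, and it is handled by the same argument the paper already uses to show $h_u\in\hexpo(\Omega)$, here with one fewer coordinate to control.
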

\begin{proof}
Let $a' = (R', R,\dots,R)$ with $R' > R$. It is enough to show that
$b_K^{\dagger}(u)$ gives the same result for both the $a$ and $a'$ because a general case
is obtained by the repetition of application of this result.
Clearly we have
$$
\begin{aligned}
&\left(\int_D \dfrac{\tau_1(w) e^{(z-w)a'}}{w - z} dw-
\int_{\partial D} \dfrac{\tau_{01}(w) e^{(z-w)a'}}{w - z} dw
\right) - \\
&\qquad\qquad\left(\int_D \dfrac{\tau_1(w) e^{(z-w)a} }{w - z} dw -
	\int_{\partial D} \dfrac{\tau_{01}(w) e^{(z-w)a}}{w - z} dw
\right) \\
&=(R'-R)
\left(\int_D \int_0^1\dfrac{\tau_1(w) e^{(z-w)(ta' + (1-t)a)}}{w' - z'} dtdw-
\int_{\partial D} \int_0^1\dfrac{\tau_{01}(w) e^{(z-w)(ta' + (1-t)a)}}{w' - z'} dtdw
\right),
\end{aligned}
$$
where $z' = (z_2,\dots,z_n)$ and $w'=(w_2,\dots,w_n)$.
Since the last integral denoted by $\tilde{h}(z)$ hereafter
belongs to $\hexpo(\Omega')$ with
$$
\Omega' := \widehat{\,\,\,\,}\{z = x+\sqrt{-1}y \in E;\, y_2 \cdots y_n \ne 0\},
$$
we have $\displaystyle\bigoplus_\alpha\, \operatorname{sgn}(\alpha) \tilde{h}(z)\big|_{\Omega_\alpha} = 0$
in $\ILHU$.  This shows the result.
\end{proof}
%By deformation of $D$, we also have:
%\begin{lem}
%	The image of $b_K^{-1}$ is contained in $\check{\textrm H}_K^n(\mathcal{W})$.
%\end{lem}
\begin{teo}{\label{teo:b_k_invserse}}
	$b_K$ and $b_K^{\dagger}$ are inverse to each other. 
\end{teo}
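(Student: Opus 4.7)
The strategy is to verify $b_K \circ b_K^{\dagger} = \mathrm{id}_{\HQDC{0}{n}{\mathcal{V}_K}}$; this suffices since $b_K$ is already an isomorphism by the preceding discussion. Fix a $\bvth$-cocycle $\tau = (\tau_1,\tau_{01}) \in \QDC{0}{n}{\mathcal{V}_K}$, so that $\tau_1 = \bar\partial \tau_{01}$ on $V \setminus K$, and write $H(z) := (-1)^n \sum_{\alpha \in \Lambda_{**}^{n+1}} \operatorname{sgn}(\alpha)\, h_u|_{\Omega_\alpha}$ for the element $b_K^{\dagger}([\tau]) \in \ILHKU$. The goal is to show $b_K(H) = [\tau]$ in $\HQDC{0}{n}{\mathcal{V}_K}$.

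I would unwind $b_K$ on each summand using the \v{C}ech-Dolbeault construction of the boundary value map from Subsection \ref{subsec:c-d-boundary-value-map} together with the explicit kernel of Example \ref{es:fundamental_example}: the boundary value $b_{\Omega_\alpha}(h_u|_{\Omega_\alpha})$ is represented by $h_u|_{\Omega_\alpha}\, \rho(\tau_\alpha)$, where $\rho(\tau_\alpha)$ is built from a partition of unity subordinate to the half-spaces defining the orthant cone $\Gamma_\alpha$. Summing $\operatorname{sgn}(\alpha)\, h_u|_{\Omega_\alpha}\, \rho(\tau_\alpha)$ over $\alpha \in \Lambda_{**}^{n+1}$ and exploiting the product structure of the Cauchy kernel $\prod_k (w_k - z_k)^{-1}$ appearing in the definition of $h_u$, the computation reduces to iterating the one-variable Sokhotski--Plemelj jump
$$
\frac{1}{2\pi\sqrt{-1}}\sum_{\epsilon = \pm} \epsilon \cdot \frac{1}{w_k - z_k}\Big|_{\epsilon\, \operatorname{Im}(z_k) > 0} \; = \; \delta_{w_k = z_k}
$$
in each complex variable. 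By Fubini, the $n$-fold iteration delivers the delta distribution on the diagonal $\{w = z\}$, and substituting this back into the integral representation of $h_u$ returns the pair $(\tau_1, \tau_{01})$ modulo a $\bvth$-coboundary; this is the higher-dimensional Cauchy integral formula in \v{C}ech-Dolbeault disguise.

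The principal obstacle is the sign and orientation bookkeeping. The factor $(-1)^n(n-1)!$ and the ordering of $\bar\partial\varphi_1 \wedge \cdots \wedge \bar\partial\varphi_{n-1}$ in Example \ref{es:fundamental_example}, the definition $\operatorname{sgn}(\alpha) = \epsilon_0 \cdots \epsilon_{n-1}$, the prefactor $(-1)^n$ in $b_K^{\dagger}$, and the positive-frame convention for the orthants $\Gamma_\alpha$ must all conspire to yield exactly $[\tau]$ rather than an unintended scalar multiple. The exponential factor $e^{(z-w)a}$ contributes trivially to the residue calculation since $e^{(z-w)a}|_{w=z} = 1$, which is consistent with the independence of $b_K^{\dagger}$ from $a$ already established in the preceding lemma. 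Once this sign analysis is completed and the $2^n$ orthant contributions are assembled, the identity $b_K \circ b_K^{\dagger} = \mathrm{id}$ follows, and combined with the known bijectivity of $b_K$ this yields the theorem.
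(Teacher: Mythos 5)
Your reduction to proving $b_K\circ b_K^{\dagger}=\operatorname{id}$ is legitimate (the paper in effect verifies the same composition, exploiting that $b_K$ is already known to be bijective), and your identification of the representative of $b_{\Omega_\alpha}(h_u|_{\Omega_\alpha})$ as $h_u\,\rho(\tau_\alpha)$ is correct. The gap is in the central analytic step. You propose to apply the iterated one-variable Sokhotski--Plemelj jump directly to $h_u$ as defined from the \v{C}ech--Dolbeault pair $(\tau_1,\tau_{01})$. But $h_u$ contains the term $\int_D \tau_1(w)e^{(z-w)a}/(w-z)\,dw$, a $2n$-dimensional volume integral of the iterated Cauchy kernel against a form $\tau_1$ that is merely smooth, not holomorphic. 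The Plemelj jump formula is a statement about one-dimensional \emph{contour} integrals of sufficiently regular densities; it does not apply variable-by-variable to this volume term, whose singularity along each $\{w_k=z_k\}$ has real codimension two and is locally integrable, so the naive ``sum of boundary values $=$ delta on the diagonal'' computation is not justified for it. This is precisely the point where the paper's proof takes a detour you omit: it first replaces $\tau$ by a holomorphic \v{C}ech representative $f=\{f_\alpha\}\in C^n(\mathcal{S},\mathcal{S}';\,\hexppo{n})$ via the comparison quasi-isomorphisms $\beta_1,\beta_2$, and uses the product-chain computation of Subsection 6.2.3 to rewrite $h_u$ as a finite sum of genuine $n$-dimensional contour integrals $\int_{\Phi_\alpha(L(z))} f_\alpha(w)e^{(z-w)a}/(w-z)\,dw$ of holomorphic functions. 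Only then does the multivariable Cauchy integral formula (your ``iterated jump'') apply rigorously.

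A second, smaller omission: even after the Cauchy formula produces
\begin{equation*}
\sum_{\beta\in\Lambda_{**}^{n+1}}\operatorname{sgn}(\beta)\,g_{\alpha,\beta}(z)=\operatorname{sgn}(\alpha)f_\alpha(z)\qquad(z\in T_\alpha),
\end{equation*}
one still has a double sum over orthants $\alpha,\beta$ that must be reassembled inside $\ILHU$. The paper does this by showing that $g_{\alpha,\beta}|_{T_\alpha}$ and $g_{\alpha,+^n}|_{T_{\alpha\cdot\beta}}$ extend analytically to a common infinitesimal wedge and hence define the same class in the intuitive representation, which lets the sum collapse to $\sum_\beta\operatorname{sgn}(\beta)f_\beta|_{T_\beta}$. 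Your sketch asserts the final assembly ``once the sign analysis is completed'' but does not supply this identification, without which the $2^n$ orthant contributions do not recombine. So the strategy is the right one in outline, but the two load-bearing steps --- passage to a holomorphic \v{C}ech representative before invoking any reproducing-kernel identity, and the wedge-extension argument identifying the $g_{\alpha,\beta}$ in $\ILHU$ --- are missing.
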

\begin{proof}
We use the same notations as those in Subsection {\ref{subsec:laplace_prodcut}}, 
where we take an open subset $V$ as $S$. Hence, the pair $(\mathcal{S},\mathcal{S}')$
are coverings of $(V, V \setminus K)$. Set 
$$
Q_{k,\epsilon} = \{y=(y_1,\cdots,y_n) \in M\,;\,  \epsilon y_{k+1} > L^{-1}|y|\}
\qquad (k=0,1,\cdots,n-1, \epsilon=\pm) 
$$
for sufficiently large $L>0$ and set
$$
T_{k,\epsilon} = U \widehat{\times} \sqrt{-1}Q_{k,\epsilon}.
$$
Let $T \subset \rcE$ be an open neighborhood of ${U}$ such that
$T \cap \rcM =U$,
$T$ is $1$-regular at $\infty$ and $T \cap E$ is a Stein open subset.
Furthermore, by shirking $T$ if necessary, we may assume
$T \subset S$ and
$$
T_{k,\epsilon} \cap T \subset S_{k,\epsilon} \cap S
\qquad (k=0,1,\cdots,n-1,\,\epsilon = \pm ).
$$
Set also $T_n = T$ and define the pair $(\mathcal{T}, \mathcal{T}')$ of coverings of 
$(T, T \setminus \rcM)$ by
$$
\mathcal{T} = \{T_{0,+},\, T_{0,-},\,\cdots,\, T_{n-1,+},\,T_{n-1,-},\, T_n\},\qquad
\mathcal{T}' = \{T_{0,+},\, T_{0,-},\,\cdots,\, T_{n-1,+},\, T_{n-1,-}\}.
$$
Using these coverings, we have the commutative diagram of complexes,
where the horizontal arrows are all quasi-isomorphisms:
$$
\begin{diagram}
\node{C(\mathcal{S},\,\mathcal{S}')(\hexpo)}
\arrow{e,t}{\beta_1}
\arrow{s,l}{\iota_1}
\node{\QQDC{0}{\mathcal{S}}}
\arrow{s,l}{\iota}
\node{ \QQDC{0}{\mathcal{V}_K}} 
\arrow{w,t}{\beta_2}
\arrow{s,l}{\iota_2}
\\
\node{ C(\mathcal{T},\,\mathcal{T}')(\hexpo)}
\arrow{e,t}{\alpha_1}
\node{ \QQDC{0}{\mathcal{T}}}
\node{ \QQDC{0}{\mathcal{V}_{U}}}
\arrow{w,t}{\alpha_2}
\end{diagram}.
$$
Then by taking $n$-th cohomology groups we get
$$
\begin{diagram}
	\node{{\textrm H}^n(C(\mathcal{S},\,\mathcal{S}')(\hexpo))}
\arrow{e,t}{\beta_{1}^n}
\arrow{s,l}{\iota_{1}^n}
\node{\HQDC{0}{n}{\mathcal{S}}}
\arrow{s,l}{\iota^n}
\node{\HQDC{0}{n}{\mathcal{V}_K}} 
\arrow{w,t}{\beta_2^n}
\arrow{s,l}{\iota_2^n}
\\
\node{ {\textrm H}^n(C(\mathcal{T},\,\mathcal{T}')(\hexpo))}
\arrow{e,t}{\alpha_1^n}
\node{ \HQDC{0}{n}{\mathcal{T}}}
\node{ \HQDC{0}{n}{\mathcal{V}_{U}}}
\arrow{w,t}{\alpha_2^n}
\end{diagram},
$$
where all the horizontal arrows are isomorphic and
all the vertical arrows are injective.
Before entering the proof, we first confirm several fundamental facts
which are needed in the proof: 

\begin{itemize}
\item Lemma {\ref{lem:b_w_and_I_C_commute}} still holds by the same reasoning, that is,
the canonical isomorphism
$$
\iota_{IC}: {\textrm H}^n(C(\mathcal{T},\,\mathcal{T}')(\hexpo))
\xrightarrow{\,\,\sim\,\,} 
\hat{\textrm H}^n(\hexpo(\mathcal{W}(U)))
$$
is given by
$$
\left[\underset{\alpha \in \Lambda_{**}^{n+1}}{\oplus}\, g_\alpha \right]
\mapsto (-1)^n\bigoplus_{\alpha \in \Lambda_{**}^{n+1}}
	\operatorname{sgn}(\alpha) g_\alpha|_{T_\alpha}.
$$
\item %By Lemma A.5 \cite{HIS} and by repeated applications of the well-known Wiel procedure, 
We have the commutative diagram below as in {\eqref{eq:b_w_and_diag_equiv}}.
%	Furthermore, it follows from the construction of boundary values morphisms
%that, for any cocycle $g = \{g_{\alpha}\}_{\alpha \in \Lambda_{**}^{n+1}} \in 
%C^n(\mathcal{T},\,\mathcal{T}')(\hexpo)$, we have
%$$
%\sum_{\alpha \in \Lambda_{**}^{n+1}} \textrm{sgn}(\alpha) b_{T_\alpha}(g_\alpha)
%= ((\alpha_2^{n})^{-1} \circ \alpha_1^n)([g])
%\qquad\text{in $\HQDC{0}{n}{\mathcal{V}_{U}}$}.
%$$
%Hence we can conclude that the following diagram commutes.
$$
\begin{diagram}
\node{ {\textrm H}^n(C(\mathcal{T},\,\mathcal{T}')(\hexpo))} 
\arrow{s,l}{\iota_{IC}}\arrow{se,l}{(\alpha^n_2)^{-1} \circ \alpha_1^n} \\
\node{\hat{\textrm H}^n(\hexpo(\mathcal{W}(U)))}
\arrow{e,t}{b} \node{\HQDC{0}{n}{\mathcal{V}_{U}}}
\end{diagram}.
$$
%By taking the above commutative diagram into account, 
%we identify  the morphism $b$ with
%the morphism 
%$(\alpha_2^{n})^{-1} \circ \alpha_1^n: {\textrm H}^n(C(\mathcal{T},\,\mathcal{T}')(\hexpo)) \to \HQDC{0}{n}{\mathcal{V}_{U}}$
%in what follows.
%%Here ${\textrm sgn}(\alpha)$ denotes $\epsilon_0\epsilon_1\cdots\epsilon_{n-1}$
%for $\alpha = ((0,\epsilon_0), (1,\epsilon_1),\cdots, (n-1,\epsilon_{n-1}),n)$.
%
\item The morphism $\iota_1$ is induced from the restriction of coverings, that is,
for $(f_{\alpha})_{\alpha \in \Lambda_{*}^{n+1}} \in 
C^n(\mathcal{S},\,\mathcal{S}')(\hexpo)$, we have
$$
\iota_1^n([(f_{\alpha})_{\alpha \in \Lambda_{*}^{n+1}}])
= [(f_\alpha|_{T_\alpha})_{\alpha \in \Lambda_{**}^{n+1}}]
\qquad\text{in ${\textrm H}^n(C(\mathcal{T},\,\mathcal{T}')(\hexpo))$}.
$$
Note also that, for $\alpha \in \Lambda_*^{n+1} \setminus \Lambda_{**}^{n+1}$,
we have always $T_\alpha = \emptyset$ but $S_\alpha$ is not necessarily empty.
\end{itemize}

Since $b_K$ is an isomorphism, it suffices to show $b_K^\dagger$ is
the inverse of $b_K$, i.e., 
$$
b_K^\dagger \circ b_K = \mathrm{id}  \qquad 
\text{in $\ILHU$}.
$$
To see the above formula,
for any cocycle $f = (f_{\alpha})_{\alpha \in \Lambda_{*}^{n+1}} \in C^n(\mathcal{S},\,\mathcal{S}')(\hexpo)$, it suffices to show the equality
$$
(b_K^\dagger \circ b_K \circ \iota_{IC}\circ \iota^n_1)([f]) = 
(\iota_{IC}\circ \iota^n_1)([f]) \qquad
\text{in $\ILHU$}.
$$
Then it follows from the above fundamental facts that we have
$$
\begin{aligned}
(b_K^\dagger \circ b_K \circ \iota_{IC}\circ \iota^n_1)([f]) &= 
(b_K^\dagger \circ (\iota^n_2)^{-1} \circ b \circ \iota_{IC}\circ \iota^n_1)([f])  \\
& = 
(b_K^\dagger \circ (\iota^n_2)^{-1} \circ (\alpha_2^n)^{-1} \circ \alpha_1^n \circ \iota^n_1)([f]) = 
(b_K^\dagger \circ (\beta_2^n)^{-1}\circ \beta_1^n )([f]).
\end{aligned}
$$
Here $(\iota^n_2)^{-1}$ denotes the inverse on $\mathrm{Im}(\iota^n_2)$.
Hence it is enough to see the equality
$$
(b_K^\dagger \circ (\beta_2^n)^{-1}\circ \beta_1^n )([f]) = (\iota_{IC}\circ \iota^n_1)([f]) \qquad \text{in $\ILHU$},
$$
whose concrete form is as follows:
\begin{equation}{\label{eq:b_b_id_final_goal}}
\bigoplus_{\alpha \in \Lambda_{**}^{n+1}}\, 
\operatorname{sgn}(\alpha) h_\tau(z)\big|_{\Omega_\alpha}
= (-1)^n\bigoplus_{\alpha \in \Lambda_{**}^{n+1}} \operatorname{sgn}(\alpha)f_{\alpha}|_{T_\alpha}
\qquad \text{in $\ILHU$},
\end{equation}
where $\tau = (\tau_1, \tau_{01}) \in \QDC{0}{n}{\mathcal{V}_K})$ is
given by $[\tau] = ((\beta^n_2)^{-1}\circ \beta_1^n)([f])$. 

\

Let us show {\eqref{eq:b_b_id_final_goal}}.
By applying the same arguments as in Subsection {\ref{subsec:laplace_prodcut}}
to the integral $h_\tau(z)$, we have
$$
h_\tau(z) = \sum_{\alpha \in \Lambda^{n+1}_{**}} \dfrac{(-1)^n \operatorname{sgn}(\alpha)}{(2\pi\sqrt{-1})^n} 
\int_{\Phi_\alpha(L(z))} \dfrac{f_\alpha(z)e^{(z-w)a}}{w-z} dw
\qquad (z \in \Omega \cap E),
$$
where,
for any $\alpha = ((0,\epsilon_0),\cdots,(n-1,\epsilon_{n-1}),n) \in \Lambda_{**}^{n+1}$,
the mapping $\Phi_\alpha: \mathbb{C}^n \to \mathbb{C}^n$ is defined by
$$
\Phi_\alpha(x_1+\sqrt{-1}y_1, \cdots, x_n + \sqrt{-1}y_n)
=(x_1 + \epsilon_0 \sqrt{-1}y_1,
x_2 + \epsilon_1 \sqrt{-1}y_2,
\cdots,
x_n + \epsilon_{n-1} \sqrt{-1}y_n)
$$
and 
$$
L(z) = \ell_{+}(z_1) \times \ell_{+}(z_2) \times \dots \times \ell_{+}(z_n).
$$
Here, for $z_0 = x_0+\sqrt{-1}y_0 \in \mathbb{C}$ with $y_0 \ne 0$, the path $\ell_{+}(z_0) 
\subset \mathbb{C}$ is defined as follows:
%$$
%\begin{aligned}
%	&\{c + \sqrt{-1}t;\, 0 \le t < |y_0|-\delta\}\,\, \bigsqcup\,\,
%\{t + \sqrt{-1} (|y_0| - \delta);\, c \le t \le x_0+\delta\}  \\
%&\quad\bigsqcup\,\,
%\{(x_0+\delta) + \sqrt{-1}t;\, |y_0|-\delta < t < |y_0| + \delta\} \\
%&\quad\quad\bigsqcup\,\,
%\{(x_0 + \delta + t) + \sqrt{-1}(|y_0| + \delta + \delta t); t \ge 0\}
%\end{aligned}
Let $\gamma \subset \rcCone$ be a domain with a good boundary such that
it contains the real half line 
$\overline{\{z = x + \sqrt{-1}y \in \mathbb{C}; x \ge \min\{0, 2x_0\},\,\, y=0\}} 
\subset \rcCone$ and 
two points $x_0  \pm \sqrt{-1}y_0 \in \mathbb{C}$ are outside $\overline{\gamma}$.
%Furthermore, it satisfies, for some $C,R > 0$,
%$$
%|\partial B_r \cap \partial \gamma|_1 = 0,\qquad
%|\partial (B_r \cap \gamma) |_1 < Cr\qquad (r > R)
%$$
%where $B_r \subset \mathbb{C}$ denotes an open ball with the radius $r$ and the center at the origin,
Then we set 
$$
\ell_+(z_0) = \partial \gamma \cap \{z \in \mathbb{C};\,\textrm{Im}\, z \ge 0\}.
$$
Furthermore, the orientation of $\ell_{+}(z)$ is the same as that of the real axis.

In the same way, we define $\ell_-(z_0) \subset \mathbb{C}$ by
taking the domain $\gamma$ as in the case of $\ell_+(z_0)$. However, in this case, 
we take $\gamma$ so that the two points $x_0 \pm \sqrt{-1}y_0$ are also contained in $\gamma$.
%$$
%\begin{aligned}
%	&\{c + \sqrt{-1}t;\, 0 \le t < |y_0|+\delta\}\,\, \bigsqcup\,\,
%\{t + \sqrt{-1} (|y_0| + \delta);\, c \le t \le x_0+\delta\}  \\
%&\qquad\bigsqcup\,\,
%\{(x_0 + \delta + t) + \sqrt{-1}(|y_0| + \delta + \delta t); t \ge 0\}.
%\end{aligned}
%$$
For any $\beta = ((0,\epsilon_0),(1,\epsilon_1),\cdots,(n-1,\epsilon_{n-1}),n) 
\in \Lambda_{**}^{n+1}$, we set
$$
L_\beta(z) := \ell_{\epsilon_0}(z_1)\times \ell_{\epsilon_1}(z_2) \times \cdots
\times \ell_{\epsilon_{n-1}}(z_n)
$$
and
$$
g_{\alpha,\beta}(z) =
\dfrac{1}{(2\pi\sqrt{-1})^n} 
\displaystyle\int_{\Phi_\alpha(L_\beta(z))} \dfrac{f_\alpha(z)e^{(z-w)a}}{w-z} dw.
$$
It follows from the Cauchy integral formula that
\begin{equation}{\label{eq:cauchy-1-1}}
\sum_{\alpha \in \Lambda_{**}^{n+1}} \operatorname{sgn}(\alpha) 
g_{\beta,\alpha}(z)
= \operatorname{sgn}(\beta) f_\beta(z)  \quad (z \in T_\beta,\,\beta \in \Lambda_{**}^{n+1}).
%\begin{cases}
%f_\alpha(z)  
%\quad &(z \in T_\alpha), \\
%\\
%0 
%& (z \in \Omega \setminus \Omega_\alpha).
%\end{cases}
\end{equation}
For $\alpha=((0,\epsilon_0),(1,\epsilon_1),\cdots,(n-1,\epsilon_{n-1}),n)$
and
$\beta=((0,\eta_0),(1,\eta_1),\cdots,(n-1,\eta_{n-1}),n)$ in $\Lambda_{**}^{n+1}$,
we define
$$
\alpha\cdot\beta = ((0,\epsilon_0\eta_0),(1,\epsilon_1\eta_1),
\cdots,(n-1,\epsilon_{n-1}\eta_{n-1}),n) \in \Lambda_{**}^{n+1}.
$$
Remember that $+^n$ denotes $((0,+),(1,+),\cdots,(n-1,+),n)$.
If $\beta\in \Lambda_{**}^{n+1}$ is different from $+^n$, then
$g_{\alpha,\beta}|_{T_\alpha}$ and $g_{\alpha,+^n}|_{T_{\alpha\cdot\beta}}$
can analytically extend to some common infinitesimal wedge in $\rcE$ and
they coincide there. Hence we have, for any $\alpha,\beta \in \Lambda_{**}^{n+1}$,
$$
g_{\alpha,\beta}|_{T_\alpha} = g_{\alpha,+^n}|_{T_{\alpha \cdot \beta}} \quad \text{in $\ILHU$},
$$
from which we have obtained in $\ILHU$
$$
\begin{aligned}
(-1)^n\bigoplus_{\alpha \in \Lambda_{**}^{n+1}}\, 
\operatorname{sgn}(\alpha) h_\tau(z)\big|_{\Omega_\alpha}
&=
\bigoplus_{\alpha \in \Lambda_{**}^{n+1}} 
\sum_{\beta \in \Lambda_{**}^{n+1}}\textrm{sgn}(\alpha)\textrm{sgn}(\beta)g_{\beta,+^n}|_{T_{\alpha}} \\
&=
\bigoplus_{\beta \in \Lambda_{**}^{n+1}}
\sum_{\alpha \in \Lambda_{**}^{n+1}} 
\textrm{sgn}(\alpha \cdot \beta)
g_{\beta,\,\alpha\cdot \beta}|_{T_{\beta}} \\
&=
\bigoplus_{\beta \in \Lambda_{**}^{n+1}}\textrm{sgn}(\beta)
f_\beta|_{T_{\beta}}.
\end{aligned}
$$
This completes the proof.
\end{proof}

\section{Laplace inverse transformation $\IL$}

Let $S$ be a connected open subset in $M^*_\infty$ and $a \in M$.
Note that a connected subset is, in particular, non-empty.
%We denote by $\pi_{E^*_\infty}: E^*\setminus \{0\} \to E^*_\infty = (E^*\setminus\{0\})/\mathbb{R}_+$ the canonical porjection. We also denote by $\pi_{M^*_\infty}$ 
%the canonical projection from $M^* \setminus \{0\} \to M^*_\infty$.
Recall the definition of the map $\varpi_{M^*_\infty}$
given in \eqref{eq:inf_times_star}, for which we have
$$
\varpi_{M^*_\infty}^{-1}(S)=
\{\xi + \sqrt{-1}\eta \in E^*;\, \xi \in S,\, \eta \in M^*\}/\mathbb{R}_+
\,\,\subset E^*_\infty \setminus \sqrt{-1}M^*_\infty.
$$
Here we identify a point in $M^*_\infty$ with a unit vector in $M^*$.

%Set $K := \{a\} + \overline{\Gamma} \subset \rcM$ and

Let $h:M^*_\infty \to \{\pm\infty\} \cup \mathbb{R}$ be an upper semi-continuous function 
such that $h(\xi)$ is continuous on $S$ and $h(\xi)> -\infty$ there.
Now we extend $h$ to the one on $E^*_\infty$ in the following canonical way:
Define $\hat{h}(\zeta): E^*_\infty \to \{\pm \infty\} \cup \mathbb{R}$ by,
for $\zeta = \xi + \sqrt{-1}\eta \in E^*_\infty$ $((\xi,\eta) \in S^{2n-1})$,
$$
\hat{h}(\zeta)
=
\begin{cases}
0 \qquad & (\zeta \in \sqrt{-1}M^*_\infty), \\
\\
|\xi|h(\varpi_{M^*_\infty}(\zeta)) \quad & (\zeta \in E^*_\infty \setminus \sqrt{-1}M^*_\infty),
\end{cases}
$$
where we set $\pm \infty \times c = \pm \infty$ for $c > 0$.
Note that $\hat{h}$ is also upper semi-continuous on $E^*_\infty$ and
continuous on $\varpi_{M^*_\infty}^{-1}(S) \cup \sqrt{-1}M^*_\infty$.

Let $f \in \hhinfo{\hat{h}}(\varpi_{M^*_\infty}^{-1}(S))$.
%Let $f \in e^{-a\zeta} \hinfo(I \overset{\infty}{\times} \sqrt{-1}M^*)$.
It follows from the definition of $\hhinfo{\hat{h}}$ that we can find
continuous functions $\psi: S \times [0,\infty) \to \mathbb{R}_{\ge 0}$ and 
$\varphi:[0,\infty) \to \mathbb{R}_{\ge 0}$ satisfying the following conditions:
\begin{enumerate}
\item For any compact subset $L \subset S$, 
		the function $\underset{\xi \in L}{\sup}\, \psi(\xi,\,\lambda)$ 
is an infra-linear function of the variable $\lambda$ and
$f$ is holomorphic on an open subset $W_\psi \cap E^*$, where 
\begin{equation}{\label{eq:domain_W}}
	W_\psi := \widehat{\,\,\,\,}\left\{\zeta = \lambda\xi + \sqrt{-1}\eta \in E^*;\,
\eta \in M^*,\,\, \xi \in S, \,\,
\lambda > \psi(\xi,\, |\eta|)\right\}.
\end{equation}
Note that we identify a point in $M^*_\infty$ with a unit vector in $M^*$ here.
\item $\varphi(t)$ is a continuous infra-linear function on $[0,\infty)$ such that
\begin{equation}{\label{eq:growth_f}}
	|f(\zeta)| \,\le\, e^{- |\zeta|\hat{h}(\pi_{E^*_\infty}(\zeta)) + \varphi(|\zeta|)} 
	= e^{- |\xi|h(\pi_{M^*_\infty}(\xi)) + \varphi(|\zeta|)} 
	\qquad (\zeta = \xi + \sqrt{-1}\eta \in W_\psi \cap E^*),
\end{equation}
where $\pi_{E^*_\infty}: E^*\setminus\{0\} \to (E^*\setminus \{0\})/\mathbb{R}_+ = E^*_\infty$ 
(resp. $\pi_{M^*_\infty}: M^*\setminus\{0\} \to M^*_\infty$) 
is the canonical projection and we also set $e^{+\infty} = +\infty$, $e^{-\infty} = 0$.
\end{enumerate}
We also define an $n$-dimensional real chain in $E^*$ by
\begin{equation}
\gamma^* := \left\{\zeta = \xi + \sqrt{-1}\eta \in E^*;\,
\eta \in M^* \setminus \{0\},\,  \xi = \psi_{\xi_0}(|\eta|)\, \xi_0 \right\}.
\end{equation}
Here $\xi_0 \in S$ and
$\psi_{\xi_0}(\lambda)$ is a smooth function on $[0,\infty)$ which is monotonically increasing and has bounded derivatives on $[0,\infty)$. Further it is infra-linear with
$\psi_{\xi_0}(\lambda) > \psi(\xi_0, \lambda)$ ($\lambda \in [0,\infty)$) and
$\psi_{\xi_0}(\lambda)/(\psi(\xi_0,\lambda)+1) \to \infty$ ($\lambda \to \infty$).
Note that the orientation of $\gamma^*$ is chosen to be the same as that of $\sqrt{-1}M^*$.
\begin{es}
The following situation is the most important one considered in the paper:
Let $K$ be a regular closed subset in $\rcM$ such that 
$\HHPC{K} \cap M^*_\infty$ is connected (in particular, non-empty).
Then we set $S=\HHPC{K} \cap M^*_\infty$ and we also set
$$
h(\xi) = h_K(\xi) = \inf_{x \in K \cap M} \langle x, \xi \rangle \qquad 
(\xi \in M^*_\infty)
$$
if $K \cap M \ne \emptyset$ and set $h(\xi) = +\infty$ if $K \cap M = \emptyset$.
In this case, we have
$$
\HHPC{K} = 
\begin{cases}
	\varpi_{M^*_\infty}^{-1}(S) \qquad &(K \cap M_\infty \ne \emptyset), \\ 
\\
E^*_\infty = \varpi_{M^*_\infty}^{-1}(S) \cup \sqrt{-1}M^*_\infty \qquad 
&( K \cap M_\infty = \emptyset), \\ 
\end{cases}
$$
and
$$
\hat{h}(\zeta) = 
\inf_{x \in K \cap M} 
\operatorname{Re}\,
\langle x, \zeta \rangle \qquad (\zeta \in E^*_\infty)
$$
if $K \cap M \ne \emptyset$ and $\hat{h}(\zeta) = +\infty$ otherwise.
%In particular, the estimate {\eqref{eq:growth_f}} becomes
%$$
%|f(\zeta)| \,\le\, 
%e^{-\underset{x \in K \cap M}{\inf}
%\operatorname{Re}\,
%\langle x, \zeta \rangle
%+ \varphi(|\zeta|)} 
%=
%e^{-\underset{x \in K \cap M}{\inf}
%\operatorname{Re}\,
%\langle x, \xi \rangle
%+ \varphi(|\zeta|)} 
%\qquad (\zeta = \xi + \sqrt{-1}\eta \in W_\psi \cap E^*).
%$$
Furthermore, $\hat{h}(\zeta)$ is upper semi-continuous on $E^*_\infty$ 
and continuous on $\HHPC{K} \cup \sqrt{-1}M^*_\infty$.
\end{es}

\

%Hereafter we identify $E^*_\infty$ with $S^{2n-1}$.
Now we consider the de-Rham theorem with a parameter 
in Section {\ref{sec:dolbeault-complex}}, for which 
we take $T=S^{n-1} = \{\eta \in M^*;\,|\eta|=1\}$ and $Y = S^{n-1} \times \rcE$.  
Define coverings
$$
\mathcal{W} = \{W_0 = Y \setminus p_{\rcE}^{-1}(\rcM),\, W_1 = Y\},\qquad
\mathcal{W}' = \{W_0\}
$$
with $W_{01} = W_0 \cap W_1$.  Recall the isomorphisms
given in Proposition \ref{prop:const-with-param}
$$
\Gamma(T;\, \mathscr{L}^\infty_{loc,T})
=
\Gamma(Y;\, \tilde{p}_T^{-1} \mathscr{L}^\infty_{loc,T})
\overset{\sim}{\longrightarrow}
{\textrm H}^n_{p_{\rcE}^{-1}(\rcM)}(Y;\,p_T^{-1} \mathscr{L}^\infty_{loc,T})
=
{\textrm H}^n(C(\mathcal{W},\mathcal{W}')(\LQ_Y^{(\bullet)})),
$$
and set 
$$
\Omega :=
\widehat{\,\,\,\,}
\left\{(\theta,z) \in S^{n-1} \times E;\, \langle \theta,\,\operatorname{Im} z \rangle > 0
\right\}
\subset Y.
$$
Let $j: \Omega \hookrightarrow Y$ be the canonical open inclusion.
Then we can take a specific $\omega = (\omega_1, \omega_{01}) \in
C^n(\mathcal{W},\mathcal{W}')(\LQ_Y^{(\bullet)})$ satisfying the following
conditions:
\begin{enumerate}
\item[D1.]$\,$ $D_{\rcE} \omega = 0$ and $[\omega]$ is the image of a constant function 
$1 \in \Gamma(T;\, \mathscr{L}^\infty_{loc,T})$ through the above isomorphisms.
\item[D2.]$\,$ We have
$
\operatorname{supp}_{W_1}(\omega_1) \subset \Omega
$
and
$
\operatorname{supp}_{W_{01}}(\omega_{01}) \subset \Omega
$.
\end{enumerate}
The existence of the above $\omega$ comes from the following lemma:
\begin{lem}{\label{lem:fundamental_support_LQ}}
The canonical morphisms %in $\CatMD$
$$
\begin{aligned}
&C(\mathcal{W},\mathcal{W}')(j_!j^{-1}\LQ_Y^{(\bullet)})
\longrightarrow
C(\mathcal{W},\mathcal{W}')(\LQ_Y^{(\bullet)}), \\
&C(\mathcal{W},\mathcal{W}')
(j_!j^{-1}\EQ_Y^{(\bullet)})
\longrightarrow
C(\mathcal{W},\mathcal{W}')(\EQ_Y^{(\bullet)})
\end{aligned}
$$
are quasi-isomorphic.
%$(j_!j^{-1}\LQ_Y^{(\bullet)})(\mathcal{W},\mathcal{W}')$ denotes
%the complex $C(\mathcal{W},\mathcal{W}')(j_!j^{-1}\LQ_Y^{(\bullet)})$.
\end{lem}
\begin{proof}
Let $\mathscr{F}$ be a $\mathscr{L}^\infty_{loc,T}$ or $\mathscr{E}_T$, and
let $i: Y \setminus \Omega \to Y$ denote the closed embedding.
Then the above isomorphism is equivalent to the following isomorphism:
$$
{\mathbf R}\Gamma_{p_{\rcE}^{-1}(\rcM)}(Y;\,j_!j^{-1}p_T^{-1} \mathscr{F})
\longrightarrow
{\mathbf R}\Gamma_{p_{\rcE}^{-1}(\rcM)}(Y;\,p_T^{-1} \mathscr{F}),
$$
which comes from the fact
\begin{equation}{\label{eq:fact_xxx_1111}}
{\mathbf R}\Gamma_{p_{\rcE}^{-1}(\rcM)}(Y;\,i_*i^{-1}p_T^{-1} \mathscr{F})
\simeq 0.
\end{equation}
The fact itself can be shown by the following argument: Let us consider
the distinguished triangle
$$
\begin{aligned}
{\mathbf R}\Gamma_{p_{\rcE}^{-1}(\rcM)}(Y;\,i_*i^{-1}p_T^{-1} \mathscr{F})
\to
&{\mathbf R}\Gamma(Y \setminus \Omega;\,p_T^{-1} \mathscr{F})
\overset{\beta}{\to} \\
&\qquad{\mathbf R}\Gamma((Y \setminus \Omega)\setminus p_{\rcE}^{-1}(\rcM);\,p_T^{-1} \mathscr{F})
\overset{+1}{\to}
\end{aligned}
$$
Under the commutative diagram below,
$$
\begin{diagram}
	\node{(Y \setminus \Omega) \setminus p_{\rcE}^{-1}(\rcM)}
	\arrow{e,t}{\iota}\arrow{se,l}{p_T} 
	\node{Y \setminus \Omega} 
	\arrow{s,l}{p_T} 
	\\
	\node[2]{T}
\end{diagram},
$$
the morphism $\iota$ gives a homotopical equivalence over $T$, and hence,
it follows from Corollary 2.7.7 (i) [KS] that the morphism $\beta$ is isomorphic.
This implies {\eqref{eq:fact_xxx_1111}}. The proof has been completed.
\end{proof}

Note that we will give a concrete construction of such an $\omega$ later. 
Recall the standard coverings
$$
\mathcal{V}_\rcM = \{V_0 = \rcE \setminus \rcM,\, V_1 = \rcE\}, \qquad
\mathcal{V}_\rcM' = \{V_0\},
$$
and the morphism $\rho=\{\rho_k\}: \LQ_Y^{(\bullet)} \to \LQ_Y^{(0,\bullet)}$ of complexes
which is the projection to the space of anti-holomorphic forms, that is, each
$
\rho_k: \LQ_Y^{(k)} \to \LQ_Y^{(0,k)}
$
is defined by
$$
\sum_{|I|=i,\,|J|=j,\, i+j=k} f_{I,J}(\theta,z)dz^I \wedge d\bar{z}^J
\qquad \mapsto \qquad
\sum_{|J|=k} f_{\emptyset, J}(\theta,z) d\bar{z}^J.
$$
Note that the following diagram commutes
$$
\begin{diagram}
\node{{\mathbf R}\Gamma_{p^{-1}_{\rcE}(\rcM)}(Y;\,p_T^{-1}\mathscr{L}^\infty_{loc,T})}
\arrow{e}\arrow{s}
\node{{\mathbf R}\Gamma_{p^{-1}_{\rcE}(\rcM)}(Y;\, \LO_Y)} 
\arrow{s}
\\
\node{C(\mathcal{W},\mathcal{W}')(\LQ_Y^{(\bullet)})}
\arrow{e,t}{\rho}
\node{C(\mathcal{W},\mathcal{W}')(\LQ_Y^{(0,\bullet)})}
\end{diagram},
$$
where vertical arrows are quasi-isomorphic.

Let us take an $\omega = (\omega_1, \omega_{01}) \in
C^n(\mathcal{W},\mathcal{W}')(\LQ_Y^{(\bullet)})$
which satisfies the conditions
D1.~and D2.
\begin{df}{\label{def:IL}}
The Laplace inverse transform $\IL$ is given by
$$
\IL(f) = \left([\IL_\omega(fd\zeta)] \otimes a_{\rcM/\rcE}\right) \otimes \nu_{\rcM}
$$
with
$$
\begin{aligned}
\IL_\omega(fd\zeta) 
&:=\ilc
\int_{\gamma^*} \rho(\omega)(\dfrac{\eta}{|\eta|},z)\, e^{\zeta z} f(\zeta) d\zeta \\
&=\ilc
\left(
	\int_{\gamma^*} \rho_n(\omega_1)(\dfrac{\eta}{|\eta|}, z)\, e^{\zeta z}f(\zeta)d\zeta,\,\,
	\int_{\gamma^*} \rho_{n-1}(\omega_{01})(\dfrac{\eta}{|\eta|}, z)\, e^{\zeta z}f(\zeta)d\zeta
\right). 
\end{aligned}
$$
Here $\zeta = \xi + \sqrt{-1}\eta$ are the dual variables of $z=x+\sqrt{-1}y$,
$a_{\rcM} \in or_\rcM(\rcM)$, $a_{\rcM/\rcE} \in or_{\rcM/\rcE}(\rcM)$ so that
$a_{\rcM/\rcE} \otimes a_{\rcM}$ has the same orientation as that of $E$
through the isomorphism $or_{\rcM/\rcE} \otimes or_{\rcM} \simeq or_{\rcE}|_{\rcM}$,
and the volume $\nu_{\rcM}$ is defined by $dz \otimes a_\rcM$ 
with $dz=dz_1\wedge \cdots \wedge dz_n$ and 
$d\zeta = d\zeta_1 \wedge \cdots \wedge d\zeta_n$.
\end{df}
%Note that we sometimes write $\IL(f)$ instead of $\IL_\omega(f)$ due to the following lemma.
\begin{lem}
We have
\begin{enumerate}
\item The integration $\IL_\omega(f d\zeta)$ converges and it belongs to
$\QDC{0}{n}{\mathcal{V}_\rcM}$. Furthermore, $\bvth(\IL_\omega(f d\zeta)) = 0$ holds.
\item $\IL_\omega(f d\zeta)$ does not depend on the choices of $\omega$.  
\end{enumerate}
\end{lem}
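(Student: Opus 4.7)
The plan is to verify the three assertions — convergence (plus membership in $\QDC{n}{n}{\mathcal{V}_\rcM}$), $\bvth$-closedness, and independence of $\omega$ — in turn, using the support condition D2 for $\omega$ and the growth estimate \eqref{eq:growth_f} for $f$ as the principal tools. By Lemma~\ref{lem:fundamental_support_LQ} I may at the outset replace $\omega$ by a representative in $\EQ_Y^{n}(\mathcal{W},\mathcal{W}')$ with support in $\Omega$, so that $\rho(\omega)$ is smooth; this reduces the analysis to obtaining exponential bounds and lets me differentiate under the integral freely. I parametrize $\gamma^{*}$ by $\eta \in M^{*}\setminus\{0\}$ via $\zeta = \psi_{\xi_0}(|\eta|)\xi_0 + \sqrt{-1}\eta$. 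For fixed $z = x + \sqrt{-1}y$, D2 forces the integrand to vanish unless $(\eta/|\eta|, z) \in \Omega$; at finite $z$ this imposes the half-space condition $\langle \eta, y\rangle > 0$. Combining with \eqref{eq:growth_f} on $\gamma^{*}$ gives
$$
\bigl|\rho(\omega)(\eta/|\eta|,z)\,e^{\zeta z} f(\zeta)\bigr|\le C\,\exp\!\bigl(\psi_{\xi_0}(|\eta|)(\langle \xi_0, x\rangle - h(\xi_0)) - \langle \eta, y\rangle + \varphi(|\zeta|)\bigr).
$$
Since $\psi_{\xi_0}(t)$ and $\varphi(t)$ are infra-linear while $-\langle\eta,y\rangle$ is strictly negative and linear in $|\eta|$ on the $\eta$-support, the integrand decays exponentially in $|\eta|$, yielding absolute convergence. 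Differentiating under $\int_{\gamma^{*}}$ (each $z$-derivative of $e^{\zeta z}$ contributes only polynomial powers of $\zeta$, absorbed by the exponential decay) shows that the result is smooth and of exponential type, placing $\IL_\omega(fd\zeta)$ in $\QDC{n}{n}{\mathcal{V}_\rcM}$.

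For $\bvth\,\IL_\omega(fd\zeta) = 0$, recall that the \v{C}ech-Dolbeault differential on the covering $(\mathcal{V}_\rcM,\mathcal{V}'_\rcM)$ reads $\bvth(\mu_0,\mu_{01}) = (\bar\partial \mu_0,\,\mu_0|_{V_{01}} - \bar\partial \mu_{01})$. The first component of $\bvth \IL_\omega(fd\zeta)$ vanishes because $\bar\partial_z e^{\zeta z}=0$ and because the $(0,n)$-form $\rho_n(\omega_1)$ is of top anti-holomorphic degree, so $\bar\partial_z \rho_n(\omega_1)=0$; hence $\bar\partial_z$ may be pulled under $\int_{\gamma^{*}}$ and annihilates the integrand. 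For the second component, a direct check gives $\rho\circ d = \bar\partial\circ\rho$ on $\LQ_Y^{\bullet}$, so the cocycle condition $D_{\rcE}\omega = 0$ descends to $\bvth\,\rho(\omega)=0$, i.e., $\rho_n(\omega_1)|_{W_{01}} = \bar\partial\,\rho_{n-1}(\omega_{01})$. Substituting this identity under $\int_{\gamma^{*}}$ and again commuting $\bar\partial_z$ with the integral yields $\IL_\omega(fd\zeta)_1|_{V_{01}} = \bar\partial\,\IL_\omega(fd\zeta)_{01}$, which is exactly the second component of $\bvth$.

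For independence of $\omega$, let $\omega$ and $\omega'$ both satisfy D1 and D2. Their difference is supported in $\Omega$ and represents the zero cohomology class in $\EQ_Y^{\bullet}(\mathcal{W},\mathcal{W}')$, since both represent the image of $1\in\Gamma(T;\mathscr{L}^{\infty}_{loc,T})$. By Lemma~\ref{lem:fundamental_support_LQ} the canonical map $j_!j^{-1}\EQ_Y^{\bullet}(\mathcal{W},\mathcal{W}')\to\EQ_Y^{\bullet}(\mathcal{W},\mathcal{W}')$ is a quasi-isomorphism, so I can choose $\tau$ supported in $\Omega$ with $\omega-\omega' = D_{\rcE}\tau$. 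The same manipulation used in the $\bvth$-closedness check gives $\IL_{D_{\rcE}\tau}(fd\zeta) = \bvth\,\IL_\tau(fd\zeta)$, so $\IL_\omega(fd\zeta)$ and $\IL_{\omega'}(fd\zeta)$ differ by a $\bvth$-coboundary and induce the same class in $\HQDC{n}{n}{\mathcal{V}_\rcM}$; convergence of $\IL_\tau(fd\zeta)$ is ensured by the same support/growth analysis. The main analytic obstacle throughout is the uniform convergence near the loci where the support condition degenerates — namely $y=0$ and the set $E_\infty$, where the support hemisphere narrows or the hat operation defining $\Omega$ must be interpreted in local coordinates at infinity — and once that is in hand, everything else reduces to formal manipulations with $\rho$ and $D_{\rcE}$.
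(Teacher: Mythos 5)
Your treatment of part 2 and of the $\bvth$-closedness follows the paper's route (find a primitive supported in $\Omega$ via Lemma \ref{lem:fundamental_support_LQ}, commute $\rho$ with $D_{\rcE}$), but the convergence claim in part 1 --- which is the substantive content of the lemma --- has a genuine gap. From D2 you extract only the pointwise condition $\langle \eta, y\rangle>0$ on the support and then assert that $-\langle\eta,y\rangle$ ``is strictly negative and linear in $|\eta|$ on the $\eta$-support.'' That is exactly what is not automatic: $\langle\eta,y\rangle>0$ alone allows $\langle\eta,y\rangle/|\eta|$ to be arbitrarily small near the boundary hyperplane of the half-space, and your displayed bound of the form $\exp(\text{infra-linear in }|\eta| - \langle\eta,y\rangle)$ is not integrable over $\{\eta:\langle\eta,y\rangle>0\}$ when $n\ge 2$ (restrict to the slab $0<\langle\eta,y\rangle<|y|$: the integrand is bounded below there and the slab has infinite measure). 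You flag ``uniform convergence near the loci where the support condition degenerates'' as the main obstacle, but you never supply the ingredient that resolves it.

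The paper's proof consists precisely of that ingredient: since $\operatorname{supp}_{W_1}(\omega_1)$ and $\operatorname{supp}_{W_{01}}(\omega_{01})$ are closed in $W_1$ resp.\ $W_{01}$, $Y_\infty=S^{n-1}\times E_\infty$ is compact, and $\Omega$ is open and disjoint from $S^{n-1}\times\rcM$, condition D2 upgrades to: there are a neighborhood $O\subset\rcE$ of $\rcM$ and a single $\delta>0$ with $\operatorname{supp}_{W_1}(\omega_1)\subset (S^{n-1}\times(\rcE\setminus O))\cap \widehat{\,\,\,\,}\{(\eta,z):\langle\eta,\operatorname{Im}z\rangle>\delta|\operatorname{Im}z|\}$, and the analogous statement for $\omega_{01}$ outside any given neighborhood of $\rcM$. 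Only the resulting uniform angular gap $\langle\eta,y\rangle>\delta|\eta|\,|y|$, together with the lower bound on $|y|$ off $O$, yields genuine exponential decay in $|\eta|$ dominating the infra-linear terms $\psi_{\xi_0}(|\eta|)$ and $\varphi(|\zeta|)$, as well as the exponential-type estimates in $z$ needed for membership in $\QDC{n}{n}{\mathcal{V}_\rcM}$. The same refinement is needed again in your part 2 to justify convergence of $\IL_{\omega^{n-1}}(f d\zeta)$ for the primitive. A secondary point: for part 1 you may not replace $\omega$ by a smooth representative of its class at the outset, since the assertion concerns convergence for the given $\omega$, not for a cohomologous one.
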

\begin{proof}
Since the support of $\omega_{01}$ (resp. $\omega_1$) 
is a closed subset 
in $W_{01}$ (resp. $W_1$)
and $Y_\infty = S^{n-1} \times E_\infty$ is compact, 
we have the following facts:
\begin{enumerate}
\item There exist an open neighborhood 
$O \subset \rcE$ of $\rcM$ and $\delta > 0$ such that
$$
\begin{aligned}
\operatorname{supp}_{W_1}(\omega_1) 
\,\,&\subset\,\,
(S^{n-1} \times (\rcE \setminus O))
\bigcap\, 
\widehat{\,\,\,\,}
\left\{(\eta,z) \in S^{n-1} \times E;\, \langle \eta,\,
%\operatorname{Im} z \rangle > \delta|\operatorname{Im} z - 
%\langle \eta,\, \operatorname{Im} z \rangle\eta|\}.
\operatorname{Im} z \rangle > \delta|\operatorname{Im} z|\right\}.
\end{aligned}
$$
\item For any open neighborhood $O \subset \rcE$ of  $\rcM$, there exists
$\delta > 0$ such that
$$
\begin{aligned}
&\operatorname{supp}_{W_{01}}(\omega_{01}) \cap (S^{n-1} \times (\rcE \setminus O))
\,\,
\subset\,\,
\widehat{\,\,\,\,}
\left\{(\eta,z) \in S^{n-1} \times E;\, 
\langle \eta,\,\operatorname{Im} z \rangle > 
\delta|\operatorname{Im} z |\right\}.
\end{aligned}
$$
\end{enumerate}
The fact $\IL_\omega(fd\zeta) \in \QDC{0}{n}{\mathcal{V}_\rcM}$ and
the claim 1.~easily follows from these facts.
As a matter of fact, for example, the integral
$$
\int_{\gamma^*} \rho_{n-1}(\omega_{01})(\dfrac{\eta}{|\eta|}, z)\, e^{\zeta z}f(\zeta)d\zeta
$$
is shown to be a form on $V_0$ with the desired growth condition in the following way:
Let $O \subset \rcE$ be an open neighborhood of $\rcM$. Then,
by the above fact 2.~there exists $\delta > 0$ such that we have on $E \setminus O$
$$
\int_{\gamma^*} \rho_{n-1}(\omega_{01})(\dfrac{\eta}{|\eta|}, z)\, e^{\zeta z}f(\zeta)d\zeta
=
\int_{\widehat{\gamma}^*(z)}
\rho_{n-1}(\omega_{01})(\dfrac{\eta}{|\eta|}, z)\, e^{\zeta z}f(\zeta)d\zeta,
$$
where
$$
\widehat{\gamma}^*(z)  =
\gamma^* \cap 
\{\xi + \sqrt{-1}\eta \in \mathbb{C}^n_\zeta;\,
\langle \eta, \mathrm{Im}(z)/|\mathrm{Im}(z)| \rangle > \delta|\eta|\}.
$$
For any $z = x + \sqrt{-1}y \in E \setminus O$,
where we may assume $|y| > \delta'\max\,\{|x|,1\}$ holds for some $\delta' > 0$,
and for any $\zeta=\xi +\sqrt{-1}\eta \in \widehat{\gamma}^*(z)$, we have
\begin{equation}{\label{eq:est_lap_exp_kernel}}
\begin{aligned}
\log |e^{\zeta z} f(\zeta)| &\le
\psi_{\xi_0}(|\eta|)\langle\xi_0, x\rangle
- \langle \eta, y\rangle - \psi_{\xi_0}(|\eta|)h(\xi_0) + \varphi(|\zeta|) \\
& \le 
\psi_{\xi_0}(|\eta|)\big(\langle\xi_0, x\rangle - h(\xi_0)\big)
- \delta |\eta||y|
+ \varphi(|\zeta|).
\end{aligned}
\end{equation}
Set $\delta'' = \max\,\{\delta'^{-1},\, |h(\xi_0)|\}$. Then
the last term is estimated by:
$$
\begin{aligned}
\log |e^{\zeta z} f(\zeta)| 
	& \le \psi_{\xi_0}(|\eta|)\big(\delta'^{-1}|y| +  |h(\xi_0)|\big) 
- \delta|\eta||y| + \varphi(|\zeta|) \\
& \le \delta''\psi_{\xi_0}(|\eta|)(|y| + 1) - \delta |\eta||y| + \varphi(|\zeta|).
\end{aligned}
$$
Since $\psi_{\xi_0}(t)$ and $\varphi(t)$ are of infra-linear, for any $\epsilon > 0$,
there exists $M >0$ such that
$$
\psi_{\xi_0}(|\eta|) \le M + \epsilon|\eta|,\qquad
\varphi(|\zeta|) \le M + \epsilon |\eta|.
$$
Hence we have
$$
\log |e^{\zeta z} f(\zeta)| \le
\delta''M|y| +((\delta'' \epsilon - \delta)|y| + (\delta''+1)\epsilon )|\eta| + 
(\delta'' + 1)M,
$$
which implies that, for a sufficiently small $\epsilon > 0$, the integral converges on $E \setminus O$ with the desired growth condition. Since $O$ is an arbitrary open neighborhood of $\rcM$, we can get the conclusion.

\

Now let us show the claim 2. 
%Here we only show $[\IL(f)]$ is independent of
%the choices of $\omega$. The same fact for $\xi_0$ and $\psi_{\xi_0}$ will be shown later
%after some preparations.
Let $\omega'$ be another choice of $\omega$. Then, by the 
Lemma \ref{lem:fundamental_support_LQ}, we can find 
$\omega^{n-1} \in C^{n-1}(\mathcal{W},\mathcal{W}')(j_!j^{-1}\LQ_Y^{(\bullet)})$
such that
$$
\rho(\omega) - \rho(\omega') = \rho(D_{\rcE} \omega^{n-1}) = \bvth_{\rcE} \rho(\omega^{n-1}).
$$
Since $\omega^{n-1}$ satisfies the same support conditions as those for $\omega$,
the integration $\IL_{\omega^{n-1}}(f d\zeta)$ which is defined by
replacing $\omega$ with $\omega^{n-1}$ in the definition of $\IL_\omega(f d\zeta)$ also converges.
Hence we have
$$
\IL_\omega(f d\zeta) - \IL_{\omega'}(f d\zeta) = \bvth \IL_{\omega^{n-1}}(f d\zeta).
$$
This completes the proof.
\end{proof}

\begin{lem}{\label{lem:IL-path-independent}}
The $\IL(f)$ is independent of the choice of $\xi_0$ and $\psi_{\xi_0}$ 
which appear in the definition of $\gamma^*$.  As a consequence, we have
%$$
%\operatorname{supp}(\IL(f)) \subset 
%\overline{a+ \{x \in M;\, \langle x,\,\xi\rangle \ge 0 \quad(\forall \xi \in I)\}}.
%$$
\begin{equation}{\label{eq:lem-estimate-support}}
\operatorname{supp}(\IL(f)) \subset 
\bigcap_{\xi_0 \in S} \overline{\{x \in M;\, \langle x, \xi_0 \rangle \ge h(\xi_0)\}}.
\end{equation}
\end{lem}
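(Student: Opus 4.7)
The plan is to establish the path-independence of $\IL(f)$ first, and then derive the support estimate \eqref{eq:lem-estimate-support} as a direct consequence. For the independence, I would take two admissible pairs $(\xi_0^{(0)},\psi^{(0)})$ and $(\xi_0^{(1)},\psi^{(1)})$ and exploit the connectedness of $I$ to choose a smooth path $s\mapsto\xi_0^{(s)}\in I$ ($s\in[0,1]$) together with a common dominating family $\psi^{(s)}$ interpolating $\psi^{(0)}$ and $\psi^{(1)}$, arranged so that each intermediate pair still satisfies \eqref{eq:domain_W}--\eqref{eq:growth_f} for $f$. The chains $\gamma_s^*$ then sweep out an oriented $(n+1)$-chain $\Gamma\subset W_\psi\cap E^*$ with $\partial\Gamma=\gamma_1^*-\gamma_0^*$, modulo contributions at $|\eta|\to\infty$ which vanish by the exponential decay built into the growth conditions.

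Setting $\Omega(\zeta,z):=\rho(\omega)(\operatorname{Im}\zeta/|\operatorname{Im}\zeta|,z)\,e^{\zeta z}f(\zeta)\,d\zeta$, Stokes' theorem gives
$$
\IL_\omega(fd\zeta)\big|_{\gamma_1^*}-\IL_\omega(fd\zeta)\big|_{\gamma_0^*}=\ilc\int_\Gamma d_\zeta\Omega.
$$
Because $d\zeta\wedge d\zeta_k=0$ and both $e^{\zeta z}$ and $f$ are holomorphic in $\zeta$, the only surviving contribution is $d_\zeta\Omega=\sum_k\partial_{t_k}\rho(\omega)(\operatorname{Im}\zeta/|\operatorname{Im}\zeta|,z)\,\bar\partial_\zeta(\operatorname{Im}\zeta/|\operatorname{Im}\zeta|)_k\wedge e^{\zeta z}f(\zeta)\,d\zeta$. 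By the construction of $\omega$ spelled out before Definition \ref{def:IL} and Proposition \ref{prop:const-with-param}, the pair $\rho(\omega)(t,\cdot)\in\QDC{0}{n}{\mathcal{V}_\rcM}$ represents the constant cohomology class $1$ for every $t\in S^{n-1}$, so $\partial_{t_k}\rho(\omega)(t,\cdot)$ is a $\bvth_z$-coboundary. Fineness of the sheaves $\LQ_Y,\EQ_Y$, combined with the parametric resolution of Section \ref{sec:dolbeault-complex}, produces a smooth-in-$t$ primitive $\sigma(t,z)\in\QDC{0}{n-1}{\mathcal{V}_\rcM}$ with $\bvth_z\sigma=\partial_t\rho(\omega)$; substituting $t=\operatorname{Im}\zeta/|\operatorname{Im}\zeta|$ and absorbing the $z$-closed holomorphic factors displays $d_\zeta\Omega$ as $\bvth_z$-exact. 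Commuting $\bvth_z$ past $\int_\Gamma$ yields the desired equality of cohomology classes.

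For the support estimate, fix $x_0\in M$ with $\langle x_0,\xi_0\rangle<h(\xi_0)$ for some $\xi_0\in I$. Choose a Stein $1$-regular-at-infinity neighborhood $\hat U\subset\rcE$ of $x_0$ on which $\langle\operatorname{Re}z,\xi_0\rangle-h(\xi_0)<-\delta$ for some $\delta>0$, and compute $\IL(f)$ using the $\xi_0$-chain with $\psi_{\xi_0}$ chosen to grow so rapidly that $\psi_{\xi_0}(|\eta|)/(\varphi(|\zeta|)+|\eta|)\to\infty$. The estimate
$$
|e^{\zeta z}f(\zeta)|\le e^{\psi(|\eta|)(\langle\xi_0,\operatorname{Re}z\rangle-h(\xi_0))-\langle\eta,\operatorname{Im}z\rangle+\varphi(|\zeta|)}\le e^{-\delta\psi(|\eta|)/2}
$$
for $|\eta|$ large gives smooth representatives $\nu_1|_{\hat U}$ and $\nu_{01}|_{\hat U\setminus\rcM}$ with exponential bounds on all derivatives. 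Solving $\bar\partial\sigma=\nu_1|_{\hat U}$ via the Dolbeault theorem for $\mathscr{Q}_\rcE$ on the Stein $1$-regular set $\hat U$, and then $\bar\partial\tau=\sigma|_{\hat U\setminus\rcM}-\nu_{01}$ on the Stein set $\hat U\setminus\rcM$, exhibits $(\nu_1,\nu_{01})|_{\hat U}$ as $\bvth$-exact, so $\IL(f)$ vanishes on $\hat U\cap\rcM$; intersecting over $\xi_0\in I$ gives \eqref{eq:lem-estimate-support}. The main obstacle is the cohomological-triviality step, namely obtaining a \emph{smoothly $t$-dependent} primitive $\sigma(t,z)$ for $\partial_{t_k}\rho(\omega)$; this requires pulling the argument back to the parametric complex $\LQ_Y^\bullet(\mathcal{W},\mathcal{W}')$ and using Lemma \ref{lem:fundamental_support_LQ} to keep the support of the primitive inside $\Omega$ so that the resulting integrals still converge.
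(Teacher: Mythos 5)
Your argument for the path-independence half is essentially the paper's: you correctly identify that the whole point is to realize the $t$-derivatives of $\rho(\omega)$ as $\bvth$-coboundaries with a primitive that is smooth in $t$ and still supported in $\Omega$, and the paper obtains exactly such a primitive by choosing $\omega$ in $\EQ_Y^{n}(\mathcal{W},\mathcal{W}')$, using the $\mathscr{D}_T$-linearity of the isomorphism $\Gamma(T;\mathscr{E}_T)\simeq \mathrm{H}^n(\EQ_Y^\bullet(\mathcal{W},\mathcal{W}'))$ to get $\theta_k\omega=D_{\rcE}\tilde\omega_k$, and Lemma \ref{lem:fundamental_support_LQ} for the support control. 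One point you pass over: differentiating $t=\eta/|\eta|$ produces a factor $1/|\eta|$, which is integrable on the chain near $\eta=0$ only when $n>1$; the case $n=1$ genuinely needs the separate boundary-value argument the paper gives, and your ``contributions only at $|\eta|\to\infty$'' remark does not cover this.

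The support estimate, however, contains a genuine error. You propose to show that the restriction of $\IL_\omega(fd\zeta)=(\nu_1,\nu_{01})$ to a neighborhood $\hat U$ of a point $x_0$ with $\langle x_0,\xi_0\rangle<h(\xi_0)$ is $\bvth$-exact by first solving $\bar\partial\sigma=\nu_1$ on $\hat U$ and then solving $\bar\partial\tau=\sigma-\nu_{01}$ on $\hat U\setminus\rcM$. The second step is impossible in general: $\hat U\setminus\rcM$ is not Stein, and the $(n,n-1)$-Dolbeault cohomology of $\mathscr{Q}_{\rcE}$ there is precisely $\mathrm{H}^n_{\hat U\cap\rcM}(\hat U;\hexppo{n})$ up to the first step, i.e.\ the space of Laplace hyperfunctions on $\hat U\cap\rcM$, which is far from zero. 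Nothing in your argument uses the decay $e^{-\delta\psi(|\eta|)/2}$ in an essential way --- every \v{C}ech--Dolbeault representative is smooth with exponential bounds --- so as written the argument would prove that every Laplace hyperfunction vanishes. The correct mechanism, and the one the paper uses, is a further chain deformation: replace $\gamma^*$ by $\gamma^*_\epsilon$ with $\xi=(\epsilon^{-1}|\eta|+\ell)\xi_0$ and let $\epsilon\to 0$. On the region $\operatorname{Re}\langle\xi_0,z\rangle<h(\xi_0)$ the integral over $\gamma^*_\epsilon$ tends to $0$ because $e^{\zeta z}f(\zeta)$ decays like $e^{(\epsilon^{-1}|\eta|+\ell)(\operatorname{Re}\langle\xi_0,z\rangle-h(\xi_0))}$ there, while the swept-out $(n+1)$-chain $\tilde\gamma^*_{0+0}$ contributes a convergent integral that is $\bvth$-exact by the very same primitive $\tilde\omega_k$ used for path independence. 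This exhibits the class over $\gamma^*_0$ as an explicit coboundary on that region; your local $\bar\partial$-solving cannot replace it.
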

\begin{proof}
We first assume $n > 1$.  Let us consider the commutative diagram below:
$$
\begin{array}{llll}
\Gamma(T;\, \mathscr{L}^\infty_{loc,T})
&=
\Gamma(Y;\, \tilde{p}_T^{-1} \mathscr{L}^\infty_{loc,T})
&\overset{\sim}{\longrightarrow}
{\textrm H}^n_{p_{\rcE}^{-1}(\rcM)}(Y;\,p_T^{-1} \mathscr{L}^\infty_{loc,T})
&=
{\textrm H}^n(C(\mathcal{W},\mathcal{W}')(\LQ_Y^{(\bullet)})
\\
\qquad\uparrow
&
&
\qquad\qquad\uparrow
&
\qquad\qquad\uparrow\\
\Gamma(T;\, \mathscr{E}_{T})
&=
\Gamma(Y;\, \tilde{p}_T^{-1} \mathscr{E}_{T})
&\overset{\sim}{\longrightarrow}
{\textrm H}^n_{p_{\rcE}^{-1}(\rcM)}(Y;\,p_T^{-1} \mathscr{E}_{T})
&=
{\textrm H}^n(C(\mathcal{W},\mathcal{W}')(\EQ_Y^{(\bullet)})),
\end{array}
$$
where all the horizontal arrows are isomorphisms and
every vertical arrow is injective.
Furthermore, the bottom horizontal arrows are morphisms of $\mathscr{D}_T$-modules.
Hence we can take 
$\omega = (\omega_1, \omega_{01}) \in  
C^n(\mathcal{W},\mathcal{W}')(\EQ_Y^{(\bullet)})$
that is a representative of 
the image of $1 \in \Gamma(T;\, \mathscr{E}_{T})$ by the bottom horizontal arrows.
It follows from Lemma {\ref{lem:fundamental_support_LQ}} that 
the $\omega$ is assumed to satisfy the following conditions:
\begin{enumerate}
\item $\operatorname{supp}_{W_1}(\omega_1) \subset \Omega$ and 
$\operatorname{supp}_{W_{01}}(\omega_{01}) \subset \Omega$.
\item For any vector fields $\nu$ on $T$, we have
	$\nu [\omega] = 0$ since $[\omega]$ is the image of $1$ and the bottom horizontal morphisms in the commutative diagram are $\mathscr{D}_T$-linear.
\end{enumerate}

Let $(\theta_1,\cdots,\theta_n)$ be a homogeneous coordinate system of $S^{n-1}$,
and let $\pi: M^* \setminus \{0\} \to S^{n-1}$ a smooth map defined by
$$
(\eta_1,\cdots,\eta_n) \mapsto 
(\dfrac{\eta_1}{|\eta|},\, \dfrac{\eta_2}{|\eta|},\, \cdots, 
\dfrac{\eta_n}{|\eta|}),
$$
which induces the morphism of vector bundles
$$
\pi': {T}(M^* \setminus \{0\}) \to (M^* \setminus \{0\}) 
	\underset{S^{n-1}}{\times} {T}S^{n-1}.
$$
%Moreover, the closed embedding $\textrm{T}M^*|_{S^{n-1}} \hookrightarrow \textrm{T}(M^*\setminus \{0\})$ induces the morphism of vector bundles
%$$
%\textrm{T}M^*|_{S^{n-1}} \to S^{n-1} 
%\times_{S^{n-1}} \textrm{T}(M^* \setminus \{0\}).
%$$
By restricting the base space of the above bundle map 
to $S^{n-1} \subset M^*\setminus \{0\}$,
we get the morphism of vector bundles
\begin{equation}
\varphi: {T}M^*|_{S^{n-1}} \to {T}S^{n-1},
\end{equation}
by which we define the vector fields $\nu_k$ on $T=S^{n-1}$ as
\begin{equation}
\nu_k = \varphi\big(\dfrac{\partial}{\partial \eta_k}\bigg|_{S^{n-1}}\big)
\qquad (k=1,2,\cdots, n).
\end{equation}
Then, since $\nu_k[\omega] = 0$ holds, 
it follows from Lemma \ref{lem:fundamental_support_LQ} that
there exists $\tilde{\omega}_k=(\tilde{\omega}_{k,1},\,\tilde{\omega}_{k,01})
\in C^{n-1}(\mathcal{W},\mathcal{W}')(\EQ_Y^{(\bullet)})$
with
$$
\operatorname{supp}_{W_1}(\tilde{\omega}_{k,1}) \subset \Omega
\quad\text{and}\quad
\operatorname{supp}_{W_{01}}(\tilde{\omega}_{k,01}) \subset \Omega,
$$
such that
$$
\nu_k \omega = D_{\rcE} \tilde{\omega}_k,
$$
from which we have ($\zeta = \xi + \sqrt{-1}\eta$)
\begin{equation}{\label{eq:diff-const-zero}}
\begin{aligned}
\dfrac{\partial}{\partial \overline{\zeta}_k}
\big(\rho(\omega)(\eta/|\eta|,z)\big)
&=\dfrac{\sqrt{-1}}{|\eta|}
\rho(\nu_k \omega)(\eta/|\eta|,z)
=\dfrac{\sqrt{-1}}{|\eta|}
\rho(D_{\rcE}\tilde{\omega}_k)(\eta/|\eta|,z) \\
&=\bvth\big(\dfrac{\sqrt{-1}}{|\eta|}
\rho(\tilde{\omega}_k)(\eta/|\eta|,z)\big).
\end{aligned}
\end{equation}

Let us consider ($\xi_0$, $\psi_{\xi_0})$ and
($\xi_1$, $\psi_{\xi_1})$, which generate the $n$-dimensional chains $\gamma^*_0$
and $\gamma^*_1$, respectively. Then, by taking a continuous path $s(\lambda)$ ($\lambda \in [0,1]$) in $S$ with $s(0)=\xi_0$ and $s(1)=\xi_1$, we define an $(n+1)$-dimensional chain $\tilde{\gamma}^*$
by
$$
\tilde{\gamma}^* := \{\xi + \sqrt{-1}\eta \in E^*;\,
\xi = ((1-\lambda)\psi_{\xi_0}(|\eta|) + \lambda\psi_{\xi_1}(|\eta|)) s(\lambda),\,\,
0 \le \lambda \le 1,\,\,\eta \in M^* \setminus \{0\}\}.
$$
Here we may assume $\tilde{\gamma}^* \subset W_\psi$. In fact, we first
consider the pair of chains generated by $(\xi_0, \psi_{\xi_0})$ and $(\xi_0, g)$
where $g$ is taken to be a sufficiently large infra-linear function. Then
consider the pair of chains generated by $(\xi_0, g)$ and $(\xi_1,g)$ and finally
that by $(\xi_1, \psi_{\xi_1})$ and $(\xi_1,g)$.

By noticing that the function $\dfrac{1}{|\eta|}$ on $M^*\setminus \{0\}$ is integrable near the origin if $n > 1$ and
that each $\tilde{\omega}_k$ satisfies the same support condition
as that for $\omega$, it follows from the Stokes formula that we obtain
$$
\begin{aligned}
&\int_{\tilde{\gamma}^*} f(\zeta)\, 
\overline{\partial}_\zeta\big(\rho(\omega)(\eta/|\eta|,z)\big)\, e^{\zeta z}\, d\zeta  \\
&\qquad\qquad=
\int_{\gamma^*_1} f(\zeta)\, \rho(\omega)(\eta/|\eta|,z)\, e^{\zeta z}\, d\zeta 
-
\int_{\gamma^*_0} f(\zeta)\, \rho(\omega)(\eta/|\eta|,z)\, e^{\zeta z}\, d\zeta. 
\end{aligned}
$$
It follows from {\eqref{eq:diff-const-zero}} that we have
$$
\begin{aligned}
\int_{\tilde{\gamma}^*} f(\zeta)\, 
\overline{\partial}_\zeta\big(\rho(\omega)(\eta/|\eta|,z)\big)\, e^{\zeta z}\, d\zeta 
&=
\int_{\tilde{\gamma}^*} f(\zeta)\, e^{\zeta z}\, 
\sum_{k=1}^n\dfrac{\partial}{\partial \overline{\zeta}_k} 
\big(\rho(\omega)(\eta/|\eta|,z)\big)\, 
d\overline{\zeta}_k \wedge d\zeta  \\
&=
\bvth
\int_{\tilde{\gamma}^*} f(\zeta)\, e^{\zeta z}\, 
\dfrac{\sqrt{-1}}{|\eta|}\sum_{k=1}^n 
\rho(\tilde{\omega}_k)(\eta/|\eta|,z) d\overline{\zeta}_k \wedge d\zeta.
\end{aligned}
$$
Hence the Laplace transform of $f$ with the chain $\gamma^*_0$
and the one with the chain $\gamma^*_1$ give the same cohomology class.

\

Let us show {\eqref{eq:lem-estimate-support}} in the lemma.
Fix $\xi_0 \in S$ and take a sufficiently large $\ell > 0$
so that $\psi_{\xi_0}(t) \le t + \ell$ holds for $t \in [0,\infty)$.
Let us consider the $n$-dimensional chain $\gamma^*_\epsilon$ for $1 \ge \epsilon > 0$
$$
\gamma_\epsilon^* := \left\{\xi + \sqrt{-1}\eta \in E^*;\,
\xi = (\epsilon^{-1}|\eta| + \ell) \xi_0, \,\eta \in M^* \setminus \{0\}\right\}
$$
and the $(n+1)$-dimensional chain
$$
\tilde{\gamma}_\epsilon^* = \left\{\xi + \sqrt{-1}\eta \in E^*;\,
\begin{aligned}
&\xi = 
\left((1-\lambda)\psi_{\xi_0}(|\eta|) + \lambda(\epsilon^{-1}|\eta| + \ell)
\right)\, \xi_0
\\
&0 \le \lambda \le 1,\,\,\eta \in M^* \setminus \{0\}
\end{aligned}
\right\}.
$$
Note that $\tilde{\gamma}_\epsilon^* \subset W_\psi$ holds for $1 \ge \epsilon > 0$.
Then, on $\{z \in E;\,\textrm{Re}\,\langle \xi_0, z \rangle < h(\xi_0)$\}, 
by taking the estimate \eqref{eq:est_lap_exp_kernel} into account,
we have
$$
\begin{aligned}
&\int_{\tilde{\gamma}_\epsilon^*} f(\zeta)\, 
\overline{\partial}_\zeta\big(\rho(\omega)(\eta/|\eta|,z)\big)\, e^{\zeta z}\, d\zeta \\
&\qquad\qquad=
\int_{\gamma^*_0} f(\zeta)\, \rho(\omega)(\eta/|\eta|,z)\, e^{\zeta z}\, d\zeta 
-
\int_{\gamma^*_\epsilon} f(\zeta)\, \rho(\omega)(\eta/|\eta|,z)\, e^{\zeta z}\, d\zeta,
\end{aligned}
$$
where all the integrals converge. Hence, by letting $\epsilon \to 0+0$, we get
$$
\int_{\tilde{\gamma}_{0+0}^*} f(\zeta)\, 
\overline{\partial}_\zeta\big(\rho(\omega)(\eta/|\eta|,z)\big)\, e^{\zeta z}\, d\zeta 
=
\int_{\gamma^*_0} f(\zeta)\, 
\rho(\omega)(\eta/|\eta|,z)\, e^{\zeta z}\, d\zeta.
$$
Here the $(n+1)$-dimensional chain $\tilde{\gamma}_{0+0}$ is
$$
\tilde{\gamma}_{0+0}^* := \{\xi + \sqrt{-1}\eta \in E^*;\,
	\xi = \lambda \xi_0,\,\,
\lambda \ge \psi_{\xi_0}(|\eta|),\,\,\eta \in M^* \setminus \{0\}\}
$$
and all the integrals still converge.
This implies that, as the left hand side of the above equation gives the zero cohomology class in
$\widehat{\,\,\,\,}\{z \in E;\,\textrm{Re}\,\langle \xi_0, z \rangle < h(\xi_0)\}$,
and thus, $\operatorname{supp}(\IL(f))$ is contained in
$\overline{\{x \in M\,;\,\langle \xi_0, x \rangle \ge h(\xi_0)\}}$.
Since we can take any vector in $S$
as $\xi_0$, we have concluded the second claim of this lemma when $n > 1$.

\

Now we consider the case $n=1$. In this case, $S^{n-1}$ consists of only two points
$\{+1,-1\}$. Hence it follows from the definition of $\omega$ that 
$\tau = \omega(1,z)$ (resp. $\tau = \omega(-1,z)$) satisfies the conditions
in Lemma {\ref{lem:support_one}} with $\Omega = \Omega^1_+$ 
(resp. $\Omega = \Omega^1_-$), where
$$
\Omega^1_{\pm} = \widehat{\,\,\,\,\,}\{z \in \mathbb{C}; \pm \textrm{Im}\,z > 0\} \subset \rcCone.
$$
Hence we have obtained
$$
\IL_\omega(fd\zeta) 
=b_{\Omega^1_+}\left(\dfrac{1}{2\pi\sqrt{-1}}\int_{\gamma^* \cap \Omega^1_+} e^{\zeta z}f(\zeta)d\zeta\right)
-
b_{\Omega^1_-}\left(\dfrac{1}{2\pi\sqrt{-1}}\int_{\gamma^* \cap \Omega^1_-} e^{\zeta z}f(\zeta)d\zeta\right)
$$
for which we can easily see the claims of the lemma.
This completes the proof.
\end{proof}

%By employing the same argument as that in the above proof, we obtain 
%the following corollary.
%\begin{cor}{\label{cor:inverse_laplace_h_k}}
%Let $h:M^* \to \{-\infty\} \cup \mathbb{R}$ be an $\mathbb{R}_+$-conic function 
%such that $h(\xi)$ is continuous on $\{\xi \in M^*\setminus \{0\};\, \xi/|\xi| \in I\}$ 
%and $h(\xi)> -\infty$ holds there,
%and let $g \in \hhinfo{\tilde{h}}(I \overset{\infty}{\times}\sqrt{-1}M^*)$ with 
%$\tilde{h}(\zeta) = h(\textrm{Re}\,\zeta)$.
%Then we have
%$$
%\operatorname{supp}(\IL(g)) \subset 
%\bigcap_{\xi_0 \in I} \overline{\{x \in M;\, \langle x, \xi_0 \rangle \ge h(\xi_0)\}}.
%$$
%\end{cor}
As an immediate application of the above lemma, we have the following corollary.
Recall that, for a subset $G \subset M$, we define
$$
G^\circ = \{\zeta \in E^*;\, \textrm{Re}\,\langle \zeta,\, x\rangle \ge 0
\,\,(\forall x \in G)\}.
$$
\begin{cor}
	Let $a \in M$ and $G \ne \emptyset$ be an $\mathbb{R}_+$-conic proper closed convex subset in $M$. Furthermore, we also assume that $G^\circ \cap M^*_\infty$ 
is connected when $n=1$.
Set $K = \overline{a + G} \subset \rcM$ and let $e^{a\zeta}g(\zeta) \in \hinfo(\HHPC{K}) = \hinfo(\widehat{\,\,\,\,}(\mathrm{int}\,G^\circ) \cap E^*_\infty)$.
Then we have
\begin{equation}
\operatorname{supp}(\IL(g)) \subset K.
\end{equation}
\end{cor}
%Note that $\Ninf{G^\circ}$ is an open subset in $E^*_\infty$.
In fact, the corollary follows from the lemma by taking 
$S = \HHPC{K} \cap M^*_\infty$ and $h(\xi) = a\xi$ and by noticing the facts
that $\varpi_{M^*_\infty}^{-1}(S) = \HHPC{K}$ 
(resp. $\varpi_{M^*_\infty}^{-1}(S) \cup \sqrt{-1}M^*_\infty = \HHPC{K} = E^*_\infty$)
holds if $K \cap M_\infty \ne \emptyset$ (resp. $K \cap M_\infty = \emptyset$)
and that $S$ is connected.

\subsection{Concrete construction of $\omega$}

Now we give a method to construct $\omega$ concretely.
Let $O$ be a subset in $S^{n-1} = \{\xi \in M^*;\,|\xi| = 1\}$, and let
$\theta_k: O \to S^{n-1} \subset M^*$ ($k=1,\dots,n$) be continuous 
maps on $O$.  Set, for $\xi \in O$,
$$
\kappa(\xi) := 
\bigcap_{k=1}^n\{x \in M; \langle x,\,\theta_k(\xi) \rangle > 0\}\,\,
\subset M.
$$
We assume that there exists $\delta > 0$ satisfying
\begin{enumerate}
\item[C1.]$\,$ $S^{n-1} \setminus O$ is measure zero.
\item[C2.]$\,$
$
\kappa(\xi) \subset \left\{x \in M;\, \langle x,\,\xi \rangle > \sigma |x|\right\}
$
for any $\xi \in O$.
\item[C3.]$\,$ Let $A(\xi)$ be an $n \times n$-matrix $(\theta_1(\xi),\dots,\theta_n(\xi))$.
Then $\operatorname{det}(A(\xi)) \ge \delta$ for any $\xi \in O$.
\end{enumerate}
Note that the condition C2 is equivalent to the following C2':
\begin{enumerate}
\item[C2'.]$\,$ Set $G(\xi) := \displaystyle\sum_{k=1}^n \mathbb{R}_+ \theta_k(\xi)$. Then we have
$$
\operatorname{dist}(\xi,\, \mathbb{R}^n \setminus G(\xi)) > \delta
\qquad (\xi \in O).
$$
\end{enumerate}
In fact, C2' implies
$$
\left\{\tau \in \mathbb{R}^n;\, \left|\dfrac{\tau}{|\tau|} - \xi\right| \le \dfrac{\delta}{2}\right\}
\subset G(\xi).
$$
Then, by taking the dual of the above sets and by noticing $\kappa(\xi) = \mathrm{int}\,G(\xi)^\circ$,
we can obtain C2.
%Set $\theta_{n+1}(\xi) := - (\theta_1(\xi) + \cdots + \theta_n(\xi))$.

\

Let $\varphi_0(z)$, $\dots$, $\varphi_{n}(z)$ 
be in $\mathscr{Q}_{\rcE}(\rcE \setminus \rcM)$ 
which are given in Example \ref{es:fundamental_example} with $U=\rcM$, $V = \rcE$,
$$
\eta_k = \begin{cases}
(0,\,\dots,\,0,\,\overset{\text{$(k+1)$-th}}{1},\,0,\,\dots,0) \, \qquad &(k=0,\dots,n-1),\\
\eta_k = - (\eta_0 + \cdots + \eta_{n-1})/|\eta_0 + \cdots + \eta_{n-1}| \qquad & (k=n)
\end{cases}
$$
and $H_k = \Gamma_k = \{y \in M;\,\langle y,\eta_k\rangle > 0\}$ ($k=0,1,\cdots, n$).
Using these $\varphi_k$'s, we define $\omega_{01}$ by
$$
\omega_{01}(\xi, z) := (-1)^n (n-1)!\, 
\chi_{S_{01\cdots n-1}}({}^tA(\xi)z) \,
\bar{\partial}_z (\varphi_0({}^tA(\xi)z)) \wedge \cdots \wedge
\bar{\partial}_z (\varphi_{n-2}({}^tA(\xi)z)),
$$
where $S_{01\cdots n-1}$ is also given in Example \ref{es:fundamental_example}.
Then, by the same reasoning as that of Example 7.14 in \cite{HIS}
and Corollary {\ref{cor:mes-zero}}, we have
\begin{lem}
Thus constructed $\omega = (0,\, \omega_{01})$ satisfies 
the conditions D1.~and D2.~described before Lemma \ref{lem:fundamental_support_LQ}.
\end{lem}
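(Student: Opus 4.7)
The form $\omega_{01}(\xi, z)$ is, for each fixed $\xi \in O$, the pullback under the real-linear invertible map $z \mapsto {}^tA(\xi)z$ of the $(n-1)$-form $\tau_{01}$ from Example \ref{es:fundamental_example}, and the parameter $\xi$ enters only through this change of variables (no $d\xi$-factor appears in $\omega_{01}$). Since the exterior differential $d_\rcE$ acts only in the $\rcE$-direction and commutes with this real-linear pullback, the identity $d\tau_{01}=0$ of Example \ref{es:fundamental_example} immediately yields $d_\rcE \omega_{01}=0$ on $W_{01}$. Combined with $\omega_1 = 0$, this gives $D_\rcE \omega = (0,\,-d_\rcE\omega_{01}) = 0$, which is the first half of D1.

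To identify $[\omega]$ with the constant $1 \in \Gamma(T;\mathscr{L}^\infty_{loc,T})$ I argue fiberwise. Condition C3 gives $\det A(\xi) \ge \delta > 0$, so for every $\xi \in O$ the map $z \mapsto {}^tA(\xi)z$ is orientation-preserving on $E$. Hence the restriction $\omega|_{\{\xi\}\times \rcE}$ represents the generator $\mathds{1}$ of $H^n_\rcM(\mathbb{C}_\rcE)$, which corresponds, under the isomorphism of Proposition \ref{prop:const-with-param}, to the value $1$ at the point $\xi$ of $T$. Consequently the stalks of the section $[\omega]-1$ vanish on $\Delta := O \times \rcM \subset T \times \rcM$. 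Condition C1 says $S^{n-1}\setminus O$ has measure zero in $T$, and the fibers $\tilde{p}_T^{-1}(t)\cap\tilde{W} = \{t\}\times \rcM$ are connected, so Corollary \ref{cor:mes-zero} applied to $[\omega]-1$ with $W=Y$ forces $[\omega]=1$, completing D1.

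For D2, since $\omega_1 = 0$ only the support of $\omega_{01}$ matters. The columns of $A(\xi)$ being $\theta_k(\xi)$, the $k$-th coordinate of ${}^tA(\xi)z$ equals $\langle \theta_k(\xi),z\rangle$, so $\operatorname{supp}(\varphi_k({}^tA(\xi)z)) \subset \{z;\,\langle \theta_k(\xi),\operatorname{Im}z\rangle > 0\}$. Choosing the partition of unity $\{\varphi_k\}$ of Example \ref{es:fundamental_example} with supports narrow enough that $\operatorname{supp}(\tau_{01})$ lies inside $M\,\hat{\times}\,\sqrt{-1}\Gamma$ for $\Gamma$ an open cone arbitrarily close to the positive orthant $H_1 \cap \cdots \cap H_n$, the support of $\omega_{01}(\xi,\cdot)$ is pushed into a cone arbitrarily close to $\kappa(\xi) = \bigcap_k \{y;\,\langle y,\theta_k(\xi)\rangle>0\}$. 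Condition C2 then forces $\langle \xi,\operatorname{Im}z\rangle \geq \sigma|\operatorname{Im}z|>0$ on this region, placing $(\xi, z) \in \Omega$ as required. The main technical subtlety is exactly this last step: one must choose $\{\varphi_k\}$ fine enough that the $\xi$-parametric family of pullback supports stays inside $\Omega$ uniformly in $\xi \in O$, using the uniform bound $\delta$ from C2'/C3. The closedness and the cohomology-class identification are then routine consequences of the parametric version of Example \ref{es:fundamental_example} combined with Corollary \ref{cor:mes-zero}.
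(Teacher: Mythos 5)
Your argument is correct and is essentially the paper's own proof, which simply invokes the fiberwise reduction to Example \ref{es:fundamental_example} (Example 7.14 of \cite{HIS}) together with Corollary \ref{cor:mes-zero} to handle the measure-zero set $S^{n-1}\setminus O$; your write-up just makes the pullback, orientation, and support steps explicit. The only inessential detour is in D2: no shrinking of the $\varphi_k$ is needed, since the identity $\sum_k\varphi_k=1$ already forces $\operatorname{supp}(\tau_{01})\subset M\,\hat{\times}\,\sqrt{-1}(H_1\cap\cdots\cap H_n)$ exactly, so the pulled-back support lands in $\{\operatorname{Im}z\in\kappa(\xi)\}$ and C2 applies directly with the uniform $\sigma$.
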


\

We give some examples of such a family $\theta_k$'s.

\begin{es}{\label{es:explicit_const_omega}}
Let $\chi$ be a triangulation of $S^{n-1}$, 
and let $\{\sigma_\lambda\}_{\lambda \in \Lambda}$ be the set
of $(n-1)$-cells of $\chi$. 
For each $\lambda \in \Lambda$, we take linearly independent $n$-vectors 
$\nu_{\lambda,1}$, $\cdots$, $\nu_{\lambda,n} \in M^*$ which satisfies
$$
\overline{\sigma_\lambda} 
\subset \displaystyle\sum_{k=1}^n \mathbb{R}_+ \nu_{\lambda,k},
$$
and $\operatorname{det} A_\lambda > 0$
for the constant matrix $A_\lambda := (\nu_{\lambda,1},\nu_{\lambda,2},\dots,\nu_{\lambda,n})$. 
Note that such a family of constant vectors always exists if
each $\sigma_\lambda$ is sufficiently small. 
Furthermore, we may assume
the frame
$\nu_{\lambda,1},\nu_{\lambda,2},\dots,\nu_{\lambda,n}$ determine the positive orientation
in $M^*$ for each $\lambda$, 
Then, we set $O := \cup_{\lambda \in \Lambda} \sigma_\lambda$ and, for $k=1,\dots,n$,
define $\theta_k(\xi)$ on $O$ by
$$
\theta_k(\xi) = \nu_{\lambda,k} 	\qquad (\xi \in \sigma_\lambda).
$$
Clearly these $O$ and $\theta_k$'s satisfy the conditions C1, C2 and C3.
\end{es}

\

\begin{es}
Assume $M^*$ has an inner product.
Let $p$ be a point in $S^{n-1}$ and set $O := S^{n-1}\setminus \{p\}$.
Then $O$ becomes contractible, and hence, there exists a continuous orthogonal frame
$\tilde{\theta}_1(\xi)$, $\dots$, $\tilde{\theta}_n(\xi) \in M^*$ on $O$.
Here we may assume $\tilde{\theta}_1(\xi) = \xi$.
Set, for some $\delta > 0$,
$$
\begin{aligned}
\theta_1(\xi) &:= \tilde{\theta}_2(\xi) + \delta \tilde{\theta}_1(\xi), \\
\theta_2(\xi) &:= \tilde{\theta}_3(\xi) + \delta \tilde{\theta}_1(\xi), \\
&\vdots \\
\theta_{n-1}(\xi) &:= \tilde{\theta}_n(\xi) + \delta \tilde{\theta}_1(\xi), \\
\theta_{n}(\xi) &:= -(\tilde{\theta}_2 + \cdots + \tilde{\theta}_n(\xi)) + \delta \tilde{\theta}_1(\xi).
\end{aligned}
$$
Then these $O$ and $\theta_k$'s satisfy the conditions C1, C2 and C3.
\end{es}

\

Let us compute $\IL$ when $\omega$ comes from Example {\ref{es:explicit_const_omega}}.
In this case, on each $\sigma_\lambda$, $\omega_{01}(\xi,z)$ does not depend on the variables $\xi$. 
Hence we obtain
\begin{equation}
\IL(f) := \left[\ilc
\left(0,\,\, \sum_{\lambda \in \Lambda} \tau_{01,\lambda}\int_{\gamma^*_\lambda}  f(\zeta) e^{\zeta z} d\zeta\right)\right] \otimes a_{\rcM/\rcE} \otimes \nu_{\rcM}.
\end{equation}
Here
$$
\gamma^*_\lambda := \left\{\zeta = \xi + \sqrt{-1}\eta \in E^*;\,
\eta \in \mathbb{R}_+\sigma_\lambda,\,  \xi = \psi_{\xi_0}(|\eta|)\, \xi_0 \right\},
$$
and
$$
\tau_{01,\lambda}(z) 
:= (-1)^n (n-1)!\, \chi_{S_{01\cdots n-1}}({}^tA_\lambda z) \,
\bar{\partial} (\varphi_1({}^tA_\lambda z)) \wedge \cdots \wedge
\bar{\partial} (\varphi_{n-1}({}^tA_\lambda z)),
$$
where the constant matrix $A_\lambda$ is given by
$(\nu_{\lambda,1},\dots,\nu_{\lambda,n})$ and
the orientation of the chain $\gamma_\lambda^*$ is induced from the one of $\sqrt{-1}M^*$
through the canonical projection $E^* = M^* \times \sqrt{-1}M^* \to \sqrt{-1}M^*$.
Then, as we see in Example \ref{es:fundamental_example},
$\tau_\lambda := (0,\tau_{01,\lambda})$ satisfies the conditions
in Lemma \ref{lem:support_one}. Hence, by the definition of the boundary value map
explained in Subsection \ref{subsec:c-d-boundary-value-map}, we have
\begin{equation}
\IL(f) = 
\sum_{\lambda \in \Lambda} b_{\Omega_\lambda}\left(
\ilc
\int_{\gamma^*_\lambda}  f(\zeta) e^{\zeta z} d\zeta\right)\,\otimes\, \nu_{\rcM}
\in \HQDC{n}{n}{\mathcal{V}_\rcM},
\end{equation}
where 
$
\Omega_\lambda :=  M \widehat{\times} \sqrt{-1} \Gamma_{\lambda}
$
with $\Gamma_{\lambda} := \bigcap_{k=1}^n \{y \in M;\, \langle y,\,\nu_{\lambda,k}\rangle > 0\}$.
%\begin{oss}
%The above formula shows that our previous construction of Laplace inverse coincides
%with the current one.
%\end{oss}

%An immediate consequence of the above observation, we have
%\begin{lem}
%$[\IL(f)]$ is independent of the choices of $\xi_0$ and ${\psi}_{\xi_0}$ appearing in 
%the definition of a real $n$-chain $\gamma^*$.
%\end{lem}
%From the lemma, we also have:
%\begin{prop}
%The support of $[\IL(f)]$ is contained in 
%	$K:=\overline{\{a\}+\Gamma} \subset \rcM$.
%\end{prop}

\

Let $\Lambda = \{+1,\,-1\}$. For $\alpha = (\alpha_1,\cdots, \alpha_n) \in \Lambda^n$,
we define
\begin{equation}{\label{eq:def-Gamma-e}}
\begin{aligned}
\Gamma_\alpha &:= \{x =(x_1,\cdots,x_n) \in M;\, \alpha_k x_k > 0 \,\,(k=1,\cdots,n)\}, \\
\Gamma^*_\alpha &:= \{\eta =(\eta_1,\cdots,\eta_n) \in M^*;\, \alpha_k \eta_k > 0 \,\,(k=1,\cdots,n)\}.
\end{aligned}
\end{equation}
We denote by $+^n \in \Lambda^n$ (resp. $-^n \in \Lambda^n$) 
the multi-index in $\Lambda^n$ whose entries are all $+1$ (resp. $-1$).
Hence, $\Gamma_{+^n}$ (resp. $\Gamma^*_{+^n}$) designates the first orthant of 
$M$ (resp. $M^*$).

Let $G \ne \emptyset$ be an $\mathbb{R}_+$-conic proper closed convex subset  in $M$
and $a \in M$. Furthermore, we also assume that $G^\circ \cap M^*_\infty$ is connected
if $n = 1$. Set $K = \overline{a+G} \subset \rcM$ and let 
$f \in e^{-a\zeta} \hinfo(\HHPC{K}) = e^{-a\zeta}\hinfo(\widehat{\,\,\,\,}(\mathrm{int}\,G^\circ) \cap E^*_\infty)$.
Suppose that $G\setminus\{0\} \subset \Gamma_{+^n}$.
Then $f$ is holomorphic on $W_\psi \cap E^*$ given in \eqref{eq:domain_W} 
with $S=\HHPC{K} \cap M^*_\infty$ and $h(\xi) = a\xi$,
and it satisfies
\eqref{eq:growth_f} there. It follows from the assumption 
$G\setminus\{0\} \subset \Gamma_{+^n}$ that we can find
$a^* = (a^*_1,\cdots,a^*_n) \in M^*$ such that the open subset 
$W_\psi$ given in \eqref{eq:domain_W} %for $f \in e^{-a\zeta}\hinfo(\Ninf{G^\circ})$
satisfies
\begin{equation}
\overline{a^* + \Gamma^*_{+^n}} \subset W_\psi.
\end{equation}
Because of this fact, we can take a specific real $n$-chain ${\tilde{\gamma}^*} \subset E^*$ 
defined below which enjoys some good properties:
$$
\tilde{\gamma}^* := \left\{\zeta = \xi + \sqrt{-1}\eta \in E^*;\,
\eta \in M^* \setminus \{0\},\,  
\xi = a^* + \hat{\psi}(|\eta|)\,\left(\dfrac{|\eta_1|}{|\eta|},\, 
\dfrac{|\eta_2|}{|\eta|},\,\dots\,,\dfrac{|\eta_n|}{|\eta|}\right)  \right\},
$$
where $\hat{\psi}(t)$ is a continuous infra-linear function on $[0,\infty)$ 
which satisfies $\hat{\psi}(0) = 0$ and $\tilde{\gamma}^* \subset W_\psi$.
Note that the orientation of $\tilde{\gamma}^*$ is the same as that of $\sqrt{-1}M^*$.
For $\alpha \in \Lambda^n$, we also define
$$
\tilde{\gamma}^*_\alpha := \left\{\zeta = \xi + \sqrt{-1}\eta \in E^*;\,
\eta \in \Gamma^*_\alpha,\,  
\xi = a^* + \hat{\psi}(|\eta|)\,\left(\dfrac{|\eta_1|}{|\eta|},\, 
\dfrac{|\eta_2|}{|\eta|},\,\dots\,,\dfrac{|\eta_n|}{|\eta|}\right)  \right\}.
$$

We can replace the chain $\gamma^*$ of $\IL_\omega$ in Definition {\ref{def:IL}}
with the above chain $\tilde{\gamma}^*$, which is guaranteed by the same proof as that
in Lemma {\ref{lem:IL-path-independent}}.
Therefore, we have obtained
\begin{lem}{\label{lem:inverse_square}}
%Let $f \in e^{-a\zeta} \hinfo(\HPC{G})$ for an $\mathbb{R}_+$-conic closed subset $G$ satisfying $G\setminus\{0\} \subset \widehat{\Gamma_{+^n}}$.
Under the above situation, 
we can take the chain $\tilde{\gamma}^*$ 
as the chain of the Laplace inverse integral of $f$. 
In particular, we have
\begin{equation}
\IL(f) = 
\sum_{\alpha \in \Lambda^n} b_{\Omega_\alpha}\left(
\ilc
\int_{\tilde{\gamma}^*_\alpha}  f(\zeta) e^{\zeta z} d\zeta\right)\,\otimes \nu_{\rcM}
\in \HQDC{n}{n}{\mathcal{V}_\rcM},
\end{equation}
where $\Omega_\alpha := M \widehat{\times} \sqrt{-1}\Gamma_\alpha \subset \rcE$.
\end{lem}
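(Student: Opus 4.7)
The strategy has two parts. First, I would show that the chain $\gamma^*$ in Definition \ref{def:IL} may be replaced by $\tilde{\gamma}^*$ without changing the class of $\IL_\omega(fd\zeta)$ in $\HQDC{n}{n}{\mathcal{V}_\rcM}$. Second, I would apply the explicit formula derived immediately before the statement, specialized to a triangulation of $S^{n-1}$ adapted to the octants $\overline{\Gamma^*_\alpha}\cap S^{n-1}$, to decompose the resulting integral into the stated sum of boundary values indexed by $\alpha \in \Lambda^n$.

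For the first part, I would follow the argument of Lemma \ref{lem:IL-path-independent}. By hypothesis, $\tilde{\gamma}^*\subset \overline{a^*+\Gamma^*_{+^n}}\subset W_\psi$, while $\gamma^*\subset W_\psi$ as well, so both chains lie in the common domain of holomorphy of $f$. I interpolate between them by an $(n+1)$-dimensional chain inside $W_\psi$ and invoke the exactness relation $\theta_k\omega = D_{\rcE}\tilde{\omega}_k$ (with the representative $\omega \in \EQ_Y^n(\mathcal{W},\mathcal{W}')$ selected as in the proof of Lemma \ref{lem:IL-path-independent}) to rewrite the difference $\int_{\tilde{\gamma}^*}-\int_{\gamma^*}$ of the Laplace inverse integrals as a $\bvth$-coboundary in $\QDC{n}{n-1}{\mathcal{V}_\rcM}$. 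Convergence of every intermediate integral is ensured by the infra-$a\xi$-exponential bound \eqref{eq:growth_f} together with the damping factor $e^{\zeta z}$ for $z$ on the relevant support of $\rho(\omega)$.

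For the second part, I would use the construction of $\omega$ from Example \ref{es:explicit_const_omega} applied to a triangulation $\chi$ of $S^{n-1}$ whose top-dimensional cells are the closed octants $\sigma_\alpha:=\overline{\Gamma^*_\alpha}\cap S^{n-1}$ for $\alpha\in\Lambda^n$. On each $\sigma_\alpha$, I choose the frame $\nu_{\alpha,k}$ to be the vectors $\alpha_1 e_1,\dots,\alpha_n e_n$ reordered by a permutation $\pi_\alpha$ making $\det A_\alpha>0$; because $\sum_k \mathbb{R}_+\nu_{\alpha,k}=\overline{\Gamma^*_\alpha}$, its dual open cone $\Gamma_\lambda$ in the preceding formula coincides with $\Gamma_\alpha$ of \eqref{eq:def-Gamma-e}, and hence $\Omega_\lambda=\Omega_\alpha$. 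Combining this specialization with the chain replacement from the first part yields the stated formula after tracking the sign produced by the permutation $\pi_\alpha$, which cancels against the usual orientation factor.

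The main obstacle is the homotopy in the first part. Unlike Lemma \ref{lem:IL-path-independent}, whose two chains are both radial over a single direction $\xi_0\in I$, here $\tilde{\gamma}^*$ has a real part depending piecewise on the signs of the $\eta_k$'s, so the interpolating $(n+1)$-chain must be built so as to stay inside $W_\psi$ and so that the $1/|\eta|$-singularity arising from $\partial/\partial\overline{\zeta}_k(\rho(\omega)|_{t=\eta/|\eta|})$ remains integrable near $\eta=0$. As in the proof of Lemma \ref{lem:IL-path-independent}, the natural remedy is to break the deformation into a finite sequence of simpler homotopies, each of which can be controlled individually by Stokes' formula.
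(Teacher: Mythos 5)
Your proposal matches the paper's (very terse) argument: the paper likewise obtains the lemma by (i) replacing $\gamma^*$ with $\tilde{\gamma}^*$ ``by the same proof as that in Lemma \ref{lem:IL-path-independent}'' and (ii) invoking the formula $\IL(f)=\sum_\lambda b_{\Omega_\lambda}(\cdots)$ derived just beforehand from the explicit $\omega$ of Example \ref{es:explicit_const_omega}, specialized to the orthant decomposition of $S^{n-1}$. Your identification of the non-radial homotopy (and the integrability of the $1/|\eta|$ factor near $\eta=0$) as the point requiring care, together with the remedy of splitting it into elementary deformations inside $W_\psi$, is exactly the right elaboration of the paper's appeal to Lemma \ref{lem:IL-path-independent}.
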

Note that each integral
\begin{equation}{\label{eq:def_h_alpha}}
h_\alpha(z) := 
\ilc
\int_{\tilde{\gamma}^*_\alpha}  f(\zeta) e^{\zeta z} d\zeta
\end{equation}
belongs to $\hexpo(\Omega_\alpha)$.
We will now explain an advantage of this expression:
Set 
$$
\Omega := 
\widehat{\,\,\,\,}((\mathbb{C} \setminus \mathbb{R}_{\ge 0})
\times (\mathbb{C} \setminus \mathbb{R}_{\ge 0}) \times \cdots \times
(\mathbb{C} \setminus \mathbb{R}_{\ge 0})) \quad \subset \rcE.
$$
\begin{prop}{\label{prop:exp_global}}
For any $\alpha \in \Lambda^n$, the $\textrm{sgn}(\alpha)h_\alpha(z) \in \hexpo(\Omega_\alpha)$ 
analytically extends to the same holomorphic function in $\hexpo(\Omega)$.
Here we set $\textrm{sgn}(\alpha) = \alpha_1\alpha_2\cdots\alpha_n$.
\end{prop}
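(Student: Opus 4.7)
The plan is to construct a single holomorphic function $H\in\hexpo(\Omega)$ satisfying $H|_{\Omega_\alpha}=\textrm{sgn}(\alpha)h_\alpha$ for every $\alpha\in\Lambda^n$; since $\Omega$ is connected (as the radial compactification of the connected product $(\mathbb{C}\setminus\mathbb{R}_{\geq 0})^n$), such an extension is automatically unique. The idea is to exhibit a single integral representation shared by all $\textrm{sgn}(\alpha)h_\alpha$ on an open subregion of $\Omega$, then propagate by analytic continuation.

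I would fix a ``deep real base point'' $z^{*}=(-R,\ldots,-R)\in\Omega\cap M$ with $R>0$ sufficiently large, and take as a common chain the real product
\[
\ell:=[a^{*}_1,+\infty)\times\cdots\times[a^{*}_n,+\infty)\,\subset\,\overline{a^{*}+\Gamma^{*}_{+^n}}\,\subset\, W_\psi,
\]
where the inclusion holds by \eqref{eq:domain_W}. For $z$ in a neighborhood $N$ of $z^{*}$, the bound $|f(\zeta)e^{\zeta z}|\leq C\exp\bigl((x-a)\cdot\xi+\varphi(|\xi|)\bigr)$ on $\ell$, with $x_k<a_k$ for each $k$, guarantees absolute convergence of $H_\ell(z):=\ilc\int_\ell f(\zeta)e^{\zeta z}\,d\zeta$ and its holomorphy on $N$. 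The key step is to verify, via the substitution $\tilde\eta_k=\alpha_k\eta_k$ (whose orientation Jacobian is exactly $\textrm{sgn}(\alpha)$) followed by a coordinate-by-coordinate application of Cauchy's theorem inside $W_\psi$, that $\tilde\gamma^{*}_\alpha$ is homologous to $\textrm{sgn}(\alpha)\cdot\ell$. Closing each $\zeta_k$-slice by a quarter-arc in the half-plane $\{\mathrm{Re}\,\zeta_k\geq a^{*}_k\}$, the arc contribution vanishes because the genuinely exponential factor $|e^{\zeta_k z_k}|=e^{-R\,\mathrm{Re}\,\zeta_k}$ dominates the infra-linear bound on $f$ once $R$ is large enough; therefore $\textrm{sgn}(\alpha)h_\alpha(z)=H_\ell(z)$ on $\Omega_\alpha\cap N$, uniformly in $\alpha$.

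To extend $H_\ell$ to all of $\Omega$, I would proceed pointwise: for $z_0\in\Omega\cap E$, each coordinate $z_{0,k}\notin\mathbb{R}_{\geq 0}$ leaves an open sector of directions at infinity in the $\zeta_k$-plane along which $e^{\zeta_k z_k}$ decays uniformly for $z$ near $z_{0,k}$. Choosing such a direction for each $k$ and joining them through the right-sector region $a^{*}+\Gamma^{*}_{+^n}\subset W_\psi$ yields an iterated Hankel-type product contour $\mu(z_0)\subset W_\psi$ on which $\ilc\int_{\mu(z_0)}f(\zeta)e^{\zeta z}\,d\zeta$ is holomorphic in $z$ near $z_0$. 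Homotoping $\mu(z_0)$ to $\tilde\gamma^{*}_\alpha$ inside $W_\psi$ whenever $z_0\in\Omega_\alpha$, and otherwise to $\ell$ via the path-connectedness of $\Omega$, shows these local representations patch into a single $H\in\hexpo(\Omega)$ with the required restriction property; exponential type of $H$ follows from the infra-linear bounds on $f$ together with standard estimates on the integral representation.

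The main obstacle is the iterated Cauchy-deformation argument: $W_\psi$ is not a product region, since its shape in \eqref{eq:domain_W} couples $\xi$ and $\eta$ through the infra-linear function $\psi$, so each planar deformation in a single $\zeta_k$ must be performed while the other coordinates are frozen at values keeping the entire chain inside $W_\psi$. Organizing the induction on $k$ and verifying that the arc-at-infinity contributions vanish---by pitting the infra-linear growth of $f$ against the genuinely exponential decay $e^{-R\,\mathrm{Re}\,\zeta_k}$---is the delicate step, closely analogous to the path-independence computation in Lemma \ref{lem:IL-path-independent}. Once it is in place, the rest of the proof reduces to a standard analytic-continuation argument in the connected domain $\Omega$.
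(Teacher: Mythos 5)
Your strategy is essentially the one the paper uses: deform $\tilde{\gamma}^*_\alpha$ coordinate by coordinate onto the common real chain $a^*+\Gamma^*_{+^n}$, observe that on the region where every $\operatorname{Re}z_k<0$ the resulting integral $\ilc\int_{a^*+\Gamma^*_{+^n}}f(\xi)e^{\xi z}d\xi$ is manifestly independent of $\alpha$ (the orientation Jacobian producing $\operatorname{sgn}(\alpha)$ exactly as you say), and then obtain the extension to all of $\Omega$ by performing the deformation only in a subset $\beta$ of the coordinates. The paper packages this as a single explicit homotopy $\tilde{\gamma}^*_{\alpha,\beta}(s,\eta)$ indexed by subsets $\beta\subset\{1,\dots,n\}$, with the boundary contribution over $\partial\Gamma^*_\alpha$ killed by the observation that $d\zeta_k=0$ on $\{\zeta_k=a^*_k\}$, and it glues the resulting extensions over the explicit cover $\{\widetilde{\Omega_{\alpha,\beta}}\}$ of $\Omega$; your iterated one-variable Cauchy version with arcs is an equivalent organization.

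One justification as you state it would not survive scrutiny: you claim the arc contributions vanish because $|e^{\zeta_k z_k}|=e^{-R\operatorname{Re}\zeta_k}$ dominates the infra-linear bound on $f$. Near the imaginary end of the arc the chain sits close to the lower boundary of $W_\psi$, where $\operatorname{Re}\zeta_k=a^*_k+O(\hat{\psi}(|\eta|))$ grows only sub-linearly in $|\zeta_k|$, while $|f|$ may grow like $e^{\varphi(|\zeta|)}$ with $\varphi$ infra-linear but not controlled by $R\hat{\psi}$; so $e^{-R\operatorname{Re}\zeta_k}$ alone does not close the estimate. The decay that saves the argument on that portion of the arc is $e^{-\eta_k y_k}$ with $\alpha_k\eta_k>0$ and $\alpha_k y_k>0$, which is available precisely because you are proving the identity on $\Omega_\alpha\cap N$ rather than on all of $N$: at every angle of the quarter-arc at least one of $\operatorname{Re}\zeta_k$, $\alpha_k\operatorname{Im}\zeta_k$ is comparable to the radius, and correspondingly at least one of $x_k<0$, $\alpha_k y_k>0$ supplies genuine exponential decay. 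With that repair (and the same bookkeeping in the partial deformations used for the extension step), the argument is sound and matches the paper's.
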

\begin{proof}
%Let us first consider the case $\alpha = +^n$. 
Let $\beta$ be the subset in $\{1,\dots,n\}$, and set
$$
\begin{aligned}
\Omega_{\alpha, \beta} &:= 
\Omega_\alpha\, \bigcap\,
\widehat{\,\,\,\,}\{z \in E;\, \textrm{Re}\,z_k < 0\,\, (k \in \beta)\}\\
&= \widehat{\,\,\,\,}\{z = x + \sqrt{-1}y \in E;\, x_k < 0\,\, (k \in \beta),\,\,\,
\alpha_j y_j > 0\,\,(j=1,2,\dots,n)\}
\end{aligned}
$$
and
$$
\widetilde{\Omega_{\alpha, \beta}} := 
\widehat{\,\,\,\,}\{z = x + \sqrt{-1}y \in E;\, x_k < 0\,\, (k \in \beta),\,\,\,
\alpha_j y_j > 0\,\, (j \notin \beta)\}.
$$
Clearly we have
$$
\Omega_{\alpha,\beta} \subset \widetilde{\Omega_{\alpha,\beta}}, \qquad
\Omega = \bigcup_{\alpha \in \Lambda^n,\, \beta \subset\{1,2,\dots,n\}} \widetilde{\Omega_{\alpha,\beta}}.
$$
%For $z \in E$ with $\operatorname{Re} z \in \Gamma_\beta$, 
Let us define the continuous function 
$\tilde{\gamma}^*_{\alpha,\beta}: [0,1] \times \Gamma^*_\alpha \to E^*$ by
$$
\tilde{\gamma}^*_{\alpha,\beta}(s, \eta)
:= \xi + \sqrt{-1}\tilde{\eta} \qquad (\eta \in \Gamma^*_\alpha,\,s \in [0,1]).
$$
Here
$$
\xi = a^* + 
\left(
	((1-\delta_{\beta,1}(s)) \hat{\psi}(\eta) + 
\delta_{\beta,1}(s)|\eta|)\dfrac{|\eta_1|}{|\eta|},\, 
\,\dots\,,
((1-\delta_{\beta,n}(s)) \hat{\psi}(\eta) + 
\delta_{\beta,n}(s)|\eta|)\dfrac{|\eta_n|}{|\eta|}
\right)
$$
and
$$
\tilde{\eta} = \big((1-\delta_{\beta,1}(s)) \eta_1, \dots,
(1-\delta_{\beta,n}(s)) \eta_n\big),
$$
where $\delta_{\beta,k}(s) = s$ if $k \in \beta$ and 
$\delta_{\beta,k}(s) = 0$ otherwise.
Since $\tilde{\gamma}^*_{\alpha,\beta}(0,\, \Gamma^*_\alpha) = \tilde{\gamma}^*_\alpha$ holds,
we have
$$
\partial \tilde{\gamma}^*_{\alpha,\beta}([0,1],\, \Gamma^*_\alpha)  =
-\tilde{\gamma}^*_\alpha +
\tilde{\gamma}^*_{\alpha,\beta}(1,\, \Gamma^*_\alpha) 
-
\tilde{\gamma}^*_{\alpha,\beta}([0,1],\, \partial \Gamma^*_\alpha).
$$
Let $z$ be a point in $\Omega_{\alpha,\beta}$.
%Then, for any $C^\infty$ function $g(\zeta)$ with infra-exponential growth,
%it is easy to see $\displaystyle\int_{\tilde{\gamma}^*_{\alpha,\beta}([0,1],\,\Gamma^*_\alpha)}  d (g(\zeta) e^{\zeta z} d\zeta)$ converges. Hence, as 
Then, as $f$ is holomorphic, we have
$$
0 = \int_{\tilde{\gamma}^*_{\alpha,\beta}([0,1],\,\Gamma^*_\alpha)}  
	d (f(\zeta) e^{\zeta z} d\zeta)
	= \int_{\partial \tilde{\gamma}^*_{\alpha,\beta}([0,1],\,\Gamma^*_\alpha)}  
	f(\zeta) e^{\zeta z} d\zeta,
$$
which implies
$$
\int_{\tilde{\gamma}^*_{\alpha,\beta}(1,\,\Gamma^*_\alpha)}  
	f(\zeta) e^{\zeta z} d\zeta
-
\int_{\tilde{\gamma}^*_\alpha} f(\zeta) e^{\zeta z} d\zeta
=
\int_{\tilde{\gamma}^*_{\alpha,\beta}([0,1],\,\partial \Gamma^*_\alpha)}  
	f(\zeta) e^{\zeta z} d\zeta.
$$
Note that
$$
\tilde{\gamma}^*_{\alpha,\beta}([0,1],\,\partial \Gamma^*_\alpha)
=
\bigcup_{k=1}^n
\big(\tilde{\gamma}^*_{\alpha,\beta}([0,1],\,\partial \Gamma^*_\alpha) 
\cap \{\zeta_k = a^*_k\}\big)
$$
holds. By noticing $d\zeta_k = 0$ 
on each real $n$-chain
$\tilde{\gamma}^*_{\alpha,\beta}([0,1],\partial \Gamma^*_\alpha) \cap \{\zeta_k = a^*_k\}$,
we get
$$
\int_{\tilde{\gamma}^*_{\alpha,\beta}([0,1],\,\partial \Gamma^*_\alpha)}  
f(\zeta) e^{\zeta z} d\zeta = 0,
$$
from which
$$
\int_{\tilde{\gamma}^*_\alpha} f(\zeta) e^{\zeta z} d\zeta
=
\int_{\tilde{\gamma}^*_{\alpha,\beta}(1,\,\Gamma^*_\alpha)}  
	f(\zeta) e^{\zeta z} d\zeta
$$
follows. It is easy to see that the last integral belongs to
$\hexpo(\widetilde{\Omega_{\alpha,\beta}})$.
Hence, by taking arbitrary $\beta \subset \{1,\dots,n\}$, 
we see that $\textrm{sgn}(\alpha)h_\alpha(z)$ analytically extends to 
$\displaystyle\bigcup_{\beta \subset \{1,\dots,n\}} \widetilde{\Omega_{\alpha,\beta}}$.
In particular, on $\widetilde{\Omega_{\alpha,\beta}}$ with $\beta = \{1,\dots,n\}$, i.e.,
,
$$
\widetilde{\Omega_{\alpha,\beta}} = 
\widehat{\,\,\,\,}\{z = x + \sqrt{-1}y \in E;\, x_k < 0\,\,(k=1,\dots,n)\},
$$
$\textrm{sgn}(\alpha)h_\alpha(z)$ coincides with the integration 
on the real domain 
$$
\ilc
\int_{a^* + \Gamma_{+^n}^*}  f(\xi) e^{\xi z} d\xi,
$$
which does not depend on the index $\alpha \in \Lambda^n$. Therefore, all the analytic
extensions of $\textrm{sgn}(\alpha)h_\alpha$ 
coincide on this domain, and thus, they form the
holomorphic function of exponential type on the domain
$$
\bigcup_{\alpha,\beta} \widetilde{\Omega_{\alpha,\beta}} = \Omega.
$$
This completes the proof.

\end{proof}
\section{Laplace inversion formula}

This section is devoted to proof for the Laplace inversion formula,
that is, $\LL$ and $\IL$ are mutually inverse. 
\begin{teo}{\label{laplace-formula}}
Let $G \ne \emptyset$ be an $\mathbb{R}_+$-conic proper closed convex  
subset in $M$ and $a \in M$. Set $K = \overline{a + G} \subset \rcM$.
Furthermore, we also assume that $\HHPC{K} \cap M^*_\infty$ is connected when $n=1$.
Then the Laplace transformation
$$
\LL: \Gamma_{K}(\rcM;\, \bexpo \otimes_{\hexpa} \hexpv)) \to e^{-a\zeta}\hinfo(\HHPC{K})
$$
and the inverse Laplace transformation
$$
\IL: e^{-a\zeta}\hinfo(\HHPC{K}) \to \Gamma_{K}(\rcM;\, \bexpo \otimes_{\hexpa} \hexpv))
$$
are inverse to each other.
%$
%\LL \circ \IL = \textrm{id}
%$
%and
%$
%\IL \circ \LL = \textrm{id}.
%$
\end{teo}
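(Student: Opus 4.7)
The plan is to prove the two identities $\LL \circ \IL = \mathrm{id}$ and $\IL \circ \LL = \mathrm{id}$ separately, choosing on each side the most favorable representative so that the composition becomes an iterated integral amenable to Fubini's theorem and Cauchy's integral formula. The first step is normalization: translating in $M$ absorbs $a$ into the exponential weight, so we may assume $a = 0$; a real $\mathbb{R}$-linear change of coordinates on $M$ then moves $G$ into the first orthant, so we may also assume $G \setminus \{0\} \subset \Gamma_{+^n}$. Under this normalization, $K \subset \widehat{\Gamma_{+^n}} \cup \{0\}$, and both the product-chain Laplace formula of Subsection \ref{subsec:laplace_prodcut} and the chain $\tilde{\gamma}^*$ of Lemma \ref{lem:inverse_square} become simultaneously available.

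For $\LL \circ \IL = \mathrm{id}$, start with $f \in \hinfo(\HPC{K})$. By Proposition \ref{prop:exp_global}, the local pieces $\mathrm{sgn}(\alpha)h_\alpha$ from \eqref{eq:def_h_alpha} glue to a single $h \in \hexpo(\Omega)$, so $\IL(f)$ is represented in the intuitive \v{C}ech picture by the collection $\{\mathrm{sgn}(\alpha)\,h|_{\Omega_\alpha}\}_{\alpha \in \Lambda^n}$. Feeding this into the product-chain formula of Subsection \ref{subsec:laplace_prodcut} yields
$$
\LL(\IL(f))(\zeta_0) = (-1)^n \ilc \int_{\partial D_1 \times \cdots \times \partial D_n} e^{-w\zeta_0} \left(\int_{\tilde{\gamma}^*} e^{w\zeta}\, f(\zeta)\, d\zeta\right)\, dw.
$$
Exchange the order of integration by Fubini and perform the $w$-integration first. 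Each $\partial D_k$ is a loop around $\mathbb{R}_{\ge 0}$, so the one-dimensional integral $\int_{\partial D_k} e^{w_k(\zeta_k - \zeta_{0,k})}\, dw_k$ collapses, after contour deformation, to the Cauchy factor $2\pi\sqrt{-1}/(\zeta_k - \zeta_{0,k})$. The remaining iterated Cauchy integral in $\zeta$ along $\tilde{\gamma}^*$ is then evaluated by the $n$-dimensional Cauchy integral formula and produces $f(\zeta_0)$.

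For $\IL \circ \LL = \mathrm{id}$, take $u \in \Gamma_K(\rcM;\, \bexpo \otimes_{\hexpa} \hexpv)$ and choose a \v{C}ech representative $\{\nu_\alpha\}_{\alpha \in \Lambda_{**}^{n+1}}$ as in Example \ref{es:l-c-example03}; by Subsection \ref{subsec:laplace_prodcut}, $\LL(u)(\zeta)$ is an iterated integral along the product chain $\gamma_\alpha$. Apply Lemma \ref{lem:inverse_square} to $f := \LL(u)$, swap the order of integration, and perform the inner $\zeta$-integration along $\tilde{\gamma}^*_\alpha$. This is a one-dimensional Fourier--Laplace inversion in each coordinate and, after standard contour deformation, reduces to the iterated Cauchy kernel $\prod_k (w_k - z_k)^{-1}$ multiplied by the shift factor $e^{(z-w)a^*}$. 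The outcome coincides with the defining integral for $b_K^\dagger$ in Theorem \ref{teo:b_k_invserse}; since $b_K^\dagger$ is the two-sided inverse of $b_K$ established there, we conclude $\IL(\LL(u)) = u$.

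The hard part will be justifying Fubini and the contour deformations analytically. The composite integrands contain $e^{w(\zeta - \zeta_0)}$ or $e^{\zeta(z-w)}$ and are only absolutely integrable on the intersection of rather thin open strips in $(z, \zeta, w)$-space. The growth estimate \eqref{eq:growth_f} for $f$, the exponential bounds on $\nu_\alpha$, and the geometric choices of $\partial D$ and $\tilde{\gamma}^*$ must be tuned so that $\mathrm{Re}\,w(\zeta - \zeta_0)$, respectively $\mathrm{Re}\,\zeta(z - w)$, stays uniformly negative away from the expected Cauchy pole, producing a nonempty common region of joint convergence. Once this region is identified and the one-dimensional deformations are verified within it, the remainder of the argument is classical Cauchy--Laplace inversion applied componentwise, and the final equality propagates to the full domain by the analytic continuation argument underlying Proposition \ref{prop:exp_global} and Lemma \ref{lem:IL-path-independent}.
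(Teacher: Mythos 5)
Your second half ($\IL\circ\LL=\operatorname{id}$) follows essentially the route of the paper: absolute convergence of the double integral on a suitable wedge, Fubini, evaluation of $\int_{\tilde{\gamma}^*_\alpha}e^{(\tilde{z}-z)\zeta}\,d\zeta$ as $\operatorname{sgn}(\alpha)\,e^{(\tilde{z}-z)a^*}/(z-\tilde{z})$ by deforming to the real chain $a^*+\Gamma^*_{+^n}$, and identification of the resulting expression with the kernel defining $b_K^\dagger$, so that Theorem \ref{teo:b_k_invserse} closes the argument. That part is sound in outline.

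The first half, however, breaks at its central step. You claim that after Fubini the inner integral $\int_{\partial D_k}e^{w_k(\zeta_k-\zeta_{0,k})}\,dw_k$ ``collapses, after contour deformation, to the Cauchy factor $2\pi\sqrt{-1}/(\zeta_k-\zeta_{0,k})$''. The integrand is entire in $w_k$, so no residue is available: precisely on the set where this integral converges (where $\operatorname{Re}\bigl(w_k(\zeta_k-\zeta_{0,k})\bigr)\to-\infty$ along both unbounded ends of $\partial D_k$) the antiderivative $e^{w_k(\zeta_k-\zeta_{0,k})}/(\zeta_k-\zeta_{0,k})$ vanishes at both ends and the contour integral equals $0$; your formula would therefore give $\LL(\IL(f))=0$. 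The underlying confusion is that the integrand of the outer $w$-integral is not one function paired with the single chain $\tilde{\gamma}^*$: on the piece of the product chain lying in the wedge $\Omega_\alpha$ the representative is $\operatorname{sgn}(\alpha)h_\alpha$, where $h_\alpha$ is the integral over the sub-chain $\tilde{\gamma}^*_\alpha$ only (the integral over all of $\tilde{\gamma}^*$ diverges at such $w$). The composition is thus $\sum_\alpha\operatorname{sgn}(\alpha)\int_{\gamma_\alpha}\int_{\tilde{\gamma}^*_\alpha}$, a sum of $2^n$ wedge-paired double integrals; each inner $w$-integral runs over a half-ray wedge and yields the rational kernel $\det(Q_{\alpha,\epsilon})\,e^{(\zeta-\tilde{\zeta})b}\big/\prod_k e_k(\tilde{\zeta}-\zeta)Q_{\alpha,\epsilon}$ via $\int_0^\infty e^{-\lambda t}\,dt=1/\lambda$, not via a residue. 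The factors $2\pi\sqrt{-1}$ only appear at the very end, when $\sum_\alpha\operatorname{sgn}(\alpha)\int_{\tilde{\gamma}^*_\alpha}$ against this kernel is recognized, in the limit $\epsilon\to0$, as the $n$-dimensional Cauchy integral of $f$ around $\tilde{\zeta}$; and legitimizing that limit requires showing that the denominators $e_k(\tilde{\zeta}-\zeta)Q_{\alpha,\epsilon}$ stay bounded away from zero on $\tilde{\gamma}^*_\alpha$ as $\epsilon\to0$ --- the lengthy non-vanishing estimate in the paper --- for which your proposal offers no substitute. The $\LL\circ\IL$ direction therefore needs to be redone along these lines.
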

\begin{oss}
For $K$ and $G$ in the above theorem, %as $G$ is a cone
%with $G \setminus \{0\} \ne \emptyset$
\begin{equation}
	\HHPC{K} = \HHPC{G} = \widehat{\,\,\,\,}(\mathrm{int}\,G^\circ) \cap E^*_\infty
\end{equation}
hold, where $G^\circ$ is the dual cone of $G$ in $E^*$, that is,
$$
G^\circ = 
\{\zeta \in E^*;\, \operatorname{Re}\, \langle \zeta, x \rangle \ge 0
\quad (\forall x \in G)\}.
$$
\end{oss}

Thanks to Corollary {\ref{cor:laplace_h_k}} and Lemmas {\ref{lem:IL-path-independent}} 
and \ref{lem:convex-banach},
the following corollary immediately follows from Theorem {\ref{laplace-formula}}:
\begin{cor}{\label{cor:laplace-inverse}}
Let $K$ be a regular closed subset in $\rcM$ satisfying that
$K \cap M$ is convex and $\HHPC{K} \cap M^*_\infty$ is connected (in particular, non-empty). 
Then the Laplace transformation 
$$
\LL: \Gamma_{K}(\rcM;\, \bexpo\otimes_{\hexpa} \hexpv)) \to \hhinfo{h_K}(\HHPC{K})
$$
and the inverse Laplace transformation
$$
\IL: \hhinfo{h_K}(\HHPC{K}) \to \Gamma_{K}(\rcM;\, \bexpo\otimes_{\hexpa} \hexpv))
$$
are inverse to each other.
\end{cor}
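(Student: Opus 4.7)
The plan is to reduce Corollary \ref{cor:laplace-inverse} to Theorem \ref{laplace-formula} by presenting the regular closed convex set $K$ as the intersection of a family of sets of the form $\overline{a + G}$, with $a \in M$ and $G \subset M$ an $\mathbb{R}_+$-conic proper closed convex cone, and then matching up the two sides of the putative isomorphism factor by factor.

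First I would show that for every $\xi_0 \in \HPC{K} \cap M^*_\infty$ the infimum in $h_K(\xi_0) = \inf_{x \in K\cap M}\langle x,\xi_0\rangle$ is attained at some $a(\xi_0) \in K\cap M$, and that one can choose a proper closed convex cone $G(\xi_0)\subset M$ with $\xi_0 \in \operatorname{int}(G(\xi_0)^\circ)$ such that $K \subset \overline{a(\xi_0)+G(\xi_0)} =: K_{\xi_0}$. The first is immediate from the fact that $K$ is properly contained in a half-space of direction $\xi_0$; the second is a standard supporting-cone construction using the convexity of $K\cap M$ together with the regularity of $K$ at infinity, and is essentially the content of Lemma \ref{lem:convex-banach}. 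Combining the supporting-hyperplane characterization of the closed convex set $K \cap M$ with regularity at infinity then gives the set-theoretic identity $K = \bigcap_{\xi_0} K_{\xi_0}$.

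Next, Theorem \ref{laplace-formula} furnishes, for each $\xi_0$, a pair of mutual inverses between $\Gamma_{K_{\xi_0}}(\rcM;\, \bexpo \otimes_{\hexpa} \hexpv)$ and $e^{-a(\xi_0)\zeta}\hinfo(\HPC{K_{\xi_0}})$. On the hyperfunction side, the identity $K = \bigcap_{\xi_0} K_{\xi_0}$ translates at once into
\[
\Gamma_K(\rcM;\, \bexpo \otimes_{\hexpa} \hexpv) = \bigcap_{\xi_0}\,\Gamma_{K_{\xi_0}}(\rcM;\, \bexpo \otimes_{\hexpa} \hexpv).
\]
On the holomorphic side, I would identify $\hhinfo{h_K}(\HPC{K})$ with the corresponding intersection: restriction of any $f \in \hhinfo{h_K}(\HPC{K})$ to each $\HPC{K_{\xi_0}} \subset \HPC{K}$ is automatically of type $e^{-a(\xi_0)\zeta}$-times-infra-exponential because $h_K(\xi) \ge a(\xi_0)\cdot\xi$ on $\HPC{K_{\xi_0}} \cap M^*_\infty$; conversely, a compatible family of such local restrictions glues to a single holomorphic function on $\HPC{K} = \bigcup_{\xi_0} \HPC{K_{\xi_0}}$, whose growth is controlled by $h_K$ via the representation $h_K(\xi) = \sup_{\xi_0}\, a(\xi_0)\cdot\xi$. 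Once these two identifications are in place, the mutual inversion of $\LL$ and $\IL$ follows from Theorem \ref{laplace-formula} applied to each $K_{\xi_0}$, together with Corollary \ref{cor:laplace_h_k} (which places $\LL$ into $\hhinfo{h_K}(\HPC{K})$) and Lemma \ref{lem:IL-path-independent} (which places $\IL$ into $\Gamma_K$).

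The main obstacle I foresee is the matching of the \emph{global} growth condition ``infra-$h_K$-exponential on $\HPC{K}$'' with the family of \emph{local} conditions ``$e^{-a(\xi_0)\zeta}\times$infra-exponential on $\HPC{K_{\xi_0}}$'' in the gluing direction. This requires upgrading pointwise bounds to uniform bounds on compact subsets of $\HPC{K}$, which rests on the supremum representation of $h_K$ and an $\epsilon/2$ argument together with a compactness argument on each fixed compact $L \subset \HPC{K}$. The hypothesis that $\HPC{K} \cap M^*_\infty$ is connected is needed here to rule out multivaluedness when passing between the choices $\xi_0 \mapsto a(\xi_0)$ and assembling the corresponding local data into a single global holomorphic function; without connectedness the gluing could fail over disconnected strata of the boundary at infinity.
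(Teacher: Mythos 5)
Your overall architecture---deduce the corollary from Theorem \ref{laplace-formula} applied to enclosing sets of the form $\overline{a+G}$, together with Corollary \ref{cor:laplace_h_k} (to place $\LL$ into $\hhinfo{h_K}(\HPC{K})$), Lemma \ref{lem:IL-path-independent} (to control $\operatorname{supp}(\IL(f))$) and Lemma \ref{lem:convex-banach} (to recover $K$)---is the same as the paper's, which derives the corollary from exactly these four ingredients. However, your key geometric step fails as stated. You claim that the infimum $h_K(\xi_0)$ is attained at some $a(\xi_0)\in K\cap M$ \emph{and} that a proper closed convex cone $G(\xi_0)$ with $\xi_0\in G(\xi_0)^\circ$ can be chosen so that $K\subset \overline{a(\xi_0)+G(\xi_0)}$, calling this a standard supporting-cone construction. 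This is false in general: take $n=2$ and $K=\overline{\{x\in M;\,x_1\ge x_2^2\}}$, which is regular, has convex finite part, and has $\HPC{K}\cap M^*_\infty=\{\xi;\,\xi_1>0\}$ connected. For $\xi_0=(1,0)$ the minimizer is $a(\xi_0)=0$, and any closed convex cone with vertex $0$ containing $K$ must contain the conic hull of $K$, namely the closed half-plane $\{x_1\ge 0\}$, whose dual open cone is empty; so no proper $G(\xi_0)$ exists, and Theorem \ref{laplace-formula} cannot be applied to a half-space cone. To enclose $K$ in $\overline{a+G}$ with $G$ proper one must push the vertex $a$ off $K$ (using that $K\cap M_\infty$ is a compact subset of an open half-sphere), after which $\langle a,\xi\rangle<h_K(\xi)$; then both your identity $K=\bigcap_{\xi_0}K_{\xi_0}$ and your identification of $\hhinfo{h_K}(\HPC{K})$ with the glued family $\{e^{-a(\xi_0)\zeta}\hinfo(\HPC{K_{\xi_0}})\}$ (which rests on $h_K(\xi)=\sup_{\xi_0}\langle a(\xi_0),\xi\rangle$ with $a(\xi_0)\in K$) require a new argument; neither is the content of Lemma \ref{lem:convex-banach}, which intersects half-spaces, not proper cones.

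The intersection/gluing machinery is in fact dispensable. A single enclosing $K'=\overline{a+G}\supset K$ with $G$ proper closed convex suffices: Theorem \ref{laplace-formula} for $K'$ gives $\IL\circ\LL=\operatorname{id}$ on $\Gamma_K\subset\Gamma_{K'}$, and $\LL\circ\IL=\operatorname{id}$ on $\HPC{K'}$, which propagates to all of the connected open set $\HPC{K}$ by unique continuation; $\LL$ lands in $\hhinfo{h_K}(\HPC{K})$ by Corollary \ref{cor:laplace_h_k}; and $\operatorname{supp}(\IL(f))\subset K$ follows directly from the estimate \eqref{eq:lem-estimate-support} combined with Lemma \ref{lem:convex-banach}, with no need for the cones $K_{\xi_0}$. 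If you repair the enclosing-cone step along these lines, your argument becomes the paper's.
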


\begin{lem}{\label{lem:convex-banach}}
Let $K$ be a regular closed subset in $\rcM$. 
Assume that $K$ is convex and that $\HHPC{K}$ is also non-empty.
Then we have
$$
K = \bigcap_{\xi \in \HHPC{K} \cap M^*_\infty} \overline{\{x \in M\,;\, 
\langle x, \,\xi \rangle \ge h_K(\xi)\}}.
$$
\end{lem}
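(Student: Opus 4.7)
The forward inclusion is essentially a bookkeeping matter: for any $\xi\in\HPC{K}\cap M^*_\infty$, every $y\in K\cap M$ satisfies $\langle y,\xi\rangle\geq h_K(\xi)$ by the very definition of the support function, so $K\cap M$ is contained in $S_\xi:=\{x\in M;\,\langle x,\xi\rangle\geq h_K(\xi)\}$; regularity ($\overline{K\cap E}=K$) then gives $K=\overline{K\cap M}\subset\overline{S_\xi}$, with closures in $\rcM$, and intersecting over $\xi$ completes one direction. The core of the argument is the reverse inclusion, which I would attack by Hahn--Banach separation tailored to the restricted range of directions $\xi\in\HPC{K}\cap M^*_\infty$.

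Let $A:=K\cap M$, a closed convex non-empty subset of $M$, and denote by $\operatorname{Rec}(A)\subset M$ its recession cone. By regularity, the points of $K\cap M_\infty$ are exactly the directions in $\operatorname{Rec}(A)\setminus\{0\}$, so the set $\Gamma^*\subset M^*\setminus\{0\}$ of positive scalar multiples of elements of $\HPC{K}\cap M^*_\infty$ equals $\{\xi\in M^*\setminus\{0\};\,\langle v,\xi\rangle>0\ \forall\,v\in\operatorname{Rec}(A)\setminus\{0\}\}$. The hypothesis $\HPC{K}\cap M^*_\infty\ne\emptyset$ forces $\operatorname{Rec}(A)$ to be pointed, in which case $\Gamma^*$ is a non-empty open convex cone, equal to the interior of the polar cone of $\operatorname{Rec}(A)$. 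A direct sequence computation shows that the closure of $S_\xi$ in $\rcM$ is $S_\xi\cup\{v\in M_\infty;\,\langle v,\xi\rangle\geq 0\}$, so for $x_0\in\rcM\setminus K$ it suffices to produce $\xi\in\Gamma^*$ with $\langle x_0,\xi\rangle<h_K(\xi)$ when $x_0\in M$, and with $\langle v,\xi\rangle<0$ when $x_0$ corresponds to a unit vector $v\in M$ via the identification $M_\infty\simeq S^{n-1}$.

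Case~A: $x_0\in M\setminus A$. By the strict form of Hahn--Banach, there exists $\eta\in M^*\setminus\{0\}$ with $d:=h_A(\eta)-\langle x_0,\eta\rangle>0$; finiteness of $h_A(\eta)$ forces $\eta$ into the polar cone of $\operatorname{Rec}(A)$. Fix any $\xi_0\in\Gamma^*$ and set $\xi_\epsilon:=\eta+\epsilon\xi_0$. Because $\Gamma^*$ is an open convex cone and $\eta\in\overline{\Gamma^*}$, we have $\xi_\epsilon\in\Gamma^*$ for every $\epsilon>0$. The superadditivity $h_A(\xi_\epsilon)\geq h_A(\eta)+\epsilon h_A(\xi_0)$, immediate from $h_A(\xi)=\inf_{y\in A}\langle y,\xi\rangle$, gives
$$h_A(\xi_\epsilon)-\langle x_0,\xi_\epsilon\rangle\;\geq\;d-\epsilon\bigl|h_A(\xi_0)-\langle x_0,\xi_0\rangle\bigr|,$$
which remains positive for all sufficiently small $\epsilon>0$. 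Case~B: $x_0$ lies in $M_\infty$, identified with a unit vector $v\notin\operatorname{Rec}(A)$. Cone separation supplies $\xi$ in the polar of $\operatorname{Rec}(A)$ with $\langle v,\xi\rangle<0$, and the same perturbation places $\xi_\epsilon:=\xi+\epsilon\xi_0$ inside $\Gamma^*$ while keeping $\langle v,\xi_\epsilon\rangle<0$ for small $\epsilon$.

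The main obstacle is exactly the tension between Hahn--Banach, which produces an arbitrary separating direction, and the constraint that $\xi$ must lie in the strictly smaller open cone $\Gamma^*=\operatorname{int}(\operatorname{Rec}(A)^\circ)$: the raw separator may sit on $\partial\operatorname{Rec}(A)^\circ$, where $h_A$ is only upper semicontinuous and the separation gap can collapse under perturbation. The resolution is the additive perturbation $\xi\mapsto\xi+\epsilon\xi_0$ combined with superadditivity of the support function, which yields a quantitative lower bound on the gap that survives taking $\epsilon$ just small enough to retain strict separation while entering $\Gamma^*$.
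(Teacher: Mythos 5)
Your argument is correct. It shares its skeleton with the paper's proof --- the forward inclusion is immediate from the definition of $h_K$ plus regularity, and the reverse inclusion is a Hahn--Banach separation in which the real work is forcing the separating direction to lie in $\HPC{K}$ --- but the way you force the direction is genuinely different. The paper stays geometric: it takes an arbitrary hyperplane $L$ separating $x_0$ from $K\cap M$, notes that if $\overline{L}$ still meets $K$ at infinity then $L\cap L_{\xi_0}$ has codimension two (where $L_{\xi_0}$ is a fixed hyperplane with normal in $\HPC{K}$), and replaces $L$ by the hyperplane through $x_0$ and $L\cap L_{\xi_0}$, leaving to the reader both the verification that this tilted hyperplane works and the reduction to $x_0\in M$. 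You instead identify $K\cap M_\infty$ with the recession cone of $A=K\cap M$, recognize the cone over $\HPC{K}\cap M^*_\infty$ as the interior of its dual cone, and push the raw separator $\eta$ into that interior by the additive perturbation $\eta+\epsilon\xi_0$, with superadditivity of the support function supplying the quantitative lower bound $d-\epsilon\,\bigl|h_A(\xi_0)-\langle x_0,\xi_0\rangle\bigr|$ on the separation gap. This buys two things: the direction-fixing step becomes a clean convex-analytic estimate rather than an incidence construction, and the points of $M_\infty$ are treated explicitly (your Case B, which is in effect the bipolar theorem for $\operatorname{Rec}(A)$) instead of being silently reduced away. In a written-up version you should add one sentence for the degenerate case $K=\emptyset$ (then $h_K\equiv+\infty$ and both sides are empty) and one for the finiteness of $h_A(\xi_0)$ when $\xi_0$ lies in the open dual cone, which follows from the paper's earlier lemma that $h_K>-\infty$ on $\HPC{K}$.
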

\begin{proof}
If $K$ is an empty set, both the sides in the above equality 
become empty sets as $h_K(\xi) = +\infty$,
and hence, we may assume $K \ne \emptyset$.
It is enough to show that, for any $x_0 \in M$ with $x_0 \notin K$, 
there exists a hypersurface $L$ in
$M$ passing through $x_0$ such that $K$ and $\overline{L}$ are disjoint in $\rcM$.

Since $\HHPC{K}$ is not empty, we can take $\xi_0 \in \HHPC{K} \cap M^*_\infty$
and $r \in \mathbb{R}$ such that
$$
K \subset \widehat{\,\,\,\,}\{x \in M\,;\, \langle x,\, \xi_0\rangle > r\}.
$$
Set 
$$
L_{\xi_0} := \widehat{\,\,\,\,}\{x \in M\,;\, \langle x,\, \xi_0\rangle = r\}.
$$
We may assume $x_0 \in \{x \in M;\, \langle x,\,\xi_0\rangle > r\}$ from the
beginning.

Since $K \cap M$ is convex, we can find a hypersurface $L$ which separates
$x_0$ and $K$ in $M$. The claim follows if $\overline{L}$ also separates them in $\rcM$.
Hence we may assume that $\overline{L} \cap K \cap M_\infty$ is non-empty,
from which we conclude that the both normal vectors of $L$ are not in $\HHPC{K}$, and thus,
we have $\dim (L \cap L_{\xi_0}) = n -2$. 

We can take the hypersurface $\tilde{L}$ in $M$ which passes $x_0$ and $L \cap L_{\xi_0}$.
Then the hypersurface $\tilde{L}$ has the required properties, which completes the proof.
\end{proof}

\subsection{The proof for $\LL \circ \IL = \operatorname{id}$.}

%We continue to use the same notations as those in the previous section.
Let $f \in e^{-a\zeta} \hinfo(\HHPC{K}) = e^{-a\zeta}\hinfo(\widehat{\,\,\,\,}(\mathrm{int}\,G^\circ)\cap E^*_\infty)$.
By a coordinate transformation, we may assume 
that $a = 0$ and $G \subset \Gamma_{+^n} \cup \{0\}$ from
the beginning (see \eqref{eq:def-Gamma-e} for the set $\Gamma_{+^n}$).
Let $\Lambda = \{+1,\,-1\}$, and let $h_\alpha(z)$ ($\alpha \in \Lambda^n$) be a holomorphic function defined in \eqref{eq:def_h_alpha}. Then, by Lemma {\ref{lem:inverse_square}}, we have
$$
\IL(f) = \sum_{\alpha \in \Lambda^n} b_{\Omega_\alpha}(h_\alpha(z)) \otimes \nu_{\rcM}.
$$
Note that $\operatorname{Supp}(\IL(f)) \subset \overline{G} 
\subset \widehat{\Gamma_{+^n}} \cup \{0\}$ hold.
It follows from Proposition {\ref{prop:exp_global}} that we can 
compute the Laplace transform of $\IL(f)$ by the formula given in Example 
\ref{es:l-c-example02}.
Hence, by noticing \eqref{eq:b_w_and_diag_equiv}, we have
$$
(\LL\circ \IL)(f)(\tilde{\zeta}) =
\dfrac{1}{(2\pi\sqrt{-1})^n}
\sum_{\alpha\in\Lambda^n} \operatorname{sgn}(\alpha)
\int_{\gamma_\alpha} dz 
\int_{\gamma^*_\alpha}  f(\zeta) e^{(\zeta - \tilde{\zeta}) z} d\zeta.
$$
Here we take $\epsilon > 0$ sufficiently small and $\gamma_\alpha \subset E$ is given by
$$
\left\{z = b + (B_\epsilon  + \sqrt{-1}\epsilon A_\alpha) x\,;\, x \in \Gamma_{+^n}
\right\},
$$
where the diagonal matrix
$$
A_\alpha
=
\left(\begin{matrix}
\alpha_1 &0         &            \\
0         &\alpha_2  &0         &  \\
          &           &\vdots    &        \\
	  &           &0 &\alpha_{n-1}         & 0 \\
          &           &  &0         & \alpha_n
\end{matrix}
\right),
$$
$b = -c(1,1,\dots,1) \in \Gamma_{-^n}$ 
with a sufficiently small $c > 0$ and $B_\epsilon$ is given
in Example \ref{es:l-c-example02}.
The $\gamma^*_\alpha \subset E^*$ is given by
$$
\left\{\zeta = a^* + \xi (\delta I + \sqrt{-1}A_\alpha)\,;\, \xi \in \Gamma^*_{+^n} 
\right\},
$$
where $I$ is the identity matrix, $\epsilon > \delta > 0$ and $a^* = a (1,1,\cdots,1)\in \Gamma^*_{+^n}$
for a sufficiently large $a > 0$.
Note that the orientation of $\gamma_\alpha$ and $\gamma_\alpha^*$ are determined by
those of the parameter spaces $\Gamma_{+^n}$ and $\Gamma_{+^n}^*$, respectively.
\begin{oss}{\label{rem:const-epsilon}}
The above integral does not depend on the choice of $\epsilon > 0$ if it
is sufficiently small, and we make $\epsilon$ tend to $0$ later.
\end{oss}
In what follows, we may assume that
$\tilde{\zeta} \in E^*$ is in a sufficiently small open
neighborhood of $a^* + \Gamma^*_{+^n}$ and that $|\tilde{\zeta}|$ is
large enough. As a matter of fact,
if we could show $(\LL \circ \IL)(f)(\tilde{\zeta}) = f(\tilde{\zeta})$
for such a $\tilde{\zeta}$, the claims follows from the unique continuation property
of $f$.

When $z \in \gamma_\alpha$ and $\zeta \in \gamma^*_\alpha$, we have
$$
\begin{aligned}
\operatorname{Re} (\zeta - \tilde{\zeta})z
&= - \operatorname{Re} \tilde{\zeta}z + \operatorname{Re} \zeta z \\
&=
\bigg(\langle a^* - \operatorname{Re} \tilde{\zeta},\,b + B_\epsilon x\rangle +
\epsilon\sum_{k=1}^n \alpha_k x_k\operatorname{Im} \tilde{\zeta}_k\bigg)
%+ \langle b,\, a^* \rangle 
+ \delta \langle \xi,\, b\rangle
+ 
\bigg(\delta \langle \xi,\, B_\epsilon x\rangle
	-
\epsilon \langle \xi,\, x\rangle\bigg).
\end{aligned}
$$
Note that, for $x \in \Gamma_{+^n}$ and $\xi \in \Gamma^*_{+^n}$, we have
$$
\bigg(\delta \langle \xi,\, B_\epsilon x\rangle
	-
\epsilon \langle \xi,\, x\rangle\bigg) \le
 - \min\{\epsilon - \delta, \epsilon\delta\}|x||\xi| \le 0.
$$
Hence the above integration absolutely converges and, by the Fubini's theorem, we obtain
$$
(\LL \circ \IL)(f)(\tilde{\zeta}) =
\dfrac{1}{(2\pi\sqrt{-1})^n}
\sum_{\alpha\in\Lambda^n}
\operatorname{sgn}(\alpha)
\int_{\gamma^*_\alpha}  f(\zeta) d\zeta
\int_{\gamma_\alpha} 
e^{(\zeta - \tilde{\zeta}) z} dz.
$$
Then, if $\zeta$ is quite near $a^*$
and $\tilde{\zeta} \in E^*$ belongs to a sufficiently small open
neighborhood of $a^* + \Gamma^*_{+^n}$ and if $|\tilde{\zeta}|$ is
large enough, we get
$$
\int_{\gamma_\alpha} 
e^{(\zeta - \tilde{\zeta}) z} dz
=
\det({Q}_{\alpha,\epsilon})\int_{\Gamma_{+^n}}
e^{(\zeta - \tilde{\zeta})(b + Q_{\alpha,\epsilon} x)} dx
= \dfrac{\det(Q_{\alpha,\epsilon})\,e^{(\zeta-\tilde{\zeta})b}}
{(\tilde{\zeta}-\zeta)Q_{\alpha,\epsilon}},
$$
where
$$
\dfrac{1}{(\tilde{\zeta}-\zeta)Q_{\alpha,\epsilon}} :=
\dfrac{1}{\prod_{k=1}^n e_k (\tilde{\zeta}-\zeta)Q_{\alpha,\epsilon}}.
$$
Here $e_k$ is the unit row vector whose $k$-th entry is $1$ and
$$
Q_{\alpha,\epsilon} = B_\epsilon + \sqrt{-1}\epsilon A_\alpha.
$$
By uniqueness of the analytic continuation, the above formula holds at
any point $\zeta$ in a neighborhood of the chain $\gamma^*_\alpha$,
and hence, we have
$$
(\LL\circ \IL)(f)(\tilde{\zeta}) =
\dfrac{1}{(2\pi\sqrt{-1})^n}
\sum_{\alpha\in\Lambda^n}
\operatorname{sgn}(\alpha)\det({Q}_{\alpha,\epsilon})
\int_{\gamma^*_\alpha}  
\dfrac{f(\zeta)\,e^{(\zeta-\tilde{\zeta})b}}
{(\tilde{\zeta}-\zeta)Q_{\alpha,\epsilon}} d\zeta.
$$
%Then, by the coordinate transformation $\zeta' = \zeta Q_{\alpha,\epsilon}$, 
%the integral becomes
%$$
%\ilc
%\sum_{\alpha\in\Lambda^n} \operatorname{sgn}(\alpha)
%\int_{\gamma_\alpha^*Q_{\alpha,\epsilon}}  
%\dfrac{f(\zeta'Q^{-1}_{\alpha,\epsilon})\,
%e^{(\zeta'Q^{-1}_{\alpha,\epsilon}-\tilde{\zeta})b}}
%{\zeta' - \tilde{\zeta}Q_{\alpha,\epsilon}} d\zeta'.
%$$
Now if we could show that 
there exist $s > \delta$ and a complex open neighborhood $T \subset \mathbb{C}$ 
of $(0,s)$ such that the denominator of the integrand in the above integral
does not vanish when $\zeta \in \gamma_\alpha^*$ and $\epsilon \in T$
($\delta$ and other constants are fixed, 
where we do not keep the condition $\epsilon > \delta$ anymore),
then the above integral becomes an analytic function of $\epsilon$ ($\epsilon > 0$),
and thus, it turns out to be a constant function of $\epsilon$
due to Remark {\ref{rem:const-epsilon}}. Hence, by letting $\epsilon$ to $0$,
we have obtained
$$
(\LL\circ \IL)(f)(\tilde{\zeta}) =
\ilc
\sum_{\alpha\in\Lambda^n} \operatorname{sgn}(\alpha)
\int_{\gamma^*_\alpha}  
\dfrac{f(\zeta)\,e^{(\zeta-\tilde{\zeta})b}}
{\tilde{\zeta} - \zeta} d\zeta
$$
which is clearly equal to $f(\tilde{\zeta})$ by the Cauchy integral formula.

\

Let $g(\zeta,\eta)$ be the first element of the vector 
$\zeta Q_{\alpha,\epsilon} - \eta$, and
let us show $g(\zeta, \eta) \ne 0$ for any $\zeta \in \gamma^*_\alpha$ and
for any $\eta$ contained in a sufficiently small neighborhood
of the point $R(1,1,\cdots,1)Q_{\alpha,\epsilon}$ with a sufficiently large $R > 0$.
Set $\epsilon = \epsilon' + \sqrt{-1}\epsilon''$ for a sufficiently small $\epsilon' > 0$ and $\epsilon'' \in \mathbb{R}$ with $|\epsilon''| < \delta\epsilon'/2$.
The real part of $g(\zeta,\eta)$ is, for $\zeta = a^* + \xi (\delta E + \sqrt{-1}A_\alpha)$ with
$\xi \in \overline{\Gamma_{+^n}}$,
$$
(\delta-\epsilon'-\alpha_1\epsilon'')\xi_1 - 
((\delta\epsilon'-\alpha_2\epsilon'')\xi_2+ \cdots+
(\delta\epsilon'-\alpha_n\epsilon'')\xi_n) + (1 - (n-1)\epsilon'-\alpha_1\epsilon'')a
- \operatorname{Re}\,\eta_1
$$
and its imaginary part is
$$
\alpha_1 ((1+\epsilon' -\alpha_1\epsilon'') \xi_1  + (\epsilon' -(n-1)\alpha_1\epsilon'') a)
- 
((\alpha_2\epsilon' + \delta\epsilon'')\xi_2 + \cdots+ (\alpha_n\epsilon' + \delta\epsilon'')\xi_n) - \operatorname{Im}\,\eta_1.
$$
If $\operatorname{Re} g(\zeta,\eta) = 0$, then we have
$$
(\delta\epsilon'- \alpha_2\epsilon'')\xi_2+\cdots+
(\delta\epsilon'-\alpha_n\epsilon'')\xi_n = 
(\delta-\epsilon'-\alpha_1\epsilon'')\xi_1 - 
(\operatorname{Re} \eta_1 - (1-(n-1)\epsilon'-\alpha_1\epsilon'')a),
$$
which gives the estimate
$$
(\delta\epsilon'-|\epsilon''|)(\xi_2+\cdots+\xi_n) \le
(\delta-(\epsilon'-|\epsilon''|))\xi_1 - (\operatorname{Re} \eta_1 - a),
$$
that is, we have obtained
$$
(\epsilon'- \delta^{-1}|\epsilon''|)(\xi_2+\cdots+\xi_n) \le
(1- \delta^{-1}(\epsilon'-|\epsilon''|))\xi_1 - \delta^{-1}(\operatorname{Re} \eta_1 - a).
$$
Hence, when $\operatorname{Re} g(\zeta,\eta) = 0$, we get
$$
\begin{aligned}
|\operatorname{Im} g(\zeta,\eta)|
& \ge (1+\epsilon' - |\epsilon''|)\xi_1  - 
(\epsilon'+\delta|\epsilon''|)(\xi_2 + \cdots + \xi_n) 
- |\operatorname{Im} \eta_1| \\
&\ge
\ell(\epsilon',\epsilon'')\xi_1 +
\delta^{-1}
\left(\dfrac{\epsilon'+\delta|\epsilon''|}{\epsilon' - \delta^{-1}|\epsilon''|}\right)
(\operatorname{Re} \eta_1 - a) - |\operatorname{Im}\eta_1|,
\end{aligned}
$$
where
$$
\ell(\epsilon',\epsilon'') =
\left(1+\epsilon'-|\epsilon''|\right) - 
\left(\dfrac{\epsilon'+\delta|\epsilon''|}{\epsilon' - \delta^{-1}|\epsilon''|}\right)
\left(1 - \delta^{-1}(\epsilon' - |\epsilon''|)\right).
$$
Note that, for each $\epsilon' > 0$, we have $\ell(\epsilon',\epsilon'') > 0$
if $|\epsilon''|$ is sufficiently small. In what follows,
we consider the case for such an $\epsilon = \epsilon' + \sqrt{-1}\epsilon''$.
When $\eta$ is contained in a sufficiently small neighborhood
of the point $R(1,1,\cdots,1)Q_{\alpha,\epsilon}$,
we have
$$
\operatorname{Re}\,\eta_1 \sim R - ((n-1)\epsilon'+\alpha_1\epsilon'')R,\qquad
\operatorname{Im}\,\eta_1 \sim (\alpha_1\epsilon' - (n-1)\epsilon'')R.
$$
Hence, if $R$ is sufficiently large, $\operatorname{Im} g(\zeta,\eta)$ never becomes zero.
This completes the proof.

\subsection{The proof for $\IL \circ \LL = \operatorname{id}$.}

Let $G$ be an $\mathbb{R}_+$-conic proper closed convex subset
in $M$ and $a \in M$. Set $K= \overline{a + G} \subset \rcM$.
Then we take an open convex cone $V \subset \rcE$ containing $K$.
Let $u \in \Gamma_K(\rcM;\, \hexpo \otimes \hexpv)$ with
a representative 
$\nu = (\nu_1,\,\nu_{01}) \in \QDC{n}{n}{\mathcal{V}_K}$.
We will show $(\IL \circ \LL)(u) = u$. By a coordinate transformation,
we may assume $a = 0$ and
$$
\overline{G} \setminus \{0\} \,\subset\, \widehat{\Gamma_{+^n}}
\,\subset\,
\overline{\Gamma_{+^n}} \,\subset\, V
$$
from the beginning. Then, 
it follows from Lemma \ref{lem:inverse_square} that we get
$$
(\IL \circ \LL)(u) =
\ilc
\sum_{\alpha \in \Lambda^n} %\textrm{sgn}(\alpha)\, 
b_{\Omega_\alpha}\,\left(\int_{\tilde{\gamma}^*_\alpha} 
\LL(u)\, e^{\zeta \tilde{z}} d\zeta\right).
$$
Set
$$
g_\alpha(\tilde{z}) :=\int_{\tilde{\gamma}^*_\alpha} \LL(u)\, e^{\zeta \tilde{z}} d\zeta.
$$
It follows from Proposition {\ref{prop:exp_global}} that
$g_\alpha$ extends to a holomorphic function on $\Omega$ of exponential type.
Here
$$
\Omega = 
\widehat{\,\,\,\,}((\mathbb{C} \setminus \mathbb{R}_{\ge 0})
\times (\mathbb{C} \setminus \mathbb{R}_{\ge 0}) \times \cdots \times
(\mathbb{C} \setminus \mathbb{R}_{\ge 0})) \quad \subset \rcE.
$$
We first consider $g_\alpha(\tilde{z})$ at a point in 
$\Gamma_{-^n} \times \sqrt{-1}\Gamma_\alpha$.
Let us take $\tilde{z}$ in $\Gamma_{-^n} \times \sqrt{-1}\Gamma_\alpha$ and fix it.
Then, at this $\tilde{z}$, we can deform the $n$-chain $\tilde{\gamma}^*_\alpha$ to
$$
\left\{\zeta = \xi + \sqrt{-1}\eta' \in E^*;\, 
\begin{aligned}	
&\eta'_k = \alpha_k \eta_k\,\, (k=1,\dots,n), \\
&\xi = a^* + \epsilon' \eta, \quad \eta \in \Gamma^*_{+^n} 
\end{aligned}
\right\}
$$
with $a^* \in \Gamma^*_{+^n}$ and $\epsilon' > 0$.
Here the orientation of the modified chain $\tilde{\gamma}^*_\alpha$ is
the same as the original one and we assume $|a^*|$ to be sufficiently large.

Now, since $\zeta$ runs in $\tilde{\gamma}^*_\alpha$, the real $2n$-chain $D$ of
the integration 
$$
\LL(u)(\zeta) := \int_D e^{-z \zeta} \nu_1(z) - \int_{\partial D} e^{-z \zeta}\nu_{01}(z),
$$
can be taken so that it is an open subset in $\rcE$ with a good boundary such that 
$$
\overline{G} \subset D \subset \widehat{\,\,\,\,}\{z = x + \sqrt{-1}y;\, x \in b + \Gamma',\,
|y| < \epsilon \operatorname{dist}(x,\, M \setminus (b+ \Gamma'))\},
$$
where $b = -\epsilon\,(1,\dots,1)$ and ${\Gamma'} \subset M$
is an $\mathbb{R}_+$-conic open convex cone such that
$$
\overline{G}\setminus \{0\} \,\subset\, \widehat{\Gamma'} \,\subset\,
\overline{\Gamma'} \setminus\{0\} \,\subset\, \widehat{\Gamma_{+^n}}.
$$
Note that, if $\zeta = \xi + \sqrt{-1}\eta\in \tilde{\gamma}^*_\alpha$ 
and $z = x+ \sqrt{-1}y \in D \cap E$, we have
$$
\operatorname{Re} (\tilde{z} - z)\zeta  
= (\operatorname{Re}\tilde{z} - x) a^* - 
\sum_{k=1}^n \alpha_k(\operatorname{Im}\tilde{z}_k)\eta_k 
+ 
\bigg(\epsilon'(\operatorname{Re}\tilde{z} - x) \eta + 
\sum_{k=1}^n \alpha_k y_k \eta_k\bigg).
$$
If $\epsilon>0$ is sufficiently small and $\operatorname{Re}\tilde{z} \in b + \Gamma_{-^n}$, by noticing
$$
|y| < \epsilon (|b| + |x|)\qquad (z = x+\sqrt{-1}y \in D \cap E),
$$
we can easily see that, for any $z = x + \sqrt{-1}y \in D \cap E$ and
$\zeta = \xi + \sqrt{-1}\eta \in \tilde{\gamma}^*_\alpha$,
$$
\epsilon'(\operatorname{Re}\tilde{z} - x) \eta + 
\sum_{k=1}^n \alpha_k y_k \eta_k = 
\bigg(\epsilon'(\operatorname{Re}\tilde{z} - x) + 
(\alpha_1y_1,\dots,\alpha_n y_n)\bigg)\eta 
\le 0
$$
holds.  Hence, the double integral in $g_\alpha$ absolutely converges and
we can apply Fubini's theorem to $g_\alpha$, from which we get
$$
g_\alpha(\tilde{z}) =
\int_D \nu_1(z)
\int_{\tilde{\gamma}^*_\alpha} 
e^{(\tilde{z} -z) \zeta} d\zeta
- 
\int_{\partial D} \nu_{01}(z)
\int_{\tilde{\gamma}^*_\alpha} 
e^{(\tilde{z} -z) \zeta} d\zeta.
$$
Now let us consider the integral
$
\displaystyle\int_{\tilde{\gamma}^*_\alpha} 
e^{(\tilde{z} -z) \zeta} d\zeta.
$
If $\operatorname{Re}(\tilde{z} - z) \in \Gamma_{-^n}$
and $|\operatorname{Im}(\tilde{z} - z)|$ is sufficiently small, then
we can deform the $n$-chain to the one in $M^*$ as was done 
in the proof of Proposition {\ref{prop:exp_global}},
we have
$$
\displaystyle\int_{\tilde{\gamma}^*_\alpha} 
e^{(\tilde{z} -z) \zeta} d\zeta
= 
\textrm{sgn}(\alpha) \displaystyle\int_{a^* + \Gamma^*_{+^n}}
e^{(\tilde{z} -z) \xi} d\xi
= \textrm{sgn}(\alpha)\dfrac{e^{(\tilde{z} -z)a^*}}{z- \tilde{z}},
$$
where $\operatorname{sgn}(\alpha) = \alpha_1 \alpha_2 \cdots \alpha_n$.
Note that, by the unique continuation property, 
$$
\displaystyle\int_{\tilde{\gamma}^*_\alpha} 
e^{(\tilde{z} -z) \zeta} d\zeta
= \textrm{sgn}(\alpha) \dfrac{e^{(\tilde{z} -z)a^*}}{z- \tilde{z}}
$$
holds at a point where the integral is defined. Summing up, we have obtained
$$
g_\alpha(\tilde{z}) =
\operatorname{sgn}(\alpha)
\left(
\int_D \dfrac{e^{(\tilde{z} -z) a^*} \nu_1(z)}{z - \tilde{z}} - 
\int_{\partial D} \dfrac{e^{(\tilde{z} - z) a^*} \nu_{01}(z)}{z - \tilde{z}}
\right)
$$
if $\tilde{z} \in \Gamma_{-^n} \times \sqrt{-1}\Gamma_\alpha$.
By deforming $D$ appropriately, we see that 
the integrals in the right-hand side converge on  $M \times \sqrt{-1}\Gamma_\alpha$, 
and hence, the above equation also holds there.
It follows from Theorem {\ref{teo:b_k_invserse}}
that we have
$$
\ilc
\sum_{\alpha\in\Lambda^n}
\operatorname{sgn}(\alpha)\,
b_{\Omega_\alpha}\left(
\int_D \dfrac{e^{(\tilde{z} -z) a^*} \nu_1(z)}{z- \tilde{z}} - 
\int_{\partial D} \dfrac{e^{(\tilde{z} - z) a^*} \nu_{01}(z)}{z- \tilde{z}}
\right)
= [\nu] = u.
$$
This completes the proof.

\section{Application to PDE with constant coefficients}
Let $\mathfrak{R}$ be the polynomial ring $\mathbb{C}[\zeta_1,\cdots, \zeta_n]$ on $E^*$
and $\mathfrak{D}$ the ring $\mathbb{C}[\partial_{x_1}, \cdots, \partial_{x_n}]$ 
of linear differential operators on $M$ with constant coefficients. 
We denote by $\sigma$ the principal symbol map from $\mathfrak{D}$ to $\mathfrak{R}$, that is,
$$
\mathfrak{D} \ni P(\partial) = \sum c_\alpha \partial^\alpha
\mapsto \sigma(P)(\zeta) = \sum_{|\alpha|=\textrm{ord}(P)} c_\alpha \zeta^\alpha
\in \mathfrak{R}.
$$
For an $\mathfrak{D}$-module $\mathfrak{M} = \mathfrak{D}/\mathfrak{I}$ with
the ideal $\mathfrak{I} \subset \mathfrak{D}$, 
we define the closed subset in $E^*_\infty$
$$
\textrm{Char}_{E_\infty^*}(\mathfrak{M}) = \{\zeta \in E_\infty^*;\, \sigma(P)(\zeta) = 0\quad(\forall\,P \in \mathfrak{I})\}.
$$
Here we identify a point in $E^*_\infty$ with a unit vector in $E^*$.

Recall that $\{f_1,\cdots,f_\ell\}$ ($f_k \in \mathfrak{R}$) is said to be 
a regular sequence over $\mathfrak{R}$ if and only if the conditions below are satisfied:
\begin{enumerate}
\item $(f_1,\cdots,f_\ell) \ne \mathfrak{R}$.
\item For any $k=1,2,\cdots,\ell$, the $f_k$ is not a zero divisor on $\mathfrak{R}/(f_1, \cdots, f_{k-1})$.
\end{enumerate}
The following theorem is fundamental in the theory of operational calculus:
Let $P_1(\partial)$, $\cdots$, $P_\ell(\partial)$ be in
$\mathfrak{D}$, and
define the $\mathfrak{D}$-module
$$
\mathfrak{M} = \mathfrak{D}/(P_1(\partial),\cdots,P_\ell(\partial)).
$$
\begin{teo}
Let $K$ be a regular closed subset in $\rcM$.
Assume that $K \cap M$ is convex and $\HHPC{K} \cap M^*_\infty$ is connected (in particular, it is non-empty),
and that $P_1(\zeta)$, $\cdots$, $P_\ell(\zeta)$ form a regular sequence
over $\mathfrak{R}$. 
Then the condition 
$$
\HHPC{K} \cap \textrm{Char}_{E^*_\infty}(\mathfrak{M}) = \emptyset
$$ 
implies
$$
\textrm{Ext}^k_{\mathfrak{D}}(\mathfrak{M},\, \Gamma_K(\rcM,\bexpo)) = 0
\qquad (k=0,1).
$$
\end{teo}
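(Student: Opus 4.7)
The plan is to convert the problem into an algebraic one on the Laplace-transform side. Using the canonical trivialization of the volume sheaf $\hexpv$ by the global section $dx \otimes a_\rcM$, one obtains a $\mathfrak{D}$-linear identification $\Gamma_K(\rcM;\,\bexpo \otimes_{\hexpa} \hexpv) \simeq \Gamma_K(\rcM;\,\bexpo)$; composing with Corollary \ref{cor:laplace-inverse} yields a $\mathfrak{D}$-linear isomorphism
$$
\LL\colon \Gamma_K(\rcM;\,\bexpo) \xrightarrow{\ \sim\ } N := \hhinfo{h_K}(\HPC{K})
$$
that transports the action of $\partial_{x_k}$ to multiplication by $\zeta_k$. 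Consequently $\textrm{Ext}^k_{\mathfrak{D}}(\mathfrak{M},\,\Gamma_K(\rcM;\,\bexpo)) \simeq \textrm{Ext}^k_{\mathfrak{R}}(\mathfrak{R}/(P_1,\dots,P_\ell),\,N)$. Because $(P_1,\dots,P_\ell)$ is a regular sequence in $\mathfrak{R}$, its Koszul complex is a finite free resolution of $\mathfrak{R}/(P_1,\dots,P_\ell)$, so the task reduces to showing the vanishing of the first two cohomologies of
$$
0 \longrightarrow N \xrightarrow{d^0} N^\ell \xrightarrow{d^1} N^{\binom{\ell}{2}} \longrightarrow \cdots,
$$
with $d^0(v) = (P_1 v,\dots,P_\ell v)$ and $d^1((u_i)) = (P_i u_j - P_j u_i)_{i<j}$.

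The key local observation is that the non-characteristic hypothesis $\HPC{K} \cap \textrm{Char}_{E^*_\infty}(\mathfrak{M}) = \emptyset$ makes at least one $P_i$ invertible in $\hhinfo{h_K}$ near every point of $\HPC{K}$. More precisely, fixing $\zeta_0 \in \HPC{K}$, the hypothesis produces $i=i(\zeta_0)$ with $\sigma(P_i)(\zeta_0)\neq 0$; then on a conic neighborhood $W_{\zeta_0} \subset \rcDE$ of the form $\{\zeta;\,|\zeta|>R,\,\pi(\zeta)\in U\}$ with $U\ni\zeta_0$ small and $R$ large, one has $|P_i(\zeta)| \ge c|\zeta|^{\deg P_i}$ for some $c>0$, so $1/P_i$ is holomorphic and of infra-exponential type on $W_{\zeta_0}$, making multiplication by $P_i$ an isomorphism of the stalk $\hhinfo{h_K,\zeta_0}$. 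From this, $\textrm{Ext}^0=0$ follows immediately: if $P_i u = 0$ for all $i$, then at each $\zeta_0$ the invertibility of $P_{i(\zeta_0)}$ forces $u=0$ as a germ, hence $u=0 \in N$. For $\textrm{Ext}^1$, given $(u_i)\in N^\ell$ with $P_i u_j=P_j u_i$, I will define the germ $v_{\zeta_0}:=u_{i(\zeta_0)}/P_{i(\zeta_0)} \in \hhinfo{h_K,\zeta_0}$; the identity $P_i u_j=P_j u_i$ yields $u_i/P_i = u_j/P_j$ on every overlap where both $P_i$ and $P_j$ are invertible, so the germ is independent of the choice of $i$. The sheaf property of $\hhinfo{h_K}$ will then glue these germs into a global $v\in N$ with $P_i v = u_i$.

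The hard part will be verifying that the glued $v$ truly belongs to $N=\hhinfo{h_K}(\HPC{K})$, i.e.\ that the local bounds, valid for $\zeta \in W_{\zeta_0}$,
$$
|v(\zeta)| \;\lesssim\; \frac{e^{-\hat{h}_K(\zeta)+\varphi(|\zeta|)}}{|\zeta|^{\deg P_{i(\zeta_0)}}},
$$
obtained by combining the infra-$\hat{h}_K$-exponential estimate on $u_{i(\zeta_0)}$ with the polynomial lower bound on $P_{i(\zeta_0)}$, assemble into a single infra-$\hat{h}_K$-exponential estimate on a bona fide open neighborhood $\bigcup_{\zeta_0\in\HPC{K}} W_{\zeta_0}$ of $\HPC{K}$ in $\rcDE$. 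Since the growth condition defining $\hhinfo{h_K}$ is local on such neighborhoods (phrased via compact-set bounds), no analytic partition of unity is required; the sheaf axioms for $\hhinfo{h_K}$ together with the explicit realization of its sections as germs of holomorphic functions near $E^*_\infty$ suffice to conclude that the cover $\{W_{\zeta_0}\}$ produces a legitimate section of $N$.
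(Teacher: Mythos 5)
Your overall strategy --- reduce via the Laplace transform isomorphism to the Koszul complex of $N=\Gamma(\HPC{K};\,\hhinfo{h_K})$ and then argue locally on $\HPC{K}$ --- matches the paper's, but your key local step contains a genuine error. You claim that $\zeta_0\notin\textrm{Char}_{E^*_\infty}(\mathfrak{M})$ produces a \emph{generator} $P_{i(\zeta_0)}$ with $\sigma(P_{i(\zeta_0)})(\zeta_0)\neq 0$. The characteristic variety is defined by the vanishing of $\sigma(P)$ for \emph{all} $P$ in the ideal $\mathfrak{I}=(P_1,\dots,P_\ell)$, and the principal symbol of an element of $\mathfrak{I}$ need not lie in the ideal generated by the $\sigma(P_i)$: take $n=\ell=2$, $P_1=\partial_{x_1}^2$, $P_2=\partial_{x_1}^2+\partial_{x_2}$. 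This is a regular sequence, $\sigma(P_1)=\sigma(P_2)=\zeta_1^2$ both vanish in the direction $\zeta_0=(0,1)$, yet $\zeta_0\notin\textrm{Char}_{E^*_\infty}(\mathfrak{M})$ because $P_2-P_1=\partial_{x_2}\in\mathfrak{I}$ has symbol $\zeta_2$. Near such a $\zeta_0$ no generator satisfies $|P_i(\zeta)|\ge c|\zeta|^{\deg P_i}$ on any conic neighborhood, so your local inverses $u_{i(\zeta_0)}/P_{i(\zeta_0)}$ do not exist, and both your $\textrm{Ext}^0$ and $\textrm{Ext}^1$ arguments break down at exactly the point where the non-characteristic hypothesis is supposed to enter.

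The repair is precisely the extra ingredient in the paper's proof: $\zeta_0\notin\textrm{Char}_{E^*_\infty}(\mathfrak{M})$ yields $h=\sum_j a_jP_j\in\mathfrak{I}$ with $\sigma(h)(\zeta_0)\neq 0$, hence $h$ is invertible in the stalk $(\hhinfo{h_K})_{\zeta_0}$; then $hu=\sum_j a_jP_ju=0$ forces $u=0$ for $\textrm{Ext}^0$, and $v:=h^{-1}\sum_j a_ju_j$ solves $P_iv=u_i$ for $\textrm{Ext}^1$ (using $P_iu_j=P_ju_i$). The paper packages this as the standard Koszul contraction $s$ with $s\circ d-d\circ s=h$, which shows the stalk complex is null-homotopic in every degree at once, and then concludes by left exactness of $\Gamma(\HPC{K};\,\bullet)$ applied to the exact sheaf complex --- thereby also sidestepping the gluing and growth-estimate discussion of your last paragraph. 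With the generators replaced by such an $h$ your argument can be completed, but as written the central invertibility claim is false.
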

\begin{proof}
	Let $\mathcal{F}$ be a sheaf of $\mathbb{Z}$-modules or a $\mathbb{Z}$-module itself and $s_i: \mathcal{F} \to \mathcal{F}$
($i=1,\cdots,\ell$) a morphism such that $s_i \circ s_j = s_j \circ s_i$ holds
for $1 \le i,j \le \ell$. Then we denote by $K(s_1,\cdots,s_\ell;\,\mathcal{F})$ 
the Koszul complex associated to $(s_1,\cdots, s_\ell)$ with coefficients in $\mathcal{F}$. That is, 
$$
{0}
\to 
\overset{\text{$0$-th degree}}{\mathcal{F} \otimes (\overset{0}{\wedge}\Lambda)} 
\xrightarrow{d} 
\mathcal{F}\otimes (\overset{1}{\wedge} \Lambda)
\xrightarrow{d} \mathcal{F}\otimes (\overset{2}{\wedge}\Lambda)
\xrightarrow{d} \cdots 
\xrightarrow{d} \mathcal{F}\otimes (\overset{\ell}{\wedge}\Lambda)
\to {0},
$$
where $\Lambda$ is a free $\mathbb{Z}$-module of rank $\ell$ with basis $e_1$, $e_2$, $\cdots$,
$e_\ell$ and
$$
d(f \otimes e_{i_1} \wedge e_{i_2} \wedge \cdots \wedge e_{i_k})
= \sum_{j=1}^\ell s_j(f) \otimes e_j \wedge e_{i_1} \wedge e_{i_2} \wedge \cdots \wedge e_{i_k}.
$$
Since $P_1(\zeta)$, $\cdots$, $P_\ell(\zeta)$ form a regular sequence, 
the complex $K(P_1(\partial),\cdots, P_\ell(\partial);\, \mathfrak{D})[\ell]$ is a free resolution
of $\mathfrak{M}$ and we get
$$
\textbf{R}\textrm{Hom}_{\mathfrak{D}}(\mathfrak{M},\,\Gamma_K(\rcM;\,\bexpo))
\simeq
K(P_1(\partial),\cdots,P_\ell(\partial);\, \Gamma_K(\rcM;\,\bexpo)).
$$
Hence it follows from Corollary {\ref{cor:laplace-inverse}} that
we have
\begin{equation}{\label{eq:resol-m}}
\textbf{R}\textrm{Hom}_{\mathfrak{D}}(\mathfrak{M},\,\Gamma_K(\rcM;\,\bexpo)) \simeq
K(P_1(\zeta),\cdots,P_\ell(\zeta);\, \Gamma(\HHPC{K};\,\hhinfo{h_K})).
\end{equation}

The lemma below is a key for the theorem:
\begin{lem}
Let $\zeta^* \notin \textrm{Char}_{E^*_\infty}(\mathfrak{M})$.
Then the Koszul complex
\begin{equation}{\label{eq:koszul_at_zeta}}
K(P_1(\zeta),\cdots,P_\ell(\zeta);\, (\hhinfo{h_K})_{\zeta^*})
\end{equation}
is exact. 
\end{lem}
\begin{proof}
By the definition of $\textrm{Char}_{E^*_\infty}(\mathfrak{M})$, 
we can find $h(\zeta)$ and $a_j(\zeta)$ ($j=1,2,\cdots,\ell)$ in $\mathfrak{R}$ such that
$$
\sigma(h)(\zeta^*) \ne 0,\qquad
h(\zeta) = \sum_{j=1}^\ell a_j(\zeta)P_j(\zeta).
$$
In particular, as $\sigma(h)(\zeta^*) \ne 0$ holds, $h$ is also invertible in the germ 
$({\hhinfo{h_K}})_{\zeta^*}$ of 
the sheaf ${\hhinfo{h_K}}$ at $\zeta^*$.
Set $\Lambda = \{1,2,\cdots,\ell\}$ and
let $s  = \{s_k\}$ be a homotopy from
$K(P_1,\cdots, P_\ell;\, (\hhinfo{h_K})_{\zeta^*})$ to itself. Here
$$
s_k: K^{k+1}(P_1,\cdots, P_\ell;\, (\hhinfo{h_K})_{\zeta^*})
\to
K^{k}(P_1,\cdots, P_\ell;\, (\hhinfo{h_K})_{\zeta^*})
$$
is given by
$$
s_k(\sum_{\alpha \in \Lambda^{k+1}} f_\alpha(\zeta) e_\alpha) = 
\sum_{\beta \in \Lambda^k} \sum_{j=1}^{\ell} a_j(\zeta)f_{j\,\beta}(\zeta)e_{\beta},
$$
where $e_\alpha = e_{\alpha_1}\wedge \cdots \wedge e_{\alpha_{k+1}}$ and
$j\,\beta $ is a sequence such that $\beta$ follows $j$.
Then, by the simple computation, we can easily get the equality
$$
s \circ d - d \circ s = h.
$$
Here $h: K(P_1,\cdots, P_\ell;\, (\hhinfo{h_K})_{\zeta^*})
\to
K(P_1,\cdots, P_\ell;\, (\hhinfo{h_K})_{\zeta^*})$
is the morphism of complexes defined by
$$
K^k(P_1,\cdots, P_\ell;\, (\hhinfo{h_K})_{\zeta^*}) \ni u
\mapsto hu \in K^k(P_1,\cdots, P_\ell;\, (\hhinfo{h_K})_{\zeta^*}),
$$
which is an isomorphism because $h(\zeta)$ is invertible on $(\hhinfo{h_K})_{\zeta^*}$.
Therefore the isomorphism $h$ is homotopic to zero, 
from which we conclude that
the complex \eqref{eq:koszul_at_zeta} is
quasi-isomorphic to zero. This completes the proof of the lemma.
\end{proof}

It follows from the lemma that the Koszul complex
\begin{equation}{\label{eq:koszul_p}}
K(P_1(\zeta),\cdots,P_\ell(\zeta);\, \hhinfo{h_K})
\end{equation}
of sheaves is exact on $\HHPC{K}$ because of the condition 
$\HHPC{K} \cap \textrm{Char}_{E^*_\infty}(\mathfrak{M}) = \emptyset$. 
Applying the left exact functor $\Gamma(\HHPC{K};\,\bullet)$ to the complex 
{\eqref{eq:koszul_p}}, we get a short exact sequence
$$
0 \to
 \Gamma(\HHPC{K};\,\hhinfo{h_K})
\to
\Gamma(\HHPC{K};\,\hhinfo{h_K}) \otimes (\overset{1}{\wedge}\Lambda)
\to
\Gamma(\HHPC{K};\,\hhinfo{h_K}) \otimes (\overset{2}{\wedge}\Lambda).
$$
Then, by noticing {\eqref{eq:resol-m}},
the claim follows from the above short exact sequence.
\end{proof}
%\begin{oss}
%In the theorem, one cannot expect generally the vanishing of cohomology groups 
%for $k > 1$. Let us consider the case, where $M=\mathbb{R}^2$ with coordinates $(x,y)$, 
%$K=\{(x,y) \in M;\, x^2+y^2 \le 1\} \subset \rcM$ and
%$$
%\mathfrak{M} = \mathfrak{D}/(\partial_{x}, \partial_{y}).
%$$
%Then all the conditions in the theorem are satisfied, in particular, we have
%$\HPC{K} = M^*_\infty \overset{\infty}{\times} \sqrt{-1}M^*$ and
%$\textrm{Char}_{E^*_\infty}(\mathfrak{M}) = \emptyset$. Hence we have
%$$
%\textrm{Ext}^k_{\mathfrak{D}}(\mathfrak{M},\, \Gamma_K(\rcM,\bexpo)) = 0
%\qquad (k=0,1).
%$$
%However
%$$
%\textrm{Ext}^2_{\mathfrak{D}}(\mathfrak{M},\, \Gamma_K(\rcM,\bexpo)) \simeq \mathbb{C}.
%%$$
%\end{oss}
\begin{cor}
Let $P(\partial) \in \mathfrak{D}$, and let
$K$ be a regular closed subset in $\rcM$ satisfying that
$K \cap M$ is convex and $\HHPC{K} \cap M^*_\infty$ is connected (in particular, non-empty).
Then the morphism
$$
\Gamma_K(\rcM,\bexpo) \xrightarrow{\,\,P(\partial)\, \bullet\,\,}
\Gamma_K(\rcM,\bexpo) 
$$
becomes isomorphic if $\sigma(P)(\zeta) \ne 0$ holds for any $\zeta \in \HHPC{K}$.
\end{cor}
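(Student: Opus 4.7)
The plan is to deduce this corollary directly from the preceding theorem by specializing to $\ell = 1$ and $P_1(\partial) = P(\partial)$, and then to read off the isomorphism from the long exact sequence attached to the standard two-term free resolution of $\mathfrak{M} = \mathfrak{D}/(P(\partial))$.

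First, I will verify that the hypotheses of the theorem hold in this one-operator case. Since $\HPC{K} \cap M^*_\infty$ is connected, it is in particular non-empty, so the assumption $\sigma(P)(\zeta) \ne 0$ on $\HPC{K}$ forces $P(\zeta) \in \mathfrak{R}$ to be a non-zero polynomial; as $\mathfrak{R}$ is an integral domain, multiplication by $P(\zeta)$ is then injective on $\mathfrak{R}$, and the singleton $\{P(\zeta)\}$ is automatically a regular sequence. Next, by definition $\textrm{Char}_{E^*_\infty}(\mathfrak{M}) = \{\zeta \in E^*_\infty;\, \sigma(P)(\zeta) = 0\}$, so the hypothesis translates exactly to $\HPC{K} \cap \textrm{Char}_{E^*_\infty}(\mathfrak{M}) = \emptyset$. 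The preceding theorem then gives
$$\textrm{Ext}^k_{\mathfrak{D}}(\mathfrak{M},\, \Gamma_K(\rcM,\bexpo)) = 0 \qquad (k=0,1).$$

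Second, I will apply the functor $\textrm{Hom}_{\mathfrak{D}}(\bullet,\, \Gamma_K(\rcM,\bexpo))$ to the free resolution
$$0 \longrightarrow \mathfrak{D} \xrightarrow{\,\cdot\, P(\partial)} \mathfrak{D} \longrightarrow \mathfrak{M} \longrightarrow 0,$$
yielding the four-term exact sequence
$$0 \to \textrm{Ext}^0_{\mathfrak{D}}(\mathfrak{M},F) \to F \xrightarrow{P(\partial)\bullet} F \to \textrm{Ext}^1_{\mathfrak{D}}(\mathfrak{M},F) \to 0$$
with $F = \Gamma_K(\rcM,\bexpo)$. The vanishing of both end terms then gives both the injectivity and the surjectivity of $P(\partial)\bullet$ on $F$, which is the desired isomorphism.

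There is no substantive obstacle here: the corollary is a purely formal consequence of the theorem specialized to $\ell = 1$. The only point worth flagging is the automatic regularity of a length-one sequence in an integral domain, together with the observation that $\sigma(P) \not\equiv 0$ on the non-empty set $\HPC{K}$ guarantees $P \ne 0$ so that the resolution above is indeed exact.
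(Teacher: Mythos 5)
Your proof is correct and is precisely the argument the paper intends: the corollary is stated as the $\ell=1$ specialization of the preceding theorem, whose Koszul complex for a single operator reduces to the two-term complex you write down, so that $\mathrm{Ext}^0$ and $\mathrm{Ext}^1$ are exactly the kernel and cokernel of $P(\partial)\bullet$. The only pedantic caveat is that when $P$ is a nonzero constant the singleton $\{P(\zeta)\}$ fails the paper's condition $(f_1,\dots,f_\ell)\ne\mathfrak{R}$ and hence is not a regular sequence in their sense, but in that degenerate case the map is trivially an isomorphism, so nothing is lost.
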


\end{document}